\definecolor{mediumblue}{rgb}{0.0, 0.0, 0.8}
\colorlet{darkgreen}{green!50!black}
\newcommand{\stirling}[2]{\genfrac{[}{]}{0pt}{}{#1}{#2}}
\newcommand\ptn[1]{\mathscr{P}_{#1}}
\newcommand\regptn[2]{\mathscr{R}^{#1}_{#2}}
\renewcommand{\geq}{\geqslant}
\renewcommand{\leq}{\leqslant}
\renewcommand{\trianglerighteq}{\trianglerighteqslant}
\renewcommand{\trianglelefteq}{\trianglelefteqslant}
\tikzset{wei/.style=
{red,double=red,double
distance=1pt}}
\newcommand\gr{\Yfillcolour{magenta!30}}
\newcommand\dgr{\Yfillcolour{blue!45!cyan!30}}
\newcommand\wh{\Yfillcolour{white}}
\tikzset{wei2/.style={red,double=red,double
distance=1pt}}
\numberwithin{equation}{section}
\newtheorem{thm}{Theorem}[section]
\newtheorem{cor}[thm]{Corollary}
\newtheorem*{saxl}{Saxl's conjecture}
\newtheorem*{ack}{Acknowledgements}
\newtheorem{conj}[thm]{Conjecture}
\newtheorem*{conjA}{Generalised Saxl Conjecture}
\newtheorem*{conjB}{Strengthened Saxl Conjecture}
\newtheorem{lem}[thm]{Lemma}
\newtheorem{prop}[thm]{Proposition}
\newtheorem*{prop*}{Proposition}
\newtheorem*{thmA*}{Theorem A}
\newtheorem*{thmB*}{Theorem B}
\newtheorem*{thmC*}{Theorem C}
\newtheorem*{cor*}{Corollary}
\newtheorem*{conj*}{Semisimplicity  Conjecture}
\theoremstyle{remark}
\newtheorem{rmk}[thm]{Remark}  
\newtheorem*{Acknowledgements*}{Acknowledgements}
\theoremstyle{definition}
\newtheorem{defn}[thm]{Definition}
\newtheorem{eg}[thm]{Example}
\newcommand{\boxla}{\lambda}
\newcommand{\rad}{\mathrm{rad}}
\newcommand{\res}{\mathrm{res}}
\newcommand{\Std}{{\rm Std}}
\newcommand{\SStd}{{\rm SStd}}
\newcommand{\RStd}{{\rm RStd}}
\newcommand{\Shape}{\operatorname{Shape}}
\newcommand{\la}{\lambda}
\newcommand{\N}{{\bf N}}
 \newcommand{\SSTS}{\mathsf{S}}
\newcommand{\SSTA}{\mathsf{A}}
\newcommand{\SSTT}{\mathsf{T}}  
\newcommand{\SSTU}{\mathsf{U}}  
  \newcommand{\SSTR}{\mathsf{R}}  
\newcommand{\sts}{\mathsf{s}}  
\newcommand{\stt}{\mathsf{t}}  
\newcommand{\Lad}{{\mathsf{L}}}  
\newcommand{\ZZ}{{\mathbb Z}}
\newcommand{\NN}{{\mathbb N}}
\newcommand{\CC}{\mathbb{C}}
\DeclareMathOperator{\Hom}{Hom}
 \newcommand\releven{11}
  \newcommand\rsix{16}
 \newcommand\rten{10} \newcommand\rtwelve{12}
 \newcommand\rthirt{13}
 \newcommand\rfour{14}
 \newcommand\rfive{15}
\tikzset{
    ultra thin/.style= {line width=0.05pt},
    very thin/.style=  {line width=0.2pt},
    thin/.style=       {line width=0.1pt},
    semithick/.style=  {line width=0.6pt},
    thick/.style=      {line width=0.8pt},
    very thick/.style= {line width=1.2pt},
    ultra thick/.style={line width=1.6pt}
}
\crefname{defn}{Definition}{Definitions}
\crefname{thm}{Theorem}{Theorems}
\crefname{prop}{Proposition}{Propositions}
\crefname{lem}{Lemma}{Lemmas}
\crefname{cor}{Corollary}{Corollaries}
\crefname{conj}{Conjecture}{Conjectures}
\crefname{section}{Section}{Sections}
\crefname{subsection}{Subsection}{Subsections}
\crefname{eg}{Example}{Examples}
\crefname{figure}{Figure}{Figures}
\crefname{rem}{Remark}{Remarks}
\crefname{rmk}{Remark}{Remarks}
\crefname{equation}{equation}{equation}
\Crefname{defn}{Definition}{Definitions}
\Crefname{thm}{Theorem}{Theorems}
\Crefname{prop}{Proposition}{Propositions}
\Crefname{lem}{Lemma}{Lemmas}
\Crefname{cor}{Corollary}{Corollaries}
\Crefname{conj}{Conjecture}{Conjectures}
\Crefname{section}{Section}{Sections}
\Crefname{subsection}{Subsection}{Subsections}
\Crefname{eg}{Example}{Examples}
\Crefname{figure}{Figure}{Figures}
\Crefname{rem}{Remark}{Remarks}
\Crefname{rmk}{Remark}{Remarks}
 \newlength{\mylen}
\newcommand{\ten}{10}
\newcommand{\eleven}{11}
\newcommand\Dim[2][t]{\text{\rm Dim}_{#1}#2}
\newcommand\Item[1][]{%
  \ifx\relax#1\relax  \item \else \item[#1] \fi
  \abovedisplayskip=0pt\abovedisplayshortskip=0pt~\vspace*{-\baselineskip}}
\def\Item{\item\abovedisplayskip=0pt\abovedisplayshortskip=5pt~\vspace*{-\baselineskip}}
\begin{document}

\title[Kronecker positivity  and 2-modular representation theory]
{Kronecker positivity  \\  and 2-modular representation theory}
\author{C. Bessenrodt}
\address{Institut f\"ur Algebra, Zahlentheorie und Diskrete Mathematik, Leibniz Universit\"at Hannover, D-30167 Hannover, Germany}
\email{bessen@math.uni-hannover.de}

\author[C. Bowman]{C. Bowman}
\address{School of Mathematics, Statistics and Actuarial Science University of Kent Canterbury
CT2 7NF, UK}
\email{C.D.Bowman@kent.ac.uk}

\author{L. Sutton}
\address{National University of Singapore, 10 Lower Kent Ridge Road, Singapore 119076}
\email{matloui@nus.edu.sg}

\maketitle

\!\!\!\!\!\!\!\!\!
  \begin{abstract}
 This paper consists of  two prongs.
Firstly, we prove that any Specht module labelled by a 2-separated partition   is semisimple   and
 we completely determine its decomposition as a direct sum of graded simple modules.
   Secondly, we apply these results and other modular representation theoretic techniques on the study of Kronecker coefficients and hence verify  Saxl's conjecture for several  large new families  of partitions.
   In particular, we verify Saxl's conjecture for all height zero characters of $\mathfrak{S}_n$ for $p=2$.
  \end{abstract}

 \section*{Introduction}

This paper brings together, for the first time, the two   oldest open problems in the representation theory of the symmetric groups and their  quiver  Hecke algebras.
The first   problem is to  understand    the
   structure  of Specht modules
and the second is to describe  the decomposition of a tensor product of two Specht modules --- the infamous {\em Kronecker problem}.

 \medskip
\noindent{\bf Kronecker positivity:}
The Kronecker problem  is not only one of the central open problems in
the classical representation theory of the symmetric groups, but it is also one of  the  definitive open problems in algebraic combinatorics
 as   identified by Richard Stanley in \cite{Sta00}.  The problem of deciding the positivity of Kronecker coefficients arose in recent times also in quantum information theory  \cite{ky,MR2197548,MR2276458,MR3186652}  and Kronecker coefficients have subsequently been used to study entanglement entropy  \cite{entropy}.

    A new benchmark for the Kronecker positivity problem  is a conjecture of
 Heide, Saxl, Tiep and Zalesskii \cite{MR3056296} that was inspired by their investigation of the square of the Steinberg character for simple groups of Lie type.
It says that for any $n\neq  2, 4, 9$ there is always a
 complex irreducible character of $\mathfrak{S}_n$ whose square contains all irreducible characters of $\mathfrak{S}_n$ as constituents.
For $n $ a triangular number,   an explicit candidate was suggested by Saxl in 2012:
 Let $\rho:=\rho(k)=(k,k-1,k-2,k-3,\dots,2,1)$ denote the $k$th staircase partition.
Phrased in terms of modules,
Saxl's conjecture states that all simple modules appear in  the tensor square of
the simple $\CC\mathfrak{S}_n$-module ${\bf D}^\CC(\rho)$.  In other words,   we have that
$${\bf D}^\CC(\rho)\otimes {\bf D}^\CC(\rho) = \bigoplus _{\lambda } g(\rho,\rho,\lambda  ) {\bf D}^\CC(\lambda)$$
with $g(\rho,\rho,\lambda  ) \neq 0$ for all partitions $\lambda$ of $n$.
This conjecture has been attacked by algebraists, probabilists, and  complexity theorists
  \cite{MR3856528,I15,LS,PP16} but remains stubbornly elusive.
 Positivity of the Kronecker coefficient $g(\rho,\rho,\lambda  )$ has  been verified
 for hooks and two-row partitions when $n$ is sufficiently large in \cite{PP16},
 and then for arbitrary $n$ and
 $\lambda$ a hook  in \cite{I15,MR3856528} or a double-hook partition (i.e., when the Durfee size is 2) in \cite{MR3856528},´
   and   for any $\lambda$ comparable to $\rho$ in dominance order in \cite{I15}.

This paper begins with the  observation that the $\Bbbk \mathfrak{S}_n$-module  ${\bf D}^\Bbbk(\rho)$
is   projective   over a field $\Bbbk$ of  characteristic $p=2$,
or equivalently, that the character to  the Specht module
${\bf D}^\CC(\rho) = {\bf S}^\CC(\rho)$ is the character $\xi^\rho$
associated to a projective indecomposable $\Bbbk \mathfrak{S}_n$-module
(via its integral lift to characteristic 0).
Therefore, the tensor square of ${\bf D}^\Bbbk(\rho)$ is again a projective module,
and the square of $\xi^\rho$ is the character to a projective module.
This allows us
  to {bring to bear the tools of modular and graded representation theory}  on the study of the Kronecker coefficients.
In particular, we deduce that if ${\bf D}^\Bbbk(\la)=  {\bf S}^\Bbbk(\la)$  is a  simple Specht module,   then
  all constituents of the projective cover of   ${\bf D}^\Bbbk(\la)$ must also appear in  Saxl's tensor-square.
 For example, using this property for the trivial simple module ${\bf D}^\Bbbk((n))$
of $\mathfrak{S}_n$ at characteristic~2
gives all characters of odd degree as constituents in the Saxl square;
more generally, we will detect all irreducible characters of height~0
as constituents.
Our aim is to understand the columns of the   2-modular graded decomposition matrix  which are labelled by   {\em simple Specht modules}   and to utilise these results towards Saxl's conjecture.

 \medskip
\noindent{\bf Modular representation theory:}
The   classification of  simple Specht modules for symmetric groups and their
Hecke algebras was a massive undertaking involving over 30 years work \cite{James,MR2207757,MR1687552,MR2137291,MR1402572,MR2339470,MR1425323,MR2089249,MR2488560,MR3011340,MR2588144}.
 The pursuit of a description of  semisimple and decomposable Specht modules  is similarly old \cite{James} and yet has proven a much more difficult nut to crack.
  The decomposable  Specht modules
   labelled by hook partitions  were characterised  by  Murphy   and Speyer   \cite{MR591250,MR3250450};
      the graded decomposition numbers of these Specht modules were calculated by Chuang, Miyachi, and Tan \cite{MR2081930};
  the first examples of   decomposable Specht modules labelled by non-hook partitions were given by Dodge and Fayers   \cite{MR2905253};
 Donkin and Geranios  very recently   unified and extended these results to certain ``framed staircase" partitions \cite{DG} which we will  discuss (within the wider context of ``2-separated" partitions) below.

 We show that any Specht module labelled by a 2-separated partition   is semisimple   and
 we completely determine its decomposition as a sum of graded simple modules.
  Our proof makes heavy use of recent results in the {\em graded} representation theory of   Hecke and rational Cherednik algebras.
 We shall denote the quantisations of the Specht and simple modules by
$ {\bf S}^\Bbbk_{q}(\la)$ and  ${\bf D}^\Bbbk_{q}(\la)$ respectively over $\Bbbk$.   We  completely determine the rows  of the graded decomposition matrix of $H^\CC_{q}(n)$ labelled by 2-separated partitions;  this serves as a first approximation to our goal and subsumes and generalises the results on decomposability and decomposition numbers of   Specht modules for hook partitions
 (belonging to blocks of small $2$-core)
 \cite{MR3250450,MR2081930}, and
  results on  decomposition numbers of Specht modules in blocks of enormous 2-cores  \cite{MR1402572}.

 \bigskip
\noindent
{\bf Graded decomposition numbers of semisimple Specht modules:}
The partitions of interest to us (for both Saxl's conjecture and our   decomposability classification)  are the  2-separated partitions.
    Such partitions are obtained by  taking a staircase partition, $\tau$, and
   adding 2 copies of a partition $\lambda$   to the right of  $\tau$ and 2 copies of  a partition   $\mu$ to the bottom of $\tau$ in such a way that $\lambda$ and $\mu$ do not touch (except perhaps diagonally).
   Such partitions, denoted $\tau^\la_\mu$
    can be pictured as in \cref{figure2quosep}.

\!\!\!\!\!\begin{figure}[ht!]$$
\begin{minipage}{3cm}\begin{tikzpicture}[scale=0.3]
 \draw[fill=cyan!35](-0.5,-0.5)--(5.5,-0.5)--++(-90:0.5)--++(-180:0.5)--++(-90:0.5)--++(-180:0.5)--++(-90:0.5)--++(-180:0.5)--++(-90:0.5)--++(-180:0.5)--++(-90:0.5)--++(-180:0.5)--++(-90:0.5)--++(-180:0.5)--++(-90:0.5)--++(-180:0.5)--++(-90:0.5)--++(-180:0.5)--++(-90:0.5)--++(-180:0.5)--++(-90:0.5)--++(-180:0.5)--++(-90:0.5)--++(-180:0.5)--++(-90:0.5)--++(-180:0.5)--(-0.5,-0.5);

\draw[dashed] (3.5,0)--(3.5,-9.5);
\draw[dashed] (-1,-3)--(9.5,-3);
\draw(7.5,-1.5) node {$+2\lambda$};
\draw(1,-8) node [rotate=-90] {$+2\mu$};
\end{tikzpicture}
\end{minipage}
\quad\quad\quad =\quad \quad
\begin{minipage}{3cm}
\begin{tikzpicture}[scale=0.3]
\draw[fill=cyan!35](-0.5,-0.5)--(5.5,-0.5)--++(-90:0.5)--++(-180:0.5)--++(-90:0.5)--++(-180:0.5)--++(-90:0.5)--++(-180:0.5)--++(-90:0.5)--++(-180:0.5)--++(-90:0.5)--++(-180:0.5)--++(-90:0.5)--++(-180:0.5)--++(-90:0.5)--++(-180:0.5)--++(-90:0.5)--++(-180:0.5)--++(-90:0.5)--++(-180:0.5)--++(-90:0.5)--++(-180:0.5)--++(-90:0.5)--++(-180:0.5)--++(-90:0.5)--++(-180:0.5)--(-0.5,-0.5);
\draw[dashed] (3.5,0)--(3.5,-9.5);
\draw[dashed] (-1,-3)--(9.5,-3);

\draw(5.5,-0.5)--(10.5,-0.5)--++(-90:0.5)--++(0:-0.5)--++(-90:0.5)--++(0:-1.5)--++(-90:0.5)--++(0:-2.5)--++(-90:0.5)  --++(-0:-0.5)--++(-90:0.5)--++(-0:-2)
--++(-90:1.5)--++(0:-0.5)--++(-90:0.5)--++(0:-0.5)--++(-90:0.5)--++(0:-0.5)--++(-90:0.5)--++(0:-0.5)--++(-90:0.5)--++(0:-0.5)--++(-90:0.5)
--++(0:-0.5)--++(-90:1.5)--++(0:-0.5)--++(-90:0.5)--++(0:-0.5)--++(90:3.5);
\end{tikzpicture}\end{minipage}
$$

\!\!\!\!\!
\caption{A 2-separated partition $\tau^\la_\mu$ (see \cref{2sepdef}).}
\label{figure2quosep}
\end{figure}

 Notice that if the weight of a block is small compared to the size of the core, then all partitions in that block are 2-separated.
   We emphasise that the size of the staircase $\rho(k)$ in the following statement is immaterial (provided  that  $k\geq \ell(\lambda)+\ell(\mu^T)$, where $\ell(\la)$ denotes the length of the partition $\la$),
 and so we simply write $\tau:=\rho(k)$.
   For those interested in  the extra graded structure, we refer the reader to the full statement in   \cref{gradedresult}.

\begin{thmA*}
Let $e=2$ and let
$\tau^\la_\mu$ denote a 2-separated partition of $n$.  We have that the $H_{-1} ^\mathbb{C} (n)$-module
$ {\bf S}^\CC_{-1}({
 {\tau^{\la }
_
{\mu}
}
})  $ is semisimple and decomposes as a direct sum of simples as follows
$$
 {\bf S}^\CC_{-1}({
 {\tau^{\la }
_
{\mu}
}
})   = \bigoplus_\nu     c(\nu^T, \lambda^T,\mu)  {\bf D}^\CC_{-1}({
 {\tau^{\nu }
_
{\varnothing}
}
})
$$
 where $   c(\nu^T, \lambda^T,\mu) $ is the Littlewood--Richardson coefficient labelled by this triple of partitions.
\end{thmA*}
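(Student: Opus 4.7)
The plan is to reduce the assertion to a branching calculation governed by Littlewood--Richardson combinatorics, with the upgrade from a correct composition series to a genuine direct-sum decomposition as the main technical hurdle. As a first step, I would exploit that $\tau=\rho(k)$ is a $2$-core and that $\lambda,\mu$ do not touch: consequently $\tau^\la_\mu$ has $2$-core $\tau$ and $2$-weight $|\lambda|+|\mu|$. Because the statement is independent of $k$ (provided $k\geq \ell(\lambda)+\ell(\mu^T)$), I would apply Scopes-type (or Chuang--Rouquier derived) equivalences to replace $\tau$ by an arbitrarily large staircase, putting the addable/removable $2$-node structure of $\tau^\la_\mu$ into a clean asymptotic regime where the interaction between $\lambda$ and $\mu$ is trivial.

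As a second step, I would handle the base case $\mu=\varnothing$: the partition $\tau^\nu_\varnothing$ is $2$-regular, and from the classification of simple Specht modules together with the asymptotic reduction above, one sees that $\mathbf{S}^\CC_{-1}(\tau^\nu_\varnothing)=\mathbf{D}^\CC_{-1}(\tau^\nu_\varnothing)$. This is consistent with the theorem via $c(\nu^T,\lambda^T,\varnothing)=\delta_{\nu,\lambda}$, and supplies the simple building blocks into which the general case must decompose.

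The main combinatorial step is then a parabolic induction argument. Via an induction-from-a-Young-subalgebra construction, $\mathbf{S}^\CC_{-1}(\tau^\la_\mu)$ is related to an outer tensor product of the simple module $\mathbf{S}^\CC_{-1}(\tau^\la_\varnothing)$ with a Specht module built from the two copies of $\mu$ at the bottom. The classical Littlewood--Richardson rule for induced Specht modules then predicts exactly the multiplicities $c(\nu^T,\lambda^T,\mu)$; the appearance of the transpose on $\lambda$ but not on $\mu$ reflects that $\lambda$ is attached by rows whereas $\mu$ is attached by columns, so one side corresponds to row-insertion and the other to column-insertion, and the LR symmetry $c^\nu_{\lambda,\mu}=c^{\nu^T}_{\lambda^T,\mu^T}$ reconciles the two.

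The hard part is upgrading this composition-factor calculation to an actual direct-sum decomposition: a priori there could be non-trivial self-extensions among the $\mathbf{D}^\CC_{-1}(\tau^\nu_\varnothing)$ having the same character. I would attack this using the graded and rational Cherednik tools flagged in the introduction --- realising $\mathbf{S}^\CC_{-1}(\tau^\la_\mu)$ as a standard module in a suitable graded category $\mathcal{O}$, computing its graded Jantzen filtration, and arguing that for $2$-separated shapes all Jantzen jumps occur in a single grading degree, which together with a parity argument on the KLR grading forces $\mathrm{Ext}^1$ between the relevant simples to vanish and hence the Specht module to split as claimed.
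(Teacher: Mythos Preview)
Your high-level strategy---compute the composition multiplicities as Littlewood--Richardson numbers, then upgrade to a direct-sum decomposition via Cherednik/graded machinery---matches the paper's. But both halves of your implementation have genuine gaps.

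\textbf{The combinatorial step.} Your proposed parabolic induction is not made precise, and there is no evident Young subalgebra $H_{-1}^\CC(a)\otimes H_{-1}^\CC(b)\subset H_{-1}^\CC(n)$ from which $\mathbf{S}^\CC_{-1}(\tau^\lambda_\mu)$ is induced as an outer tensor product of the form you describe. The paper does something quite different and much more concrete: it computes directly the graded dimension of the weight space $e(\mathrm{Lad}^{\tau^\alpha_\varnothing})\mathbf{S}^\CC_{-1}(\tau^\lambda_\mu)$ by an explicit bijection between the relevant standard tableaux and $\mathrm{SStd}(\lambda^T\cup\mu,\alpha^T)$, and tracks the degree to obtain
\[
\Dim\big(e(\mathrm{Lad}^{\tau^\alpha_\varnothing})\mathbf{S}^\CC_{-1}(\tau^\lambda_\mu)\big)=|\mathrm{SStd}(\lambda^T\cup\mu,\alpha^T)|\cdot \llbracket\alpha^T\rrbracket_t!^2\cdot t^{|\mu|}.
\]
The crucial output is not just the LR multiplicity $c(\nu^T,\lambda^T,\mu)$ but the fact that \emph{every} composition factor occurs in the single grading degree $t^{|\mu|}$. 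Your induction argument, even if it could be made to work, would a priori only yield ungraded multiplicities, and you give no mechanism for establishing this degree concentration.

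\textbf{The semisimplicity step.} The concentration in one degree is exactly what feeds into the paper's criterion (its Theorem~\ref{directsum}): because category $\mathcal{O}$ for the rational Cherednik algebra is standard Koszul, the grading filtration on $\Delta(\tau^\lambda_\mu)$ coincides with its radical filtration; applying the KZ functor (which kills the $e$-singular simples) then shows that if all $e$-regular composition factors of $\mathbf{S}^\CC_{-1}(\tau^\lambda_\mu)$ live in a single degree, the Specht module is semisimple. Your alternative---a ``parity argument on the KLR grading forces $\mathrm{Ext}^1$ to vanish''---is not a valid general principle: there is no parity obstruction to $\mathrm{Ext}^1$ between simples in the same degree, and indeed such extensions do occur elsewhere. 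The Koszul argument is what actually does the work, and it needs the single-degree input that your combinatorial step does not supply.

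In short: the paper's two halves are tightly coupled---the graded tableau count produces exactly the hypothesis the Koszul criterion consumes---whereas your two halves do not connect.
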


In particular, there exist many blocks of $H^\CC_{-1}(n)$ (those with large cores) for which all Specht modules in the block  are semisimple.
  In \cite{MR2905253} Dodge and Fayers remark that {\em``every known example of a decomposable Specht module is labelled by a 2-separated partition"} and
  {\em ``it is interesting to speculate whether the 2-separated condition is necessary for a Specht module to be decomposable"}.
  In fact in Section 6  we show that their speculation  is {\em not} {true} by exhibiting two infinite  families of decomposable Specht modules  obtained by ``inflating" the smallest  decomposable Specht module (indexed by $(3,1^2)$).

Theorem A implies that all known examples of decomposable Specht modules for $\mathfrak{S}_n$ are obtained by reduction modulo $p=2$ from decomposable semisimple Specht modules for $H_{-1}^\CC(n)$.

 \bigskip
\noindent{\bf Applications to Kronecker coefficients:}
We now discuss the  results and insights  which 2-modular representation theory affords us in the study of Kronecker coefficients.  We verify
the positivity of the Kronecker coefficients in
Saxl's conjecture for a large new class of partitions,
 and
propose conjectural strengthened and generalised versions of Saxl's original conjecture.
  Our first main theorem on Kronecker coefficients is as follows:

\begin{thmB*}
Let $\lambda \vdash n=k(k+1)/2$ such that $\chi^\lambda$ is of height~0.
Then $g(\rho,\rho,\lambda)>0$.
 In particular, all $\chi^\lambda$ of odd degree are constituents of
the Saxl square.
 \end{thmB*}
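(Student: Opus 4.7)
The plan is to leverage the observation from the introduction: since $\rho=\rho(k)$ is a $2$-core, ${\bf S}^\Bbbk(\rho) = {\bf D}^\Bbbk(\rho)$ is simple and projective over a field $\Bbbk$ of characteristic~$2$, labelling the unique defect-zero $2$-block with that core. Consequently $M := {\bf D}^\Bbbk(\rho)\otimes_\Bbbk {\bf D}^\Bbbk(\rho)$ is projective, and its lift to an $\mathcal{O}$-lattice has $\CC$-character $\chi^\rho\cdot\chi^\rho = \sum_\nu g(\rho,\rho,\nu)\chi^\nu$. Writing $M = \bigoplus_\mu m_\mu P_\mu$ as a sum of projective indecomposables (over $2$-regular $\mu$) and matching characters against $\chi^{P_\mu} = \sum_\lambda d_{\lambda\mu}\chi^\lambda$ yields the identity
\[
g(\rho,\rho,\lambda) \;=\; \sum_\mu m_\mu\, d_{\lambda\mu}.
\]
Thus to prove the theorem it suffices to produce, for each height-zero $\chi^\lambda$, a $2$-regular $\mu$ with $m_\mu>0$ and $d_{\lambda\mu}>0$. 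A short adjunction using the self-duality of ${\bf D}^\Bbbk(\rho)$ in characteristic $2$ (the staircase is Mullineux-fixed) shows that whenever ${\bf S}^\Bbbk(\mu) = {\bf D}^\Bbbk(\mu)$ is a simple Specht module, $m_\mu = \dim\Hom_{\Bbbk\mathfrak{S}_n}(M,{\bf D}^\Bbbk(\mu)) = g(\rho,\rho,\mu)$.

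For odd-degree characters, the trivial simple Specht does the job: taking $\mu=(n)$ gives $m_{(n)} = \langle\chi^\rho,\chi^\rho\rangle = 1$, so $P_{(n)}$ is a direct summand of $M$, and a classical result of James identifies the ordinary constituents of the principal $2$-modular PIM $P_{(n)}$ as precisely the odd-degree characters of $\mathfrak{S}_n$. This settles the ``in particular'' clause.

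For height-zero characters in a non-principal $2$-block $B_\kappa$ with $2$-core $\kappa = \rho(j)$ and weight $w$, the plan is to use Theorem~A to supply simple Specht modules in $B_\kappa$ of $2$-separated shape, namely ${\bf D}^\Bbbk(\kappa^\nu_\varnothing) = {\bf S}^\Bbbk(\kappa^\nu_\varnothing)$ for $\nu \vdash w$. Taking $\nu = (w)$ a single row, the resulting $\mu_\kappa := \kappa^{(w)}_\varnothing = (j+2w,j-1,\ldots,1)$ is comparable to $\rho$ in dominance order (a direct check on partial sums, using $2w = (k-j)(k+j+1)/2$). Ikenmeyer's theorem \cite{I15} then forces $g(\rho,\rho,\mu_\kappa) > 0$, so $P_{\mu_\kappa}$ is a summand of $M$ by the preceding identity. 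The main obstacle is the block-theoretic final step: one must combine the explicit decomposition-matrix rows computed in Theorem~A with the $2$-core/$2$-quotient description of height-zero characters to verify that $d_{\lambda,\mu_\kappa} > 0$ for every height-zero $\chi^\lambda \in B_\kappa$. This combinatorial verification, matching the Littlewood--Richardson expansion of Theorem~A against the list of height-zero rows in each block, is the delicate content at the heart of the proof.
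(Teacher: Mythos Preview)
Your framework is exactly the paper's: the identity $g(\rho,\rho,\lambda) = \sum_\mu m_\mu\, d_{\lambda\mu}$, the choice $\mu_\kappa = \kappa^{(w)}_\varnothing$ (the most dominant partition in the block, hence labelling a simple Specht module over $\Bbbk$), and the appeal to Ikenmeyer's theorem to get $m_{\mu_\kappa} = g(\rho,\rho,\mu_\kappa) > 0$ all match. (A minor note: your Hom/adjunction detour is unnecessary; once $\mu_\kappa$ labels a simple Specht module the row $d_{\mu_\kappa,-}$ has a single nonzero entry, so $m_{\mu_\kappa} = g(\rho,\rho,\mu_\kappa)$ follows directly from the displayed identity.)

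The genuine gap is your final step. You propose to verify $d_{\lambda,\mu_\kappa} > 0$ for every height-zero $\lambda$ by invoking Theorem~A. But Theorem~A only computes rows of the decomposition matrix indexed by \emph{2-separated} partitions, and height-zero partitions need not be 2-separated. The paper gives an explicit witness: $\lambda = (9,8,3,2^3,1^2)\vdash 28$ is height zero in its 2-block but is not 2-separated, so Theorem~A says nothing about $d_{\lambda,-}$. Your proposed ``combinatorial verification, matching the Littlewood--Richardson expansion of Theorem~A against the list of height-zero rows'' therefore cannot cover all height-zero $\lambda$; it is not merely delicate, it is the wrong tool.

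The paper's actual argument bypasses Theorem~A entirely and instead uses a result of Kiyota--Okuyama--Wada: in each 2-block of weight $w$, the module ${\bf D}^\Bbbk(\tau^{(w)}_\varnothing)$ labelled by the most dominant partition is the \emph{unique} simple module of height zero in that block. Now if $\chi^\lambda$ has height zero, a direct comparison of 2-parts of dimensions forces ${\bf S}^\Bbbk(\lambda)$ to contain some height-zero composition factor with odd multiplicity; by uniqueness this factor is ${\bf D}^\Bbbk(\tau^{(w)}_\varnothing)$, so $d_{\lambda,\tau^{(w)}_\varnothing}$ is odd, in particular positive. Combined with $m_{\tau^{(w)}_\varnothing}>0$ this gives $g(\rho,\rho,\lambda)>0$ uniformly for all blocks, including the principal one (where height zero is the same as odd degree), so no separate ``classical result of James'' is needed.
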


\begin{figure}[ht!]
$$ \begin{minipage}{2.8cm}\begin{tikzpicture} [scale=0.33]

 \draw[very thick](0,0)--++(0:9)--++(-90:1)--++(180:1)--++(-90:1)
 --++(180:5)--++(-90:1)
 --++(180:1)--++(-90:3)
  --++(180:1)--++(-90:2)
    --++(180:1)--++(90:8)
  ;

  \draw[very thick,fill=cyan!35](0,0)--++(0:3)
--++(-90:1)--++(180:1)
--++(-90:1)--++(180:1)
--++(-90:1)--++(180:1)
--++(90:3);
  \draw[very thick](0,0)--++(0:7)--++(-90:1)--++(180:4);

 \draw[very thick](0,0)--++(-90:5)--++(0:1)--++(90:2);

  \clip((0,0)--++(0:9)--++(-90:1)--++(180:1)--++(-90:1)
 --++(180:5)--++(-90:1)
 --++(180:1)--++(-90:3)
  --++(180:1)--++(-90:2)
    --++(180:1)--++(90:8)
  ;

  \path(0,0) coordinate (origin);
   \foreach \i in {1,...,19}
  {
    \path (origin)++(0:1*\i cm)  coordinate (a\i);
    \path (origin)++(-90:1*\i cm)  coordinate (b\i);
    \path (a\i)++(-90:10cm) coordinate (ca\i);
    \path (b\i)++(0:10cm) coordinate (cb\i);
    \draw[densely dotted] (a\i) -- (ca\i)  (b\i) -- (cb\i); }

 \path(0.5,-0.5) coordinate (origin);
 \foreach \i in {1,...,19}
  {
    \path (origin)++(0:1*\i cm)  coordinate (a\i);
    \path (origin)++(-90:1*\i cm)  coordinate (b\i);
    \path (a\i)++(-90:1cm) coordinate (ca\i);
        \path (ca\i)++(-90:1cm) coordinate (cca\i);
    \path (b\i)++(0:1cm) coordinate (cb\i);
    \path (cb\i)++(0:1cm) coordinate (ccb\i);
  }
 \draw[very thick,fill=cyan!35](0,0)--++(0:3)
--++(-90:1)--++(180:1)
--++(-90:1)--++(180:1)
--++(-90:1)--++(180:1)
--++(90:3);
   \end{tikzpicture}\end{minipage} \qquad
   \begin{minipage}{2.8cm}\begin{tikzpicture} [scale=0.33]

 \draw[very thick](0,0)--++(0:9)--++(-90:1)--++(180:3)--++(-90:1)
 --++(180:1)--++(-90:1)
 --++(180:3)--++(-90:3)
  --++(180:1)--++(-90:2)
    --++(180:1)--++(90:7)
  ;

  \draw[very thick,fill=cyan!35](0,0)--++(0:3)
--++(-90:1)--++(180:1)
--++(-90:1)--++(180:1)
--++(-90:1)--++(180:1)
--++(90:3);
  \draw[very thick](0,0)--++(0:5)--++(-90:1)--++(180:1)--++(-90:1)--++(180:2);

 \draw[very thick](0,0)--++(-90:5)--++(0:1)--++(90:2);

  \clip(0,0)--++(0:9)--++(-90:1)--++(180:3)--++(-90:1)
 --++(180:1)--++(-90:1)
 --++(180:3)--++(-90:3)
  --++(180:1)--++(-90:2)
    --++(180:1)--++(90:7)
  ;

  \path(0,0) coordinate (origin);
   \foreach \i in {1,...,19}
  {
    \path (origin)++(0:1*\i cm)  coordinate (a\i);
    \path (origin)++(-90:1*\i cm)  coordinate (b\i);
    \path (a\i)++(-90:10cm) coordinate (ca\i);
    \path (b\i)++(0:10cm) coordinate (cb\i);
    \draw[densely dotted] (a\i) -- (ca\i)  (b\i) -- (cb\i); }

 \path(0.5,-0.5) coordinate (origin);
 \foreach \i in {1,...,19}
  {
    \path (origin)++(0:1*\i cm)  coordinate (a\i);
    \path (origin)++(-90:1*\i cm)  coordinate (b\i);
    \path (a\i)++(-90:1cm) coordinate (ca\i);
        \path (ca\i)++(-90:1cm) coordinate (cca\i);
    \path (b\i)++(0:1cm) coordinate (cb\i);
    \path (cb\i)++(0:1cm) coordinate (ccb\i);
  }
 \draw[very thick,fill=cyan!35](0,0)--++(0:3)
--++(-90:1)--++(180:1)
--++(-90:1)--++(180:1)
--++(-90:1)--++(180:1)
--++(90:3);
   \end{tikzpicture}\end{minipage} \qquad
   \begin{minipage}{3cm}\begin{tikzpicture} [scale=0.33]

 \draw[very thick](0,0)--++(0:9)--++(-90:1)--++(180:3)--++(-90:1)
 --++(180:1)--++(-90:3)
 --++(180:1)--++(-90:1)
  --++(180:3)--++(-90:2)
    --++(180:1)--++(90:8)
  ;

  \draw[very thick,fill=cyan!35](0,0)--++(0:3)
--++(-90:1)--++(180:1)
--++(-90:1)--++(180:1)
--++(-90:1)--++(180:1)
--++(90:3);

 \draw[very thick](3,0)--++(-90:3)--++(180:1)--++(-90:1)--++(180:1);

 \draw[very thick](5,0) --++(-90:1)--++(180:1)
  --++(-90:3)--++(180:1)
   --++(-90:1)--++(180:2);

 \draw[very thick](0,0)--++(-90:5)--++(0:1)--++(90:2);

  \clip(0,0)--++(0:9)--++(-90:1)--++(180:3)--++(-90:1)
 --++(180:1)--++(-90:3)
 --++(180:1)--++(-90:1)
  --++(180:3)--++(-90:2)
    --++(180:1)--++(90:8)
  ;

  \path(0,0) coordinate (origin);
   \foreach \i in {1,...,19}
  {
    \path (origin)++(0:1*\i cm)  coordinate (a\i);
    \path (origin)++(-90:1*\i cm)  coordinate (b\i);
    \path (a\i)++(-90:10cm) coordinate (ca\i);
    \path (b\i)++(0:10cm) coordinate (cb\i);
    \draw[densely dotted] (a\i) -- (ca\i)  (b\i) -- (cb\i); }

 \path(0.5,-0.5) coordinate (origin);
 \foreach \i in {1,...,19}
  {
    \path (origin)++(0:1*\i cm)  coordinate (a\i);
    \path (origin)++(-90:1*\i cm)  coordinate (b\i);
    \path (a\i)++(-90:1cm) coordinate (ca\i);
        \path (ca\i)++(-90:1cm) coordinate (cca\i);
    \path (b\i)++(0:1cm) coordinate (cb\i);
    \path (cb\i)++(0:1cm) coordinate (ccb\i);
  }
 \draw[very thick,fill=cyan!35](0,0)--++(0:3)
--++(-90:1)--++(180:1)
--++(-90:1)--++(180:1)
--++(-90:1)--++(180:1)
--++(90:3);
   \end{tikzpicture}\end{minipage}\quad
     \begin{minipage}{2.5cm}\begin{tikzpicture} [scale=0.33]
  \draw[very thick,fill=cyan!35](0,0)--++(0:4)
--++(-90:1)--++(180:1)
--++(-90:1)--++(180:1)--++(-90:1)--++(180:1)
--++(-90:1)--++(180:1)
--++(90:3);
 \draw[very thick](0,0)--++(0:10)--++(-90:1)--++(180:3)--++(-90:1)
 --++(180:3)--++(-90:3)
 --++(180:1)--++(-90:1)
  --++(180:1)--++(-90:2)
    --++(180:2)--++(90:8)
  ;
  \draw[very thick] (6,0)--++(90:-1)--++(180:2);
  \draw[very thick] (0,-8)--++(0:1)--++(90:3)
  --++(0:1)--++(90:1)
  --++(0:1)
  --++(90:2);

  \clip(0,0)--++(0:10)--++(-90:1)--++(180:3)--++(-90:1)
 --++(180:3)--++(-90:3)
 --++(180:1)--++(-90:1)
  --++(180:1)--++(-90:2)
    --++(180:2)--++(90:8)
  ;

  \path(0,0) coordinate (origin);
   \foreach \i in {1,...,19}
  {
    \path (origin)++(0:1*\i cm)  coordinate (a\i);
    \path (origin)++(-90:1*\i cm)  coordinate (b\i);
    \path (a\i)++(-90:10cm) coordinate (ca\i);
    \path (b\i)++(0:10cm) coordinate (cb\i);
    \draw[densely dotted] (a\i) -- (ca\i)  (b\i) -- (cb\i); }

 \path(0.5,-0.5) coordinate (origin);
 \foreach \i in {1,...,19}
  {
    \path (origin)++(0:1*\i cm)  coordinate (a\i);
    \path (origin)++(-90:1*\i cm)  coordinate (b\i);
    \path (a\i)++(-90:1cm) coordinate (ca\i);
        \path (ca\i)++(-90:1cm) coordinate (cca\i);
    \path (b\i)++(0:1cm) coordinate (cb\i);
    \path (cb\i)++(0:1cm) coordinate (ccb\i);
  }
    \draw[very thick,fill=cyan!35](0,0)--++(0:4)
--++(-90:1)--++(180:1)
--++(-90:1)--++(180:1)--++(-90:1)--++(180:1)
--++(-90:1)--++(180:1)
--++(90:4);
     \end{tikzpicture}\end{minipage}$$
\caption{Examples of partitions which label height 0 characters for $\mathfrak{S}_{28}$ and  $\mathfrak{S}_{36}$ (and therefore label constituents of  Saxl's tensor square by Theorem B).    There are
672 
and
1417 such characters for these groups, respectively.  A combinatorial construction of all such partitions (for arbitrary $n\in \NN$) is given in \cref{subsec:height0}.
}
\label{2adicpic2836}
\end{figure}
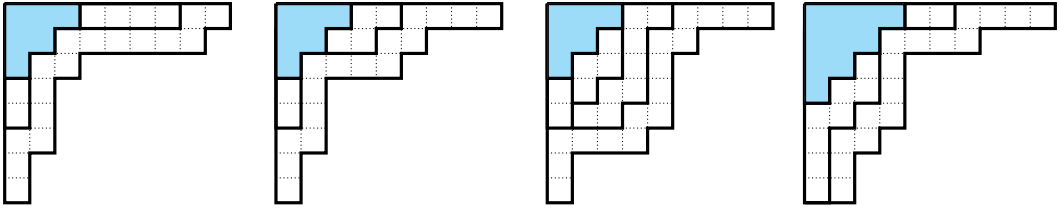

We   now shift  focus  to  the Kronecker coefficients labelled by  2-separated partitions.
 In what follows, we shall write  $g(\rho,\rho,\tau^\la_\mu)$ for the Kronecker coefficient labelled by
 a staircase
 $\rho$ of size  $n= k(k+1)/2$ for
 some $k\in \NN$ and some   2-separated partition  $\tau^\la_\mu $ of $n$;
  in other words, we do not encumber the notation by explicitly recording the size of the
    staircases involved.

\begin{thmC*}
 For $(\alpha,\beta)$ a $k$-Carter--Saxl pair (as in   \cref{ineedalabel}) we have that $g(\rho,\rho,\alpha)\geq k$.  In particular, all framed staircase partitions $\tau^{(a)}_{(1^b)}$ appear in the Saxl square.
 \end{thmC*}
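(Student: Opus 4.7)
The plan is to exploit the fact that in characteristic $2$ the Specht module $\mathbf{S}^\Bbbk(\rho)=\mathbf{D}^\Bbbk(\rho)$ is both simple and projective (since $\rho$ is a $2$-core), so $\mathbf{D}^\Bbbk(\rho)\otimes\mathbf{D}^\Bbbk(\rho)$ is a projective $\Bbbk\mathfrak{S}_n$-module. Its character then expands as
\[ \chi^\rho\cdot\chi^\rho \;=\; \sum_{\mu} c_\mu\, \xi^\mu, \qquad c_\mu\in\mathbb{Z}_{\geq 0},\]
with $\mu$ ranging over $2$-regular partitions of $n$. Using the Brauer reciprocity identity $\xi^\mu=\sum_\lambda d_{\lambda\mu}\chi^\lambda$ and comparing with $\chi^\rho\cdot\chi^\rho=\sum_\lambda g(\rho,\rho,\lambda)\chi^\lambda$, we obtain
\[ g(\rho,\rho,\lambda) \;=\; \sum_{\mu} c_\mu\, d_{\lambda\mu}\]
for every partition $\lambda$ of $n$, where $d_{\lambda\mu}$ are the $2$-modular decomposition numbers. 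Hence to prove $g(\rho,\rho,\alpha)\geq k$ it suffices to exhibit a single $2$-regular $\beta$ with $c_\beta\geq 1$ and $d_{\alpha\beta}\geq k$.

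Producing such a $\beta$ is precisely what the $k$-Carter--Saxl pair $(\alpha,\beta)$ of \cref{ineedalabel} is designed to do. The Carter half of the definition will guarantee that $\mathbf{S}^\Bbbk(\beta)=\mathbf{D}^\Bbbk(\beta)$ is a simple Specht module; in particular the row $\beta$ of the $2$-modular decomposition matrix collapses to $(0,\ldots,0,1,0,\ldots,0)$, so the identity above specialises to $g(\rho,\rho,\beta)=c_\beta$. The Saxl half will provide $g(\rho,\rho,\beta)\geq 1$, either by appealing to an established case of Saxl's conjecture (hook, double-hook, or dominance-comparable partitions in \cite{MR3856528,I15,PP16}) or via a direct Kronecker argument for the specific $\beta$ at hand, thereby pinning $c_\beta\geq 1$. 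The lower bound $d_{\alpha\beta}\geq k$ will be read off from Theorem A: the Hecke Specht module $\mathbf{S}^\CC_{-1}(\alpha)$ for a $2$-separated partition $\alpha=\tau^\lambda_\mu$ decomposes as a direct sum of simples $\mathbf{D}^\CC_{-1}(\tau^\nu_\varnothing)$ with Littlewood--Richardson multiplicities $c(\nu^T,\lambda^T,\mu)$, and specialisation transfers these multiplicities to lower bounds for the $2$-modular decomposition numbers of $\mathfrak{S}_n$.

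For the concrete framed-staircase consequence one takes $\alpha=\tau^{(a)}_{(1^b)}$, so the controlling coefficient $c^{\nu^T}_{(1^a),(1^b)}$ is nonzero whenever $\nu^T$ is a two-column shape of size $a+b$. I would choose $\nu$ so that the partner $\beta=\tau^\nu_\varnothing$ sits inside a Saxl-verified class (for instance taken comparable to $\rho$ in dominance order), producing a $1$-Carter--Saxl pair and hence $g(\rho,\rho,\tau^{(a)}_{(1^b)})\geq 1$. The main obstacle I expect is the interplay between the two halves of the pair: the chosen $\beta$ must simultaneously label a simple $2$-modular Specht module \emph{and} lie in a previously known Saxl case. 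The combinatorics of \cref{ineedalabel} should orchestrate this balance, and the heart of the argument will be to show that enough such pairs exist to cover the entire framed-staircase family and deliver the general bound $g(\rho,\rho,\alpha)\geq k$.
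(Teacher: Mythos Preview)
Your approach is essentially the paper's: express $(\chi^\rho)^2$ in terms of projective characters via the coefficients $G(\rho,\rho,\nu)$, obtain $g(\rho,\rho,\alpha)=\sum_\nu G(\rho,\rho,\nu)\,d_{\alpha\nu}$, and then argue that $G(\rho,\rho,\beta)\geq 1$ together with $d_{\alpha\beta}=k$ yields $g(\rho,\rho,\alpha)\geq k$. That is exactly how \cref{ineedalabel} is deduced in the text.

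However, you are overcomplicating one point and thereby inventing an obstacle that does not exist. There is no ``Saxl half'' in the definition of a Carter--Saxl pair: the definition in \cref{ineedalabel} only asks that $\beta$ satisfy Carter's criterion and that $d_{\alpha\beta}=k$. The positivity $g(\rho,\rho,\beta)>0$ is then \emph{automatic}, because Carter's criterion (part~(i) of \cref{JamMAt}) forces all parts of $\beta$ to be distinct, so $\beta$ is $2$-regular and hence $\beta\trianglerighteq\rho$; Ikenmeyer's \cref{iken} applies directly. Combined with your observation that the $\beta$-row of the decomposition matrix is a single~$1$, this gives $G(\rho,\rho,\beta)=g(\rho,\rho,\beta)\geq 1$ with no further case analysis. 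So the ``interplay between the two halves'' you anticipate as the main obstacle simply vanishes: Carter's criterion alone does all the work on the $\beta$ side.

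For the framed staircases, your idea is right but you do not need to hunt for a clever $\nu$. Take $\nu=(a+b)$, so $\beta=\tau^{(a+b)}_\varnothing$ is the most dominant partition in its $2$-block and in particular satisfies Carter's criterion. Theorem~A (or direct computation) gives the Hecke decomposition number $[\mathbf{S}^\CC_{-1}(\tau^{(a)}_{(1^b)}):\mathbf{D}^\CC_{-1}(\tau^{(a+b)}_\varnothing)]=c((1^{a+b}),(1^a),(1^b))=1$, and the adjustment-matrix factorisation makes this a lower bound for the $2$-modular decomposition number. Thus $(\tau^{(a)}_{(1^b)},\tau^{(a+b)}_\varnothing)$ is a $1$-Carter--Saxl pair, and you are done.
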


We do not recall the definition of a $k$-Carter--Saxl pair here, but rather discuss some examples and consequences of Theorem B.
In particular, Theorem B implies that  every 2-block contains a wealth of constituents of the Saxl square
${\bf S}^\CC(\rho)\otimes {\bf S}^\CC(\rho)$ which can be deduced using our techniques.
 Carter--Saxl pairs  cut  across
 hook partitions,   partitions of arbitrarily large   Durfee size,
 symmetric and non-symmetric partitions, partitions from arbitrary blocks, and   across the full range of the dominance order.  (In fact, the only common trait of these partitions is that they label semisimple Specht modules for $H_{-1}^\mathbb{C}(n)$.)
We shall illustrate  below that the property of being a Carter--Saxl pair 
is actually very easy to  work with diagrammatically.
 For example, the above theorem includes the infinite family of  ``framed staircases" as some of the simplest examples: these are partitions which
interpolate hooks and staircases.  More explicitly, these are  the partitions of the  form $\alpha=\tau^{(a)}_{(1^b)}$.  These can be pictured as in \cref{staircasepart} below.

\begin{figure}[ht!]
$$
\begin{minipage}{3.4cm}\begin{tikzpicture}[scale=0.3]
 \draw (1,0)--++(0:13)--++(-90:0.5)--++(180:13 );
   \draw(1,0)--(1,-0.5);
    \draw(2,0)--(2,-0.5);
      \draw(3,0)--(3,-0.5);
            \draw(4,0)--(4,-0.5);
                        \draw(5,0)--++(90:-0.5);
                        \draw(6,0)--++(90:-0.5);\draw(7,0)--++(90:-0.5);
  \draw(8,0)--++(90:-0.5);
    \draw(9,0)--++(90:-0.5);
      \draw(10,0)--++(90:-0.5);
        \draw(11,0)--++(90:-0.5);
          \draw(12,0)--++(90:-0.5);
          \draw(13,0)--++(90:-0.5);
   \draw (0.5,-1)--++(-90:8)--++(180:0.5)--++(90:8 );
          \draw(0,-2)--++(0:0.5);
          \draw(0,-3)--++(0:0.5);
                    \draw(0,-4)--++(0:0.5);
                              \draw(0,-5)--++(0:0.5);
                                        \draw(0,-6)--++(0:0.5);
              \draw(0,-7)--++(0:0.5);          \draw(0,-8)--++(0:0.5);

  \draw (0,0)--(1,0)--(1,-0.5) --(0.5,-0.5)  --(0.5,-1)  --(0,-1)  --(0,0);

  \draw[fill=cyan!30] (0,0)--(1,0)--(1,-0.5)--(0.5,-0.5)--(0.5,-1)--(0,-1)--(0,0);

 \end{tikzpicture}
\end{minipage}
 \ \ \quad\quad\quad
\begin{minipage}{2.8cm}\begin{tikzpicture}[scale=0.3]
 \draw (2.5,0)--++(0:9)--++(-90:0.5)--++(180:11 );
   \draw(0.5+1,0)--(1.5,-0.5);
    \draw(0.5+2,0)--(2.5,-0.5);
      \draw(0.5+3,0)--(3.5,-0.5);
            \draw(0.5+4,0)--(4.5,-0.5);
                        \draw(0.5+5,0)--++(90:-0.5);
                        \draw(0.5+6,0)--++(90:-0.5);\draw(7.5,0)--++(90:-0.5);
  \draw(0.5+8,0)--++(90:-0.5);
    \draw(0.5+9,0)--++(90:-0.5);
      \draw(0.5+10,0)--++(90:-0.5);
        \draw(0.5+11,0)--++(90:-0.5);

   \draw (0.5,-2.5 )--++(-90:6 )--++(180:0.5)--++(90:6  );
          \draw(0,-2-0.5)--++(0:0.5);
          \draw(0,-3-0.5)--++(0:0.5);
                    \draw(0,-4-0.5)--++(0:0.5);
                              \draw(0,-5-0.5)--++(0:0.5);
                                        \draw(0,-6-0.5)--++(0:0.5);
              \draw(0,-7-0.5)--++(0:0.5);

   \draw[fill=cyan!35] (0,0)--(2.5,0)--++(-90:0.5)--++(180:0.5)--++(-90:0.5)--++(180:0.5)--++(-90:0.5)--++(180:0.5)--++(-90:0.5)--++(180:0.5)--++(-90:0.5)--++(180:0.5)--(0,0);

 \end{tikzpicture}

\end{minipage}
\quad
 \ \ \quad\quad\quad
\begin{minipage}{2.8cm}\begin{tikzpicture}[scale=0.3]
   \draw (3,0)--(10,-0)--(10,-0.5)--(0,-0.5);
  \draw (0.5,-3)--(0.5,-8)--(0,-8)--(0,0);

   \draw(1,0)--(1,-0.5);
    \draw(2,0)--(2,-0.5);
      \draw(3,0)--(3,-0.5);
            \draw(4,0)--(4,-0.5);
                        \draw(5,0)--++(90:-0.5);
                        \draw(6,0)--++(90:-0.5);\draw(7,0)--++(90:-0.5);
  \draw(8,0)--++(90:-0.5);
    \draw(9,0)--++(90:-0.5);
      \draw(10,0)--++(90:-0.5);

            \draw(0,-2)--++(0:0.5);

          \draw(0,-3)--++(0:0.5);
                    \draw(0,-4)--++(0:0.5);
                              \draw(0,-5)--++(0:0.5);
                                        \draw(0,-6)--++(0:0.5);
              \draw(0,-7)--++(0:0.5);          \draw(0,-8)--++(0:0.5);

   \draw[fill=cyan!35] (0,0)--(3,0)--++(-90:0.5)--++(180:0.5)--++(-90:0.5)--++(180:0.5)--++(-90:0.5) --++(180:0.5)--++(-90:0.5)--++(180:0.5)--++(-90:0.5)--++(180:0.5)--++(-90:0.5)--++(180:0.5)--(0,0);

 \end{tikzpicture}

\end{minipage}
\quad\quad\quad   \begin{minipage}{1.4cm}\begin{tikzpicture}[scale=0.3]
  \draw[fill=cyan!35] (0,0)--(3.5,0)--++(-90:0.5)--++(180:0.5)--++(-90:0.5)--++(180:0.5)--++(-90:0.5)--++(180:0.5)--++(-90:0.5)--++(180:0.5)--++(-90:0.5)--++(180:0.5)--++(-90:0.5)--++(180:0.5)--++(-90:0.5)--++(180:0.5)--(0,0);

  \end{tikzpicture}
\end{minipage}
$$
\caption{Some examples of  framed staircases:  $\alpha=\tau^{(13)}_{(1^{8})}$,
 $\tau^{(9)}_{(1^{6})}$,  $\tau^{(7)}_{(1^{5})}$, and $\tau=\rho(9)$  are all partitions of  $n=45$.      Up to conjugation, there are 35   framed staircase partitions of   $n=45$.
The classification of  decomposable  Specht modules labelled by framed staircases  is the main result of  \cite{DG}. In \cref{stairacse} we prove Saxl's conjecture for all framed staircase partitions.  The key ingredient in our proof is  that
 $(\tau^{(a)}_{(1^b)}, \tau^{(a+b)}_\varnothing)$ is a 1-Carter--Saxl pair	for	 $a,b\in \mathbb N$. }
\label{staircasepart}
\end{figure}
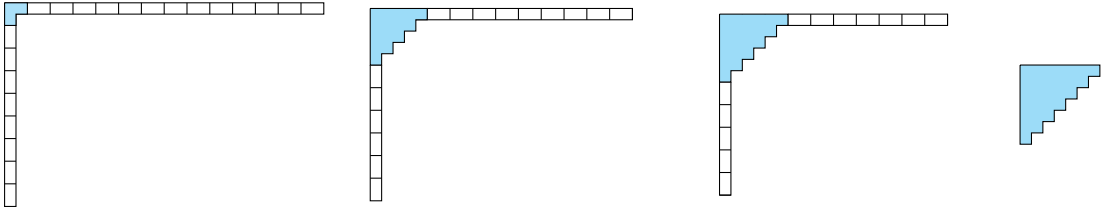

    We   wish   to provide bounds on the
 Kronecker coefficients: the maximal possible values obtained  by Kronecker products are studied in \cite{PP16}, and the Kronecker products whose coefficients
 are all as small as possible (namely all 0 or 1) are classified in \cite{cbcb}.
 For constituents to partitions of depth at most~4,
 explicit formulae for their multiplicity in squares
 were provided by Saxl in 1987,
   and later work by Zisser and Vallejo, respectively.
  For the Kronecker coefficients studied here, the easiest (and well known) non-trivial case
 is $g(\rho(k),\rho(k),(n-1,1))=k-1$, so the Kronecker coefficients are
 even unbounded; this also holds for the other families corresponding to
 partitions of small depth.
 Lower bounds coming from character values on a specific class were obtained
 by Pak and Panova in \cite{PP17},
  where also the asymptotic behaviour of the multiplicity of special constituents
 is studied.
 Theorem B allows us to
 provide explicit lower bounds on the Kronecker coefficients $g(\rho(k),\rho(k),\lambda)$
 for new infinite families of  Saxl constituents, where again the multiplicities are unbounded.

We now provide some examples of more complicated Carter--Saxl pairs.
For $n=78$, if we first focus on
 the (unique) block of weight $w=6$ we find 6 constituents in this block labelled by framed staircases  as well as the   Carter--Saxl pairs  given (up to conjugation) in \cref{afiguregofigure} below.
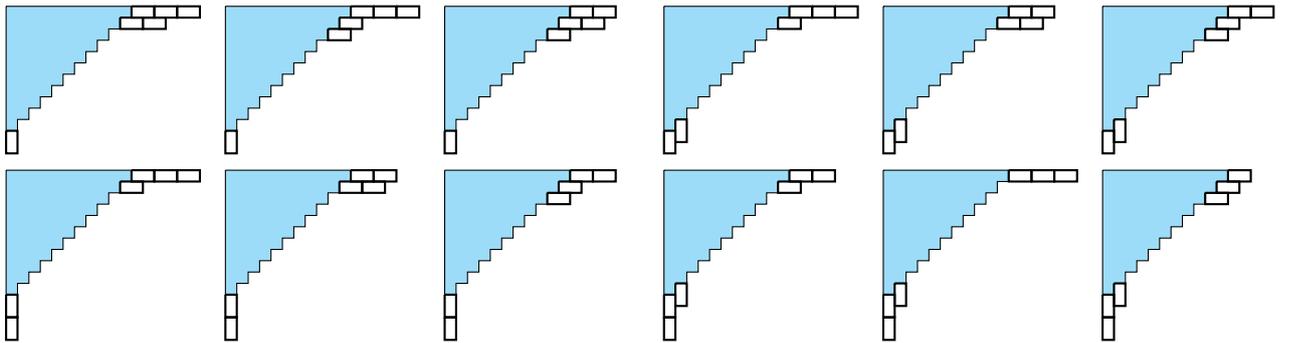
\begin{figure}[ht!] $$
 \begin{minipage}{2.8cm}\begin{tikzpicture}[scale=0.3]
 \draw[fill=cyan!35](0,-0.5)--(5.5,-0.5)--++(-90:0.5)--++(-180:0.5)--++(-90:0.5)--++(-180:0.5)--++(-90:0.5)--++(-180:0.5)--++(-90:0.5) --++(-180:0.5)--++(-90:0.5)--++(-180:0.5)--++(-90:0.5)--++(-180:0.5)--++(-90:0.5)--++(-180:0.5)--++(-90:0.5)--++(-180:0.5)--++(-90:0.5)--++(-180:0.5)--++(-90:0.5)--++(-180:0.5)--++(-90:0.5)--++(-180:0.5)--(0,-0.5);

\draw[thick](5.5,-0.5)--(6.5,-0.5)--(6.5,-1)--(5.5,-1)--(5.5,-0.5);
\draw[thick](6.5,-0.5)--++(0:1)--++(-90:0.5)--++(180:1)--++(90:0.5);
\draw[thick](7.5,-0.5)--++(0:1)--++(-90:0.5)--++(180:1)--++(90:0.5);
\draw[thick](6,-1)--++(0:1)--++(-90:0.5)--++(180:1)--++(90:0.5);
\draw[thick](5,-1)--++(0:1)--++(-90:0.5)--++(180:1)--++(90:0.5);

\draw[thick](0,-6)--++(0:0.5)--++(-90:1)--++(180:0.5)--++(90:1);
   \end{tikzpicture}
\end{minipage}
 \
 \begin{minipage}{2.8cm}\begin{tikzpicture}[scale=0.3]
\draw[fill=cyan!35](0,-0.5)--(5.5,-0.5)--++(-90:0.5)--++(-180:0.5)--++(-90:0.5)--++(-180:0.5)--++(-90:0.5)--++(-180:0.5)--++(-90:0.5) --++(-180:0.5)--++(-90:0.5)--++(-180:0.5)--++(-90:0.5)--++(-180:0.5)--++(-90:0.5)--++(-180:0.5)--++(-90:0.5)--++(-180:0.5)--++(-90:0.5)--++(-180:0.5)--++(-90:0.5)--++(-180:0.5)--++(-90:0.5)--++(-180:0.5)--(0,-0.5);

\draw[thick](5.5,-0.5)--(6.5,-0.5)--(6.5,-1)--(5.5,-1)--(5.5,-0.5);
\draw[thick](6.5,-0.5)--++(0:1)--++(-90:0.5)--++(180:1)--++(90:0.5);
\draw[thick](7.5,-0.5)--++(0:1)--++(-90:0.5)--++(180:1)--++(90:0.5);
\draw[thick](5,-1)--++(0:1)--++(-90:0.5)--++(180:1)--++(90:0.5);
\draw[thick](4.5,-1.5)--++(0:1)--++(-90:0.5)--++(180:1)--++(90:0.5);

\draw[thick](0,-6)--++(0:0.5)--++(-90:1)--++(180:0.5)--++(90:1);
   \end{tikzpicture}
\end{minipage}
 \
 \begin{minipage}{2.8cm}\begin{tikzpicture}[scale=0.3]
\draw[fill=cyan!35](0,-0.5)--(5.5,-0.5)--++(-90:0.5)--++(-180:0.5)--++(-90:0.5)--++(-180:0.5)--++(-90:0.5)--++(-180:0.5)--++(-90:0.5) --++(-180:0.5)--++(-90:0.5)--++(-180:0.5)--++(-90:0.5)--++(-180:0.5)--++(-90:0.5)--++(-180:0.5)--++(-90:0.5)--++(-180:0.5)--++(-90:0.5)--++(-180:0.5)--++(-90:0.5)--++(-180:0.5)--++(-90:0.5)--++(-180:0.5)--(0,-0.5);

\draw[thick](5.5,-0.5)--(6.5,-0.5)--(6.5,-1)--(5.5,-1)--(5.5,-0.5);
\draw[thick](6.5,-0.5)--++(0:1)--++(-90:0.5)--++(180:1)--++(90:0.5);
\draw[thick](6,-1)--++(0:1)--++(-90:0.5)--++(180:1)--++(90:0.5);
\draw[thick](5,-1)--++(0:1)--++(-90:0.5)--++(180:1)--++(90:0.5);
\draw[thick](4.5,-1.5)--++(0:1)--++(-90:0.5)--++(180:1)--++(90:0.5);

\draw[thick](0,-6)--++(0:0.5)--++(-90:1)--++(180:0.5)--++(90:1);
   \end{tikzpicture}
\end{minipage}
 \
\begin{minipage}{2.8cm}\begin{tikzpicture}[scale=0.3]
\draw[fill=cyan!35](0,-0.5)--(5.5,-0.5)--++(-90:0.5)--++(-180:0.5)--++(-90:0.5)--++(-180:0.5)--++(-90:0.5)--++(-180:0.5)--++(-90:0.5) --++(-180:0.5)--++(-90:0.5)--++(-180:0.5)--++(-90:0.5)--++(-180:0.5)--++(-90:0.5)--++(-180:0.5)--++(-90:0.5)--++(-180:0.5)--++(-90:0.5)--++(-180:0.5)--++(-90:0.5)--++(-180:0.5)--++(-90:0.5)--++(-180:0.5)--(0,-0.5);

\draw[thick](5.5,-0.5)--(6.5,-0.5)--(6.5,-1)--(5.5,-1)--(5.5,-0.5);
\draw[thick](6.5,-0.5)--++(0:1)--++(-90:0.5)--++(180:1)--++(90:0.5);
\draw[thick](7.5,-0.5)--++(0:1)--++(-90:0.5)--++(180:1)--++(90:0.5);
 \draw[thick](5,-1)--++(0:1)--++(-90:0.5)--++(180:1)--++(90:0.5);

\draw[thick](0,-6)--++(0:0.5)--++(-90:1)--++(180:0.5)--++(90:1);
\draw[thick](0.5,-5.5)--++(0:0.5)--++(-90:1)--++(180:0.5)--++(90:1);
   \end{tikzpicture}\end{minipage}
    \
\begin{minipage}{2.8cm}\begin{tikzpicture}[scale=0.3]
\draw[fill=cyan!35](0,-0.5)--(5.5,-0.5)--++(-90:0.5)--++(-180:0.5)--++(-90:0.5)--++(-180:0.5)--++(-90:0.5)--++(-180:0.5)--++(-90:0.5) --++(-180:0.5)--++(-90:0.5)--++(-180:0.5)--++(-90:0.5)--++(-180:0.5)--++(-90:0.5)--++(-180:0.5)--++(-90:0.5)--++(-180:0.5)--++(-90:0.5)--++(-180:0.5)--++(-90:0.5)--++(-180:0.5)--++(-90:0.5)--++(-180:0.5)--(0,-0.5);

\draw[thick](5.5,-0.5)--(6.5,-0.5)--(6.5,-1)--(5.5,-1)--(5.5,-0.5);
\draw[thick](6.5,-0.5)--++(0:1)--++(-90:0.5)--++(180:1)--++(90:0.5);
  \draw[thick](5,-1)--++(0:1)--++(-90:0.5)--++(180:1)--++(90:0.5);
 \draw[thick](6,-1)--++(0:1)--++(-90:0.5)--++(180:1)--++(90:0.5);

\draw[thick](0,-6)--++(0:0.5)--++(-90:1)--++(180:0.5)--++(90:1);
\draw[thick](0.5,-5.5)--++(0:0.5)--++(-90:1)--++(180:0.5)--++(90:1);
   \end{tikzpicture}\end{minipage}
 \ 
\begin{minipage}{2.8cm}\begin{tikzpicture}[scale=0.3]
\draw[fill=cyan!35](0,-0.5)--(5.5,-0.5)--++(-90:0.5)--++(-180:0.5)--++(-90:0.5)--++(-180:0.5)--++(-90:0.5)--++(-180:0.5)--++(-90:0.5) --++(-180:0.5)--++(-90:0.5)--++(-180:0.5)--++(-90:0.5)--++(-180:0.5)--++(-90:0.5)--++(-180:0.5)--++(-90:0.5)--++(-180:0.5)--++(-90:0.5)--++(-180:0.5)--++(-90:0.5)--++(-180:0.5)--++(-90:0.5)--++(-180:0.5)--(0,-0.5);

\draw[thick](5.5,-0.5)--(6.5,-0.5)--(6.5,-1)--(5.5,-1)--(5.5,-0.5);
\draw[thick](6.5,-0.5)--++(0:1)--++(-90:0.5)--++(180:1)--++(90:0.5);
  \draw[thick](5,-1)--++(0:1)--++(-90:0.5)--++(180:1)--++(90:0.5);
 \draw[thick](4.5,-1.5)--++(0:1)--++(-90:0.5)--++(180:1)--++(90:0.5);

\draw[thick](0,-6)--++(0:0.5)--++(-90:1)--++(180:0.5)--++(90:1);
\draw[thick](0.5,-5.5)--++(0:0.5)--++(-90:1)--++(180:0.5)--++(90:1);
   \end{tikzpicture}\end{minipage}
 $$
  $$
  \begin{minipage}{2.8cm}\begin{tikzpicture}[scale=0.3]
\draw[fill=cyan!35](0,-0.5)--(5.5,-0.5)--++(-90:0.5)--++(-180:0.5)--++(-90:0.5)--++(-180:0.5)--++(-90:0.5)--++(-180:0.5)--++(-90:0.5) --++(-180:0.5)--++(-90:0.5)--++(-180:0.5)--++(-90:0.5)--++(-180:0.5)--++(-90:0.5)--++(-180:0.5)--++(-90:0.5)--++(-180:0.5)--++(-90:0.5)--++(-180:0.5)--++(-90:0.5)--++(-180:0.5)--++(-90:0.5)--++(-180:0.5)--(0,-0.5);

\draw[thick](5.5,-0.5)--(6.5,-0.5)--(6.5,-1)--(5.5,-1)--(5.5,-0.5);
\draw[thick](6.5,-0.5)--++(0:1)--++(-90:0.5)--++(180:1)--++(90:0.5);
\draw[thick](7.5,-0.5)--++(0:1)--++(-90:0.5)--++(180:1)--++(90:0.5);
 \draw[thick](5,-1)--++(0:1)--++(-90:0.5)--++(180:1)--++(90:0.5);

\draw[thick](0,-6)--++(0:0.5)--++(-90:1)--++(180:0.5)--++(90:1);
\draw[thick](0,-7)--++(0:0.5)--++(-90:1)--++(180:0.5)--++(90:1);
   \end{tikzpicture}\end{minipage}
    \
\begin{minipage}{2.8cm}\begin{tikzpicture}[scale=0.3]
\draw[fill=cyan!35](0,-0.5)--(5.5,-0.5)--++(-90:0.5)--++(-180:0.5)--++(-90:0.5)--++(-180:0.5)--++(-90:0.5)--++(-180:0.5)--++(-90:0.5) --++(-180:0.5)--++(-90:0.5)--++(-180:0.5)--++(-90:0.5)--++(-180:0.5)--++(-90:0.5)--++(-180:0.5)--++(-90:0.5)--++(-180:0.5)--++(-90:0.5)--++(-180:0.5)--++(-90:0.5)--++(-180:0.5)--++(-90:0.5)--++(-180:0.5)--(0,-0.5);

\draw[thick](5.5,-0.5)--(6.5,-0.5)--(6.5,-1)--(5.5,-1)--(5.5,-0.5);
\draw[thick](6.5,-0.5)--++(0:1)--++(-90:0.5)--++(180:1)--++(90:0.5);
  \draw[thick](5,-1)--++(0:1)--++(-90:0.5)--++(180:1)--++(90:0.5);
 \draw[thick](6,-1)--++(0:1)--++(-90:0.5)--++(180:1)--++(90:0.5);

\draw[thick](0,-6)--++(0:0.5)--++(-90:1)--++(180:0.5)--++(90:1);
\draw[thick](0,-7)--++(0:0.5)--++(-90:1)--++(180:0.5)--++(90:1);
   \end{tikzpicture}\end{minipage}
 \ 
\begin{minipage}{2.8cm}\begin{tikzpicture}[scale=0.3]
\draw[fill=cyan!35](0,-0.5)--(5.5,-0.5)--++(-90:0.5)--++(-180:0.5)--++(-90:0.5)--++(-180:0.5)--++(-90:0.5)--++(-180:0.5)--++(-90:0.5) --++(-180:0.5)--++(-90:0.5)--++(-180:0.5)--++(-90:0.5)--++(-180:0.5)--++(-90:0.5)--++(-180:0.5)--++(-90:0.5)--++(-180:0.5)--++(-90:0.5)--++(-180:0.5)--++(-90:0.5)--++(-180:0.5)--++(-90:0.5)--++(-180:0.5)--(0,-0.5);

\draw[thick](5.5,-0.5)--(6.5,-0.5)--(6.5,-1)--(5.5,-1)--(5.5,-0.5);
\draw[thick](6.5,-0.5)--++(0:1)--++(-90:0.5)--++(180:1)--++(90:0.5);
  \draw[thick](5,-1)--++(0:1)--++(-90:0.5)--++(180:1)--++(90:0.5);
 \draw[thick](4.5,-1.5)--++(0:1)--++(-90:0.5)--++(180:1)--++(90:0.5);

\draw[thick](0,-6)--++(0:0.5)--++(-90:1)--++(180:0.5)--++(90:1);
\draw[thick](0,-7)--++(0:0.5)--++(-90:1)--++(180:0.5)--++(90:1);
   \end{tikzpicture}\end{minipage}
    \ 
\begin{minipage}{2.8cm}\begin{tikzpicture}[scale=0.3]
\draw[fill=cyan!35](0,-0.5)--(5.5,-0.5)--++(-90:0.5)--++(-180:0.5)--++(-90:0.5)--++(-180:0.5)--++(-90:0.5)--++(-180:0.5)--++(-90:0.5) --++(-180:0.5)--++(-90:0.5)--++(-180:0.5)--++(-90:0.5)--++(-180:0.5)--++(-90:0.5)--++(-180:0.5)--++(-90:0.5)--++(-180:0.5)--++(-90:0.5)--++(-180:0.5)--++(-90:0.5)--++(-180:0.5)--++(-90:0.5)--++(-180:0.5)--(0,-0.5);

\draw[thick](5.5,-0.5)--(6.5,-0.5)--(6.5,-1)--(5.5,-1)--(5.5,-0.5);
\draw[thick](6.5,-0.5)--++(0:1)--++(-90:0.5)--++(180:1)--++(90:0.5);
  \draw[thick](5,-1)--++(0:1)--++(-90:0.5)--++(180:1)--++(90:0.5);

\draw[thick](0.5,-5.5)--++(0:0.5)--++(-90:1)--++(180:0.5)--++(90:1);

\draw[thick](0,-6)--++(0:0.5)--++(-90:1)--++(180:0.5)--++(90:1);
\draw[thick](0,-7)--++(0:0.5)--++(-90:1)--++(180:0.5)--++(90:1);
   \end{tikzpicture}\end{minipage}  \ 
\begin{minipage}{2.8cm}\begin{tikzpicture}[scale=0.3]
\draw[fill=cyan!35](0,-0.5)--(5.5,-0.5)--++(-90:0.5)--++(-180:0.5)--++(-90:0.5)--++(-180:0.5)--++(-90:0.5)--++(-180:0.5)--++(-90:0.5) --++(-180:0.5)--++(-90:0.5)--++(-180:0.5)--++(-90:0.5)--++(-180:0.5)--++(-90:0.5)--++(-180:0.5)--++(-90:0.5)--++(-180:0.5)--++(-90:0.5)--++(-180:0.5)--++(-90:0.5)--++(-180:0.5)--++(-90:0.5)--++(-180:0.5)--(0,-0.5);

\draw[thick](5.5,-0.5)--(6.5,-0.5)--(6.5,-1)--(5.5,-1)--(5.5,-0.5);
\draw[thick](6.5,-0.5)--++(0:1)--++(-90:0.5)--++(180:1)--++(90:0.5);
\draw[thick](7.5,-0.5)--++(0:1)--++(-90:0.5)--++(180:1)--++(90:0.5);

\draw[thick](0.5,-5.5)--++(0:0.5)--++(-90:1)--++(180:0.5)--++(90:1);

\draw[thick](0,-6)--++(0:0.5)--++(-90:1)--++(180:0.5)--++(90:1);
\draw[thick](0,-7)--++(0:0.5)--++(-90:1)--++(180:0.5)--++(90:1);
   \end{tikzpicture}\end{minipage} \ 
\begin{minipage}{2.8cm}\begin{tikzpicture}[scale=0.3]
\draw[fill=cyan!35](0,-0.5)--(5.5,-0.5)--++(-90:0.5)--++(-180:0.5)--++(-90:0.5)--++(-180:0.5)--++(-90:0.5)--++(-180:0.5)--++(-90:0.5) --++(-180:0.5)--++(-90:0.5)--++(-180:0.5)--++(-90:0.5)--++(-180:0.5)--++(-90:0.5)--++(-180:0.5)--++(-90:0.5)--++(-180:0.5)--++(-90:0.5)--++(-180:0.5)--++(-90:0.5)--++(-180:0.5)--++(-90:0.5)--++(-180:0.5)--(0,-0.5);

\draw[thick](5.5,-0.5)--(6.5,-0.5)--(6.5,-1)--(5.5,-1)--(5.5,-0.5);
  \draw[thick](5,-1)--++(0:1)--++(-90:0.5)--++(180:1)--++(90:0.5);
  \draw[thick](4.5,-1.5)--++(0:1)--++(-90:0.5)--++(180:1)--++(90:0.5);
\draw[thick](0.5,-5.5)--++(0:0.5)--++(-90:1)--++(180:0.5)--++(90:1);

\draw[thick](0,-6)--++(0:0.5)--++(-90:1)--++(180:0.5)--++(90:1);
\draw[thick](0,-7)--++(0:0.5)--++(-90:1)--++(180:0.5)--++(90:1);
   \end{tikzpicture}\end{minipage}$$

\noindent
  \caption{More examples of coefficients  $g(\rho,\rho,\tau^\la_\mu)>0$.  These belong to the block of weight 6 for the symmetric group of rank $78$.
We have that each of  $\tau^\la_\mu$ belongs to a Carter--Saxl pair of the form  $(\tau^\la_\mu,\tau^{(3,2,1)}_\varnothing)$.
 }
\label{afiguregofigure}
\end{figure}

Finally, we propose two extensions of Saxl's conjecture based on its modular representation theoretic interpretation.
The first conjecture reduces the problem to the case of $2$-regular partitions, but at the expense of working in the more difficult modular setting.  We remark that towards Saxl's conjecture over $\CC$,
  it has already been verified that for any 2-regular partition $\la$ of $n=k(k+1)/2$ the Kronecker coefficient
  $g(\rho(k),\rho(k),\la)$ is positive \cite{I15}, and so it is natural to hope that this can be extended to positive characteristic.

\begin{conjB}
Let  $\Bbbk$ be a field of characteristic 2.  We have that
$$
\dim_\Bbbk \left(\Hom_{\mathfrak{S}_n}\left( {\bf D}^\Bbbk(\rho(k)) \otimes  {\bf D}^\Bbbk(\rho(k)),
 {\bf D}^\Bbbk(\la)\right)\right) > 0.
$$
for any $2$-regular partition $\la$ of $n=k(k+1)/2$.
Equivalently: Saxl's 2-modular tensor square contains all indecomposable projective modules as direct summands with positive multiplicity.
\end{conjB}

What could be a suitable candidate for arbitrary $n$, not just triangular numbers?

\begin{conjA}
For  $n\in \mathbb{N}$ there exists a symmetric $p$-core $\lambda$ for some $p\le n$ such that
$
 {\bf D}^\CC(\la) \otimes  {\bf D}^\CC(\lambda)
$
contains all simple $\CC\mathfrak{S}_n$-modules with positive multiplicity.
\end{conjA}

While this sounds reasonable, in fact, for larger $n$  it hardly restricts the search for
a good candidate as almost any partition of $n$ is then a $p$-core for some $p\le n$.
So as a guide towards finding a simple module ${\bf D}^\CC(\la)$  whose tensor square
contains all simples, one would try to find a suitable symmetric $p$-core for a small prime~$p$.

\begin{ack}
We would like to thank Matthew Fayers for  helpful discussions on simple Specht modules and for sharing his extensive computer calculations which were fundamental in writing this paper.
We also wish to thank  Liron Speyer for  playing ``match-maker" in this collaboration.
 The second author would like   to thank both  the
Alexander von Humboldt Foundation and the Leibniz Universit\"at
 Hannover  for
financial support and an enjoyable summer.
 The third author is supported by Singapore MOE Tier 2 AcRF MOE2015-T2-2-003.
\end{ack}

\section{The Hecke algebra }\label{sec1}
Let $\Bbbk$ be a commutative integral domain.
    We let $\mathfrak{S}_n$ denote the symmetric group on $n$ letters, with   presentation
$$\mathfrak{S}_n=\langle s_1,\dots ,s_{n-1}\mid s_i s_{i+1}s_i = s_{i+1} s_{i}s_{i+1}, s_is_j = s_j s_i \text{ for }|i-j|>1\rangle.$$
  We are interested in the representation theory (over $\Bbbk$) of
symmetric groups and their
deformations.
 Given $q\in \Bbbk$, we define  the  Hecke algebra $H_q^\Bbbk(n)$ to be the unital associative $\Bbbk$-algebra with generators $T_1, T_2,,\dots, T_{n-1}$ and relations
$$
 (T_i -q)(T_i + 1) = 0
\quad
   T_iT_j =T_jT_i,
\quad T_{i}T_{i+1}T_{i} =T_{i+1}T_{i}T_{i+1}$$
for $i=1,\dots,n-1$ and $i\neq j$.
We let $e\in \mathbb N$ be the smallest integer such that  $1+q+q^2+\dots + q^{e-1}=0$ or set $e=\infty$ if no such integer exists.
  If $\Bbbk$ is a field of characteristic $p$ and
     $p=e$ , then $H^\Bbbk_q(n)$ is isomorphic to $\Bbbk\mathfrak{S}_n$.

  We define a {\sf composition}, $\lambda$,  of $n$ to be a   finite  sequence  of non-negative integers $ (\lambda_1,\lambda_2, \ldots)$ whose sum, $|\lambda| = \lambda_1+\lambda_2 + \dots$, equals $n$.
  If the sequence $(\lambda_1,\lambda_2, \ldots)$ is  weakly decreasing,  we say that $\la$ is a {\sf partition}.   The number of non-zero parts of a partition, $\la$,  is called its {\sf length}, $\ell(\lambda)$; the size of the largest part is called the width, $w(\la)=\la_1$.
   Given  $\lambda  \in \ptn n  $,
   its {\sf Young diagram} $[\lambda ]$
    is defined to be the configuration of nodes,
\[
[\lambda] = \{(r,c) \mid 1\le r \le \ell(\lambda), 1\leq  c\leq \lambda_r\}.
\]
 The conjugate partition, $\la^T$, is the partition obtained by interchanging the rows and columns of $\la$.
      Given $\lambda\in \ptn n$, we define a {\sf tableau} of shape $\lambda$ to be a filling of the nodes  of
the Young diagram of  $\boxla $ with the numbers
$\{1,\dots , n\}$.
We define a {\sf  standard tableau} to be a tableau  in which    the entries increase along both the rows and columns of each component.
 We let $\Std(\lambda)$ denote the set of all standard tableaux of shape $\lambda\in\ptn n$.
    Given
$\stt\in \Std(\lambda)$, we set $\Shape(\stt)=\la$.
  Given $1\leq k \leq n$, we let $\stt{\downarrow}_{\{1,\dots ,k\}}$ be the subtableau of $\stt$ whose  entries belong to the set
  $\{1,\dots,k\}$.
We write $\stt \trianglerighteq  \sts$ if $\stt(k)  \trianglerighteq  \sts(k)$ for all $1\leq k \leq s$ and refer to this as the dominance order on $\Std(\lambda)$.

 We let $\stt^\lambda$ and $\stt_\lambda$ denote the most and least dominant tableaux respectively.
 We let $w_\lambda\in \mathfrak{S}_n$ be the permutation such that $w_\lambda \stt^\lambda=\stt_\lambda$.
For example, $w_{(3,2,1)}=(2,4)(3,6)$ and
$$
\stt^{(3,2,1)}=\begin{minipage}{2cm}\gyoung(1;2;3,4;5,6)\end{minipage}\qquad
\stt_{(3,2,1)}=\begin{minipage}{2cm}\gyoung(1;4;6,2;5,3)\end{minipage}.
$$

\begin{defn}  Given $\la $ a partition of $n$, we set $\mathfrak{S}_\la=\mathfrak{S}_{\lambda_1} \times \mathfrak{S}_{\lambda_2}\dots \leq \mathfrak{S}_n$ and we set
$$
   x_\la=\sum_{w \in \mathfrak{S}_\la} T_w
   \qquad
      y_\la=\sum_{w \in \mathfrak{S}_\la} (-q)^{\ell(w)}T_w
$$
and we define the {\sf Specht module}, ${\bf S}^\Bbbk_{q}(\la)$, to be the left  $H^\Bbbk_q(n)$-module
 $$
{\bf S}^\Bbbk_{q}(\la):= H^\Bbbk_q(n)y_\lambda  T_{w_\lambda} x_\lambda.
 $$
  \end{defn}

\begin{rmk}
Letting $\Bbbk=\mathbb{C}$ and specialising $q=1$ we have that
$H^\Bbbk_q(n)$ is isomorphic to $\CC\mathfrak{S}_n$.  In this case,   we drop the subscript on the Specht modules
 and we have that
 $$\{{\bf S}^\CC (\la)\mid \la\in \ptn n\}$$ provide a complete set of non-isomorphic simple $\CC\mathfrak{S}_n$-modules.
 We let $\chi^\la$ denote the character of the complex irreducible module ${\bf S}^\CC (\la)$.
\end{rmk}

 \subsection{Modular representation theory }\label{facts}
 Let $\Bbbk$ be a field and $q\in \Bbbk$.
The group algebra of the  symmetric group $\Bbbk\mathfrak{S}_n$ is a semisimple  algebra if and only if $\Bbbk$ is a field of characteristic $p>n$.
The Hecke algebra of the symmetric group is a non-semisimple algebra if and only if
$q$ is a primitive $e$th root of unity for some $e\leq n$ or
$\Bbbk$ is a field of characteristic $p\leq n$.
 We shall now recall the basics of the non-semisimple representation theory of these algebras.

Modular representation theory seeks to  deconstruct the non-semisimple representations of an algebra
 in terms of their simple constituents.  To this end, we define
   the {\it radical} of a finite-dimensional $A$-module $M$, denoted
$\rad (M)$, to be the smallest submodule of $M$ such that the corresponding
quotient is semisimple.  We then let $\rad^2 M = \rad (\rad M)$ and
inductively define the {\it radical series}, $ \rad^i M $, of $M$ by
$\rad^{i+1} M = \rad(\rad^i M)$.
We have a finite chain
\[
M  \supset \rad (M) \supset \rad^2 (M) \supset \cdots \supset \rad^i
(M) \supset \rad^{i+1} (M) \supset \cdots  \supset\rad^{s} (M)= 0.
\]

In the non-semisimple case, the Specht modules are no longer simple but they continue to play an important role in the representation theory of $H^\Bbbk_q(n)$ as we shall now see.
 We say that  a partition $\lambda=(\la_1,\la_2,\dots,\la_\ell)$ is $e$-regular
  if
  there is no $1\leq i \leq \ell$ such that $\lambda_i=\la_{i+1}=\dots=\la_{i+e-1}>0$.
  We let $\regptn e n$ denote the set of all $e$-regular partition of $n$.
Occasionally, we will also use the notation $\lambda\vdash_e n$
in place of $\lambda \in \regptn e n$.
 For $\Bbbk$ an arbitrary field, we have that
\begin{equation}\label{simpless}\{ {\bf D}^\Bbbk_q(\mu)\mid {\bf D}^\Bbbk_q(\mu)={\bf S}^\Bbbk_q(\mu) / \rad({\bf S}_q^\Bbbk(\mu)) ,    \mu \in \regptn e n \}\end{equation}
 provides a full set of non-isomorphic  simple $H^\Bbbk_q(n)$-modules.
 Of course, the radical of a Specht module is not easy to compute!
     The passage between the Specht and  simple  modules is recorded in the
 {\sf decomposition matrix},
 $$ (d^\Bbbk _{\lambda\mu} )_{{\lambda\in \ptn n}\atop {\mu \in \regptn e n}}\,
 \quad
 d^\Bbbk_{\lambda\mu} = [{\bf S}^\Bbbk_q(\lambda):{\bf D}^\Bbbk_q(\mu)]\, $$
 where $ [{\bf S}^\Bbbk_q(\lambda):{\bf D}^\Bbbk_q(\mu)]$ denotes the multiplicity of ${\bf D}^\Bbbk_q(\mu)$ as a composition factor of
 $ {\bf S}^\Bbbk_q(\lambda)$.     This matrix is uni-triangular with respect to the dominance ordering on $\ptn n$.
 We have already seen in \cref{simpless} that every column of the decomposition matrix contains an entry equal to 1; namely if $\mu\in\regptn e n$ then $d_{\mu,\mu}=1$.  We  now recall James' regularisation theorem, which states that every row of the decomposition matrix  contains an entry equal to 1 (and identifies this entry).

 \begin{figure}[ht!]
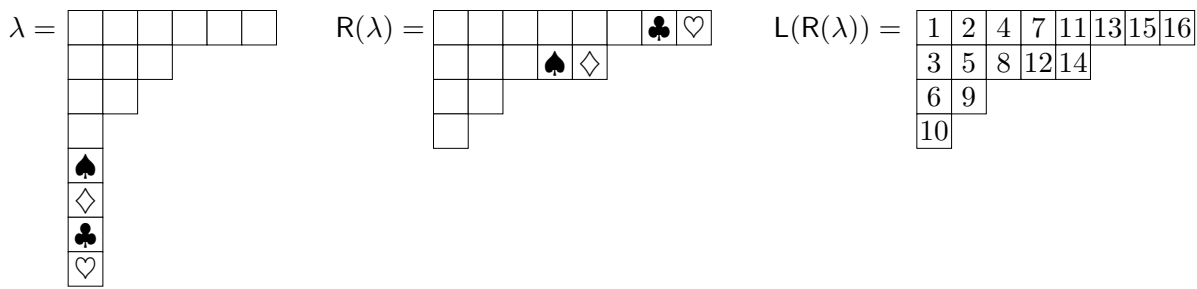

$$ \lambda=\gyoung(;;;;;;,;;;,;;,;,;\spadesuit,\diamondsuit,\clubsuit,\heartsuit)
\qquad
{\sf R}(\lambda)=
\gyoung(;;;;;;;\clubsuit;\heartsuit,;;;;\spadesuit;\diamondsuit,;;,;)
\qquad
\Lad({{\sf R}(\la)})=
\gyoung(1;2;4;7;\releven;\rthirt;\rfive;\rsix,3;5;8;\rtwelve;\rfour,6;9,\rten)$$
\caption{The partition $\la=\tau^{(1)}_{(1^2)}(4)$, its 2-regularisation ${\sf R}(\la)$, and the ladder tableau of shape ${\sf R}(\la)$.  }
\label{regularising}
\end{figure}

\begin{eg}
We picture a partition $\lambda$ and its 2-regularisation ${\sf R}(\lambda)$ in \cref{regularising}.
We have highlighted which nodes are moved and to where they have been moved.
\end{eg}

 First we require some combinatorics.
   We define the ($e$-){\sf ladder number} of a node $(r,c)\in [\lambda] $ to be $\mathfrak{l}(r,c)=r+c(e-1)$.
The $i$th {\sf ladder} of $\la$ is defined to be the set $$\mathscr{L}_i=\{(r,c)\in\mathbb{N}^2 \mid \mathfrak{l}(r,c)=i\}\cap[\la].$$
    Given a node $(r,c)\in [\la]$ we define the ($e$-){\sf residue} to be $\mathfrak{l}(r,c)$ modulo $e$.
 The $e$-regularisation of $\lambda$ is the partition ${\sf R}(\lambda)$
 obtained
 by moving all of the nodes of $\lambda$
  as high along their ladders as possible.
 When $q=-1$, each ladder of $\la$ is a complete north-east to south-westerly diagonal in $[\la]$.
 In particular, when $e=2$ the partition ${\sf R}(\lambda)$ is obtained from   $\la$ by sliding nodes  as high along their south-west to north-easterly diagonals as possible.

\begin{thm}[James' regularisation theorem]\label{James-reg}
Let $\la$ be a partition of $n$ and $\Bbbk$ be an arbitrary field.
  We have that
 $[{\bf S}^\Bbbk_q(\la):{\bf D}^\Bbbk_q(\mu)] $ is equal to 1 if $\mu =\la^R$ and is zero if $\mu\lhd \la^R$.
\end{thm}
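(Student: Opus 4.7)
My plan is to prove the two assertions separately: (a) the multiplicity of ${\bf D}^\Bbbk_q(\la^R)$ as a composition factor of ${\bf S}^\Bbbk_q(\la)$ equals one, and (b) no simple module indexed by an $e$-regular partition strictly smaller than $\la^R$ in dominance appears. The guiding observation I would start from is that sliding a node $(r,c)$ up its ladder preserves the $e$-residue $\mathfrak{l}(r,c)\pmod{e}$; hence $\la$ and $\la^R$ have the same multiset of residues and therefore lie in the same block of $H^\Bbbk_q(n)$. In particular, every composition factor ${\bf D}^\Bbbk_q(\mu)$ of ${\bf S}^\Bbbk_q(\la)$ must admit a standard $\mu$-tableau whose residue sequence coincides with that of some standard $\la$-tableau.

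For (a), I would construct an explicit weight vector witnessing ${\bf D}^\Bbbk_q(\la^R)$ as a quotient. The ladder tableau $\Lad({\sf R}(\la))$ pictured in Figure~2 is defined by entering $1,2,\dots,n$ ladder-by-ladder, and is the unique standard $\la^R$-tableau whose residue reading word is dominance-maximal among all standard tableaux of shapes in the block of $\la$. I would pick out the corresponding weight space of ${\bf S}^\Bbbk_q(\la)$ with respect to the Jucys–Murphy subalgebra (equivalently, apply the KLR idempotent $e(\mathbf{i})$ for the associated residue sequence $\mathbf{i}$), check by counting tableaux that this weight space is one-dimensional, and verify that the head of the cyclic submodule it generates is ${\bf D}^\Bbbk_q(\la^R)$. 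The uniqueness of the surviving tableau, together with the dimension count, forces the multiplicity to be exactly one.

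For (b), once the residue content is pinned down by the block, the argument becomes essentially combinatorial. Any Jucys–Murphy eigenvector in a composition factor ${\bf D}^\Bbbk_q(\mu)$ of ${\bf S}^\Bbbk_q(\la)$ comes from a standard $\mu$-tableau producing the same residue sequence as some standard $\la$-tableau; a short analysis of ladder positions then forces $\mu\trianglerighteq\la^R$ in dominance. Combining this with the unitriangularity of the decomposition matrix already recorded in the paper eliminates every $\mu\lhd\la^R$. The principal technical obstacle in the whole scheme is showing that the weight vector constructed in step (a) is genuinely nonzero over an \emph{arbitrary} field $\Bbbk$; for this I would work with an integral form of the Specht module over $\mathbb{Z}[q,q^{-1}]$ and specialise from the generic (semisimple) situation, where the relevant seminormal basis element is manifestly nonzero and, after the appropriate normalisation, its leading coefficient is a unit that survives base change.
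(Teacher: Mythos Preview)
The paper does not prove this statement; it is quoted as James' classical result (see \cite{James} and, for a graded refinement, \cite{Fay07}), so there is no in-paper proof to compare against.  I will therefore assess your proposal on its own merits.

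Your outline has a genuine error in part~(a).  The weight space $e(\underline{i})\,{\bf S}^\Bbbk_q(\la)$ for the ladder residue sequence $\underline{i}=\res(\Lad^{\la^R})$ is \emph{not} one-dimensional in general: its dimension equals the number of standard $\la$-tableaux with residue sequence $\underline{i}$, and since nodes in the same ladder share a residue and are pairwise incomparable in the Young order, this count is $\prod_i |\mathscr{L}_i\cap[\la]|\,!$, typically much larger than~$1$.  (Compare \cref{ineedalabel2} in the paper, where this very phenomenon is packaged as the factor $[{\rm Lad}(\mu)]_t!$.)  What \emph{is} true, and what you presumably intended, is that $|{\rm CStd}(\la,\la^R)|=1$: there is a unique coloured tableau, equivalently a unique $\mathfrak{S}_{{\rm Lad}(\la^R)}$-orbit of ladder tableaux of shape~$\la$.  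Combined with \cref{hero} this gives $d_{\la,\la^R}\le 1$, but it does not by itself give $d_{\la,\la^R}\ge 1$.  Your sentence ``verify that the head of the cyclic submodule it generates is ${\bf D}^\Bbbk_q(\la^R)$'' is exactly the missing step, and exhibiting a nonzero weight vector does not supply it: that vector could a~priori lie in a composition factor ${\bf D}^\Bbbk_q(\mu)$ with $\mu\rhd\la^R$.  James' original argument avoids this by constructing an explicit nonzero element of ${\bf S}^\Bbbk_q(\la)$ (a signed sum of polytabloids over the ladder permutations) and showing directly that it generates a quotient isomorphic to ${\bf D}^\Bbbk_q(\la^R)$; the integrality argument you sketch at the end is in the right spirit but would need to be carried out for this specific element, not merely for a generic weight vector.

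For part~(b), the phrase ``a short analysis of ladder positions then forces $\mu\trianglerighteq\la^R$'' hides the entire combinatorial content of the theorem.  The statement you need is: if $\mu$ is $e$-regular and there exists a coloured tableau of shape $\la$ and weight $\mu$ (equivalently ${\rm CStd}(\la,\mu)\ne\varnothing$), then $\mu\trianglerighteq\la^R$.  This is true but requires a careful argument comparing, ladder by ladder, the highest available positions in $\mu$ against those in $\la^R$; it is not an immediate consequence of unitriangularity, since there can exist $e$-regular $\mu$ with $\la\trianglelefteq\mu$ yet $\mu\not\trianglerighteq\la^R$ when the partitions are incomparable.  You should either supply this combinatorial lemma or cite it (it is implicit in James' original proof and made explicit in the graded setting in \cite{Fay07}).
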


\subsection{Brauer--Humphrey's reciprocity}
Given $\la$ an $e$-regular partition and
${\bf D}^\Bbbk_q(\la)$ the corresponding simple  $H^\Bbbk_q(n)$-module, we let
${\bf P}_q^\Bbbk(\la)$  denote its projective cover.
Brauer--Humphrey's reciprocity states that  ${\bf P}_q^\Bbbk(\mu)$ has a {\sf Specht filtration}, in other words
$$
0= S_1  \subset
S_2
\subset \dots
\subset S_z= {\bf P}_q^\Bbbk(\mu)
$$such that for each $1\leq r\leq z$, we have
$
S_r/ S_{r-1}
\cong
 {\bf S}^\Bbbk_q(\mu)$ for some $\la\in \ptn n$ dependent on $1\leq r \leq z$.
 Furthermore Brauer--Humphrey's reciprocity states that the multiplicity, $[ {\bf P}_q^\Bbbk(\mu): {\bf S}_q^\Bbbk(\la)]$,
 of ${\bf S}_q^\Bbbk(\la)$ in such a filtration is well-defined and that
\begin{equation}\label{labelll}[ {\bf P}_q^\Bbbk(\mu): {\bf S}_q^\Bbbk(\la)]=
  [{\bf S}^\Bbbk_q(\la):{\bf D}^\Bbbk_q(\mu)].\end{equation}
In other words, the $\lambda$th column  of the decomposition matrix determines the  Specht filtration multiplicities for ${\bf P}^\Bbbk_q(\la)$.
This will be a key observation for our applications to Kronecker coefficients in \cref{kronsax}.

  \subsection{2-blocks}\label{facts2}
We first recall the block-structure of Hecke algebras in (quantum)  characteristic $e=2$ (which will be the main case of interest in this paper).
Throughout this section $e=2$ and $\Bbbk$ can be taken to be an arbitrary field (although we are mainly interested in the cases when $\Bbbk=\CC$ or $\Bbbk$ is of characteristic $p=2$).
  The algebra $H^\Bbbk_{-1}(n)$ decomposes as a direct sum of primitive 2-sided ideals, called {\sf blocks}.
 All questions concerning modular representation theory break-down block-by-block according to this decomposition: in particular each simple/Specht module belongs to a unique block.

The rim of the Young diagram of $\la\vdash n$  is the collection of nodes
 $R[\la]=\{(r,c)\in[\la] \mid (r+1,c+1)\not \in [\la] \}$.
 Given  $(r,c)\in[\la]$, we define the   associated {\sf rim-hook} to be the set of nodes
 $h(r,c) = \{(i,j)\in R[\la]\mid r\leq i, c\leq j\}$.
 If $|h(r,c)|=e\in \mathbb{N}$, then we  refer to $h$ as a removable $e$-hook;
 if $e=2$ we refer to $h(r,c)$ as a removable domino.
  Removing $h(r,c)$ from $[\la]$ gives the Young diagram $[\la]\setminus h(r,c)$
  of a partition of $n-e$.
 It is easy to see that a partition   has no removable dominoes if and only if it is of the form $\rho(k)=(k,k-1,k-2,\dots,2,1)$ for some $k\geq 0$; in which case we say that it is a {\sf 2-core}.
We let ${\sf core}(\la)$ denote the 2-core partition obtained by successively removing all removable dominoes from $\lambda$
(this defines a unique partition).
 The number of dominoes removed from $\la$  is referred to as the {\sf weight} of the partition $\la$ and is denoted $w(\la)$.
Given $k,n \in \NN_0$, we define $B_k(n)=\{ \lambda\in \ptn n \mid  {\sf core}(\la)=\rho(k)\} $ to be the corresponding   {\sf  combinatorial 2-block}.
The set $\ptn n$ decomposes as the disjoint union of the non-empty $B_k(n)$.  We note that
it makes sense to speak of the {\sf weight of a 2-block}  since any  two partitions in the same 2-block necessarily have the same weight.
 Two simple $H_{-1}^\Bbbk(n)$-modules
 (or irreducible characters of $\CC \mathfrak{S}_n$)
 belong to the same 2-block if and only if their labelling partitions belong to the same combinatorial 2-block.

\begin{figure}[ht!]
$$     \begin{minipage}{4cm}\begin{tikzpicture} [scale=0.33]

 \draw[very thick](0,0)--++(0:9)--++(-90:1)--++(180:1)--++(-90:1)
 --++(180:3)--++(-90:1)
 --++(180:2)--++(-90:2)
  --++(180:1)--++(-90:1)
  --++(180:1)--++(-90:5)
   --++(180:1)--++(90:11)
 ;
  \clip(0,0)--++(0:9)--++(-90:1)--++(180:1)--++(-90:1)
 --++(180:3)--++(-90:1)
 --++(180:2)--++(-90:2)
  --++(180:1)--++(-90:1)
  --++(180:1)--++(-90:5)
   --++(180:1)--++(90:11)
 ;

  \path(0,0) coordinate (origin);
   \foreach \i in {1,...,19}
  {
    \path (origin)++(0:1*\i cm)  coordinate (a\i);
    \path (origin)++(-90:1*\i cm)  coordinate (b\i);
    \path (a\i)++(-90:10cm) coordinate (ca\i);
    \path (b\i)++(0:10cm) coordinate (cb\i);
    \draw[densely dotted] (a\i) -- (ca\i)  (b\i) -- (cb\i); }

 \draw[very thick](0,0)--++(0:9)--++(-90:1)--++(180:1)--++(-90:1)
 --++(180:3)--++(-90:1)
 --++(180:2)--++(-90:2)
  --++(180:1)--++(-90:1)
  --++(180:1)--++(-90:5)
   --++(180:1)--++(90:11)
 ;
 \fill[cyan!35](0,0)--++(0:5)--++(-90:1)--++(180:1)--++(-90:1)
 --++(180:1)--++(-90:1)
 --++(180:1)--++(-90:1)
  --++(180:1)--++(-90:1)  --++(180:1)--++(-90:1)
  ;

 \draw[very thick](0,0)--++(0:7)--++(-90:1)--++(180:1)--++(-90:1)
 --++(180:3)--++(-90:1)
 --++(180:1)--++(-90:1)
  --++(180:1)--++(-90:1)
  --++(180:1)--++(-90:5)
   --++(180:1)--++(90:11)
 ;

   \draw[very thick](origin)--++(-90:7)--++(0:1)--++(90:3)--++(0:1)--++(-90:2);
      \draw[very thick](origin)--++(-90:9)--++(0:1);

 \draw[very thick](0,0)--++(0:5)--++(-90:1)--++(0:3)--++(180:4) --++(-90:1) ;

 \path(0.5,-0.5) coordinate (origin);
 \foreach \i in {1,...,19}
  {
    \path (origin)++(0:1*\i cm)  coordinate (a\i);
    \path (origin)++(-90:1*\i cm)  coordinate (b\i);
    \path (a\i)++(-90:1cm) coordinate (ca\i);
        \path (ca\i)++(-90:1cm) coordinate (cca\i);
    \path (b\i)++(0:1cm) coordinate (cb\i);
    \path (cb\i)++(0:1cm) coordinate (ccb\i);
  }

   \end{tikzpicture}\end{minipage}\qquad
   \begin{minipage}{4cm}
   \begin{tikzpicture} [scale=0.33]

 \draw[very thick](0,0)--++(0:11)--++(-90:1)--++(180:1)--++(-90:1)
 --++(180:3)--++(-90:1)
 --++(180:3)--++(-90:1)
  --++(180:1)--++(-90:1)
  --++(180:1)
   --++(180:2)--++(90:5)
 ;

   \clip(0,0)--++(0:11)--++(-90:1)--++(180:1)--++(-90:1)
 --++(180:3)--++(-90:1)
 --++(180:3)--++(-90:1)
  --++(180:1)--++(-90:1)
  --++(180:1)
   --++(180:2)--++(90:5);

  \path(0,0) coordinate (origin);
   \foreach \i in {1,...,19}
  {
    \path (origin)++(0:1*\i cm)  coordinate (a\i);
    \path (origin)++(-90:1*\i cm)  coordinate (b\i);
    \path (a\i)++(-90:10cm) coordinate (ca\i);
    \path (b\i)++(0:10cm) coordinate (cb\i);
    \draw[densely dotted] (a\i) -- (ca\i)  (b\i) -- (cb\i); }

  \draw[very thick](0,0)--++(0:9)
  --++(-90:1)
    --++(0:1)  --++(180:1)--++(180:1)--++(-90:1)--++(180:3)--++(-90:1)
    --++(180:3)--++(-90:1)--++(0:1) ;
 ;
 \fill[cyan!35](0,0)--++(0:5)--++(-90:1)--++(180:1)--++(-90:1)
 --++(180:1)--++(-90:1)
 --++(180:1)--++(-90:1)
  --++(180:1)--++(-90:1)  --++(180:1)--++(-90:1)
  ;

 \draw[very thick](0,0)--++(0:7)--++(-90:1)--++(180:1)--++(-90:1)
 --++(180:3)--++(-90:1)
 --++(180:1)--++(-90:1)
--++(180:1)--++(-90:1)
--++(180:1)--++(-90:5)
--++(180:1)--++(90:11)
 ;

      \draw[very thick](origin)--++(-90:9)--++(0:1);

  \draw[very thick](0,0)--++(0:5)--++(-90:1)--++(0:3)--++(180:4) --++(-90:1) ;

 \path(0.5,-0.5) coordinate (origin);
 \foreach \i in {1,...,19}
  {
    \path (origin)++(0:1*\i cm)  coordinate (a\i);
    \path (origin)++(-90:1*\i cm)  coordinate (b\i);
    \path (a\i)++(-90:1cm) coordinate (ca\i);
        \path (ca\i)++(-90:1cm) coordinate (cca\i);
    \path (b\i)++(0:1cm) coordinate (cb\i);
    \path (cb\i)++(0:1cm) coordinate (ccb\i);
  }

   \end{tikzpicture} \end{minipage}$$
   \caption{The partitions $\tau^{(2,2,1)}_{(3,1,1)}$ and
    $\tau^{(3^2,2,1^2)}_{\varnothing}$ for $\rho = \rho(5)$.}
   \label{below}
   \end{figure}

\begin{eg}
The partition $(9,8,5,3^2,2,1^5)$ has 4 removable dominoes:
two $(2)$-dominoes $\{(2,7),(2,8) \}$ and $\{(3,4),(3,5)\}$ and two
$(1^2)$-dominoes $\{(3,3),(4,3)\}$ and $\{(1,10),(1,11)\}$.  One can continue to successively remove such dominoes until one is left with the $2$-core $\rho(5)=(5,4,3,2,1)$ as depicted on the lefthand-side of  \cref{below}.
\end{eg}

   \begin{defn}\label{2sepdef}Let  $w_1,w_2   \in \NN_0$ be arbitrary   and
$\lambda\in \ptn {w_1} $ and $\mu \in \ptn {w_2} $ such that $\la^T_1+\mu_1\leq k+1$.
We let $\tau^\lambda_\mu$ denote the partition
$$
\tau^\la_\mu= (\rho(k) + 2\mu^T)^T + 2\lambda.
$$
We say that any partition, $\tau^\la_\mu$,  of this form is {\sf 2-separated}.  \end{defn}

\begin{rmk}We note that  2-separated partitions appear across all 2-blocks of the Hecke algebra.
If the weight of a block is small compared to the size of the core, then all partitions in that block are 2-separated.

\end{rmk}

\begin{rmk} While the name ``2-separated" may seem odd to some readers, it is motivated by the form this partition takes on a 2-abacus.  In   \cite{MR1402572} these partitions are referred to as  ``2-quotient separated".  \end{rmk}

\subsection{Characters of height zero}\label{subsec:height0}
We now wish to discuss the defect groups of 2-blocks of symmetric groups and their characters of height zero (see \cite{JK} for background and more details).
Write $n =2^{a_1}+\ldots + 2^{a_s}$ where $a_1>\ldots > a_s \geq 0$; we set $s(n)=s$.
For $m\in \N$, let $m_2$ be the largest 2-power dividing $m$.
Then $(n!)_2 = 2^{n - s(n)}$ is the size of a Sylow 2-subgroup of $\mathfrak{S}_n$.
Let   $B$ be a  $2$-block of $\mathfrak{S}_n$  of weight $w$;
then a defect group of   $B$    is
isomorphic to a Sylow 2-subgroup of $\mathfrak{S}_{2w}$,
 and thus is of cardinality  $2^{2w-s(w)}$; the number $d(B)= 2w-s(w)$
 is called the \emph{defect} of~$B$.

\begin{eg}
  The   five 2-blocks of $\mathfrak{S}_{36}$ are indexed by the 2-cores
   $\varnothing, \rho(3),  \rho(4),  \rho(7)$ and $\rho(8)$.
   These blocks are of weight $18, 12, 10, 4,$ and $0$ respectively.
   Since $18=2^4+2$, the 2-block $B$ of weight 18 has defect $d(B)=34$.
%
\end{eg}

 We now recall the important notion of height 0 characters and simple modules.
First we recall the fact that the dimension of any simple module ${\bf D}^\Bbbk(\la)$ or
${\bf S}^\CC (\mu)$  belonging to a $2$-block $B$ of the symmetric group $\mathfrak{S}_n$
is divisible by $2^{n-s(n)-d(B)}$.
Such a module is said to be {\sf of height 0} if
this 2-power is the largest 2-power dividing its dimension.
We also say that the character $\chi^\mu$ associated to the Specht module  ${\bf S}^\CC (\mu)$
is a  {\sf height 0 character}.
For the 2-block $B$, we set
$${\rm Irr}^\CC_0(B)= \{ \chi^\lambda \mid \chi^\la \text{ is a    height zero character of  }  B\}.
$$
Generalising an earlier result of Macdonald on characters of odd degree, a combinatorial description for the partitions labels of height 0 characters was
given in terms of the so-called 2-core tower by Olsson (see \cite{O76, O-LN}).
A new characterisation was recently given in  \cite[Section 3.2]{MR3829558}, again
generalising an earlier version for the principal 2-block.
This says that a partition
 $\la$  in a 2-block $B=B_k(n)$  of weight
 $w=2^{w_1}+\ldots + 2^{w_{s(w)}}$, where $w_1 > \ldots > w_{s(w)}\ge 0$,
 labels a height~0 character if and only if there is a sequence
 $$ \la=\la(0)\supset \la(1) \supset  \dots \supset   \la(s(w)-1) \supset   \la(s(w))= \rho_k$$
  of partitions such that $\la(i-1) \setminus \la(i)$ is a
  $2^{w_i}$ rim-hook  for $i=1,\ldots , s(w)$.

A formula for the number $k_0(B)$ of height 0 characters in a 2-block
of weight $w$ was already given by Olsson \cite{O76}, and was also deduced
from the description above in \cite{MR3829558}.
With $B$ and $w$ given as above, we have
$$k_0(B)=\prod_{j=1}^{s(w)} 2^{w_j+1}.$$
 Since we get this number for each 2-block, the set of height 0 characters
 $\chi^\mu$ for $\mathfrak{S}_n$
 constitutes quite a large class of irreducible characters.

\begin{eg}
The irreducible  characters of height 0 belonging to the principal 2-block
of $\mathfrak{S}_n$
 are precisely  the characters of odd degree.

\end{eg}

\begin{eg}
The partitions $(9,8,3,2^3,1^2), (9,6,5,2^3,1^2) \vdash 28$
  and the partitions
  $(9,6,5^3,4,1^2)  $ $ (10,7,4^3,3,2^2)\vdash 36 $ all label height zero characters.
These partitions are depicted in \cref{2adicpic2836} in such a manner as to illustrate their combinatorial construction via adding rim hooks (detailed above).
\end{eg}

 \begin{eg}\label{36}
  The   five 2-blocks of $\mathfrak{S}_{36}$,
 their weights $w$, the 2-adic expansions of $2w$,  and the number of height 0 characters in the 2-block are recorded in the table below.
$$\begin{array}{c|c|c|c|ccc}
\text{$2$-core} & \text{weight $w$}		& 2w &  k_0(B)  \\
\hline
\varnothing & 18 		&2^5+2^2	& 2^7\\
\rho(3) & 15		&2^4+2^3+2^2+2	& 2^{10}\\
\rho(4) & 13		&2^4+2^3 + 2	& 2^{8}\\
\rho(7) & 4			&2^3	&  2^3 \\
\rho(8) & 0			& -	&  1 \\
\end{array}$$
In particular, there are in total 1417 height 0 characters in 2-blocks of $\mathfrak{S}_{36}$, amongst which there are 128 of odd degree.
\end{eg}

 The following theorem will be one of the key results we use later on.
 It says that while there are many complex characters of height 0, there
  is only one simple module  ${\bf D}^\Bbbk(\lambda )$ of height 0 in each 2-block.

\begin{thm}[{\cite[Theorem 1.4]{KOW}}]\label{KOW}
Let $\Bbbk$ be a field of characteristic 2.
For any 2-block $B$ of weight $w$ of the symmetric group $\mathfrak{S}_n$,
the module ${\bf D}^\Bbbk(\lambda)$ to the most dominant
partition $\lambda=\tau^{(w)}_\varnothing$
is the unique simple $\Bbbk\mathfrak{S}_n$-module in $B$
  of height 0.

\end{thm}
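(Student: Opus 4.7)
The first and easiest part is to pin down the candidate: the partition $\tau^{(w)}_\varnothing = (k+2w, k-1, k-2, \ldots, 1)$ is the dominance-maximum of $B = B_k(n)$ and is visibly $2$-regular. By James' regularisation theorem (\cref{James-reg}) together with the standard unitriangularity of the decomposition matrix with respect to dominance, every composition factor of ${\bf S}^\Bbbk(\tau^{(w)}_\varnothing)$ has the form ${\bf D}^\Bbbk(\mu)$ with $\mu \unrhd \tau^{(w)}_\varnothing$ and $\mu \in B$; since no such $\mu \neq \tau^{(w)}_\varnothing$ exists, the Specht module ${\bf S}^\Bbbk(\tau^{(w)}_\varnothing)$ is already simple, equal to ${\bf D}^\Bbbk(\tau^{(w)}_\varnothing)$, and its dimension is given by the hook length formula.

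For the height-zero check, I would compute $v_2(\dim {\bf D}^\Bbbk(\tau^{(w)}_\varnothing))$ directly. The hook lengths of $\tau^{(w)}_\varnothing$ split into those of the core $\rho(k)$, which are all odd and so contribute nothing to $v_2$, and the new hooks from the extended first row, which form the multiset $\{2k+2w-1, 2k+2w-3, \ldots, 2w+1\} \cup \{1, 2, \ldots, 2w\}$. Only the latter set contributes, giving $\sum v_2(h(r,c)) = v_2((2w)!) = 2w - s(w)$. Applying Legendre's formula then yields
$$
v_2(\dim {\bf D}^\Bbbk(\tau^{(w)}_\varnothing)) \;=\; (n - s(n)) - (2w - s(w)) \;=\; n - s(n) - d(B),
$$
so this simple module is of height zero.

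The main obstacle is uniqueness: for every other $2$-regular $\nu \in B$ one must show $v_2(\dim {\bf D}^\Bbbk(\nu)) > n - s(n) - d(B)$. I would proceed by induction on the weight $w$, using Kleshchev's modular branching rules: the socle of $\Res^{\mathfrak{S}_n}_{\mathfrak{S}_{n-1}} {\bf D}^\Bbbk(\nu)$ decomposes as a sum of simples ${\bf D}^\Bbbk(\nu - A)$ indexed by good nodes $A$, landing in $2$-blocks of weight at most $w$ whose height-zero simples are (inductively) known. Tracking $2$-adic valuations through restriction and induction, and combining with Brauer reciprocity $\dim {\bf P}^\Bbbk(\mu) = \sum_\lambda d^\Bbbk_{\lambda,\mu}\chi^\lambda(1)$ together with the explicit count $k_0(B) = \prod_j 2^{w_j+1}$ of height-zero ordinary characters, ought to isolate $\tau^{(w)}_\varnothing$ as the unique $2$-regular partition whose simple realises the minimal $2$-adic valuation. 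The delicate part is obtaining uniform $2$-adic control of the adjustment-matrix entries across non-maximal $2$-regular $\nu$, for which the combinatorial characterisation of height-$0$ partitions via iterated $2^{w_j}$ rim-hook removals (recalled in \cref{subsec:height0}) would need to be brought fully to bear.
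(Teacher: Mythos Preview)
The paper does not prove this statement; it is quoted verbatim from \cite{KOW} (Kiyota--Okuyama--Wada) and used as a black box in the proof of \cref{height0}. So there is no ``paper's own proof'' to compare your proposal against.

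Your first two paragraphs are correct and self-contained. Simplicity of ${\bf S}^\Bbbk(\tau^{(w)}_\varnothing)$ follows exactly as you say from dominance-maximality in the block, and the hook-length computation is right: the hooks in rows $\geq 2$ coincide with those of $\rho(k)$ and are all odd, while the first-row hooks form the set $\{2k+2w-1,2k+2w-3,\ldots,2w+1\}\cup\{1,2,\ldots,2w\}$, so $\sum_{(r,c)} v_2\big(h(r,c)\big)=v_2\big((2w)!\big)=2w-s(w)=d(B)$ and the height is zero.

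The uniqueness, however, is the whole substance of the Kiyota--Okuyama--Wada result, and your third paragraph is a programme rather than a proof, with a gap you yourself name. Kleshchev's branching rules describe only the socle (equivalently the head) of the restriction of ${\bf D}^\Bbbk(\nu)$, not its dimension, so on their own they give no handle on $v_2\big(\dim{\bf D}^\Bbbk(\nu)\big)$; the phrase ``tracking $2$-adic valuations through restriction and induction'' hides precisely the step that requires an argument. Nor does the identity $\dim{\bf P}^\Bbbk(\mu)=\sum_\lambda d_{\lambda,\mu}\,\chi^\lambda(1)$ help without independent control of the decomposition numbers $d_{\lambda,\mu}$, which for $p=2$ is not available in general. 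The proof in \cite{KOW} is not short and is organised around a careful induction via good-node branching combined with explicit lower bounds on heights of Brauer characters; if you intend to prove the statement here rather than cite it, you would need to reproduce that machinery or supply a genuinely new argument, and the sketch as written does not get there.
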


\section{KLR algebras and coloured tableaux }

 \label{clored}
 Given $n\in \mathbb{N}$ and an indeterminate $t$
we define  the {\sf quantum integers}
 and  {\sf quantum factorials}
$$
[n]_t= \frac{1+ t^2+ t^{4}\dots +  t^{2 n-2}}{t^{n-1}}
\qquad
[n]_t!=[1]_t[2]_t\dots[n]  _t
$$
and given $\mu \in \ptn n$ a partition of length $\ell$, we set
$$
 [\mu]_t! = [\mu_1]_t! [\mu_2]_t!\dots  [\mu_\ell]_t!
$$
 We now define the  {\sf quantum binomial coefficients} to be
\[
\stirling{a}{b}_t:=\frac{[a]_t!}{[b]_t![a-b]_t!},
\]
for all $a\geqslant b \geqslant 0$.
  The motivating observation for studying Hecke algebras is the following.
Let $\Bbbk$ be a field of characteristic $p$ and let $q\in\Bbbk$ be an element of order $e=p$:
 then $H^\Bbbk_q(n)$ is isomorphic to $\Bbbk\mathfrak{S}_n$.
  This gives us a way of factorising representation theoretic questions into two steps:
 firstly {\em specialise the quantum parameter}
 $q$ to be a $p$th root of unity
 (in $\CC$ and compatibly in $\Bbbk$)
 and study the non-semisimple algebra $H^\CC_q(n)$;
 then {\em reduce modulo $p$}  by studying $H^\Bbbk_q(n)=H^\ZZ_q(n)\otimes _\ZZ \Bbbk$.
This allows us to  factorise the problem of understanding decomposition matrices  as follows,
\begin{equation}\label{adjust}
[{\bf S}^\Bbbk_q(\la): {\bf D}_q^\Bbbk(\mu)]=
[{\bf S}_q^\CC(\la): {\bf D}^\CC_q(\nu)] \times
[{\bf D}_q^\CC(\nu)\otimes_\ZZ\Bbbk: {\bf D}^\Bbbk_q(\mu)].
\end{equation}
 On the right-hand side of the equality we have two matrices: the first is the {\sf decomposition matrix for $H^\CC_q(n)$}
and the second is known as {\sf ``James' adjustment matrix"}.  Therefore understanding the decomposition matrix of $H^\CC_q(n)$ serves as a first step toward understanding the decomposition matrix of $\Bbbk\mathfrak{S}_n$.

We now recall the manner in which the grading can be incorporated into the picture and its immense power in understanding the decomposition matrix for $H^\CC_q(n)$ (and hence, by \cref{adjust} give us a method for attacking the problem of calculating decomposition numbers for symmetric groups).
  Let  $t$ be an indeterminate over $\ZZ$.  The following theorem provides us with a
  $\ZZ$-graded presentation (which we record with respect to the indeterminate $t$) of the Hecke algebra.

\begin{thm}[\cite{MR2551762,MR2525917,ROUQ}]
\label{defintino1}
The  Hecke algebra    $H_q^\Bbbk(n)$
admits a graded presentation with generators
\begin{align}\label{gnrs}
\{e(\underline{i}) \ | \ \underline{i}=(i_1,\dots,i_n)\in   (\ZZ/e\ZZ)^n\}\cup\{y_1,\dots,y_n\}\cup\{\psi_1,\dots,\psi_{n-1}\},
\end{align}
subject to   a list of     relations given in \cite[Main Theorem]{MR2551762}.
The $\ZZ$-grading on $H_q^\Bbbk(n)$ is given by
$$
{\rm deg}(e(\underline{i}))=0, \quad
{\rm deg}( y_r)=2,\quad
{\rm deg}(\psi_r e(\underline{i}))=
\begin{cases}
-2		&\text{if }i_r=i_{r+1}, \\
1		&\text{if }i_r=i_{r+1}\pm 1 \ \& \ e\neq 2, \\
2		&\text{if }i_r=i_{r+1}\pm 1 \ \& \ e= 2, \\   
0 &\text{otherwise.}
\end{cases} $$
\end{thm}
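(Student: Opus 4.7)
The plan is to follow the approach pioneered independently by Brundan--Kleshchev and Rouquier: construct explicit elements inside (a suitable completion of) $H_q^\Bbbk(n)$ that satisfy the defining KLR relations, then show the resulting map is in fact an isomorphism by a dimension/basis comparison. The starting point is the \emph{Jucys--Murphy elements} $L_r := q^{1-r}(T_{r-1}\cdots T_1)(T_1\cdots T_{r-1})$ inside $H_q^\Bbbk(n)$; the content of position $r$ in a standard tableau is encoded in the eigenvalue of $L_r$, and over a field these eigenvalues are (up to $q$-twist) precisely the residues $i_r \in \mathbb{Z}/e\mathbb{Z}$ coming from the ladder combinatorics of \cref{James-reg}.

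First I would decompose the identity $1 \in H_q^\Bbbk(n)$ as a sum of primitive central-in-a-completion idempotents $e(\underline{i})$, one for each residue sequence $\underline{i} \in (\mathbb{Z}/e\mathbb{Z})^n$ that actually occurs; only finitely many contribute, and a Vandermonde-style argument shows each $e(\underline{i})$ is in fact a polynomial in the $L_r$, hence genuinely lives in $H_q^\Bbbk(n)$ itself. Next, on each block one sets
\[
y_r\, e(\underline{i}) := \bigl(1 - q^{-i_r} L_r\bigr)\, e(\underline{i}),
\]
which is nilpotent since $L_r - q^{i_r}$ acts nilpotently on $e(\underline{i}) H_q^\Bbbk(n)$. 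Finally, I would define
\[
\psi_r\, e(\underline{i}) := \bigl(T_r + f_r(\underline{i}; y_r, y_{r+1})\bigr) g_r(\underline{i}; y_r, y_{r+1})^{-1}\, e(\underline{i}),
\]
with rational correction functions $f_r, g_r$ chosen block-by-block (depending on whether $i_r = i_{r+1}$, $i_r = i_{r+1} \pm 1$, or neither) so that the modified crossings satisfy the homogeneous KLR quadratic and braid relations rather than the inhomogeneous Hecke ones. Since $g_r$ is a unit in the completion modulo nilpotents, this makes sense, and a direct computation confirms that $\psi_r e(\underline{i}) \in H_q^\Bbbk(n)$.

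The verification of relations splits into a quadratic relation $\psi_r^2 e(\underline{i}) = Q_{i_r,i_{r+1}}(y_r,y_{r+1})e(\underline{i})$ (which pins down the polynomials $Q_{ij}$ from the quadratic relation on $T_r$) and a cubic braid relation that mixes three consecutive strands; the latter is the genuinely delicate piece, as it requires a careful analysis of the three cases $e=\infty$, $e \geq 3$, and especially $e=2$ where the doubled edge in the Dynkin diagram forces the degree of $\psi_r e(\underline{i})$ to jump to $2$ rather than $1$ when $i_r = i_{r+1}\pm 1$ --- this is exactly the case-split visible in the degree formula in the statement. After relations are verified one has a homomorphism $\Phi$ from the KLR algebra $R_n^{\mathrm{KLR}}$ to $H_q^\Bbbk(n)$; surjectivity follows because one can recover each $T_r$ from $\psi_r, y_r, e(\underline{i})$ by inverting the construction.

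The main obstacle, and the heart of the Brundan--Kleshchev argument, is injectivity together with the grading statement. I would establish this by exhibiting a basis of $R_n^{\mathrm{KLR}}$ of cardinality $n!$ indexed by triples (permutation, monomial in $y$'s, residue sequence) --- essentially a KLR analogue of Murphy's basis --- and showing that $\Phi$ sends this basis to a linearly independent subset of $H_q^\Bbbk(n)$; since $\dim_\Bbbk H_q^\Bbbk(n) = n!$, bijectivity follows. The $\mathbb{Z}$-grading is then \emph{defined} by declaring the generators in \eqref{gnrs} to have the prescribed degrees, and one checks that every relation is homogeneous in these degrees, noting that the doubled-bond degree $2$ in the $e=2$ case is forced by the symmetric quadratic relation $\psi_r^2 e(\underline{i}) = (y_{r+1}-y_r)^2 e(\underline{i})$ in this residue pattern.
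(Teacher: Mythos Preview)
The paper does not prove this theorem at all: it is quoted as a black box from the cited references \cite{MR2551762,MR2525917,ROUQ}, with the relations themselves deferred to \cite[Main Theorem]{MR2551762}. So there is no in-paper argument to compare against; your sketch is essentially a summary of the Brundan--Kleshchev proof in \cite{MR2551762}, which is indeed the standard route and the one the authors are invoking.

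One small correction to your sketch: the basis you describe as ``indexed by triples (permutation, monomial in $y$'s, residue sequence)'' is the PBW basis of the \emph{affine} KLR algebra, which is infinite-dimensional. What you actually need for the isomorphism with $H_q^\Bbbk(n)$ is the \emph{cyclotomic} quotient (level one, dominant weight $\Lambda_0$), where the additional relation $y_1 e(\underline{i})=0$ whenever $i_1=0$ (and $e(\underline{i})=0$ otherwise) cuts the dimension down to exactly $n!$; the matching basis on the KLR side is $\{\psi_w e(\underline{i})\}$ with $w$ ranging over reduced expressions and $\underline{i}$ over residue sequences, and the injectivity step uses a faithful polynomial representation rather than a direct dimension count on the affine algebra.
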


The importance of  \cref{defintino1} is that it allows to consider an extra, richer graded structure on the Specht modules.
 We now recall the definition of this grading on the tableau basis of the Specht module.
Let $\la\in\ptn n$ and $\stt \in \Std(\la)$.
  We let $\stt^{-1}(k)$ denote the node  in $\stt$ containing the integer $k\in\{1,\dots,n\}$.
  Given $1\leq k\leq n$, we let ${\mathcal A}_\stt(k)$,
(respectively ${\mathcal R}_\stt(k)$)  denote the set of   all addable $\res (\stt^{-1}(k))$-nodes  (respectively all
removable   $\res (\stt^{-1}(k))$-nodes )  of the partition $\Shape(\stt{\downarrow}_{\{1,\dots ,k\}})$ which
   are above  $\stt^{-1}(k)$, i.e. those in an earlier row.
  Let $\la\in\ptn n$ and $\stt \in \Std(\la)$.  We define the degree of $\stt$ as follows:
$$
\deg(\stt) = \sum_{k=1}^n \left(	|{\mathcal A}_\stt(k)|-|{\mathcal R}_\stt(k)|	\right).
$$
  Given
$\stt\in \Std(\la)$  we  define the  {\sf  residue sequence} of $\stt$   as follows:
$$
\res(\stt) = (\res (\stt^{-1}(1)), \res (\stt^{-1}(2)),  \dots, \res(\stt^{-1}(n)) )  \in (\ZZ/e\ZZ)^n.
$$
 Let $t$ be an indeterminate over $\NN_0$. If $M=\oplus_{z\in\ZZ}M_z$ is
a free graded $\Bbbk$-module,
 then its \emph{graded dimension} is the Laurent  polynomial
\[\Dim{(M)}=\sum_{k\in\ZZ}(\dim_{\Bbbk}M_k)t^k.\]
If $M$ is a graded $H^\Bbbk_q(n)$-module and $k\in\ZZ$, define $M\langle k \rangle$ to be the same module with $(M\langle k \rangle)_i = M_{i-k}$ for all $i\in\ZZ$. We call this a \emph{degree shift} by $k$.  The graded dimensions of Specht modules admit a combinatorial description as follows:

\begin{thm}[\cite{bkw11}]\label{grading}
The Specht module ${\bf S}^\Bbbk_q(\la)$ is a free $\ZZ$-graded $\Bbbk$-module with basis  $\{\psi^\stt
 \ | \ \stt\in\Std(\lambda)\}$ and
where $\deg(\psi^\stt)=t^{\deg(\stt)}$.
 \end{thm}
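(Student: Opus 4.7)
The plan is to construct the claimed basis using the Brundan--Kleshchev isomorphism $H_q^\Bbbk(n) \cong R_n^\Lambda$ (the cyclotomic KLR algebra of type $A^{(1)}_{e-1}$), so that the graded generators from Theorem \ref{defintino1} act on the Specht module in a way whose degrees are controlled combinatorially by the addable/removable node statistics. Throughout, I would fix a ``standard" generator $z_\la \in {\bf S}^\Bbbk_q(\la)$ of residue sequence $\res(\stt^\la)$: concretely, $z_\la$ should be chosen so that $e(\underline{i})z_\la = \delta_{\underline{i},\res(\stt^\la)}\,z_\la$ and of degree 0.

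First I would define, for each $\stt \in \Std(\la)$, an element $\psi^\stt := \psi_{d(\stt)} z_\la$, where $d(\stt) \in \mathfrak{S}_n$ is the permutation taking $\stt^\la$ to $\stt$ and $\psi_{d(\stt)} := \psi_{i_1} \psi_{i_2} \cdots \psi_{i_r}$ for some fixed reduced expression $d(\stt) = s_{i_1} \cdots s_{i_r}$. The first key technical point is well-definedness: these elements depend a priori on the choice of reduced expression, because in the KLR algebra the braid relations carry ``error terms". The standard remedy is to show that any two choices differ by a $\Bbbk$-linear combination of elements $\psi^\sts$ with $\sts \vartriangleright \stt$ in dominance order. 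This is proved by inducting on the length of $d(\stt)$ and analyzing the braid/commutation relations case-by-case on residues $i_r$ vs $i_{r+1}$; the error terms produced by a non-commuting braid correspond precisely to permutations that push the entries of $\stt$ to more dominant tableaux.

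Second, I would compute $\deg(\psi^\stt)$ by induction on the dominance order, starting from $\stt^\la$ (where $\psi^{\stt^\la} = z_\la$ has degree $0$, and indeed $\sum_k(|{\mathcal A}_{\stt^\la}(k)| - |{\mathcal R}_{\stt^\la}(k)|) = 0$). For the inductive step, I would compare $\stt$ to a tableau $\stt' = \stt \cdot s_r$ obtained by swapping adjacent entries $r, r+1$. Multiplying by $\psi_r$ shifts the degree by the amount prescribed in Theorem \ref{defintino1}, and a direct combinatorial check shows that the change matches exactly the change in the addable/removable node count $\sum_k(|{\mathcal A}(k)| - |{\mathcal R}(k)|)$ as one passes from $\stt'$ to $\stt$: the nodes whose addability/removability changes are precisely those that become visible or invisible ``above" the swapped box, and the cases $i_r = i_{r+1}$, $i_r = i_{r+1} \pm 1$, and neither, correspond respectively to the degree shifts $-2$, $\pm 2$ (or $\pm 1$ for $e > 2$), and $0$.

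Finally I would show $\{\psi^\stt \mid \stt \in \Std(\la)\}$ is actually a basis. The span property follows from the well-definedness argument: given any monomial in the $\psi$, $y$, $e(\underline{i})$ generators acting on $z_\la$, one uses the relations to rewrite it as a $\Bbbk$-linear combination of $\psi^\stt$'s (with coefficients possibly non-trivial in lower-dominance tableaux, controlled by the same dominance filtration). Linear independence follows from a dimension count: the ungraded Specht module ${\bf S}^\Bbbk_q(\la)$ is free of rank $|\Std(\la)|$ (by classical work of Dipper--James), matching the cardinality of the proposed basis, and the triangular change-of-basis with the Dipper--James--Murphy standard basis is invertible. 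The main obstacle is the well-definedness / triangular-filtration step, since it requires carefully tracking the KLR error terms through all braid/commutation relations and matching them to the dominance order; everything else is then either an induction or a dimension count.
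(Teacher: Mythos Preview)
The paper does not give a proof of this statement: it is quoted as a result of Brundan--Kleshchev--Wang \cite{bkw11} and used as input. Your sketch is essentially the strategy of that original paper (define $\psi^\stt = \psi_{d(\stt)} z_\la$, control the ambiguity in reduced expressions by a dominance-triangular argument, compute degrees inductively, and conclude by a dimension count against the Dipper--James basis), so there is nothing to compare against here beyond saying that you have correctly identified the architecture of the BKW proof.

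One small correction to your sketch: it is not true in general that $\deg(\stt^\la)=0$ under the paper's convention. For instance, with $e=2$ and $\la=(2,1)$, the node $(2,1)$ in $\stt^\la$ sees the removable $1$-node $(1,2)$ above it and no addable $1$-node, giving $\deg(\stt^\la)=-1$. In the BKW construction the generator $z_\la$ is placed in degree $\deg(\stt^\la)$ rather than in degree $0$; the inductive comparison between $\stt$ and $\stt\cdot s_r$ then goes through exactly as you describe. This does not affect the structure of your argument, only the normalization of the base case.
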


Of course, this theorem gives us an added level of graded structure to consider: the {\sf graded} decomposition numbers of symmetric groups and their Hecke algebras.
By \cref{grading}, we obtain a grading on the module ${\bf D}_q(\mu)=  {\bf S}_q(\la)/ \rad ({\bf S}_q(\la))$.
 We define the {\sf graded decomposition number} to be the polynomial
\begin{equation}\label{gradeddecompdef}
 d_{\lambda,\mu}^\Bbbk(t)= \sum_{k \in \ZZ } [{\bf S}^\Bbbk_q(\la): {\bf D}^\Bbbk_q(\mu)\langle k \rangle ] t^k
\end{equation}
which records the composition multiplicity of each simple module and its relevant degree shift.
In particular upon specialisation $t\to 1$ the polynomials of \cref{gradeddecompdef} specialise to be the usual decomposition numbers.
While one might expect this grading to {\em increase} the level of difficulty of our question,
we find that by keeping track of this extra  grading information we are
{rewarded}
with an incredibly powerful
algorithm for understanding the decomposition numbers  of $H^\CC_q(n)$.

 Equation~(\ref{adjust}) hints  that we could first study the decomposition numbers of  $H^\CC_q(n)$
 as an intermediary first step toward understanding the decomposition numbers of symmetric groups in positive characteristic.  In fact, this approach has been incredibly successful: Lascoux, Leclerc and Thibon provided an iterative algorithm for understanding the {\em graded} decomposition numbers of  $H^\CC_q(n)$ in \cite{LLT}.
We now provide an elementary tableau-theoretic re-interpretation of this algorithm (using the work of Kleshchev and Nash \cite{KN10}).

\subsection{Coloured tableaux   }
We now recast ideas from \cite{KN10} in terms of  orbits of tableaux which we encode as ``coloured tableaux".
  Let   $\lambda   $ a partition  of $n$ and $\mu$ a composition of $n$.
 We define a {\sf    Young tableau of shape $  \lambda$ and weight $\mu$} to be  a filling of the nodes  of
   $\la$  with the entries
    $$\underbrace{1, \dots, 1}_{\mu_1}, \underbrace{2,\dots, 2}_{\mu_2},
  \ldots,   \underbrace{\ell ,\dots, \ell }_{\mu_\ell }. $$
We say that a tableau is {\sf row standard} if the entries are  weakly increasing along the rows  of $  \lambda$; we  denote the set of such tableaux by $\RStd(\la,\mu)$.
   We say that the  Young tableau is {\sf semistandard} if  the entries are  weakly increasing along the rows and
   are strictly increasing along the columns of $  \lambda$; we denote the set of such tableaux by $\SStd(\la,\mu)$.

\begin{defn} Given     $\mu\vdash_e  n$, we let ${\rm Lad}(\mu)$ denote the composition $\nu$ such that
$$
{\nu_i =\sharp\{ (r,c) \in \mu \mid \mathfrak{l}(r,c)=i \}.}
$$
where we have that $\nu_1=0$ by definition.   We define a semistandard coloured tableaux,
$\SSTS$,  to be a
semistandard tableau  of weight ${\rm Lad}(\mu)$ such that the entry of any node  is congruent to its residue.  We denote the set of all such tableaux of shape $\lambda$
by ${\rm CStd}(\la, \mu )\subseteq {\rm SStd}(\la, {\rm Lad}(\mu) )
$.
 We let $\Lad^\mu$ denote the unique element of
${\rm CStd}(\mu, \mu )$.
 We set
 $e(\mu)=e(\res (\Lad^\mu)) \in H^\Bbbk_q(n)$.

\end{defn}

\begin{eg} \label{eis2ex}
For $q=-1$, $e=2$
and
$
\mu=(6) , (5,1), (4,2)
$,
we have that
${\rm Lad}(\mu)$ is equal to $(0,1,1,1,1,1,1)$,
 $(0,1,2,1,1,1)$ and
 $(0,1,2,2,1)$ respectively.
 All semistandard coloured tableaux (up to conjugation) for the principal 2-block of $H^\Bbbk_{-1}(6)$
are listed in the table below.

$$
\renewcommand{\arraystretch}{1}
\begin{array}{c|c|c|c}
{\rm CStd}(\la, \mu ) & 6 & 5,1 & 4,2  \\
\hline
6 & \Yvcentermath1 \Ylinecolour{white}\gyoung(;,;)\Ylinecolour{black} \gyoungxy(0.9,0.9,!\gr2!\wh3!\gr4!\wh5!\gr6!\wh7) & * \Ylinecolour{white}\gyoung(;,;)\Ylinecolour{black} & * \\
\hline5,1 & \Yvcentermath1  \Ylinecolour{white}\gyoung(;,;,;)\Ylinecolour{black} \gyoungxy(0.9,0.9,!\gr2!\wh3!\gr4!\wh5!\gr6,!\wh7)  \Ylinecolour{white}\gyoung(;,;,;)\Ylinecolour{black} & \Yvcentermath1 \Ylinecolour{white}\gyoung(;,;,;)\Ylinecolour{black} \gyoungxy(0.9,0.9,!\gr2!\wh3!\gr4!\wh5!\gr6,!\wh3)  \Ylinecolour{white}\gyoung(;,;,;)\Ylinecolour{black} & * \\
\hline4,2 &  \Yvcentermath1\Ylinecolour{white}\gyoung(;,;,;) \Ylinecolour{black}\gyoungxy(0.9,0.9,!\gr2!\wh3!\gr4!\wh7,;5!\gr6) \Ylinecolour{white}\gyoung(;,;,;) &  \Yvcentermath1\Ylinecolour{white}\gyoung(;,;,;)\Ylinecolour{black} \gyoungxy(0.9,0.9,!\gr2!\wh3!\gr4!\wh5,;3!\gr6)  \Ylinecolour{white}\gyoung(;,;,;)&
 \Yvcentermath1 \Ylinecolour{white}\gyoung(;,;,;)\Ylinecolour{black}  \gyoungxy(0.9,0.9,!\gr2!\wh3!\gr4!\wh5,;3!\gr4)
  \Ylinecolour{white}\gyoung(;,;,;)
 \\
  \hline4,1^2 & \Yvcentermath1 \Ylinecolour{white}\gyoung(;,;,;,;)\Ylinecolour{black} \gyoungxy(0.9,0.9,!\gr2!\wh5!\gr6!\wh7,;3,!\gr4) \  \Yvcentermath1\gyoungxy(0.9,0.9,!\gr2!\wh3!\gr4!\wh7,;5,!\gr6) &  \Yvcentermath1
\Ylinecolour{white}\gyoung(;,;,;)\Ylinecolour{black} \gyoungxy(0.9,0.9,!\gr2!\wh3!\gr4!\wh5,;3,!\gr6)  \Ylinecolour{white}\gyoung(;,;,;,;)\Ylinecolour{black} &
 \Yvcentermath1 \Ylinecolour{white}\gyoung(;,;,;,;)\Ylinecolour{black}\gyoungxy(0.9,0.9,!\gr2!\wh3!\gr4!\wh5,;3,!\gr4)  \Ylinecolour{white}\gyoung(;,;,;,;)\Ylinecolour{black} \\

\hline3^2 & * &   * &  \Yvcentermath1 \Ylinecolour{white}\gyoung(;,;,;)\Ylinecolour{black}\gyoungxy(0.9,0.9,!\gr2!\wh3!\gr4,!\wh3!\gr4!\wh5)  \Ylinecolour{white}\gyoung(;,;,;)\Ylinecolour{black}  \\
\end{array}$$

 \end{eg}

 The importance of coloured  semistandard   tableaux of weight $\mu$ is that they encode
 an $\mathfrak{S}_{{\rm Lad}(\mu)}$-orbit  of   standard  Young tableaux;   we shall now make this idea more  precise.
 Given a composition $\nu$ and $c\geq1$,  we set $[\nu]_c= \nu_1+\nu_2+\dots + \nu_c\in \mathbb{N}$ and we set $\nu_0=0$.
 Let $\mu$  be an $e$-regular partition and   let $\sts$ be a standard   Young tableau of  shape     $\la$  such that the residue sequence of $\sts$ is given by
	$$ 0  ,\underbrace{-1, -1, \dots, -1}_{\nu_3\text{ times}}, \underbrace{-2, -2,\dots, -2}_{\nu_4\text{ times}},
\underbrace{-3, -3, \dots, -3}_{\nu_5\text{ times}},   \ldots    $$
 for $\nu=(0,1,\nu_3,\dots,\nu_\ell)={\rm Lad}(\mu)$; we refer to such an $\sts$ as a {\sf ladder tableau of ladder weight}~$\mu$.
Then define $\mu(\sts)$ to be  the  coloured tableau
 obtained from $\sts$ by replacing each entry $i$ for
  $[{{\rm Lad}(\mu)}]_{c-1} < i \leq [{{\rm Lad}(\mu)}]_c$ in $\sts$ by the entry $c$  for  $  c \geq  1$.

We identify a coloured semistandard   Young tableau, $\SSTS $, of weight $\mu$ with the set of standard  Young tableaux, $[\SSTS]_\mu=\{\sts \mid \mu(\sts)=\SSTS\}$.
 Given  $\SSTS\in  \SStd (\la,\mu)$  we  let $\sts^\lambda\in  [\SSTS]_{ \mu} $
denote the unique most dominant tableau in $ [\SSTS]_{ \mu}$.

\begin{eg}\label{followup}
Continuing with \cref{eis2ex}, we let $\SSTS\in {\rm CStd}((4,1^2),(4,2))$  depicted above.  We have that
$$
 \left[\ \Yvcentermath1\gyoungxy(0.9,0.9,!\gr2!\wh3!\gr4!\wh5,;3,!\gr4)
 \ \right]_{\!\mu}=\left \{
 \Yvcentermath1 \gyoungxy(0.9,0.9,!\gr1!\wh2!\gr4!\wh6,;3,!\gr5) \ , \
 \Yvcentermath1 \gyoungxy(0.9,0.9,!\gr1!\wh3!\gr4!\wh6,;2,!\gr5)  \ , \
 \Yvcentermath1 \gyoungxy(0.9,0.9,!\gr1!\wh3!\gr5!\wh6,;2,!\gr4)  \ , \
  \Yvcentermath1 \gyoungxy(0.9,0.9,!\gr1!\wh2!\gr5!\wh6,;3,!\gr4)
\right\}
$$
as an orbit of standard tableaux (which we have coloured in order to facilitate comparison).  \end{eg}

For $\mu \in \regptn e n$, we let $\SSTT^\mu $   be the unique element of ${\rm CStd}(\mu,\mu)$.
 We  set $\deg(\SSTT^\mu)=0$  so that
$$
\Dim (e(\mu){\bf S}_q(\mu))= \sum_{\stt\in\SSTT^\mu}t^{\deg(\stt)}
=
 [{{\rm Lad}(\mu) }]_t! =
 [{{\rm Lad}(\mu) }]_t!   \times t^{\deg(\SSTT^\mu)}
$$
which is  invariant under the {\sf bar map}   interchanging $t \leftrightarrow t^{-1}$ (see also \cite[Lemma 3.4]{KN10}).
We now provide a  general definition of the degree of a coloured tableau which allows us to calculate the graded characters of weight spaces of Specht modules in terms of coloured tableaux.
 Let $(a,b)\in \la\in\ptn n$ be a node of residue $i\in \ZZ/e\ZZ$
  and  $\mu \in \regptn en  $, $\SSTS \in {\rm CStd}(\la,\mu)$.
     We let ${\mathcal A}_\SSTS(a,b)$  denote the set  of   all addable
      $i$-nodes  of the partition
$$\la \cap \{(r,c) \mid \SSTS(r,c)\leq  \mathfrak{l}(a,b)\}$$
which are above $(a,b)\in\la$.
 We let ${\mathcal R}_\SSTS(a,b)$  denote the set  of   all removable $i$-nodes  of the partition
$$\la \cap \{(r,c) \mid \SSTS(r,c) <  \mathfrak{l}(a,b)\}$$
which are above $(a,b)\in\la$.
 We then define the degree of the node $(a,b)\in \lambda$ to be
$|{\mathcal A}_\SSTS(a,b)|-|{\mathcal R}_\SSTS(a,b)|$.
   We define  $\deg(\SSTS)$ to be the sum over the degrees of all nodes $(a,b)\in \la$.

We have seen that the tableaux of ${\rm CStd}(\la,\mu)$ are simply the orbits of  tableaux  from $\Std(\lambda)$ with a given residue sequence.  Therefore, by comparing the degree function for coloured tableaux with that of standard tableaux we obtain
\begin{equation}\label{ineedalabel2}
\Dim (e(\mu){\bf S}_q(\lambda))=
 [{{\rm Lad}(\mu) }]_t!
\sum _{\SSTS \in {\rm CStd}(\la,\mu)} t^{\deg(\SSTS)}.
\end{equation}
And so coloured standard tableaux provide a combinatorial description of the ladder-weight multiplicity as defined in \cite[Section 3.3]{KN10}.

   \begin{eg}
Continuing with   $\SSTS\in {\rm CStd}((4,1^2),(4,2))$  in  \cref{followup},
we have that
$$\mathcal{R}_\SSTS(a,b)=\emptyset \text{ for all }(a,b)\in  (4,1^2)
\quad\text{and}\quad
\mathcal{A}_\SSTS(a,b)=\begin{cases}
\{(2,2)	\} &\text{if }(a,b)=(3,1) \\
\emptyset 	&\text{otherwise}.
\end{cases}
$$
and therefore
$$
\deg_\SSTS(a,b)=\begin{cases}
1 &\text{if }(a,b)=(3,1) \\
0	&\text{otherwise}.
\end{cases}
$$
Therefore $\deg(\SSTS)=1$.
The   four distinct standard tableaux $\sts\in [\SSTS]_{(4,2)} $  are depicted in \cref{followup}; these four tableaux are obtained from each other by permuting the pairs $2, 3$ in the third ladder and the pairs $4, 5$ in the fourth ladder.
We have that $$\sum_{\sts \in\SSTS}t^{\deg(\sts)}=t^3+2t+t^{-1}= t\times (t+t^{-1}) ^2=  \deg(\SSTS)	 \times [2]_t! [2]_t!  =
 \deg(\SSTS)	\times [{\sf Lad}(\mu)]_t! $$

\end{eg}

With our new tableaux theoretic combinatorics in place, we can recast the (LLT) algorithm
from  \cite[Section 4]{KN10} in this combinatorial setting.

\begin{eg}\label{thisisalabel}
 We record the graded degrees of the coloured tableaux appearing in   \cref{eis2ex}
 and their conjugates (which are not pictured).  Notice that   conjugation does not preserve the degrees of    tableaux.
$$ \begin{array}{c|ccc}
  & 6 & 5,1 & 4,2\\
\hline
6 & 1 & * & *\\
5,1 & t & 1 & *\\
4,2 & 1 & t & 1\\
4,1^2 & 2t & t^2 & t \\
3^2 & * & * & t\\
 2^3 & * & * & t^2\\
 3,1^3 & 2t^2 & t & t^2 \\
  2^2,1^2 &   t^3 & t^2& t^3 \\
  2,1^4 & t^2 & t^3   & * \\
  1^6 &  t^3 & * & * \\
\end{array}
$$

\end{eg}

We are almost ready to restate the LLT algorithm in terms of our  combinatorics, we simply require two observations about the graded  structure of the Hecke algebra.  The first is almost trivial, but the proof of the  latter depends on incredibly deep  geometric or categorical insights.

\begin{thm}[{\cite[Theorem 4.18]{bk09}}]\label{bk09}
For $\la \in \ptn n$ and $\mu \in \regptn en$, the polynomial
$\Dim  (e(\mu) {\bf D}^\Bbbk_q(\la))$ is bar-invariant
(i.e., fixed under interchanging $t$ and $t^{-1}$).
\end{thm}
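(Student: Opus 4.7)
The plan is to deduce bar-invariance of $\Dim(e(\mu){\bf D}^\Bbbk_q(\la))$ from a graded self-duality of the simple module ${\bf D}^\Bbbk_q(\la)$, and then use that $e(\mu)$ is a homogeneous idempotent of degree $0$.

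The first step is to observe that the graded presentation in \cref{defintino1} admits a graded anti-involution $\sigma$ fixing each of the generators $e(\underline{i})$, $y_r$, $\psi_r$ (all of the defining relations can be read backwards, being homogeneous in the generators). This produces a duality functor $\circledast$ on finite-dimensional graded $H_q^\Bbbk(n)$-modules, where $M^\circledast := \Hom_\Bbbk(M,\Bbbk)$ carries the $\sigma$-twisted action and the grading $(M^\circledast)_k = \Hom_\Bbbk(M_{-k},\Bbbk)$. Because $\sigma(e(\mu))=e(\mu)$ and $e(\mu)$ sits in degree $0$, one has $e(\mu)M^\circledast \cong (e(\mu)M)^*$ as graded vector spaces, whence $\Dim(e(\mu)M^\circledast)(t) = \Dim(e(\mu)M)(t^{-1})$.

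The second step is to show that ${\bf D}^\Bbbk_q(\la)^\circledast \cong {\bf D}^\Bbbk_q(\la)$ (with no grading shift). Since $\circledast$ is an exact, contravariant involution that permutes simples, the classification in \cref{simpless} forces ${\bf D}^\Bbbk_q(\la)^\circledast \cong {\bf D}^\Bbbk_q(\la)\langle d \rangle$ for some integer $d$, and applying $\circledast$ a second time shows the shift is symmetric, so once we know $d$ is well-defined it suffices to exhibit it as $0$. To pin down $d=0$ one produces a homogeneous element of degree $0$ in the simple head. The natural candidate is the image in ${\bf D}^\Bbbk_q(\la)$ of the basis vector $\psi^{\stt^\la}$ of \cref{grading}, which lies in $e(\res(\stt^\la)){\bf S}^\Bbbk_q(\la)$ and has degree $0$ (every addable/removable count cancels for the most dominant tableau). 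Since this vector generates ${\bf S}^\Bbbk_q(\la)$ and its image in the head is non-zero (it is the normalising vector in the construction of ${\bf D}^\Bbbk_q(\la)$), the simple has a non-zero degree-$0$ piece, and by contravariance of the form this forces the duality to preserve the grading, i.e.\ $d=0$.

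Combining the two steps yields $e(\mu){\bf D}^\Bbbk_q(\la) \cong (e(\mu){\bf D}^\Bbbk_q(\la))^*$ as graded vector spaces, which is exactly the bar-invariance of $\Dim(e(\mu){\bf D}^\Bbbk_q(\la))$. The main obstacle is the middle step, namely pinning down the shift $d=0$; this requires an explicit construction of a degree-zero generator of the simple quotient and an argument that this vector survives modulo the radical. One clean way to verify this is to check that $\Dim(e(\la){\bf D}^\Bbbk_q(\la))$ has non-zero constant term via the ladder tableau $\Lad^\la$, which has degree $0$ by inspection, since this tableau is self-paired under the contravariant form arising from the cellular structure.
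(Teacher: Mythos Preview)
The paper does not give its own proof of this statement; it is quoted from \cite[Theorem 4.18]{bk09}. Your overall strategy---graded anti-involution $\sigma$, the induced duality $\circledast$, and then $\circledast$-self-duality of the simples---is exactly the mechanism behind the cited result, so in spirit you are on the right track.

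However, your argument for the crucial step $d=0$ has a genuine gap. The claim that $\deg(\stt^\lambda)=0$ is false in general: already for $e=2$ and $\lambda=(2,1)$ one computes $\deg(\stt^{(2,1)})=-1$ (the node $(2,1)$ sees the removable $1$-node $(1,2)$ above it and no addable $1$-node). So $\psi^{\stt^\lambda}$ is not a degree-zero generator. Moreover, even if you did locate a non-zero degree-zero vector in ${\bf D}^\Bbbk_q(\lambda)$, that alone does not pin down $d$: from ${\bf D}^\Bbbk_q(\lambda)^\circledast\cong {\bf D}^\Bbbk_q(\lambda)\langle d\rangle$ you only get $\Dim({\bf D}^\Bbbk_q(\lambda))(t^{-1})=t^{d}\Dim({\bf D}^\Bbbk_q(\lambda))(t)$, and a non-zero constant term on both sides is compatible with many values of~$d$. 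Your fallback suggestion via the ladder tableau and the cellular form is the right idea, but it needs to be made precise: what actually forces $d=0$ is the existence of a \emph{homogeneous} contravariant bilinear form of degree~$0$ on the graded Specht (or cell) module whose radical is $\rad({\bf S}^\Bbbk_q(\lambda))$; this is supplied either by the Hu--Mathas graded cellular structure or, as in \cite{bk09}, by a direct analysis using the categorification framework. Once that form is in hand, the induced non-degenerate degree-zero form on ${\bf D}^\Bbbk_q(\lambda)$ gives the isomorphism ${\bf D}^\Bbbk_q(\lambda)\cong{\bf D}^\Bbbk_q(\lambda)^\circledast$ with no shift, and your first and last steps then complete the proof.
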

\begin{thm}[\cite{MR1722955}]\label{VV99} Let $\Bbbk=\mathbb C$.
For $\la \in \ptn n$ and $\mu \in \regptn en$
with $\mu\ne \la$,
$d_{\lambda,\mu}(t) \in t\NN_0[t]$.
\end{thm}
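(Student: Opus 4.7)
The plan is to combine the LLT conjecture (proved by Ariki in the semisimple Hecke algebra setting over the complex numbers) with Varagnolo and Vasserot's geometric realisation of the canonical basis of the Fock space. By the LLT--Ariki theorem, the graded decomposition numbers $d_{\lambda,\mu}(t)$ of $H^{\mathbb{C}}_q(n)$ are precisely the coefficients appearing in the expansion of Kashiwara--Lusztig's lower canonical basis element $G^{-}(\mu)$ in terms of the standard basis $\{s_\lambda\}$ of the level-one Fock space representation $\mathcal{F}$ of the quantum affine algebra $U_t(\widehat{\mathfrak{sl}}_e)$. Thus our task reduces entirely to statements about the Fock space canonical basis.

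The first half of the statement, namely the vanishing of the constant term of $d_{\lambda,\mu}(t)$ for $\lambda \neq \mu$, is then an elementary consequence of the defining properties of the canonical basis. By Kashiwara--Lusztig's construction, $G^{-}(\mu)$ is characterised as the unique bar-invariant element of $\mathcal{F}$ satisfying $G^{-}(\mu) = s_\mu + \sum_{\lambda \lhd \mu} d_{\lambda,\mu}(t) s_\lambda$ with each coefficient $d_{\lambda,\mu}(t) \in t\mathbb{Z}[t]$. This triangularity directly yields the required constant-term vanishing (and also recovers the unitriangularity of the decomposition matrix already noted in the paper). Equivalently on the module-theoretic side, the head $\mathbf{D}^{\mathbb{C}}_q(\mu)$ of $\mathbf{S}^{\mathbb{C}}_q(\mu)$ sits in degree zero with multiplicity one, so any other composition factor can only appear in strictly positive degree.

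The second and considerably deeper half is the integrality/positivity $d_{\lambda,\mu}(t) \in \mathbb{N}_0[t]$, which is the main content of \cite{MR1722955}. The Varagnolo--Vasserot strategy is to realise the Fock space canonical basis geometrically: one identifies the coefficients of $G^{-}(\mu)$ with (parabolic) affine Kazhdan--Lusztig polynomials for the affine symmetric group $\widehat{\mathfrak{S}}_e$, and then invokes the geometric interpretation of Kazhdan--Lusztig polynomials as Poincar\'e polynomials of stalks of intersection cohomology complexes on affine Schubert varieties. Non-negativity of these stalks then follows from the Beilinson--Bernstein--Deligne--Gabber decomposition theorem together with the purity of the relevant IC sheaves; a more recent, algebraic route is to replace perverse sheaves by Soergel bimodules and invoke the Elias--Williamson proof of Soergel's conjecture.

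The genuine obstacle is clearly the Varagnolo--Vasserot positivity: the triangularity/constant-term part is purely formal from the axioms of the canonical basis, whereas geometric positivity for affine Kazhdan--Lusztig polynomials requires the full apparatus of equivariant perverse sheaves, purity of intersection cohomology, and the identification of Kashiwara's canonical basis with the geometric (IC) basis on the affine flag variety. Assembling the final claim is then simply the observation that, for $\lambda \neq \mu$, every coefficient lies simultaneously in $\mathbb{N}_0[t]$ (by Varagnolo--Vasserot) and in $t\mathbb{Z}[t]$ (by canonical basis triangularity), and hence in $t\mathbb{N}_0[t]$.
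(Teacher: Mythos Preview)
The paper does not actually give a proof of this theorem; it is simply quoted from \cite{MR1722955} and used as a black box (together with \cref{bk09}) to justify the uniqueness in the inductive algorithm of \cref{troll}. So there is no ``paper's proof'' to compare your proposal against.

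That said, your outline is a reasonable summary of how the result is established in the literature, with one attribution caveat. The identification of the \emph{graded} decomposition numbers $d_{\lambda,\mu}(t)$ with the canonical basis coefficients is not the LLT--Ariki theorem: Ariki's theorem only gives the ungraded statement at $t=1$, since the grading on $H^{\mathbb C}_q(n)$ did not exist until the KLR presentation of \cite{MR2551762}. The graded upgrade is due to Brundan--Kleshchev \cite{bk09}. Once that identification is in place, your two halves are exactly right: the condition $d_{\lambda,\mu}(t)\in t\mathbb Z[t]$ for $\lambda\neq\mu$ is part of the defining characterisation of the canonical basis, and the positivity $d_{\lambda,\mu}(t)\in\mathbb N_0[t]$ is the main theorem of Varagnolo--Vasserot, proved via the identification with parabolic affine Kazhdan--Lusztig polynomials and the geometric positivity of those.
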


 Rearranging \cite[Theorem 3.8]{KN10} in terms of our  coloured tableaux, we obtain the following relationships between coloured tableaux, simple characters, and graded decomposition  numbers:

   \begin{prop}\label{troll}
For $\la \in \ptn n$ and $\mu \in \regptn en$
we
have that  $$ { \Dim{(e(\mu){\bf S}^{\mathbb C} (\lambda))}}  =  \!\!\sum_{\SSTS\in {\rm CStd}(\la,\mu)}t^{\deg(\SSTS)}{[ {\rm Lad}(\mu)]_t! }
 \in \NN_0[t,t^{-1}] \quad \text{and}\quad {\Dim{(e(\mu){\bf D}^\Bbbk_q(\lambda))}}  \in \NN_0[t+t^{-1}].$$
Moreover, the following hold:
\begin{itemize}[leftmargin=*]
\item[$(i)$]   if $ {\rm CStd}(\la,\mu)=\emptyset $, then $d_{\lambda,\mu}(t)=0$ and  $\Dim{(e(\mu){\bf D}^\Bbbk_q(\la) )}=0$;
\item[$(ii)$] we have $ \Dim{(e(\mu){\bf S}_q^\Bbbk (\mu)))} =  \Dim{(e(\mu){\bf D}^\Bbbk_q(\mu))} ={[ {\rm Lad}(\mu)]_t! } $;
\item[$(iii)$] we have that
\begin{equation*}
 {\Dim(e(\mu){\bf D}_q^\Bbbk(\la))} +d_{\lambda,\mu}(t){ [{{\rm Lad}(\mu) }]_t!}=
\!\!\! \sum_{\SSTS \in {\rm CStd}(\la,\mu)}\!\!\!\!\!\!\! t^{\deg(\SSTS)}{ [{{\rm Lad}(\mu) }]_t!}-
 \!\!\sum_{
\begin{subarray}c
\la \lhd \nu \lhd \mu\end{subarray}
}   \!\!\!{\Dim(e(\mu){\bf D}^\Bbbk_q(\nu))  d_{\lambda,\nu}(t)} \end{equation*}
\end{itemize}
   \end{prop}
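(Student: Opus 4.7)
The first displayed identity is exactly equation~\eqref{ineedalabel2}, already derived immediately above the statement: it comes from decomposing the tableau basis $\{\psi^\stt\}_{\stt\in\Std(\la)}$ of ${\bf S}^\CC_q(\la)$ guaranteed by \cref{grading} into orbits $[\SSTS]_\mu$ of standard tableaux sharing the residue sequence $\res(\Lad^\mu)$, and matching $\deg(\stt)$ to $\deg(\SSTS)$ across each orbit. The containment $\Dim(e(\mu){\bf D}^\Bbbk_q(\la))\in\NN_0[t+t^{-1}]$ then follows by combining \cref{bk09} (bar-invariance of the graded dimension) with the tautological non-negativity of graded module dimensions.

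For parts (i)--(iii), my starting point is the identity coming from the composition series of ${\bf S}^\CC_q(\la)$:
$$
\Dim\bigl(e(\mu){\bf S}^\CC_q(\la)\bigr) \;=\; \sum_{\nu\in\regptn en} d_{\la,\nu}(t)\,\Dim\bigl(e(\mu){\bf D}^\CC_q(\nu)\bigr).
$$
Two observations prune the effective range of the sum to indices $\nu$ satisfying $\mu\gedom\nu\gedom\la$. James' regularisation theorem (\cref{James-reg}) forces $d_{\la,\nu}(t)=0$ unless $\nu\gedom\la^R\gedom\la$; and, as the main combinatorial input, one checks that ${\rm CStd}(\nu,\mu)=\varnothing$ for every $\nu\gdom\mu$, whence the first displayed formula gives $\Dim(e(\mu){\bf S}^\CC_q(\nu))=0$, and so $\Dim(e(\mu){\bf D}^\CC_q(\nu))=0$ since ${\bf D}^\CC_q(\nu)$ is a subquotient of ${\bf S}^\CC_q(\nu)$.

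With these restrictions in hand the three sub-claims follow quickly. For (ii), taking $\la=\mu$ leaves only the index $\nu=\mu$; since ${\rm CStd}(\mu,\mu)=\{\Lad^\mu\}$ with $\deg(\Lad^\mu)=0$ and $d_{\mu,\mu}(t)=1$, both asserted equalities drop out. For (iii), I peel off from the right-hand side the $\nu=\mu$ contribution $d_{\la,\mu}(t)[{\rm Lad}(\mu)]_t!$ (via~(ii)) and, when $\la\in\regptn en$, the $\nu=\la$ contribution $\Dim(e(\mu){\bf D}^\CC_q(\la))$ (since $d_{\la,\la}(t)=1$); the remaining indices are precisely $\la\lhd\nu\lhd\mu$, and transposing the $\nu=\mu$ piece to the left yields the stated recursion. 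For (i), suppose ${\rm CStd}(\la,\mu)=\varnothing$; then the left-hand side of the decomposition identity vanishes. \cref{VV99} ensures $d_{\la,\nu}(t)\in\NN_0[t]$ while each $\Dim(e(\mu){\bf D}^\CC_q(\nu))$ lies in $\NN_0[t,t^{-1}]$, so no cancellation is possible and every summand must be zero; the $\nu=\mu$ term gives $d_{\la,\mu}(t)\cdot[{\rm Lad}(\mu)]_t!=0$, hence $d_{\la,\mu}(t)=0$, and, when $\la\in\regptn en$, the $\nu=\la$ term gives $\Dim(e(\mu){\bf D}^\CC_q(\la))=0$. The passage from $\CC$ to an arbitrary field $\Bbbk$ is mediated by the adjustment-matrix factorisation~\eqref{adjust}, which preserves both the vanishing and the bar-invariance.

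The principal technical hurdle is the combinatorial vanishing ${\rm CStd}(\nu,\mu)=\varnothing$ for $\nu\gdom\mu$. One must verify that the row-weak/column-strict rule together with the residue constraint --- which ties each entry $i$ to nodes on ladder~$i$ --- genuinely obstructs packing the content ${\rm Lad}(\mu)$ into a strictly more dominant shape; this is a residue-coloured analogue of the classical dominance obstruction on semistandard fillings and constitutes the only serious combinatorial step in the argument.
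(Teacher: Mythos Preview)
Your argument tracks the natural unpacking of the Kleshchev--Nash reference the paper cites, and the logic for parts (i)--(iii) is sound: you correctly identify the composition-series identity, the dominance cut-offs on both ends, and the combinatorial vanishing ${\rm CStd}(\nu,\mu)=\varnothing$ for $\nu\rhd\mu$ as the one genuinely non-formal ingredient.

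There is one real gap. Your justification for $\Dim(e(\mu){\bf D}^\Bbbk_q(\la))\in\NN_0[t+t^{-1}]$ is insufficient: bar-invariance (from \cref{bk09}) together with non-negativity of coefficients does \emph{not} force membership in $\NN_0[t+t^{-1}]$. For example $t^2+t^{-2}$ is bar-invariant with non-negative coefficients, but $(t+t^{-1})^2=t^2+2+t^{-2}$, so $t^2+t^{-2}=(t+t^{-1})^2-2\notin\NN_0[t+t^{-1}]$. What you have actually established is that $\Dim(e(\mu){\bf D}^\Bbbk_q(\la))$ is a bar-invariant element of $\NN_0[t,t^{-1}]$; the stronger containment, if intended literally, needs additional input from \cite{KN10} (or, as is plausible, the paper is being slightly loose with notation --- note that the algorithmic discussion immediately following the proposition only uses bar-invariance, which is all you need for everything downstream).

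A minor stylistic point: your final remark about the adjustment matrix~\eqref{adjust} mediating the passage to arbitrary $\Bbbk$ is not quite the right mechanism for the $\Dim(e(\mu){\bf D}^\Bbbk_q(\la))$ claims, since the adjustment matrix relates decomposition numbers rather than graded characters of simples directly; the bar-invariance over $\Bbbk$ is already stated in \cref{bk09} for arbitrary fields, and the vanishing in (i) over $\Bbbk$ follows because $e(\mu){\bf S}^\Bbbk_q(\la)=0$ (the tableau count is field-independent) and ${\bf D}^\Bbbk_q(\la)$ is a quotient.
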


 Now we set $\Bbbk=\CC$. The right-hand side of  the equation in \cref{troll}$(iii)$ is calculated by induction along the   dominance ordering. Any polynomial in $\NN_0[t,t^{-1}]$ can be written {\em uniquely} as the   sum of a bar-invariant polynomial from $\NN_0[t,t^{-1}]$ and a polynomial from $ t\NN_0[t]$.
Putting together \cref{bk09,VV99} we deduce that  the lefthand-side is uniquely determined by the righthand-side and induction on the dominance order.

\begin{eg}\label{eis2exfin}
We continue with \cref{eis2ex}.  Using the equation in \cref{troll}$(iii)$, we obtain the first 5 rows of the  graded decomposition matrix of the principal block of $H^\CC_{-1}(6)$  and
$\tfrac{1}{{ [{{\rm Lad}(\mu) }]_t!}}{\Dim (e(\mu){\bf D}_q(\la))}$   as follows:
$$ \begin{array}{c|ccc}
 & 6 & 5,1 & 4,2\\
\hline
6 & 1 & * & *\\
5,1 & t & 1 & *\\
4,2 & * & t & 1\\
4,1^2 & t & t^2 & t \\
3^2 & * & * & t\\
 2^3 & * & * & t^2\\
 3,1^3 &  t^2 & t & t^2 \\
  2^2,1^2 &   * & t^2& t^3 \\
  2,1^4 & t^2 & t^3   & * \\
  1^6 &  t^3 & * & * \\
\end{array} \qquad\qquad
\begin{array}{c|ccc}
 & 6 & 5,1 & 4,2\\
\hline
6 & 1 & * & *\\
5,1 & * & 1 & *\\
4,2 & 1 & * & 1\\
 \end{array}
$$ Notice that if we multiply  these two matrices together we obtain the matrix from \cref{thisisalabel}.
The remaining entries of the table can be deduced by applying the sign automorphism to the Specht modules (although this automorphism is not of degree zero and so the   entries will differ by a degree shift).
Comparing with the table in \cref{thisisalabel},
we observe that
the entry in the row labelled by $(4,2)$ and column labelled by $(6)$ is bar-invariant in
 \cref{thisisalabel}
 and so does not contribute to the decomposition matrix, but instead contributes a vector in the simple module ${\bf D}_{-1}(4,2)$.
Then in the row labelled by $(4,1^2)$ we see another discrepancy between the two tables: this is because
${\bf D}_{-1}(4,2)$ is a composition factor of ${\bf S}_{-1}(4,1^2)$ and so it contributes to the sum in   \cref{troll}$(iii)$.
 \end{eg}
 We are now ready to provide new upper bounds for (graded) decomposition numbers in terms of our coloured tableaux.
 \begin{thm}\label{hero}
For $\la\in \ptn n$ and $\mu \in \mathcal{R}^e_n$ and $\Bbbk$ an arbitrary field,  we have that
 $$[{\bf S}^\Bbbk_q(\la): {\bf D}^\Bbbk_q(\mu)\langle k \rangle ]  \leq   |\{  \SSTS \mid \SSTS \in  {\rm CStd}(\la,\mu) , \deg (\SSTS)=k  \}  |  $$
for $k\in \ZZ$ and in particular, $d_{\lambda,\mu}^\Bbbk \leq |{\rm CStd}(\la,\mu)   |$.    \end{thm}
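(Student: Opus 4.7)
The plan is to deduce the coefficient-wise inequality directly from the graded composition-series identity
\begin{equation*}
\Dim(e(\mu){\bf S}^\Bbbk_q(\la)) = \sum_{\nu \in \regptn{e}{n}} d_{\lambda,\nu}^\Bbbk(t)\,\Dim(e(\mu){\bf D}^\Bbbk_q(\nu)),
\end{equation*}
which expresses the graded $e(\mu)$-weight space of the Specht module via its graded composition series in the Grothendieck group of $H_q^\Bbbk(n)$. Combined with \eqref{ineedalabel2}, the left-hand side equals $T_{\la,\mu}(t)\cdot [\Lad(\mu)]_t!$, where I set $T_{\la,\mu}(t):=\sum_{\SSTS\in{\rm CStd}(\la,\mu)}t^{\deg(\SSTS)}$. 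Isolating the $\nu=\mu$ summand via \cref{troll}(ii) then rearranges this to
\begin{equation*}
T_{\la,\mu}(t)\cdot [\Lad(\mu)]_t! = d_{\lambda,\mu}^\Bbbk(t)\cdot [\Lad(\mu)]_t! + \Dim(e(\mu){\bf D}^\Bbbk_q(\la)) + \!\!\sum_{\la \lhd \nu \lhd \mu} d_{\lambda,\nu}^\Bbbk(t)\,\Dim(e(\mu){\bf D}^\Bbbk_q(\nu)).
\end{equation*}

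The heart of the argument is to establish, for every $e$-regular $\nu$, a divided-power factorisation
\begin{equation*}
\Dim(e(\mu){\bf D}^\Bbbk_q(\nu)) \;=\; [\Lad(\mu)]_t!\cdot q_{\nu,\mu}(t), \qquad q_{\nu,\mu}(t)\in\NN_0[t,t^{-1}].
\end{equation*}
This is a consequence of the KLR presentation of \cref{defintino1}: the residue sequence $\res(\Lad^\mu)$ consists of consecutive runs of equal residues indexed by the ladders of $\mu$, and within each run of length $\Lad(\mu)_i$ the $\psi_r e(\underline{i})$-relations with $i_r = i_{r+1}$ generate a graded divided-power idempotent $e^{(\mu)}$ satisfying $\Dim(e(\mu)M)=[\Lad(\mu)]_t!\cdot\Dim(e^{(\mu)}M)$ for every graded $H_q^\Bbbk(n)$-module $M$, with $\Dim(e^{(\mu)}M)\in\NN_0[t,t^{-1}]$. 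Note the special case $\nu=\mu$ gives $q_{\mu,\mu}(t)=1$, consistent with \cref{troll}(ii).

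Granted this divisibility, cancelling $[\Lad(\mu)]_t!$ in the displayed identity yields
\begin{equation*}
T_{\la,\mu}(t) \;=\; d_{\lambda,\mu}^\Bbbk(t) + q_{\la,\mu}(t) + \sum_{\la \lhd \nu \lhd \mu} d_{\lambda,\nu}^\Bbbk(t)\, q_{\nu,\mu}(t),
\end{equation*}
in which every summand on the right is a product of Laurent polynomials with non-negative integer coefficients (the $d^\Bbbk_{\lambda,\nu}(t)$ being graded composition multiplicities). Extracting coefficients of $t^k$ therefore gives
$$
[{\bf S}^\Bbbk_q(\la):{\bf D}^\Bbbk_q(\mu)\langle k\rangle]\;\leq\;|\{\SSTS\in {\rm CStd}(\la,\mu):\deg(\SSTS)=k\}|,
$$
and specialising $t=1$ recovers $d_{\lambda,\mu}^\Bbbk\leq|{\rm CStd}(\la,\mu)|$.

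The main obstacle is precisely the divided-power divisibility; the ungraded bound (at $t=1$) is already immediate from non-negativity of composition multiplicities, but upgrading to the graded inequality requires isolating the factor $[\Lad(\mu)]_t!$ \emph{uniformly} inside every simple weight space, which needs the divided-power manipulation in the KLR presentation. Once this is in place the argument runs verbatim over any field $\Bbbk$, bypassing both the positivity result \cref{VV99} (which holds only over $\CC$) and the adjustment-matrix factorisation \eqref{adjust}.
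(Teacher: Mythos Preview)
Your argument is correct and takes a genuinely different route from the paper's own proof. The paper argues by induction on the dominance order: it uses the recursive identity in \cref{troll}(iii) together with \cref{troll}(ii) to show, step by step, that $[\Lad(\mu)]_t!$ divides $\Dim(e(\mu){\bf D}^\Bbbk_q(\nu))$ in $\ZZ[t,t^{-1}]$ for each $e$-regular $\nu$ appearing between $\lambda$ and $\mu$, and then concludes. You instead bypass the induction entirely by invoking the nil-Hecke divided-power idempotent $e^{(\mu)}$ inside $e(\mu)H^\Bbbk_q(n)e(\mu)$, obtaining $\Dim(e(\mu)M)=[\Lad(\mu)]_t!\cdot\Dim(e^{(\mu)}M)$ for \emph{every} graded module $M$ in one stroke.

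Your approach actually buys something the paper's proof, as written, leaves implicit: the coefficient-wise inequality requires not merely that $[\Lad(\mu)]_t!$ divide $\Dim(e(\mu){\bf D}^\Bbbk_q(\nu))$ in $\ZZ[t,t^{-1}]$, but that the quotient $q_{\nu,\mu}(t)$ lie in $\NN_0[t,t^{-1}]$. Divisibility alone does not force this (for instance $(t^3+t^{-3})/(t+t^{-1})=t^2-1+t^{-2}$), whereas your identification $q_{\nu,\mu}(t)=\Dim(e^{(\mu)}{\bf D}^\Bbbk_q(\nu))$ gives non-negativity automatically, since it is a graded dimension. The paper's induction, taken literally, only yields divisibility; the passage to the coefficient-wise bound tacitly relies on exactly the structural fact you isolate. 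In that sense your proof is not just an alternative but a clarification. The divided-power idempotent is defined integrally (via $\psi_{w_0}y^{\delta}$ in each nil-Hecke block), so your argument indeed works uniformly over any $\Bbbk$ without appeal to \cref{VV99} or \eqref{adjust}, as you claim.
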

 \begin{proof}
 It is immediate from \cref{ineedalabel2} that
  $$[{\bf S}^\Bbbk_q(\la): {\bf D}^\Bbbk_q(\mu)\langle k \rangle ]  \leq    [{\rm Lad}(\mu)]_t! \times |\{  \SSTS \mid \SSTS \in  {\rm CStd}(\la,\mu) , \deg (\SSTS)=k  \}    |$$
and indeed this is just   rephrasing  a classical observation due to Gordon James.
  The new observation is that by   \cref{troll}, we know that  $ [{\rm Lad}(\mu)]_t! $ divides  both
$$
\Dim
	(e(\mu)
	( {\bf S}^\Bbbk_q(\lambda))
\qquad
\Dim
	(e(\mu)
	( {\bf D}^\Bbbk_q(\mu)))
 $$
 and the result follows  by induction on the dominance ordering and  the equation in \cref{troll}$(iii)$.  In more detail,
 our base case for induction is when $\mu=\lambda$ mentioned above.  Now,   by    \cref{troll}$(iii)$
and  our inductive assumption, the result holds for all $\nu$ such that
  ${\mu\vartriangleright  \nu \vartriangleright   \la}$.
 Putting this together with   \cref{troll}$(ii)$,   we deduce that
  $[{\rm Lad}(\mu)]_t!$ divides $ \Dim (e(\mu){\bf D}^\Bbbk_q(\la))$ as required.
 \end{proof}

\begin{eg} If $p=2$ then the graded decomposition matrix of $\Bbbk\mathfrak{S}_6$ is given by the table in  \cref{thisisalabel}.
    In other words, the bounds of \cref{hero} are sharp.
 \end{eg}

 \begin{rmk}
 The inductive approach to calculating decomposition numbers of $H^\CC_q(n)$  highlighted   in  the equation in \cref{troll}$(iii)$ above
is used in the arXiv  appendix  to this paper to prove decomposability of an infinite family of Specht modules.
In \cref{2qs}
 the above algorithm will not work (as the set of 2-separated partitions is not saturated in the dominance order).  However,  we provide an analogous algorithm for calculating 2-separated decomposition numbers using ``2-dilated" coloured tableaux.
 \end{rmk}

\section{The Cherednik algebra and a simple criterion \\ for semisimplicity of a Specht module}\label{schur}

  The group $\mathfrak{S}_n  $ acts on the algebra,  $\CC\langle x_{1}, \dots, x_{n}, y_{1}, \dots y_{n}\rangle$, of polynomials in $2n$ non-commu\-ting variables.
 The  {\sf rational Cherednik algebra} $\mathscr{H}_q(\mathfrak{S}_{n})$ is a quotient of the semidirect product algebra  $\CC\langle x_{1}, \dots, x_{n}, y_{1}, \dots, y_{n}\rangle\rtimes \mathfrak{S}_{n} $ by
  commutation relations in the $x$'s and $y$'s that are similar to those of the Weyl algebra but involve an error term in $\CC\mathfrak{S}_n$ (see  \cite[Section 1]{EtingofGinzburg} for the full list of relations). In particular, these relations tell us that the $x$'s commute with each other and so do the $y$'s.
 The algebra $\mathscr{H}_q(\mathfrak{S}_{n})$ has three distinguished subalgebras: $\CC[\underline{y}] := \CC[y_{1}, \dots, y_{n}]$, $\CC[\underline{x}] := \CC[x_{1}, \dots, x_{n}]$, and the group algebra $\CC\mathfrak{S}_n$. The \emph{PBW theorem}  \cite[Theorem 1.3]{EtingofGinzburg} asserts that multiplication gives a vector space isomorphism
	\[
	\CC[\underline{x}] \otimes \CC\mathfrak{S}_n \otimes \CC[\underline{y}] \buildrel \cong \over \longrightarrow \mathscr{H}_q(\mathfrak{S}_{n})
	\]
	\noindent called the \emph{triangular decomposition} of $\mathscr{H}_q(\mathfrak{S}_{n})$, by analogy with the triangular decomposition of the universal enveloping algebra of a semisimple Lie algebra.
		We define the category ${\mathcal{O}}_q(\mathfrak{S}_n)$ to be the full subcategory consisting of all finitely generated $\mathscr{H}_q(\mathfrak{S}_n)$-modules on which $y_{1}, \dots, y_{n}$ act locally nilpotently.    The category   ${\mathcal{O}}_q(\mathfrak{S}_n)$ is a highest weight category with respect to the poset   $(\ptn  n,\rhd)$.   The standard modules are constructed as follows.
 Extend the action of $\mathfrak{S}_n$ on ${\bf S}^\CC(\lambda)$ to an action of $\CC[\underline{y}]\rtimes \mathfrak{S}_n$ by letting $y_{1}, \dots, y_{n}$ act by $0$. The algebra $\mathbb{C}[\underline{y}] \rtimes \mathfrak{S}_n$ is a subalgebra of $\mathscr{H}_q(\mathfrak{S}_n)$ and we define the Weyl modules,
	\[
	\Delta(\lambda) := {\rm Ind}_{\CC[\underline{y}]\rtimes \mathfrak{S}_n}^{\mathscr{H}_q(\mathfrak{S}_n)} {\bf S}^\CC( \lambda) :=
	\mathscr{H}_q(\mathfrak{S}_n) \otimes_{\CC[\underline{y}] \rtimes \mathfrak{S}_n}  {\bf S}^\CC(\lambda)
 = \CC[\underline{x}] \otimes {\bf S}^\CC(\lambda)
	\]
  where the last equality is \emph{only as $\CC[\underline{x}]$-modules} and follows from the triangular decomposition.
	 We let $L(\lambda)$ denote the unique irreducible quotient of $\Delta(\lambda)$.
	 In \cite[Theorem 7.4.]{RSVV} (see also \cite{Losev,Webster})  it is shown that   ${\mathcal{O}}_q(\mathfrak{S}_n)$ is standard Koszul.
    We do not recall the definition of a standard Koszul algebra here, but merely the following useful proposition.
The following proposition is proven in {\cite[Proposition 2.4.1]{MR1322847}} in the generality of all Koszul algebras.

 \begin{prop}
 \label{usefulKoszul}
For $\lambda,\mu\in\ptn n$ we have that
$$[\Delta (\lambda) : L (\mu)\langle i\rangle ]= \dim_\Bbbk \Hom_{\mathscr{H}_q(\mathfrak{S}_{n})} (\rad_i(\Delta^\CC(\lambda)),	L^\CC(\mu)).$$
 \end{prop}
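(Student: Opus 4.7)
The plan is to translate the stated equality into an assertion about the radical and grading filtrations of $\Delta(\lambda)$ coinciding, which is precisely the Koszul feature being invoked.

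First, I would rewrite the right-hand side. By definition of the radical series, $\rad_{i+1}(\Delta(\lambda))=\rad(\rad_i(\Delta(\lambda)))$, so any map $\rad_i(\Delta(\lambda))\to L(\mu)$ to a simple module kills the radical and hence factors through the semisimple head
$$\rad_i(\Delta(\lambda))/\rad_{i+1}(\Delta(\lambda)).$$
Since this quotient is semisimple, Hom to a simple just picks out the multiplicity:
$$\dim_{\Bbbk}\Hom(\rad_i(\Delta(\lambda)),L(\mu))=[\rad_i(\Delta(\lambda))/\rad_{i+1}(\Delta(\lambda)):L(\mu)].$$
This step is formal and uses nothing beyond basic properties of the radical series.

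The main content is now the identification of the $i$-th radical layer of $\Delta(\lambda)$ with its $i$-th graded component $\Delta(\lambda)_i$. For this, I would invoke the standard Koszulity of $\mathcal{O}_q(\mathfrak{S}_n)$ recorded just before the statement (from \cite{RSVV}, with \cite{Losev,Webster}): being standard Koszul means, among other things, that every standard module $\Delta(\lambda)$ is linear, i.e., $\Delta(\lambda)_i$ is concentrated in degree $i$ with $\Delta(\lambda)_0$ its head, and its grading filtration coincides with its radical filtration. Equivalently, $\rad_i(\Delta(\lambda))=\bigoplus_{j\geq i}\Delta(\lambda)_j$, so that
$$\rad_i(\Delta(\lambda))/\rad_{i+1}(\Delta(\lambda))\cong\Delta(\lambda)_i$$
as graded modules concentrated in degree $i$.

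Combining the two steps gives
$$\dim_{\Bbbk}\Hom(\rad_i(\Delta(\lambda)),L(\mu))=[\Delta(\lambda)_i:L(\mu)]=[\Delta(\lambda):L(\mu)\langle i\rangle],$$
which is the desired equality. The only subtle ingredient is the identification of the radical and grading filtrations on $\Delta(\lambda)$; this is the role played by the (standard) Koszul hypothesis, and is exactly the general content of \cite[Proposition~2.4.1]{MR1322847} specialised to our highest-weight setting. The rest of the argument is purely formal manipulation of Hom into a simple quotient of a semisimple head.
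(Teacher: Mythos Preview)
Your proposal is correct and follows essentially the same route as the paper: both reduce the statement to the coincidence of the radical and grading filtrations on $\Delta(\lambda)$, which is precisely \cite[Proposition~2.4.1]{MR1322847} applied in the Koszul setting. The paper's version is terser and explicitly records the quadratic property (via \cite[Corollary~2.3.3]{MR1322847}) needed as a hypothesis for that proposition, while you spell out more carefully the formal step identifying $\dim\Hom(\rad_i\Delta,L(\mu))$ with the multiplicity in the $i$th radical layer.
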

\begin{proof}
    By \cite[Corollary 2.3.3]{MR1322847}, any Koszul algebra is quadratic.
Therefore, since $\Delta (\lambda)/ \rad(\Delta (\lambda))=L(\la)$ is simple and concentrated in degree zero, the radical filtration of $\Delta (\lambda)$ coincides with the grading filtration
of $\Delta (\lambda)$ by  \cite[Proposition 2.4.1]{MR1322847}.
\end{proof}

Now, there exists an exact functor (the Knizhnik--Zamolodchikov functor) relating the module categories of Cherednik and Hecke algebras,
$${\sf KZ}: {\mathcal{O}}_q(\mathfrak{S}_n) \longrightarrow H_q^{\CC}(n)  \text{-mod}.$$
A construction of this functor is given in \cite{GGOR03}, here we will only
 need the fact (from \cite[Section 6]{GGOR03}) that
\begin{equation}\label{alaebl}{\sf KZ}(\Delta(\lambda))= {\bf S}_q^\CC(\lambda),
\quad
{\sf KZ}(L(\lambda))=
\begin{cases}
 {\bf D}_q^\CC(\lambda)		&\text{if $\lambda$ is $e$-regular,}\\
 0					&\text{otherwise.}
 \end{cases}
\end{equation}
  This allows us to prove the following criterion for decomposability of Specht modules, denoted ${\bf S}^\CC_q(\lambda)$, for the Hecke algebra $H^\CC_q(n)$.

\begin{thm}\label{directsum}
Fix $\lambda \in \ptn n$.  Suppose that for all  $e$-regular partitions  $\mu$, we have that
\begin{equation}\label{assumper}[{\bf S}^\CC_q(\lambda) : {\bf D}^\CC_q(\mu)]=a_\mu t^{p(\lambda)}\end{equation}
for some fixed $p(\lambda)=z\in \mathbb{N}$ (independent of $\mu$) and some scalars $a_\mu\in \mathbb{N}$.
 It follows that
the Specht module ${\bf S}^\CC_q(\lambda) $ is semisimple.
\end{thm}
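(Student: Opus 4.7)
The plan is to pull the semisimplicity question back to the Cherednik algebra via the KZ functor, and exploit the standard Koszulity of $\mathcal{O}_q(\mathfrak{S}_n)$. More concretely, I will work with the radical filtration of the Weyl module $\Delta(\lambda)$, apply the exact functor ${\sf KZ}$ to it, and use \cref{usefulKoszul} to read off the positions of the surviving composition factors.

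First I would observe that for $\mu\in\regptn en$ one has $[{\bf S}^\CC_q(\lambda):{\bf D}^\CC_q(\mu)\langle i\rangle]=[\Delta(\lambda):L(\mu)\langle i\rangle]$, since ${\sf KZ}$ is a grading-compatible quotient functor sending $\Delta(\lambda)\mapsto{\bf S}^\CC_q(\lambda)$ and $L(\mu)\mapsto{\bf D}^\CC_q(\mu)$ by \eqref{alaebl}. Combining this with hypothesis \eqref{assumper} and \cref{usefulKoszul}, we deduce that for every $\mu\in\regptn en$, the simple module $L(\mu)$ occurs in the radical layer $\rad^i(\Delta(\lambda))/\rad^{i+1}(\Delta(\lambda))$ with multiplicity $a_\mu$ if $i=p(\lambda)=:z$, and does not appear in any other layer.

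Now apply ${\sf KZ}$ to the radical filtration $\Delta(\lambda)\supseteq\rad(\Delta(\lambda))\supseteq\rad^2(\Delta(\lambda))\supseteq\cdots\supseteq 0$. Exactness gives a filtration of ${\bf S}^\CC_q(\lambda)$ whose $i$-th successive quotient is ${\sf KZ}(\rad^i(\Delta(\lambda))/\rad^{i+1}(\Delta(\lambda)))$. By \eqref{alaebl} the functor ${\sf KZ}$ kills $L(\mu)$ for $\mu$ not $e$-regular and sends $L(\mu)$ to ${\bf D}^\CC_q(\mu)$ otherwise. Hence by the previous paragraph every such quotient vanishes except possibly the $z$-th, which is the semisimple module $\bigoplus_{\mu\in\regptn en}a_\mu{\bf D}^\CC_q(\mu)$. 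The filtration therefore collapses to a single nonzero semisimple layer, and ${\bf S}^\CC_q(\lambda)$ is itself this semisimple module.

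The main subtlety in the above argument is the matching of graded decomposition numbers on the two sides of the KZ functor: this relies on the compatibility between the $\ZZ$-grading on $H^\CC_q(n)$ coming from the KLR presentation of \cref{defintino1} and the internal grading on $\mathcal{O}_q(\mathfrak{S}_n)$ induced by the Euler operator. Once this compatibility is in place, \cref{usefulKoszul} converts graded multiplicities into multiplicities in radical layers, and the argument proceeds cleanly as above; everything else is formal manipulation of exact sequences under a quotient functor.
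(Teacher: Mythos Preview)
Your proposal is correct and follows essentially the same route as the paper's own proof: transfer graded multiplicities to $\Delta(\lambda)$ via \eqref{alaebl}, use \cref{usefulKoszul} to identify the grading with the radical filtration, and then push the radical layers forward through the exact functor ${\sf KZ}$ so that only the $z$-th layer survives. The paper's write-up is slightly terser (it computes ${\sf KZ}(\rad_i(\Delta(\lambda)))$ directly rather than explicitly passing through the filtration), but the argument is the same.
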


   \begin{proof}
 Throughout the proof, we let  $\lambda\in \ptn n$ be an arbitrary partition.
 For an $e$-regular partition $\mu\in \ptn n$, we have that
$$
[\Delta(\lambda) :  L(\mu)\langle k\rangle]=
[{\sf KZ}(\Delta(\lambda)) :  {\sf KZ}(L(\mu)\langle k\rangle)]=
[{\bf S}_q^\CC(\lambda) : {\bf D}_q^\CC(\mu)\langle k\rangle].
$$
by \cref{alaebl}.  Putting together \cref{usefulKoszul} and  our assumption in \cref{assumper},  we have that
$$
 [ \rad_i( \Delta(\lambda)) : L(\mu)] = 0$$
for $\mu$ any  $e$-regular   partition  and   any $i\neq z$.  Therefore
$$
{\sf KZ}(\rad_i( \Delta(\lambda)) =
\begin{cases}
  \bigoplus_\mu a_\mu {\bf D}_q^\CC(\mu) \langle z \rangle  		&\text{for }i=z, \\
  0										&\text{otherwise}.
\end{cases}$$
Therefore
$${\sf KZ}(\Delta(\lambda)) = {\sf KZ}(\rad_z(\Delta(\lambda)))=  \bigoplus_\mu a_\mu {\bf D}_q^\CC(\mu) \langle z \rangle,  $$
and the result follows.
    \end{proof}

\begin{rmk}
We have seen  the grading and radical structure of standard $\mathscr{H}_q(\mathfrak{S}_{n})$-modules are intimately related.
It is unknown as to whether or not the Schur functor preserves this property.  Thus \cref{directsum} represents all that is currently known about the relationship between
the grading and radical structure on Specht modules for  $H^\CC_q(n)$.
\end{rmk}

 \section{The Hecke algebra   and  \\   $2$-separated    partitions}\label{2qs}

Throughout this section, we shall consider the
representation theory of the Hecke algebra $H^\CC_{-1}(n)$ as a
first approximation to the $2$-modular representation theory of symmetric groups.
We shall focus on the Specht modules labelled by $2$-separated partitions.  We shall prove that these modules are semisimple and calculate their decomposition as a direct sum of graded simple modules.
 Throughout this section our character formulas will be given in terms of binomial coefficients that have been ``dilated" by a factor of 2.  We shall write
 $
\llbracket m\rrbracket_t =
[ m]_{t^2}
  $
   and extend this notation to the quantum factorials and binomials in the obvious fashion.

\begin{thm}\label{mainresultformodular}
We have that
$$
\Dim \left(
e\left({
\Lad^{\tau^{\alpha }
_
{\varnothing}
}
}\right)
{\bf S}^\CC_{-1}\left({
 {\tau^{\la }
_
{\mu}
}
}\right)\right)
=
  \left|\SStd(\lambda^T\cup \mu,\alpha^T)\right|
\times \llbracket \alpha^T\rrbracket_t ! ^2 \times t^{|\mu|}.
$$
 \end{thm}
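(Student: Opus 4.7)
The plan is to start from the combinatorial character formula of \cref{ineedalabel2},
\[
\Dim \left(e(\tau^\alpha_\varnothing){\bf S}^\CC_{-1}(\tau^\lambda_\mu)\right) = [{\rm Lad}(\tau^\alpha_\varnothing)]_t! \sum_{\SSTS \in {\rm CStd}(\tau^\lambda_\mu,\tau^\alpha_\varnothing)} t^{\deg(\SSTS)},
\]
and to evaluate the right-hand side by a careful analysis of the ladder geometry of $2$-separated partitions when $e=2$, reorganising the product of the ladder-factorial and the degree-generating sum into the form stated.

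The key structural observation is that in $\tau^\alpha_\varnothing$ the staircase $\rho(k)$ occupies ladders $2,\ldots,k+1$ of sizes $1,2,\ldots,k$ while the added $2\alpha$-portion occupies the ladders strictly above $k+1$ in \emph{consecutive pairs} of common size $\alpha^T_i$, for $i\geq 1$. This pairing is exactly what makes the dilated quantum symbols $\llbracket m\rrbracket_t=[m]_{t^2}$ the natural objects in the final answer: each ladder-pair contributes a factor that can be rewritten in the $t^2$ variable, producing the $\llbracket\alpha^T\rrbracket_t!^2$ on the right.

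Next I would enumerate ${\rm CStd}(\tau^\lambda_\mu,\tau^\alpha_\varnothing)$. The residue and semistandard constraints rigidly force the staircase portion of $\tau^\lambda_\mu$ to receive the staircase colours of $\tau^\alpha_\varnothing$ in the unique way matching $\Lad^{\tau^\alpha_\varnothing}$ on that region; all remaining freedom lives in the $2\lambda$-strip to the right of the staircase and in the $2\mu$-strip below it, both of which must be filled by the paired ``large'' colours from the $\alpha$-portion. Exploiting the ladder pairing, I would show that the resulting combinatorial choices biject naturally with semistandard tableaux of shape $\lambda^T\cup\mu$ and content $\alpha^T$ — the $i$-th ladder-pair simultaneously feeds the $i$-th column of $\lambda^T$ (in the $2\lambda$-strip) and the corresponding part of $\mu$ (in the $2\mu$-strip), and the two pieces of data glue into a single object of shape $\lambda^T\cup\mu$. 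This bijection accounts for the factor $|\SStd(\lambda^T\cup\mu,\alpha^T)|$.

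The main obstacle is the degree bookkeeping. Splitting $\deg(\SSTS)$ into contributions from the three regions, the staircase piece cancels against the corresponding factor of $[{\rm Lad}(\tau^\alpha_\varnothing)]_t!$ just as in the self-filling case of \cref{troll}(ii); the $2\lambda$-piece is bar-symmetric and combines with its paired ladder factorials to reassemble into $\llbracket\alpha^T\rrbracket_t!^2$ via the $t\mapsto t^2$ identification; while the $2\mu$-piece is the asymmetric contribution, since each node of the $2\mu$-strip lies strictly below the staircase and hence sees an addable $i$-node on the staircase diagonal above it with no matching removable partner, yielding a systematic $+1$ per node and a net $t^{|\mu|}$ after the ladder-pair identification. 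The technically delicate step is verifying that this asymmetric excess is \emph{uniform} across the bijection so that $t^{|\mu|}$ pulls cleanly out of the sum; once that is secured, the three contributions multiply together to give the stated identity.
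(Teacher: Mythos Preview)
Your high-level strategy --- biject ${\rm CStd}(\tau^\lambda_\mu,\tau^\alpha_\varnothing)$ with $\SStd(\lambda^T\cup\mu,\alpha^T)$ and then analyse degrees region by region --- is close in spirit to the paper's argument, and the bijection you describe is correct.  The gap is in the degree bookkeeping.  The degree $\deg(\SSTS)$ of a single coloured tableau is an \emph{integer}, so each term $[{\rm Lad}(\tau^\alpha_\varnothing)]_t!\cdot t^{\deg(\SSTS)}$ coming out of \cref{ineedalabel2} is a monomial shift of one fixed polynomial.  Your proposed mechanism --- ``the $2\lambda$-piece is bar-symmetric and combines with its paired ladder factorials to reassemble into $\llbracket\alpha^T\rrbracket_t!^2$ via the $t\mapsto t^2$ identification'' --- cannot work as stated: a shift by an integer can translate $\prod_i([\alpha^T_i]_t!)^2$ but cannot dilate it into $\prod_i([\alpha^T_i]_{t^2}!)^2$.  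Likewise ``the staircase piece cancels against the corresponding factor of $[{\rm Lad}]_t!$'' is confused: the staircase portion of every coloured tableau is rigidly forced with degree contribution zero, so there is nothing there to cancel.

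The paper works one level finer, with \emph{standard} rather than coloured tableaux.  It first reduces to skew standard tableaux $\sts$ on $\tau^\lambda_\mu\setminus\rho$ with residue sequence $(0^{\alpha^T_1},1^{\alpha^T_1},0^{\alpha^T_2},1^{\alpha^T_2},\ldots)$, then builds an explicit surjection $\varphi$ onto $\SStd(\lambda^T\cup\mu,\alpha^T)$ by scaling each domino to a single box, conjugating the $\lambda$-side, and recording only the ``subscript'' labels; the fibres are $\prod_i(\mathfrak{S}_{\alpha^T_i}\times\mathfrak{S}_{\alpha^T_i})$-orbits.  The crux is that the \emph{fibre sum} $\sum_{\sts:\varphi(\sts)=\SSTS}t^{\deg(\sts)}$ is shown to be the \emph{same polynomial} for every fibre, namely $\llbracket\alpha^T\rrbracket_t!^2\cdot t^{|\mu|}$.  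This is where the genuine polynomial (not merely a monomial) appears, and it is exactly what your coloured-tableau viewpoint cannot see.  The fibre computation proceeds by peeling off one part $a_m=\alpha^T_m$ at a time along a chain of intermediate $2$-separated shapes $\tau^{\lambda(0)}_{\mu(0)}\subset\cdots\subset\tau^{\lambda(\ell)}_{\mu(\ell)}$; at each step the diagram is cut into regions $X,Y,Z$ (with $Y$ the inner staircase $\rho_Y$ separating the $2\lambda$- and $2\mu$-strips --- a different split from your staircase/$2\lambda$/$2\mu$), the $Y$-contribution is the constant shift $t^{|\mu(m)|-|\mu(m-1)|}$, and the $X\cup Z$-contribution factors as a square (one factor for the residue-$0$ boxes, one for the residue-$1$ boxes), each identified with $\sum_{\Std(\rho(a_m)\setminus\rho(a_m-1))}t^{\deg}$ and evaluated by a short induction on~$a_m$.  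What your outline is missing is precisely this fibre-level computation.
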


Before embarking on the proof, we note the following immediate corollaries.

\begin{cor}
We have that
$$
e\left({
\Lad^{\tau^{\alpha }
_
{\varnothing}
}
}\right)
{\bf S}^\CC_{-1}\left({
 {\tau^{\la }
_
{\mu}
}
}\right)
=
t^{|\mu|} \sum_{\nu} c\left(\nu^T, \lambda^T,\mu\right)  e\left({
\Lad^{\tau^{\alpha }
_
{\varnothing}
}
}\right)
{\bf S}^\CC_{-1}\left({
 {\tau^{\nu }
_
{\varnothing}
}
}\right)
$$
and therefore
$$
\left[{\bf S}^\CC_{-1}\left({
 {\tau^{\la }
_
{\mu}
}
}\right) \right] = \sum_\nu t^{|\mu|}\times c\left(\nu^T, \lambda^T,\mu\right) \times \left[{\bf S}^\CC_{-1}\left({
 {\tau^{\nu }
_
{\varnothing}
}
}\right) \right].
$$
\end{cor}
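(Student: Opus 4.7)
The plan is to derive both displayed equations directly from Theorem~\ref{mainresultformodular} by repackaging the combinatorics on the right-hand side using the Littlewood--Richardson rule, then appealing to character-theoretic rigidity.

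First I would apply Theorem~\ref{mainresultformodular} to the left-hand side, obtaining
$$\Dim\!\left(e(\Lad^{\tau^\alpha_\varnothing})\,{\bf S}^\CC_{-1}(\tau^\la_\mu)\right) = \left|\SStd(\lambda^T \cup \mu,\alpha^T)\right|\cdot \llbracket\alpha^T\rrbracket_t!^{\,2}\cdot t^{|\mu|}.$$
The key combinatorial input is the identity
$$\left|\SStd(\lambda^T\cup\mu,\alpha^T)\right| = \sum_\nu c(\nu^T,\lambda^T,\mu)\cdot\left|\SStd(\nu^T,\alpha^T)\right|,$$
which I would prove by interpreting the left-hand side as the coefficient of $x^{\alpha^T}$ in $s_{\lambda^T}\cdot s_\mu$ (pairs of semistandard tableaux of shapes $\lambda^T$ and $\mu$ with combined content $\alpha^T$) and expanding this product via $s_{\lambda^T}\cdot s_\mu = \sum_\nu c(\nu^T,\lambda^T,\mu)\,s_{\nu^T}$, then reading off the monomial coefficients via Kostka numbers.

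Second, I would reapply Theorem~\ref{mainresultformodular} in the special case $(\la,\mu)\mapsto(\nu,\varnothing)$ to recognise each factor $\left|\SStd(\nu^T,\alpha^T)\right|\cdot\llbracket\alpha^T\rrbracket_t!^{\,2}$ as the graded dimension of the weight space $e(\Lad^{\tau^\alpha_\varnothing}){\bf S}^\CC_{-1}(\tau^\nu_\varnothing)$. Combining these two applications of Theorem~\ref{mainresultformodular} with the Littlewood--Richardson identity above yields the first displayed equality as an equation of graded dimensions for every admissible shape $\alpha$.

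The second equation, an equation of graded characters in the Grothendieck group, then follows because the classes $[{\bf S}^\CC_{-1}(\tau^\nu_\varnothing)]$ are linearly independent (by James' regularisation theorem, Theorem~\ref{James-reg}, their leading terms along the dominance order are distinct simple classes), and within the fixed 2-block with core $\tau$ the weight-space dimensions on the ladders $\Lad^{\tau^\alpha_\varnothing}$ as $\alpha$ varies suffice to separate these classes. The main obstacle I anticipate is bookkeeping the conjugations: one must match the ``disjoint union'' shape $\lambda^T\cup\mu$ appearing in Theorem~\ref{mainresultformodular} with the triple $(\nu^T,\lambda^T,\mu)$ indexing the Littlewood--Richardson coefficient, and checking that the factor $t^{|\mu|}$ (which is absent from the weight spaces of $\tau^\nu_\varnothing$) is correctly tracked through both applications of Theorem~\ref{mainresultformodular}.
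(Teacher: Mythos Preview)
Your proposal is correct and is exactly what the paper intends: the paper states this as an ``immediate corollary'' of Theorem~\ref{mainresultformodular} with no further argument, and your derivation of the first displayed equation via the Littlewood--Richardson identity $|\SStd(\lambda^T\cup\mu,\alpha^T)| = \sum_\nu c(\nu^T,\lambda^T,\mu)\,|\SStd(\nu^T,\alpha^T)|$ together with two applications of Theorem~\ref{mainresultformodular} is precisely the intended unpacking.

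The one place to sharpen is your justification of the ``therefore''. Showing that the ladder weight spaces $e(\Lad^{\tau^\alpha_\varnothing})$ separate the classes $[{\bf S}^\CC_{-1}(\tau^\nu_\varnothing)]$ \emph{from one another} (via invertibility of the Kostka matrix) is not by itself enough to conclude the Grothendieck-group identity: a priori $[{\bf S}^\CC_{-1}(\tau^\la_\mu)]$ need not lie in the span of those classes, so one must also rule out contributions from simples ${\bf D}^\CC_{-1}(\sigma)$ with $\sigma$ a $2$-regular partition in the block not of the form $\tau^\nu_\varnothing$. The paper elides this point as well, so you are matching its level of rigour; a clean fix is to invoke Proposition~\ref{troll} at \emph{every} $2$-regular ladder weight in the block rather than only at those of the form $\tau^\alpha_\varnothing$.
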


Therefore the Specht modules ${\bf S}^\CC_{-1}({
 {\tau^{\nu }
_
{\varnothing}
}
})$ are simple (as their characters are bar-invariant)   and by \cref{schur} we deduce the following corollary.

\begin{cor}\label{gradedresult}
We have that,  as an $H^\CC_{-1}(n)$-module, any Specht module labelled by a 2-separated partition is semisimple and decomposes as follows
$$
 {\bf S}^\CC_{-1}\left({
 {\tau^{\la }
_
{\mu}
}
}\right)   = \bigoplus_\nu     c\left(\nu^T, \lambda^T,\mu\right)  {\bf  D}^\CC_{-1}\left({
 {\tau^{\nu }
_
{\varnothing}
}
}\right)  \left\langle {|\mu|} \right\rangle.
$$
In particular, the Specht $H^\CC_{-1}(n)$-module ${\bf S}^\CC_{-1}({ {\tau^{\la }_{\mu}}})$ is simple
if and only if $\lambda$ or $\mu$ is equal to $\varnothing$.
\end{cor}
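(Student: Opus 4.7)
The plan is to deduce the decomposition from the immediately preceding Corollary combined with the semisimplicity criterion of \cref{directsum}, once the ``spine'' Specht modules $\mathbf{S}^\CC_{-1}(\tau^\nu_\varnothing)$ are shown to be simple.

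First, I would reduce to the spine case.  The preceding Corollary gives the character identity
\[
\bigl[\mathbf{S}^\CC_{-1}(\tau^\lambda_\mu)\bigr]\;=\;t^{|\mu|}\sum_\nu c(\nu^T,\lambda^T,\mu)\,\bigl[\mathbf{S}^\CC_{-1}(\tau^\nu_\varnothing)\bigr].
\]
Granting simplicity $\mathbf{S}^\CC_{-1}(\tau^\nu_\varnothing)=\mathbf{D}^\CC_{-1}(\tau^\nu_\varnothing)$, each graded decomposition multiplicity $[\mathbf{S}^\CC_{-1}(\tau^\lambda_\mu):\mathbf{D}^\CC_{-1}(\eta)\langle k\rangle]$ is either zero or equals $c(\nu^T,\lambda^T,\mu)$ concentrated in the single degree $k=|\mu|$, which is precisely the hypothesis of \cref{directsum}.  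Hence $\mathbf{S}^\CC_{-1}(\tau^\lambda_\mu)$ is semisimple with the stated direct sum decomposition and uniform grading shift $\langle|\mu|\rangle$.

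Next, I would prove simplicity of $\mathbf{S}^\CC_{-1}(\tau^\nu_\varnothing)$.  The partition $\tau^\nu_\varnothing$ is 2-regular (the staircase offsets force strict inequalities between successive rows), so by \cref{James-reg} we have $d_{\tau^\nu_\varnothing,\tau^\nu_\varnothing}(t)=1$ and any further composition factor is labelled by a 2-regular $\eta\lhd\tau^\nu_\varnothing$.  Setting $\mu=\varnothing$ in \cref{mainresultformodular} gives
\[
\Dim\bigl(e(\Lad^{\tau^\alpha_\varnothing})\mathbf{S}^\CC_{-1}(\tau^\nu_\varnothing)\bigr)\;=\;\bigl|\SStd(\nu^T,\alpha^T)\bigr|\cdot\llbracket\alpha^T\rrbracket_t!^{\,2},
\]
which is bar-invariant in $t$.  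Expanding this weight space via the decomposition into simples, using that each $\Dim(e(\cdot)\mathbf{D}^\CC_{-1}(\eta))$ is bar-invariant by \cref{bk09} while $d_{\tau^\nu_\varnothing,\eta}(t)\in t\mathbb{N}_0[t]$ for $\eta\lhd\tau^\nu_\varnothing$ by \cref{VV99}, a standard top-degree positivity argument (no strictly positive power of $t$ arising from a correction term can be cancelled by another positive power) forces every product $d_{\tau^\nu_\varnothing,\eta}(t)\cdot\Dim(e(\Lad^{\tau^\alpha_\varnothing})\mathbf{D}^\CC_{-1}(\eta))$ to vanish, for all $\alpha$ and all $\eta\ne\tau^\nu_\varnothing$.

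The hard part will be upgrading this to $d_{\tau^\nu_\varnothing,\eta}(t)=0$: for each candidate composition factor $\mathbf{D}^\CC_{-1}(\eta)$, some idempotent $e(\Lad^{\tau^\alpha_\varnothing})$ must actually detect it.  I expect to handle this via the inductive LLT algorithm of \cref{troll}, exploiting dominance order inside the 2-block and showing that every 2-regular $\eta\lhd\tau^\nu_\varnothing$ that can appear as a composition factor is itself of spine form $\tau^{\nu'}_\varnothing$---whereupon the choice $\alpha=\nu'$ yields $e(\Lad^{\tau^\alpha_\varnothing})\mathbf{D}^\CC_{-1}(\eta)\ne 0$ and the vanishing of $d_{\tau^\nu_\varnothing,\eta}(t)$ follows.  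For the ``simple iff'' clause, the direct sum formula is transparent: when $\mu=\varnothing$ (respectively $\lambda=\varnothing$) only the Littlewood--Richardson coefficient $c(\lambda^T,\lambda^T,\varnothing)=1$ (respectively $c(\mu,\varnothing,\mu)=1$) survives, giving a simple module; conversely, if both $\lambda$ and $\mu$ are non-empty then $s_{\lambda^T}\cdot s_\mu$ contains at least two distinct Schur summands (the partitions $\lambda^T+\mu$ and $\lambda^T\sqcup\mu$ are different), producing at least two non-zero Littlewood--Richardson coefficients and a genuine direct sum.
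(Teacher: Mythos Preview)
Your outline matches the paper's argument: prove the spine Specht modules $\mathbf{S}^\CC_{-1}(\tau^\nu_\varnothing)$ are simple via bar-invariance, substitute into the Grothendieck-group identity of the preceding Corollary, and invoke \cref{directsum}. The paper says exactly this in one sentence (``their characters are bar-invariant'') and moves on, so your elaboration is welcome.

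Two points to tighten. First, the dominance direction is reversed throughout: composition factors $\mathbf{D}^\CC_{-1}(\eta)$ of $\mathbf{S}^\CC_{-1}(\tau^\nu_\varnothing)$ satisfy $\eta\trianglerighteq\tau^\nu_\varnothing$, not $\eta\lhd\tau^\nu_\varnothing$ (unitriangularity and James' regularisation both go this way). Second, and more substantively, your ``hard part'' resolution as written is not correct: there \emph{are} 2-regular partitions $\eta\rhd\tau^\nu_\varnothing$ in the block that are not of spine form (e.g.\ $(4,1)\rhd(3,2)=\tau^{(1,1)}_\varnothing$ with core $\rho(1)$). What saves the argument is the refinement you hint at but don't state: such non-spine $\eta$ have $\mathrm{CStd}(\tau^\nu_\varnothing,\eta)=\varnothing$, whence $d_{\tau^\nu_\varnothing,\eta}=0$ by \cref{hero}. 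Concretely, a coloured tableau of shape $\tau^\nu_\varnothing$ forces the residue multiset of the entries to match the residue multiset of $[\tau^\nu_\varnothing]$, and a residue count (as in the opening paragraph of the proof of \cref{mainresultformodular}) shows the only compatible ladder weights are those of spine partitions $\tau^\alpha_\varnothing$. Once you have this, your positivity argument for the spine $\eta$'s together with $e(\Lad^{\tau^{\nu'}_\varnothing})\mathbf{D}^\CC_{-1}(\tau^{\nu'}_\varnothing)\neq 0$ finishes the simplicity proof. The paper's one-line justification is hiding precisely this step; you have located it correctly but need the sharper statement ``non-spine $\eta$ have empty $\mathrm{CStd}$'' rather than ``all $\eta$ are spine''.
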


We now turn to the proof of the main result.

\begin{proof}[Proof of \cref{mainresultformodular}]
We let $\rho:=\rho(k)$ and we assume for notational purposes  that $k$ is even; the $k$ odd case is identical except that the residues 0 and 1 must be transposed.
 We  let $w= |\alpha|= |\lambda|+ |\mu|$.    Consider the ladder tableau of the partition ${
 {\tau^{\alpha }
_
{\varnothing}
}
}$.
We have that
$$
\res ( {
\Lad^{\tau^{\alpha }
_
{\varnothing}
}
}  )=
\res(\rho) \circ
(\underbrace{0,0,\dots,0}_{\alpha_1^T\text{ times}} , \underbrace{1,1,\dots,1}_{\alpha_1^T\text{ times}}  )
\circ
(\underbrace{0,0,\dots,0}_{\alpha_2^T\text{ times}} , \underbrace{1,1,\dots,1}_{\alpha_2^T\text{ times}} )
\circ \cdots .
$$
Given $\stt \in \Std( {\tau^{\la }
_
{\mu}
}
)$, it is clear that
 $\res(\stt)=\res(\Lad^{\tau^{\alpha }_{\varnothing} }  )$
if and only if  $\stt= \Lad^\rho \circ \sts $
for some $\sts \in \Std( {\tau^{\la }
_
{\mu}
}
   \setminus \rho)$ with
$$\res(\sts)= (\underbrace{0,0,\dots,0}_{\alpha_1^T\text{ times}} , \underbrace{1,1,\dots,1}_{\alpha_1^T\text{ times}}  )
\circ
(\underbrace{0,0,\dots,0}_{\alpha_2^T\text{ times}} , \underbrace{1,1,\dots,1}_{\alpha_2^T\text{ times}} )
\circ \cdots
$$
  and we let $\Std_\alpha ( {\tau^{\la }
_
{\mu}
}
   \setminus \rho)$ denote the set of all such tableaux.
   All that  remains is to show that
\begin{equation}\label{maineqn}
\sum_{\sts\in  \Std_\alpha ( {\tau^{\la }
_
{\mu}
}
   \setminus \rho)} t^{{\rm deg}(\sts)} = \left|\SStd\left(\lambda^T\cup \mu,\alpha^T\right)\right|
\times \llbracket \alpha^T\rrbracket _t^2 \times t^{|\mu|}.
\end{equation}
  Given an integer $  j   \in \{1,2,\dots, 2w\}$
   we
  have that there exists a unique corresponding integer $a(j)\in\{1,\dots,w\}$  such that
  $$
  1+\sum_{  i=1    }^{a(j)} |\alpha^T_i | \leq j \leq
   1+\sum_{  i=1    }^{a(j)+1} |\alpha^T_i |;
  $$
  these integers will record the weight of the semistandard tableaux in the statement of  \cref{maineqn}.
Namely, we record    a skew-tableau $\sts$
    by placing both the usual entry $j\in\{1,2,\dots, 2w\}$
    but we also add a subscript  $a(j)$.  An example is depicted on the left-hand side of \cref{keyidea}.
 Recall that we can think of the partition $\tau^\la_\mu$ as being obtained by adding $(2)$-dominoes  to the right of $\rho$ and $(1^2)$-dominoes  to the bottom of $\rho$ in an intuitive fashion demonstrated in \cref{below}.
Take the partition $\rho$ and add a total of $\alpha_1^T$ nodes of residue 0; the resulting partition has
precisely $\alpha_1^T$ addable 1-nodes $X_1, \dots, X_{\alpha_1}$: namely, those which belong to the $(2)$- and $(1^2)$-dominoes  containing the nodes  $X_1, \dots, X_{\alpha_1}$.
 Repeating this observation as necessary, we deduce that any two nodes in the same domino of a tableau
${\sts\in  \Std_\alpha ( {\tau^{\la }
_
{\mu}
}
   \setminus \rho)} $   have the same subscript.
   Furthermore, we note that the fact that the residue sequence is of the form
 $$(\underbrace{0,0,\dots,0}_{\alpha_1^T\text{ times}} , \underbrace{1,1,\dots,1}_{\alpha_1^T\text{ times}}  )
\circ
(\underbrace{0,0,\dots,0}_{\alpha_2^T\text{ times}} , \underbrace{1,1,\dots,1}_{\alpha_2^T\text{ times}} )
\circ \cdots
$$ implies that no two $(2)$-dominoes of the same subscript  can be added in the same row
 and no two  $(1^2)$-dominoes  of the same subscript can be added in the same column.
   Therefore we obtain a well-defined map
   $$
\varphi:    \Std_\alpha \left( {\tau^{\la }
_
{\mu}
}
   \setminus \rho\right)  \longmapsto \SStd\left(\lambda^T \cup \mu, \alpha^T\right)
   $$
   given by scaling the sizes of all the dominoes  by $1/2$, conjugating $\la$, and recording {\em only}
   the subscripts (i.e., deleting the integers $\{1,\dots,2w\}$).  An example is depicted  in \cref{keyidea}.
  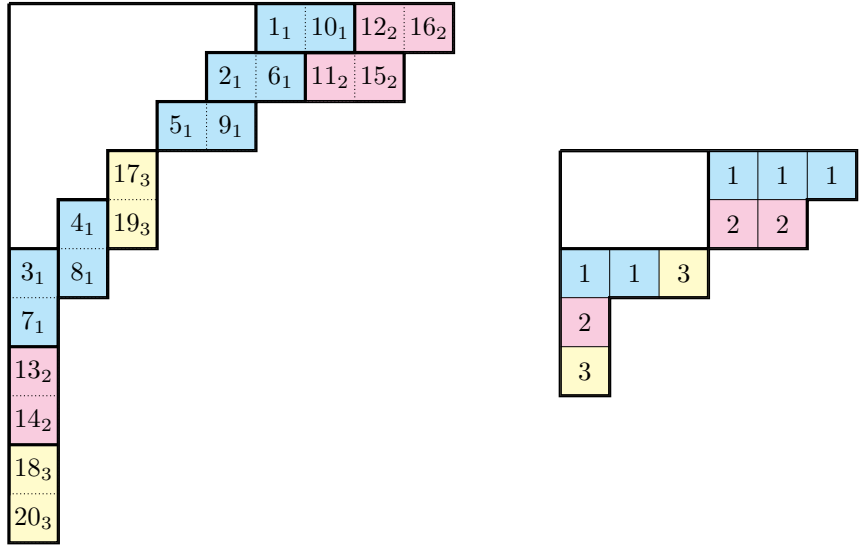
\begin{figure}[ht!]
 $$
\scalefont{0.9} \begin{minipage}{6.2cm}  \begin{tikzpicture} [scale=0.65]

 \draw[very thick](0,0)--++(0:9)--++(-90:1)--++(180:1)--++(-90:1)
 --++(180:3)--++(-90:1)
 --++(180:2)--++(-90:2)
  --++(180:1)--++(-90:1)
  --++(180:1)--++(-90:5)
   --++(180:1)--++(90:11)
 ;
  \clip(0,0)--++(0:9)--++(-90:1)--++(180:1)--++(-90:1)
 --++(180:3)--++(-90:1)
 --++(180:2)--++(-90:2)
  --++(180:1)--++(-90:1)
  --++(180:1)--++(-90:5)
   --++(180:1)--++(90:11)
 ;

 \fill[cyan!25] (0:5)--++(0:2)--++(-90:1)
 --++(180:1) --++(-90:1)
 --++(180:1) --++(-90:1)
 --++(180:2)
  --++(90:1)
 --++(0:1)  --++(90:1)
 --++(0:1)  --++(90:1)
 --++(0:1)
  ;

 \fill[magenta!25] (0:7)--++(0:2)--++(-90:1)
 --++(180:1) --++(-90:1)
 --++(180:1) --++(-90:1)
 --++(180:2)
  --++(90:1)
 --++(0:1)  --++(90:1)
 --++(0:1)  --++(90:1)
 --++(0:1)
  ;

 \fill[cyan!25] (90:-5)--++(-90:2)--++(0:1)
 --++(+90:1) --++(0:1)
  --++(+90:2)
  --++(90:1)
 --++(180:1)  --++(-90:1)
 --++(180:1)  --++(-90:1)
 --++(180:1)
  ;

 \fill[magenta!25] (90:-7)--++(-90:2)--++(0:1)
 --++(+90:2) --++(0:1)
    ;

 \fill[yellow!25] (90:-9)--++(-90:2)--++(0:1)
 --++(+90:2) --++(0:1)
    ;

 \fill[yellow!25] (2,-3)--++(-90:2)--++(0:1)
 --++(+90:2) --++(0:1)
    ;

  \path(0,0) coordinate (origin);
   \foreach \i in {1,...,19}
  {
    \path (origin)++(0:1*\i cm)  coordinate (a\i);
    \path (origin)++(-90:1*\i cm)  coordinate (b\i);
    \path (a\i)++(-90:10cm) coordinate (ca\i);
    \path (b\i)++(0:10cm) coordinate (cb\i);
    \draw[densely dotted] (a\i) -- (ca\i)  (b\i) -- (cb\i); }

 \draw[very thick](0,0)--++(0:9)--++(-90:1)--++(180:1)--++(-90:1)
 --++(180:3)--++(-90:1)
 --++(180:2)--++(-90:2)
  --++(180:1)--++(-90:1)
  --++(180:1)--++(-90:5)
   --++(180:1)--++(90:11)
 ;
 \fill[white](0,0)--++(0:5)--++(-90:1)--++(180:1)--++(-90:1)
 --++(180:1)--++(-90:1)
 --++(180:1)--++(-90:1)
  --++(180:1)--++(-90:1)  --++(180:1)--++(-90:1)
  ;

 \draw[very thick](0,0)--++(0:7)--++(-90:1)--++(180:1)--++(-90:1)
 --++(180:3)--++(-90:1)
 --++(180:1)--++(-90:1)
  --++(180:1)--++(-90:1)
  --++(180:1)--++(-90:5)
   --++(180:1)--++(90:11)
 ;

   \draw[very thick](origin)--++(-90:7)--++(0:1)--++(90:3)--++(0:1)--++(-90:2);
      \draw[very thick](origin)--++(-90:9)--++(0:1);

 \draw[very thick](0,0)--++(0:5)--++(-90:1)--++(0:3)--++(180:4) --++(-90:1) ;

 \path(0.5,-0.5) coordinate (origin);
 \foreach \i in {1,...,19}
  {
    \path (origin)++(0:1*\i cm)  coordinate (a\i);
    \path (origin)++(-90:1*\i cm)  coordinate (b\i);
    \path (a\i)++(-90:1cm) coordinate (ca\i);
        \path (ca\i)++(-90:1cm) coordinate (cca\i);
    \path (b\i)++(0:1cm) coordinate (cb\i);
    \path (cb\i)++(0:1cm) coordinate (ccb\i);
  }
\draw(a5) node {$1_1$};
\draw(a6) node {$10_1$};
 \draw(a7) node {$12_2$};
  \draw(a8) node {$16_2$};

\draw(ca4) node {$2_1$};
\draw(ca5) node {$6_1$};
 \draw(ca6) node {$11_2$};
  \draw(ca7) node {$15_2$};

\draw(cca3) node {$5_1$};
\draw(cca4) node {$9_1$};

\draw(b5) node {$3_1$};
\draw(b6) node {$7_1$};
 \draw(b7) node {$13_2$};
  \draw(b8) node {$14_2$};
 \draw(b9) node {$18_3$};
  \draw(b10) node {$20_3$};

\draw(cb4) node {$4_1$};
\draw(cb5) node {$8_1$};

\draw(ccb3) node {$17_3$};
\draw(ccb4) node {$19_3$};

   \end{tikzpicture} \end{minipage}
   \quad   \quad   \quad
 \begin{minipage}{4cm}  \begin{tikzpicture} [scale=0.65]

  \draw[very thick](0,0)--++(0:6)--++(-90:1)--++(180:1)--++(-90:1)
 --++(180:2)--++(-90:1)
 --++(180:2)--++(-90:2)
  --++(180:1)--++(90:5) ;
  \clip(0,0)--++(0:6)--++(-90:1)--++(180:1)--++(-90:1)
 --++(180:2)--++(-90:1)
 --++(180:2)--++(-90:2)
  --++(180:1)--++(90:5)
 ;

 \fill[cyan!25] (3,0)--++(-90:1)--++(0:3)
 --++(+90:1) --++(180:3)
    ;

 \fill[cyan!25] (0,-2)--++(-90:1)--++(0:2)
 --++(+90:1) --++(180:2)
    ;

     \fill[yellow!25] (2,-2)--++(-90:1)--++(0:2)
 --++(+90:1) --++(180:2)
    ;

 \fill[yellow!25] (0,-4)--++(-90:1)--++(0:2)
 --++(+90:1) --++(180:2)
    ;

     \fill[magenta!25] (0,-3)--++(-90:1)--++(0:2)
 --++(+90:1) --++(180:2)
    ;
     \fill[magenta!25] (3,-1)--++(-90:1)--++(0:2)
 --++(+90:1) --++(180:2)
    ;

  \path(0,0) coordinate (origin);
   \foreach \i in {1,...,19}
  {
    \path (origin)++(0:1*\i cm)  coordinate (a\i);
    \path (origin)++(-90:1*\i cm)  coordinate (b\i);
    \path (a\i)++(-90:10cm) coordinate (ca\i);
    \path (b\i)++(0:10cm) coordinate (cb\i);
    \draw[thin ] (a\i) -- (ca\i)  (b\i) -- (cb\i); }

   \fill[white](0,0)--++(0:3)--++(-90:2)--++(180:3)--++(90:2)
   ;

 \draw[very thick](0,0)--++(0:6)--++(-90:1)--++(180:1)--++(-90:1)
 --++(180:2)--++(-90:1)
 --++(180:2)--++(-90:2)
  --++(180:1)--++(90:5)
  ;
  \draw[very thick](0,0)--++(0:3)--++(-90:2)--++(180:3)--++(90:2);
   \path(0.5,-0.5) coordinate (origin);
 \foreach \i in {1,...,19}
  {
    \path (origin)++(0:1*\i cm)  coordinate (a\i);
    \path (origin)++(-90:1*\i cm)  coordinate (b\i);
    \path (a\i)++(-90:1cm) coordinate (ca\i);
        \path (ca\i)++(-90:1cm) coordinate (cca\i);
    \path (b\i)++(0:1cm) coordinate (cb\i);
    \path (cb\i)++(0:1cm) coordinate (ccb\i);
  }
\draw(a3) node {$1 $};
\draw(a4) node {$ 1$};
 \draw(a5) node {$1$};

\draw(ca3) node {$2 $};
\draw(ca4) node {$2$};

\draw(b2) node {$1$};
\draw(b3) node {$2$};
 \draw(b4) node {$3$};

\draw(cb2) node {$1$};
\draw(ccb2) node {$3$};


   \end{tikzpicture} \end{minipage}
 $$
 \caption{A $(3^2,2,1^2)$-decorated standard tableau of shape $\tau^{(2,2,1)}_{(3,1,1)}
\setminus \rho$
and the corresponding element of
$\SStd((3,2)\cup (3,1,1) , (5,3,2))$.  The associated sequence of partitions (as in \cref{sequence}) is
$\tau\subset \tau^{(1^3)}_{(2)}\subset \tau^{(2^2,1)}_{(2,1)}\subset \tau^{(2^2,1)}_{(3,1^2)}$.}
 \label{keyidea}
 \end{figure}

 All that remains to show is that
\begin{equation}\label{gradedequationtoprove}
 \sum_{\{\sts \mid \varphi(\sts)= \SSTS \}} t^{{\rm deg}(\sts)}=
 \llbracket  \alpha^T\rrbracket _t!^2  \times t^{|\mu|}
\end{equation}
for any $\SSTS  \in \SStd(\lambda^T\cup \mu,\alpha^T)$.
The set ${\{\sts \mid \varphi(\sts)= \SSTS \}} $ consists of an orbit
$$\mathfrak{S}_{\alpha_1}\times \mathfrak{S}_{\alpha_1}
\times
\mathfrak{S}_{\alpha_2}\times \mathfrak{S}_{\alpha_2}
\times \dots$$
of standard tableaux. In other words,  $\sts,\stt$ are such
$\varphi(\sts)= \varphi(\stt)$ if and only if they differ by permuting nodes whose subscripts  and residues are both matching.    Therefore, we have that
$$
 \sum_{\{\sts \mid \varphi(\sts)= \SSTS \}} 1^{{\rm deg}(\sts)}
 =\left(\alpha^T_1!\alpha^T_2!\dots\right)^2= \left(
 \llbracket \alpha^T\rrbracket _t!^2 \right)|_{t=1},   $$
and so the ungraded version of \cref{gradedequationtoprove} follows.

It remains to consider the grading.  We  first cut the  diagram of any 2-separated partition  $\tau^\la_\mu$ into four regions by  drawing
 a vertical line immediately after the  $ \mu_1$th column of $\tau^\la_\mu$  and a horizontal line immediately below the    $\la_1^T$th row.  An example is depicted in \cref{sfdjakhfasklfhalkfskdfhaslkjfdha}.
 We label the three of the four  quarters of  the diagram  $X:=X(\tau^\la_\mu)$, $Y:=Y(\tau^\la_\mu)$, and $Z:=Z(\tau^\la_\mu)$ as suggested in  \cref{sfdjakhfasklfhalkfskdfhaslkjfdha}.
The intersection of $\tau^\la_\mu$ with the region $Y$ is equal to the staircase partition of width $\rho_1 -\mu_1-\la_1^T$; we set  $\rho_Y:= [\tau^\la_\mu] \cap Y$.

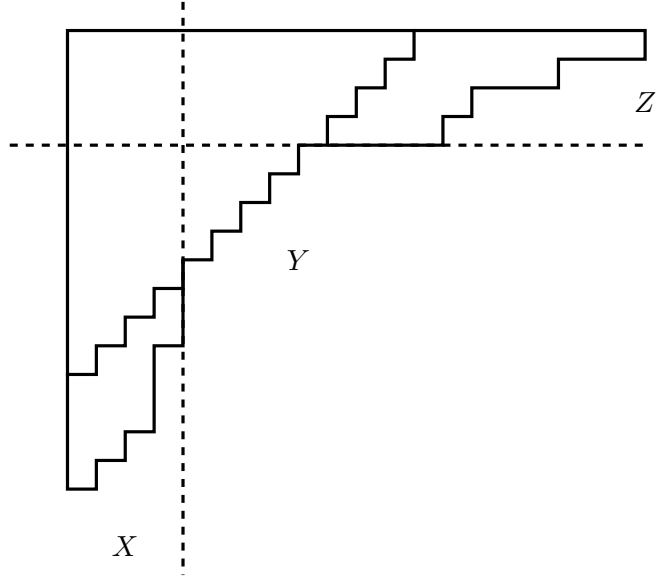
\begin{figure}[ht!]
$$
 \begin{tikzpicture} [scale=0.38]

 \draw[very thick](-2,0)--++(90:1)--++(0:12)--++(-90:1)--++(180:1)--++(-90:1)--++(180:1) --++(-90:1)--++(180:1) --++(-90:1)--++(180:1) --++(-90:1)--++(180:1) --++(-90:1)--++(180:1) --++(-90:1)--++(180:1) --++(-90:1)--++(180:1) --++(-90:1)--++(180:1) --++(-90:1)--++(180:1) --++(-90:1)--++(180:1) --++(-90:1)--++(180:1)  --(-2,0);

  ;

   \draw[very thick](10,1)--(18,1)--(18,0)--(15,0)--++(-90:1)--++(180:3)--++(-90:1)--++(180:1)--++(-90:1)--++(180:4);

     \draw[very thick](-2,-11) --++(-90:4)--++(0:1) --++(90:1) --++(0:1) --++(90:1) --++(0:1) --++(90:3)--++(0:1) --++(90:3) ;

      \draw[very thick, dashed] (2,2)--(2,-18);
            \draw[very thick, dashed] (-4,-3)--(18,-3);
      \draw(0,-17) node {$X$};
            \draw(18,-1.5) node {$Z$};
            \draw(6,-7) node {$Y$};

   \end{tikzpicture} $$

   \caption{Dividing the partition $\tau^\la_\mu$ into regions $X:=X(\tau^\la_\mu)$, $Y:=Y(\tau^\la_\mu)$ and $Z:=Z(\tau^\la_\mu)$.  In this case $\rho_Y$ is the copy of the partition $(4,3,2,1)$ in region $Y$.  }
   \label{sfdjakhfasklfhalkfskdfhaslkjfdha}
   \end{figure}

We shall calculate the lefthand-side of \cref{gradedequationtoprove} by peeling off a row of $\alpha$ at a time, in a manner which  we now make precise.
Fix $\alpha \in \ptn n$ a partition  and set  $\ell = \ell(\alpha^T)$ and   $\SSTS\in \SStd(\lambda^T\cup\mu, \alpha^T) $. We define
$$
\mu^{(i)}_j = |
\{(j,c)\in \mu\mid \SSTS(j,c)\leq  i\}
\}
|
\qquad
\lambda^{(i)}_j = |
\{(r,j)\in \la\mid \SSTS(r,j)\leq  i\}|
$$
for $1\leq i \leq \ell(\alpha)$.  We set $\mu {(i)}=(\mu^{(i)}_1,\dots,\mu^{(i)}_\ell)$
and $\la {(i)}=(\la^{(i)}_1,\dots,\la^{(i)}_\ell)$
We consider the associated sequence of partitions
\begin{equation}\label{sequence}
\tau =\tau^{\la {(0)}}_{\mu {(0)} }\subseteq
	\tau^{\la {(1)}}_{\mu {(1)} }
	 \subseteq
	\tau^{\la {(2)}}_{\mu {(2)} }
	\dots  \subseteq
	\tau^{\la {(\ell)}}_{\mu {(\ell)} }=\tau ^\la_\mu ,
\end{equation}
 an example is given in \cref{keyidea}.
Setting $\alpha^T=(a_1,\dots,a_\ell)$, we will show that
 $$
 \sum_{\sts \in \Std_{(a_m)}(\tau^{\lambda(m)}_{\mu(m)}\setminus  \tau^{\lambda(m-1)}_{\mu(m-1)})} t^{{\rm deg}(\sts)} = \llbracket a_m \rrbracket_t! ^2 \times t^{|\mu(m)|-|\mu(m-1)|},
 $$
 and hence deduce the result.  Clearly we can calculate the degree contribution of the node $(r,c)$   to   the tableau $${\sts \in \Std_{(a_m)}(\tau^{\lambda(m)}_{\mu(m)}\setminus  \tau^{\lambda(m-1)}_{\mu(m-1)})} $$ by considering the  addable/removable nodes above $(r,c)$ from the regions $$X_m:=X(\tau^{\la(m)}_{\mu(m)})\quad  Y_m:=Y(\tau^{\la(m)}_{\mu(m)})\quad Z_m:=Z(\tau^{\la(m)}_{\mu(m)})$$  separately.   We let $\deg_{X_m}(r,c)$, $\deg_{Y_m}(r,c)$ and $\deg_{Z_m}(r,c)$ denote these respective contributions.
We first consider the contribution from $Y_m$.
By definition, the sum total $$\deg_{Y_m}(\sts)= \sum_{1\leq k \leq 2m}\deg_{Y_m}\sts^{-1}(k)$$ is independent of the tableau $\sts\in \Std(\tau^{\lambda(m)}_{\mu(m)}\setminus  \tau^{\lambda(m-1)}_{\mu(m-1)})$.
 For $\rho_{Y_m}$ a staircase partition of $p(p+1)/2$, we have that
$$|{\rm Rem}_0(\rho_{Y_m})|=0\quad |{\rm Rem}_1(\rho_{Y_m})|=p
\quad |{\rm Add}_0(\rho_{Y_m})|=p+1\quad
|{\rm Add}_1(\rho_{Y_m})|
=0.$$
 Therefore $\deg_{Y_m}(\sts)$ is equal to the number of $(1^2)$-bricks   in region $X_m \cap \tau^{\la(m)}_{\mu(m)}\setminus \tau^{\la(m-1)}_{\mu(m-1)}$.  In other words    $\deg_{Y_m}(\sts)=|\mu|-|\mu'|$.
Finally, it remains to prove that
\begin{equation}\label{final} \sum_{\sts\in \Std_{(a_m)}(\tau^{\lambda(m)}_{\mu(m)}\setminus  \tau^{\lambda(m-1)}_{\mu(m-1)})}
t^{\deg_{X_m \cup Z_m}(\sts)} =\llbracket a_m\rrbracket_t! ^2  \end{equation}
which we shall do by induction.  We set $\Std_m :=  \Std (\rho(a_m) \setminus \rho(a_m-1))$.  It is easy to see that
$$  \sum_{\sts\in  \Std_{(a_m)}(\tau^{\lambda(m)}_{\mu(m)}\setminus  \tau^{\lambda(m-1)}_{\mu(m-1)})}
\!\!\!\!\!\!\!t^{\deg_{X_m \cup Z_m}(\sts)}  =
\Bigg(\sum_{\sts\in \Std_m}t^{\deg(\sts)} \Bigg)\times \Bigg(\sum_{\sts\in \Std_m}
t^{\deg(\sts)} \Bigg)  $$
 where the first/second multiplicand on the righthand-side counts the contribution of all the residue 0-boxes/1-boxes respectively.
  We set $\square_j=(j ,a_m+1-j)\in \rho(a_m) $  for $1\leq j \leq a_m$.
   We have that
\begin{align*}
  \sum_{\begin{subarray}c
 1\leq j \leq a_m \\ \{\sts \in \Std_m\mid \sts^{-1}(a_m)= \square_j\}
 \end{subarray} }\!\!\!\!\!\!t^{\deg(\sts )}	 =
   \sum_{\begin{subarray}c \stt  \in \Std_{m-1} \\ 1\leq j \leq a_m \end{subarray} }t^{a_m-j} \times t^{1-j}\times  t^{\deg(\stt  )}
  = \sum_{1\leq j \leq a_m}   t^{a_m+1-2j} \llbracket a_m-1\rrbracket_t !
 \end{align*}
 which is equal to $\llbracket a_m \rrbracket_t ! $.
Here the first equality follows by considering both the degree
of the node $\sts^{-1}(a_m)$ (which is equal to $1-j$) and the resulting shift to the degrees of each of the  $(a_m-j)$ nodes
below $\sts^{-1}(a_m)$. The second equality holds
 by induction.  Therefore \cref{final} holds and the result follows.     \end{proof}

 \section{Kronecker coefficients and Saxl's conjecture }\label{kronsax}
Let $\lambda,\mu, \nu$ be partitions of $n$.
We define the Kronecker coefficients $g(\lambda,\mu,\nu)$
to be the coefficients in the expansion
$${\bf D}^\CC(\lambda)\otimes  {\bf D}^\CC(\mu) = \bigoplus_{\nu \vdash n} g(\lambda,\mu,\nu) \, {\bf D}^\CC(\nu)\:.$$
We now recall  Saxl's conjecture concerning the positivity of these coefficients.
We let $\chi^{\lambda}$ denote the complex irreducible $\mathfrak{S}_n$-character
to the partition $\lambda$ of $n$, i.e., the character of the Specht module ${\bf S}^\CC(\lambda)$.

\begin{saxl}
Let  $n = k(k+1)/2$ and $\rho=(k,k-1,\dots,2,1)$.
For all $\lambda \vdash n$, the multiplicity of
   $\chi^\lambda$   in the Kronecker product   $\chi^\rho\cdot \chi^\rho$    is  strictly positive.
\end{saxl}

In
\cite{MR3056296},
Heide, Saxl, Tiep and Zalesski  verified that
for almost all finite simple groups of Lie type the square of the Steinberg character
contains all irreducible characters as constituents.
They also conjectured that for all alternating groups there is some irreducible character with this property.
For symmetric groups $\mathfrak{S}_n$
to triangular numbers $n$, Saxl
then suggested the candidate $\chi^\rho$ as stated above.
Saxl's conjecture has been attacked by algebraists and complexity theorists
using a variety of combinatorial and probabilistic methods \cite{MR3856528,I15,LS,PP16}.
From our perspective, a particularly useful result is the following.

\begin{thm}[{\cite[Theorem 2.1]{I15}}]\label{iken}
Let  $n = k(k+1)/2$ and $\rho=(k,k-1,\dots,2,1)$.  If $\lambda$ is a partition of $n$ such that $\lambda \trianglerighteq \rho$ or $\lambda \trianglelefteq \rho$, then $g(\rho,\rho,\la) >0$.
\end{thm}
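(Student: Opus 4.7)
The plan proceeds in two stages. First I exploit the self-conjugacy $\rho = \rho^T$ of the staircase. Since in general $\chi^{\mu^T} = \chi^\mu \otimes \operatorname{sgn}$, this self-conjugacy gives $\chi^\rho \otimes \operatorname{sgn} = \chi^\rho$. Tensoring both sides of
\[
\chi^\rho \cdot \chi^\rho \;=\; \sum_{\mu \vdash n} g(\rho,\rho,\mu)\,\chi^\mu
\]
by $\operatorname{sgn}$ yields the symmetry $g(\rho,\rho,\mu) = g(\rho,\rho,\mu^T)$ for every $\mu \vdash n$. Because conjugation reverses the dominance order, the hypothesis $\lambda \trianglelefteq \rho$ translates into $\lambda^T \trianglerighteq \rho^T = \rho$, and so the case $\lambda \trianglelefteq \rho$ reduces at once to the case $\lambda \trianglerighteq \rho$. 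From this point on I may assume $\lambda \trianglerighteq \rho$.

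For $\lambda \trianglerighteq \rho$ I would work from the character-theoretic expression
\[
g(\rho,\rho,\lambda) \;=\; \frac{1}{n!} \sum_{\sigma \in \mathfrak{S}_n} \chi^\rho(\sigma)^2\,\chi^\lambda(\sigma),
\]
in which the weights $\chi^\rho(\sigma)^2$ are non-negative, so that the Kronecker coefficient is a non-negative-weighted sum of values of $\chi^\lambda$. The key structural input is that the staircase $\rho$ is a $2$-core: it admits no removable domino, so by the Murnaghan--Nakayama rule $\chi^\rho(\sigma) = 0$ on every class whose cycle type contains a part equal to $2$. This substantially restricts the support of the character sum, leaving only classes whose cycles avoid the value $2$ and whose shape is compatible with successive removals of longer rim hooks from $\rho$.

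The identity class then contributes the strictly positive term $(f^\rho)^2 f^\lambda / n!$, and the remaining task is to control the (possibly negative) contributions from non-identity classes. I would invoke quantitative character bounds of Fomin--Lulov or Larsen--Shalev type, which give exponentially small ratios $|\chi^\lambda(\sigma)|/f^\lambda$ and $|\chi^\rho(\sigma)|/f^\rho$ in terms of the fixed-point count or cycle structure of $\sigma$; combined with the sparsity of the support of $\chi^\rho$ forced by its $2$-core structure, and with the standard size estimates for the surviving conjugacy classes, these should suffice to keep the identity term dominant. The principal obstacle, and the quantitative heart of the argument, is producing such bounds \emph{sharp enough and uniform} over the entire dominance cone $\lambda \trianglerighteq \rho$: a casual estimate would collapse the positivity, and it is precisely this calibration that makes every character-theoretic attack on Saxl's conjecture delicate. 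An alternative representation-theoretic route (useful as a sanity check) would observe that for $\lambda \trianglerighteq \rho$ the Kostka number $K_{\lambda\rho}$ is positive, so $\chi^\lambda$ is a constituent of the permutation character $\operatorname{Ind}^{\mathfrak{S}_n}_{\mathfrak{S}_\rho}\mathbf{1}$; one then tries to promote an explicit $\mathfrak{S}_\rho$-invariant witness in $V^\rho \otimes V^\rho$ to an $\mathfrak{S}_n$-equivariant copy of $V^\lambda$ inside $V^\rho \otimes V^\rho$, via Frobenius reciprocity applied to $\operatorname{End}_{\mathfrak{S}_\rho}(V^\rho|_{\mathfrak{S}_\rho})$.
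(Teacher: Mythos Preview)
The paper does not prove this statement; it is quoted from Ikenmeyer \cite[Theorem~2.1]{I15} and used as a black box. So there is no proof here to compare your attempt against, only Ikenmeyer's original.

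Your conjugation reduction is correct and is indeed the first step in \cite{I15}: self-conjugacy of $\rho$ gives $g(\rho,\rho,\lambda)=g(\rho,\rho,\lambda^T)$, and since conjugation reverses dominance, the case $\lambda\trianglelefteq\rho$ follows from $\lambda\trianglerighteq\rho$.

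The rest of your proposal is not a proof but a strategy, and the strategy cannot be completed as written. You propose that the identity-class term $(f^\rho)^2 f^\lambda/n!$ dominates the rest of the sum
\[
g(\rho,\rho,\lambda)=\frac{1}{n!}\sum_\sigma \chi^\rho(\sigma)^2\,\chi^\lambda(\sigma),
\]
with the deficit controlled by Fomin--Lulov or Larsen--Shalev bounds. But this framing is wrong in principle, not merely hard in practice. For $\lambda$ near the top of the dominance cone (e.g.\ $\lambda=(n)$ or $(n-1,1)$) the degree $f^\lambda$ is only polynomial in~$n$, so the identity contribution is exponentially small; the positivity of $g(\rho,\rho,\lambda)$ then comes from the aggregate of \emph{all} classes, not from any dominant term. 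Concretely, for $\lambda=(n)$ every class contributes $+\chi^\rho(\sigma)^2/n!\ge 0$ and they sum to exactly~$1$; the identity term alone is negligible. No known character bounds are uniform enough over $\{\lambda\trianglerighteq\rho\}$ to rescue this, and you yourself flag the calibration as ``the principal obstacle''. Your Frobenius-reciprocity alternative likewise only yields the aggregate identity $\sum_\mu K_{\mu\rho}\,g(\rho,\rho,\mu)=\dim\operatorname{End}_{\mathfrak{S}_\rho}(V^\rho\!\downarrow)$, which does not isolate any individual coefficient.

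Ikenmeyer's actual argument is short and purely combinatorial, with no asymptotic estimates. The engine is the criterion (valid for any self-conjugate $\nu$, here $\nu=\rho$) that a single nonzero character value on the conjugacy class of cycle type~$\nu$ already forces $g(\nu,\nu,\mu)>0$; the dominance hypothesis is then exploited combinatorially to produce the required nonvanishing. If you want to reconstruct a proof, that is the direction to pursue rather than analytic bounds.
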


We let $\Bbbk$ be a field of characteristic 2.
We keep the notation $\rho=\rho(k)$ and $n=k(k+1)/2$, and we note that
${\bf D}^\Bbbk(\rho)={\bf S}^\Bbbk(\rho)={\bf P}^\Bbbk(\rho)$ is a simple projective
$\Bbbk\mathfrak{S}_n$-module.  Therefore
its tensor square is also projective and decomposes as a direct sum of indecomposable projective modules labelled by 2-regular partitions; we let  $ G(\rho,\rho,\nu) $ denote the corresponding coefficients as follows
$${\bf D}^\Bbbk(\rho)\otimes  {\bf D}^\Bbbk(\rho) =
\bigoplus_{\nu \vdash_2 n} G(\rho,\rho,\nu) \, {\bf P}^\Bbbk(\nu)\:.$$
Equivalently, on the level of complex characters we have for the irreducible character $\chi^{\rho}$ the decomposition of its square into characters $\xi^\nu$ to projective indecomposable modules (i.e., to integral lifts of the projective modules at characteristic~2):
$$(\chi^\rho)^2 =
\bigoplus_{\nu \vdash_2 n} G(\rho,\rho,\nu) \, \xi^\nu \:.$$

We wish to pass information back and forth between the 2-modular coefficients $G(\lambda,\mu,\nu)$ and the Kronecker coefficients $g(\lambda,\mu,\nu)$
in order to make headway on Saxl's conjecture.
The following observation is immediate
\begin{equation}\label{easypeasy}
g(\rho,\rho,\la )= \sum_\nu G(\rho,\rho,\nu)d_{\lambda,\nu}.
\end{equation}

  We first want to explain how to apply this to obtain positivity for new classes
 of Kronecker coefficients.
Recall from \cref{facts2} that the 2-blocks  of $\mathfrak{S}_n$
 are parameterized by
 the common 2-core of the partitions labelling the simple modules in characteristic~2
 and the Specht modules in the block,
 together with the weight.
 Further recall from \cref{facts} that the decomposition matrix for each block is unitriangular
 with respect to 
   the dominance ordering.
 In particular, the most dominant partition, $\tau^{(w)}_\varnothing$,
 in a given 2-block of weight $w$
 labels a Specht module with simple reduction mod~2.
 Hence, applying Ikenmeyer's result we deduce that
 any non-zero entry of the first column (labelled by $\tau^{(w)}_\varnothing$)  of
 the 2-decomposition matrix of any 2-block  corresponds to a non-zero Kronecker coefficient.
This allows us to verify Kronecker positivity in Saxl's conjecture for two new  infinite families of partitions:

\begin{thm}\label{height0}
Let $n=k(k+1)/2$, $\rho=\rho(k)$ and $\lambda \vdash n$ such that $\chi^\lambda$
is of height~0.
Then $g(\rho,\rho,\lambda)>0$.
In particular, all $\chi^\lambda$ of odd degree are constituents of
the Saxl square.
 \end{thm}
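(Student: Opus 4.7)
The plan is to run a block-by-block argument based on equation~\eqref{easypeasy}. Fix a $2$-block $B$ of $\mathfrak{S}_n$ of weight $w$, with $2$-core $\rho(j)$ for some $0\le j\le k$, and set $\tau:=\tau^{(w)}_\varnothing$, the most dominant partition of $B$. By \cref{KOW}, $\mathbf{D}^\Bbbk(\tau)$ is the unique simple $\Bbbk\mathfrak{S}_n$-module in $B$ of height~$0$. I aim to establish two facts: (i)~$G(\rho,\rho,\tau)>0$, and (ii)~$d_{\lambda,\tau}>0$ whenever $\lambda$ labels a height~$0$ character in $B$. Combined via \eqref{easypeasy}, these immediately give
$$g(\rho,\rho,\lambda)\;\ge\;G(\rho,\rho,\tau)\cdot d_{\lambda,\tau}\;>\;0,$$
which proves the theorem.

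Step~(i) is straightforward. Since $\tau$ is the most dominant element of $B$ and is $2$-regular, the unitriangularity of the $2$-modular decomposition matrix forces $d_{\tau,\nu}=\delta_{\tau,\nu}$ for $\nu\in B$, so substituting $\lambda=\tau$ into \eqref{easypeasy} collapses the sum to a single term, yielding $g(\rho,\rho,\tau)=G(\rho,\rho,\tau)$. One checks directly from $\tau=(j+2w,j-1,\ldots,2,1)$ (using $2w=(k-j)(k+j+1)/2$) that $\tau\trianglerighteq\rho$, so Ikenmeyer's result \cref{iken} yields $g(\rho,\rho,\tau)>0$, and hence $G(\rho,\rho,\tau)>0$.

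Step~(ii) is the main obstacle: it leverages the \emph{uniqueness} assertion of \cref{KOW} through a $2$-adic valuation argument. Write $v:=n-s(n)-d(B)$, so that $2^v\mid\dim\mathbf{D}^\Bbbk(\mu)$ for every $\mu\in B$, with sharp divisibility (that is, height~$0$) occurring only at $\mu=\tau$. Dividing the identity
$$\dim\chi^\lambda\;=\;\sum_{\mu\vdash_2 n}d_{\lambda,\mu}\dim\mathbf{D}^\Bbbk(\mu)$$
by $2^v$ and reducing modulo~$2$ kills every summand on the right with $\mu\ne\tau$, leaving
$$\frac{\dim\chi^\lambda}{2^v}\;\equiv\;d_{\lambda,\tau}\cdot\frac{\dim\mathbf{D}^\Bbbk(\tau)}{2^v}\pmod{2}.$$
The height~$0$ hypothesis on $\chi^\lambda$ makes the left-hand side odd, while the factor on the right is odd because $\mathbf{D}^\Bbbk(\tau)$ is itself of height~$0$. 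Hence $d_{\lambda,\tau}$ is odd, in particular strictly positive, proving (ii). The final assertion about odd-degree characters is then the special case of the principal $2$-block, whose height~$0$ characters are precisely those of odd degree, as recorded in \cref{subsec:height0}.
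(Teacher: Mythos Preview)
Your proof is correct and follows essentially the same route as the paper's: both arguments combine \eqref{easypeasy} with Ikenmeyer's theorem applied to the most dominant partition $\tau=\tau^{(w)}_\varnothing$ in the block (yielding $G(\rho,\rho,\tau)>0$), together with the $2$-adic valuation argument that forces $d_{\lambda,\tau}$ to be odd via the uniqueness statement in \cref{KOW}. The paper compresses your Step~(ii) into the phrase ``simply by comparing dimensions'', but the content is identical.
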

\begin{proof}
Let $B$ be the 2-block of $\mathfrak{S}_n$ to which $\chi^\lambda$ belongs.
Because $\chi^\lambda$ is a character of height~0,
the modulo 2 reduction
 ${\bf S}^\Bbbk(\la)$ must
 have a composition factor of height zero  and this composition factor must  appear with odd multiplicity (simply by comparing the dimensions, see \cref{subsec:height0}).
 By \cref{KOW},  the 2-block $B$ contains a unique simple module
$D={\bf D}^\Bbbk(\mu)$  of height~0,
with  $\mu=\tau^{(w)}_\varnothing=\tau +(2w)$ the most dominant partition
belonging to the block.
 The discussion preceding the theorem now implies
$$g(\rho,\rho,\lambda) \ge d_{ \lambda , \tau+(2w)} >0.\qedhere$$
\end{proof}

Since we get the large number $k_0(B)$ of height~0  irreducible characters
for each 2-block $B$, this  constitutes quite a large class of
 constituents in the Saxl square.

  \begin{eg}
  We have already seen (in \cref{36}) that the  block $B_{3}(36)$ contains 1024 height~0 characters,
  which all appear  in Saxl's tensor square.
\end{eg}

\begin{eg}
We consider the partition $\lambda=(9,8,3,2,2,2,1,1)\vdash 28$.
 This is the first of the  four partitions pictured in \cref{2adicpic2836}.
In this case,
  the desired positivity of $g(\rho(7),\rho(7),\lambda)$
cannot be deduced using
  the available non-vanishing criteria in  the literature  \cite{MR3856528,PP16},
  and $\lambda$ is incomparable to $\rho$ in the dominance order, so \cite{I15}
  does not apply.
 The character $\chi^\lambda$ belongs to the 2-block of weight $w=11$ and
2-core $\rho(3)$, and it is of height~0.
Instead of computing the degree explicitly, this can also be seen by
applying one of the combinatorial descriptions for labels of
height~0 characters, e.g., the one due to \cite{MR3829558}
recalled in \cref{subsec:height0}.
  Finally, referring forward in this paper: we remark that  $(9,8,3,2,2,2,1,1)$ is  not 2-separated.  Thus
\cref{height0} provides us with constituents of the Saxl  square that cannot be deduced using any other results in the literature.
\end{eg}

For the second new family;
we will also need to apply our new results
on Specht modules for the Hecke algebra.

 \begin{thm}\label{stairacse}
For  $\tau^{(m)}_{(1^\ell)}$   any framed staircase partition of $n=k(k+1)/2$,
we have that
$g(\rho,\rho,\tau^{(m)}_{(1^\ell)})>0$.
 \end{thm}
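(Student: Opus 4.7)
The plan is to reduce positivity of $g(\rho,\rho,\tau^{(m)}_{(1^\ell)})$ to \cref{iken} by producing a $2$-regular partition $\nu$ with both $G(\rho,\rho,\nu)>0$ and $d^\Bbbk_{\tau^{(m)}_{(1^\ell)},\nu}>0$, and then invoking \cref{easypeasy}. The natural candidate is $\nu = \tau^{(m+\ell)}_\varnothing$, the most dominant partition in the $2$-block of weight $w=m+\ell$ containing $\tau^{(m)}_{(1^\ell)}$.

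First I would apply \cref{gradedresult} with $\lambda=(m)$ and $\mu=(1^\ell)$: the relevant Littlewood--Richardson coefficient is $c((1^{m+\ell}),(1^m),(1^\ell))$, which equals $1$ by Pieri's rule, as $s_{(1^{m+\ell})}$ occurs with multiplicity one in $e_m\cdot e_\ell = s_{(1^m)}s_{(1^\ell)}$. Hence
\[
[{\bf S}^\CC_{-1}(\tau^{(m)}_{(1^\ell)}):{\bf D}^\CC_{-1}(\tau^{(m+\ell)}_\varnothing)]=1.
\]
Applying \cref{adjust} transfers this to the modular setting: since the adjustment matrix has non-negative entries and unit diagonal, $d^\Bbbk_{\tau^{(m)}_{(1^\ell)},\tau^{(m+\ell)}_\varnothing}\ge 1$ for any field $\Bbbk$ of characteristic $2$.

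Next I would observe that $\tau^{(m+\ell)}_\varnothing = (k'+2w,k'-1,\ldots,2,1)$, where $\rho(k')$ is the $2$-core of $\tau^{(m)}_{(1^\ell)}$, has strictly decreasing parts (hence is $2$-regular) and is the most dominant partition in its $2$-block. Unitriangularity of the decomposition matrix then forces ${\bf S}^\Bbbk(\tau^{(m+\ell)}_\varnothing) = {\bf D}^\Bbbk(\tau^{(m+\ell)}_\varnothing)$, so $g(\rho,\rho,\tau^{(m+\ell)}_\varnothing) = G(\rho,\rho,\tau^{(m+\ell)}_\varnothing)$. A routine comparison of partial sums (the $r$th partial sum of $\tau^{(m+\ell)}_\varnothing$ exceeds that of $\rho(k)$ by $2w - r(k-k')$ for $r\le k'+1$, and this is non-negative since $2w=(k-k')(k+k'+1)/2\ge r(k-k')$) establishes $\tau^{(m+\ell)}_\varnothing\trianglerighteq \rho(k)$. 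Applying \cref{iken} then yields $g(\rho,\rho,\tau^{(m+\ell)}_\varnothing)>0$, and hence $G(\rho,\rho,\tau^{(m+\ell)}_\varnothing)>0$.

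Assembling these via \cref{easypeasy},
\[
g(\rho,\rho,\tau^{(m)}_{(1^\ell)}) \;=\; \sum_\nu G(\rho,\rho,\nu)\, d_{\tau^{(m)}_{(1^\ell)},\nu} \;\geq\; G(\rho,\rho,\tau^{(m+\ell)}_\varnothing)\cdot d^\Bbbk_{\tau^{(m)}_{(1^\ell)},\tau^{(m+\ell)}_\varnothing} \;>\; 0,
\]
which completes the argument. The only substantive input is \cref{gradedresult}, which pinpoints the specific $2$-modular composition factor ${\bf D}^\Bbbk(\tau^{(m+\ell)}_\varnothing)$ of ${\bf S}^\Bbbk(\tau^{(m)}_{(1^\ell)})$ visible to Ikenmeyer's dominance criterion; once the semisimple decomposition over $H^\CC_{-1}(n)$ is in hand, I expect no further obstacles.
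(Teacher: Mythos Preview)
Your proof is correct and follows essentially the same route as the paper: pick $\nu=\tau^{(m+\ell)}_\varnothing$, use \cref{gradedresult} to get $c((1^{m+\ell}),(1^m),(1^\ell))=1$ as the relevant decomposition multiplicity over $H^\CC_{-1}(n)$, pass to characteristic~$2$ via the adjustment matrix, and then invoke Ikenmeyer's dominance criterion together with \cref{easypeasy}. The paper's proof is terser (it cites ``the discussion above'' for the Ikenmeyer step rather than checking dominance explicitly), but the logic is the same.
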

 \begin{proof}
Let $w=\ell+m$ be the weight of the 2-block $B$ to which $\tau^{(m)}_{(1^\ell)}$ belongs.
We have that $\tau^{(w)}_\varnothing$ is the most dominant partition in $B$ and so the corresponding Specht module is simple.  Now
$$[ {\bf S}^\Bbbk_{-1}(\tau^{(m)}_{(1^\ell)})
 : {\bf D}^\Bbbk_{-1}(\tau^{(w)}_\varnothing )]
 \geq
 [{\bf S}^\CC_{-1}(\tau^{(m)}_{(1^\ell)})
 : {\bf D}^\CC_{-1}(\tau^{(w)}_\varnothing )]
 = c((1^w),(1^m),(1^{\ell}))=1>0$$
 and so the result follows by the discussion above.
  \end{proof}

For reasons that will soon become apparent, we now recall Carter's criterion explicitly.

\begin{thm}[{\cite{MR1687552}}]\label{JamMAt}
We let $\Bbbk$ be a field of characteristic 2.
Let $\lambda=(\lambda_1,\lambda_2,\dots,\lambda_\ell)$ be a partition.  Then the Specht module
 ${\bf S}^\Bbbk (\lambda)$
is simple if and only if one of the following conditions holds:
\begin{itemize}
\item[$(i)$] $\la_i-\la_{i+1}\equiv -1$ modulo $2^{\ell_2(\la_{i+1}-\la_{i+2}})$ for all $i\geq 1$;
\item[$(ii)$] the transpose partition, $\lambda^T$, satisfies $(i)$;
\item[$(iii)$] $\lambda=(2,2)$,
\end{itemize}
where here $\ell_2(k)$ is the least non-negative integer such that $k < 2^{\ell_2(k)}$.
We say that any partition as in  $(i)$ satisfies Carter's criterion.
\end{thm}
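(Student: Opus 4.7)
The plan is to split the result into a forward direction (Carter's condition implies simplicity) and a reverse direction (simplicity implies Carter's condition for $\lambda$ or $\lambda^T$), after first collapsing conditions $(i)$ and $(ii)$ using a phenomenon specific to characteristic~2: the sign representation coincides with the trivial representation. Concretely, James duality reads ${\bf S}^\Bbbk(\lambda)^\ast \cong {\bf S}^\Bbbk(\lambda^T) \otimes \mathrm{sgn}$, which at $p=2$ collapses to ${\bf S}^\Bbbk(\lambda)^\ast \cong {\bf S}^\Bbbk(\lambda^T)$, so ${\bf S}^\Bbbk(\lambda)$ is simple if and only if ${\bf S}^\Bbbk(\lambda^T)$ is simple. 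This allows me to assume throughout that whichever member of the conjugate pair satisfies $(i)$ is $\lambda$ itself; the exceptional case $\lambda=(2,2)$ is handled by a direct computation in $\mathfrak{S}_4$.

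For the forward direction, suppose $\lambda$ satisfies condition~$(i)$. I would combine \cref{James-reg}, which gives $d^\Bbbk_{\lambda,{\sf R}(\lambda)}=1$, with the upper bound from \cref{hero}, which says $d^\Bbbk_{\lambda,\mu} \leq |\mathrm{CStd}(\lambda,\mu)|$. It therefore suffices to show that $\mathrm{CStd}(\lambda,\mu) = \emptyset$ for every $2$-regular $\mu \neq {\sf R}(\lambda)$. The key combinatorial observation is that the congruence $\lambda_i-\lambda_{i+1}\equiv -1 \pmod{2^{\ell_2(\lambda_{i+1}-\lambda_{i+2})}}$ can be reinterpreted as forcing ladder boxes to slide upward uniquely. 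More precisely, any semistandard coloured filling of shape $\lambda$ with $2$-regular weight~$\mu$ determines, row by row, how many boxes of each ladder live in each row; the Carter congruence at row $i$ rules out each of the possible redistributions of a brick between rows $i$ and $i+1$, because any such redistribution would violate either semistandardness or the residue constraint dictated by~$\mu$. Hence $\mu={\sf R}(\lambda)$ is forced and ${\bf S}^\Bbbk(\lambda)\cong {\bf D}^\Bbbk({\sf R}(\lambda))$ is simple.

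For the reverse direction, assume ${\bf S}^\Bbbk(\lambda)$ is simple with $\lambda\neq (2,2)$ and that neither $\lambda$ nor $\lambda^T$ satisfies $(i)$. Let $i$ be a smallest row index where the congruence fails. I would use the failing bit position in the binary expansion of $\lambda_i-\lambda_{i+1}$ to exhibit an explicit second coloured tableau $\SSTS\in \mathrm{CStd}(\lambda,\mu)$ for some $2$-regular $\mu \vartriangleright {\sf R}(\lambda)$, obtained by transferring a carefully chosen domino of bricks between rows $i$ and $i+1$, and to verify that $\deg(\SSTS)=0$. Invoking \cref{troll}$(iii)$ together with \cref{bk09} (bar-invariance) and \cref{VV99} (positivity in $t\NN_0[t]$), this extra tableau contributes a bar-invariant constant to the right-hand side of the LLT-style recursion and hence forces a non-zero graded decomposition number $d_{\lambda,\mu}(t)$ over $\CC$. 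Transporting via the factorisation in \eqref{adjust} yields a non-zero $d^\Bbbk_{\lambda,\mu}$ in characteristic $p=2$ with $\mu\neq {\sf R}(\lambda)$, contradicting simplicity.

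The main obstacle will be the reverse direction, specifically promoting the combinatorial existence of an additional coloured tableau into an actual non-vanishing decomposition number, since \cref{hero} only supplies an upper bound. One must bootstrap via the Kleshchev--Nash recursion in \cref{troll}$(iii)$, which itself relies on the deep Varagnolo--Vasserot positivity of \cref{VV99}; tracking the constant terms contributed by a bar-invariant tableau through this recursion, while managing cancellations coming from more dominant weights~$\nu$ with $\lambda\vartriangleleft \nu \vartriangleleft \mu$, is the most delicate part. An alternative route, closer to the original argument of \cite{MR1687552}, is to exhibit an explicit non-zero homomorphism between Specht modules whose existence is forced by the failing row congruence, and to proceed by induction on $|\lambda|$ using a removable-domino argument to reduce to a smaller non-simple Specht module.
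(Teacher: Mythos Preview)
The paper does not prove this theorem at all: it is stated with the attribution \cite{MR1687552} (James--Mathas, 1999) and used as a black box in the applications to Saxl's conjecture. There is therefore no ``paper's own proof'' to compare against; the result is imported from the literature, where the forward implication goes back to James's original monograph and the converse was completed by James--Mathas using the Jantzen--Schaper formula.

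That said, your proposed strategy has real gaps beyond those you flag. In the forward direction, the claim that Carter's congruences force $\mathrm{CStd}(\lambda,\mu)=\emptyset$ for every $2$-regular $\mu\neq{\sf R}(\lambda)$ is asserted rather than argued: you would need to show that \emph{every} semistandard coloured filling of $\lambda$ is the ladder tableau, and the one-line explanation about ``redistributing a brick between rows $i$ and $i+1$'' does not engage with the actual constraints defining $\mathrm{CStd}$. In the reverse direction, the problem you identify is fatal as stated: exhibiting a single extra tableau $\SSTS\in\mathrm{CStd}(\lambda,\mu)$ of degree~$0$ does \emph{not} force $d_{\lambda,\mu}(t)\neq 0$, because the recursion in \cref{troll}$(iii)$ subtracts contributions $\Dim(e(\mu){\bf D}^\CC_q(\nu))\,d_{\lambda,\nu}(t)$ from intermediate $\nu$, and these can absorb the bar-invariant part (indeed, this is exactly what happens in \cref{eis2exfin} for the row $(4,2)$, column $(6)$). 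Your fallback suggestion---an explicit homomorphism plus induction via domino removal---is essentially the James--Mathas route and is the one that actually works, but it is not developed here.
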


 \begin{eg}
 The most dominant partition, $\tau^{(w)}_{\varnothing}$, in a 2-block of weight $w$ satisfies Carter's criterion.
 \end{eg} \begin{eg}
 In a 2-block of weight $w=m(m+1)/2$, we
 find the partition
  $\tau^{\rho({m})}_{\varnothing} $
 that satisfies Carter's criterion.
 \end{eg}

If $\lambda$ is a 2-regular partition, then all the rows of $\lambda $ are of distinct length.
It immediately follows that $\lambda\trianglerighteq \rho$ and therefore  $g(\rho,\rho,\lambda) >0$.
 If furthermore the partition $\lambda$ satisfies Carter's criterion, then by \cref{labelll} we have that
 ${\bf P}^\Bbbk (\lambda)$ is the unique projective module in which   ${\bf S}^\Bbbk (\lambda)$ appears as a composition factor of a Specht filtration.  Putting these two statements together (in light of   \cref{easypeasy}) we obtain the following.

 \begin{prop}
 Let $n=k(k+1)/2$ and $\alpha \vdash n$.
 If $\alpha$  satisfies Carter's criterion, then we have that
 $G(\rho(k),\rho(k),\alpha)>0$.
 \end{prop}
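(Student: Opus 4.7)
The plan is to combine two ingredients already prepared in the paper: Ikenmeyer's positivity criterion and the collapse of a row of the 2-decomposition matrix labelled by a simple Specht module. Throughout, let $\alpha \vdash n = k(k+1)/2$ be 2-regular (a prerequisite for $G(\rho,\rho,\alpha)$ even to be defined) and suppose $\alpha$ satisfies Carter's criterion.

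First, I would observe that 2-regularity forces the parts of $\alpha$ to be strictly decreasing, whence $\alpha \trianglerighteq \rho$ by the discussion immediately preceding the proposition, and so \cref{iken} of Ikenmeyer yields $g(\rho,\rho,\alpha) > 0$. Next, Carter's criterion (\cref{JamMAt}) ensures ${\bf S}^\Bbbk(\alpha)$ is simple; combined with 2-regularity this forces ${\bf S}^\Bbbk(\alpha) = {\bf D}^\Bbbk(\alpha)$. The $\alpha$-row of the 2-decomposition matrix therefore collapses: $d_{\alpha,\nu} = [{\bf S}^\Bbbk(\alpha):{\bf D}^\Bbbk(\nu)] = \delta_{\alpha,\nu}$ for every 2-regular $\nu$. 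By Brauer--Humphrey reciprocity \cref{labelll}, this is equivalent to saying that ${\bf S}^\Bbbk(\alpha)$ appears as a Specht-filtration factor of exactly one indecomposable projective, namely ${\bf P}^\Bbbk(\alpha)$, and with multiplicity one.

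To conclude, I would substitute this collapse into the key character identity \cref{easypeasy},
\[
g(\rho,\rho,\alpha) \;=\; \sum_\nu G(\rho,\rho,\nu)\, d_{\alpha,\nu} \;=\; G(\rho,\rho,\alpha),
\]
and combine with the first step to obtain $G(\rho,\rho,\alpha) = g(\rho,\rho,\alpha) > 0$, as required. There is no substantive obstacle in the argument: it is essentially a bookkeeping consequence of the projectivity of ${\bf D}^\Bbbk(\rho) \otimes {\bf D}^\Bbbk(\rho)$ at characteristic $2$ and the vanishing of the off-diagonal entries in the row of the decomposition matrix indexed by a simple Specht module. The only delicacy worth double-checking is that \cref{easypeasy} is applied in the right direction, so that simplicity of ${\bf S}^\Bbbk(\alpha)$ (a statement about the $\alpha$-row) is what collapses the sum indexed by $\nu$; the Specht-filtration interpretation of the same identity via \cref{labelll} then makes the resulting equality $G(\rho,\rho,\alpha)=g(\rho,\rho,\alpha)$ transparent from the projective-module side.
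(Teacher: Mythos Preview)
Your proposal is correct and follows essentially the same route as the paper: both use Ikenmeyer's result to get $g(\rho,\rho,\alpha)>0$ from $\alpha\trianglerighteq\rho$, then collapse the sum in \cref{easypeasy} via $d_{\alpha,\nu}=\delta_{\alpha,\nu}$ (a consequence of Carter's criterion making ${\bf S}^\Bbbk(\alpha)$ simple) to conclude $G(\rho,\rho,\alpha)=g(\rho,\rho,\alpha)>0$. Your remark that Carter's criterion already forces 2-regularity (so that $G(\rho,\rho,\alpha)$ is defined) is a useful clarification the paper leaves implicit.
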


 The following result is immediate by  \cref{easypeasy}. It is the key to all of our results on Kronecker positivity (as it relates this problem to that of determining the positivity of modular decomposition numbers) and vastly generalises \cref{stairacse}.

 \begin{thm}\label{ineedalabel}
 Let $n=k(k+1)/2$ and $\alpha \vdash n$.    If there exists some $\beta$ satisfying  Carter's criterion such that $d_{\alpha ,\beta}=m>0$, then
 $g(\rho(k),\rho(k),\alpha)\geq m$.
 We refer to such a pair $(\alpha,\beta)$ as an {\sf $m$-Carter--Saxl pair}.
 \end{thm}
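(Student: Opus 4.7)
The plan is that this theorem follows in a few lines from \cref{easypeasy} together with the preceding proposition; it is essentially a restatement of that proposition combined with the monotonicity present in (easypeasy). To begin, recall that \cref{easypeasy} expresses
\[ g(\rho,\rho,\alpha) = \sum_\nu G(\rho,\rho,\nu)\, d_{\alpha,\nu} \]
as a sum of non-negative integers, since the $G(\rho,\rho,\nu)$ count multiplicities of indecomposable projective summands of the 2-modular tensor square and the $d_{\alpha,\nu}$ are ordinary decomposition numbers. I would therefore discard all summands except the one with $\nu = \beta$ to obtain
\[ g(\rho,\rho,\alpha) \geq G(\rho,\rho,\beta)\, d_{\alpha,\beta} = m\, G(\rho,\rho,\beta). \]

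The remaining step is to verify that $G(\rho,\rho,\beta) \geq 1$, and this is precisely the preceding proposition applied to $\beta$. For the reader's convenience, I would recall its argument: $\beta$ satisfies Carter's criterion, so ${\bf S}^\Bbbk(\beta) = {\bf D}^\Bbbk(\beta)$ is simple and in particular $\beta$ is 2-regular. As the rows of a 2-regular partition are strictly decreasing, we then have $\beta \trianglerighteq \rho$, so $g(\rho,\rho,\beta) > 0$ by Ikenmeyer's theorem (\cref{iken}). On the other hand, simplicity of ${\bf S}^\Bbbk(\beta)$ combined with Brauer--Humphreys reciprocity (\cref{labelll}) gives $d_{\beta,\nu} = \delta_{\beta,\nu}$, so applying \cref{easypeasy} at $\alpha = \beta$ collapses the sum to $g(\rho,\rho,\beta) = G(\rho,\rho,\beta)$. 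Hence $G(\rho,\rho,\beta) \geq 1$, and combining with the displayed inequality yields $g(\rho,\rho,\alpha) \geq m$.

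There is no genuine obstacle in this proof; it is a bookkeeping consequence of the machinery already in place. The conceptual point, rather than the technical content, is what matters: the theorem reduces the Kronecker positivity problem for $g(\rho,\rho,\alpha)$ to the (in general still delicate, but purely block-local and modular representation theoretic) task of exhibiting a single Carter partition $\beta$ occurring with positive multiplicity in the $\alpha$-row of the 2-decomposition matrix. The strength of the bound $m$ can then be inherited directly from decomposition number calculations, and it is this reformulation that powers the subsequent applications to framed staircases, height-zero characters, and more general Carter--Saxl pairs.
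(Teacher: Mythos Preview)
Your argument is correct and is precisely the paper's approach: the paper states that the result is immediate from \cref{easypeasy} together with the preceding proposition that $G(\rho,\rho,\beta)>0$ whenever $\beta$ satisfies Carter's criterion, and you have simply written out this two-line deduction explicitly. One minor expository quibble: the implication ``$\beta$ satisfies Carter's criterion $\Rightarrow$ $\beta$ is $2$-regular'' follows directly from condition~(i) of \cref{JamMAt} (distinct row lengths are forced), rather than from simplicity of ${\bf S}^\Bbbk(\beta)$ per se, since a simple Specht module need not be labelled by a $2$-regular partition in general.
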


We are now ready to use the results of \cref{schur,2qs} toward the Kronecker problem.

   \begin{thm}\label{app1}
   Let $w=k(k+1)/2$, $n=w(2w+1)$ and $\tau =  \rho(2w-1)$.
   Then for $\la,\mu$ any pair such that  $c(\rho(k),\la,\mu^T)>0$
   we have that
   $$g(\rho{(2w)},\rho({2w}),\tau^\la_\mu)\geq c(\rho(k),\la,\mu^T)> 0.$$
      \end{thm}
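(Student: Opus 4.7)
The plan is to construct an $m$-Carter--Saxl pair $(\tau^\lambda_\mu,\beta)$ with $m\ge c(\rho(k),\lambda,\mu^T)$ and then invoke \cref{ineedalabel}. The natural candidate is $\beta:=\tau^{\rho(k)}_\varnothing$: since $c(\rho(k),\lambda,\mu^T)>0$ forces $|\lambda|+|\mu|=w$, both $\tau^\lambda_\mu$ and $\beta$ are $2$-separated over the same staircase $\tau=\rho(2w-1)$ and have weight $w=k(k+1)/2$, so they lie in a common $2$-block of $\mathfrak{S}_n$.

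First I would verify that $\beta=\rho(2w-1)+2\rho(k)$ satisfies Carter's criterion of \cref{JamMAt}. A direct computation of the successive differences shows $\beta_i-\beta_{i+1}=3$ for $i<k$ and $\beta_i-\beta_{i+1}=1$ for $i\ge k$; since $3\equiv-1\pmod{4}$ and $1\equiv-1\pmod{2}$, condition $(i)$ of \cref{JamMAt} holds (this is precisely the example recorded immediately after \cref{JamMAt}). In particular $\beta\in\regptn{2}{n}$ and ${\bf D}^\Bbbk(\beta)={\bf S}^\Bbbk(\beta)$ is simple in characteristic~$2$.

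Next I would extract the relevant composition multiplicity over the Hecke algebra $H^\CC_{-1}(n)$. Applied to $\tau^\lambda_\mu$, \cref{gradedresult} gives
\[
{\bf S}^\CC_{-1}(\tau^\lambda_\mu) \;=\; \bigoplus_\nu c(\nu^T,\lambda^T,\mu)\,{\bf D}^\CC_{-1}(\tau^\nu_\varnothing)\langle|\mu|\rangle,
\]
and specialising to $\nu=\rho(k)$, together with the self-conjugacy $\rho(k)^T=\rho(k)$ and the full $S_3$-symmetry of Littlewood--Richardson coefficients, yields
\[
[{\bf S}^\CC_{-1}(\tau^\lambda_\mu):{\bf D}^\CC_{-1}(\beta)]\;=\;c(\rho(k),\lambda^T,\mu)\;=\;c(\rho(k),\lambda,\mu^T).
\]
Passing to $\Bbbk\mathfrak{S}_n$ with $\Bbbk$ of characteristic~$2$ via the adjustment-matrix factorisation \cref{adjust}, and using that the diagonal entry $[{\bf D}^\CC_{-1}(\beta)\otimes_\ZZ\Bbbk:{\bf D}^\Bbbk(\beta)]$ equals $1$ while all other entries of the adjustment matrix are non-negative, one deduces
\[
d^\Bbbk_{\tau^\lambda_\mu,\beta}\;\ge\;c(\rho(k),\lambda,\mu^T).
\]
Applying \cref{ineedalabel} with $\alpha=\tau^\lambda_\mu$ and $m=c(\rho(k),\lambda,\mu^T)$ then produces the claimed bound on $g(\rho(2w),\rho(2w),\tau^\lambda_\mu)$.

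The only substantive step is the Carter-criterion verification for $\beta$; everything else is assembly of \cref{gradedresult}, \cref{adjust} and \cref{ineedalabel}. The mild subtlety is matching the Littlewood--Richardson symmetries exactly, and confirming that $\beta$ genuinely belongs to the same $2$-block as $\tau^\lambda_\mu$; once this is in hand the argument is a short chain of inequalities.
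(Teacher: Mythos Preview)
Your argument is correct and matches the paper's proof essentially line for line: choose $\beta=\tau^{\rho(k)}_\varnothing$, observe it satisfies Carter's criterion (the paper cites this as the example following \cref{JamMAt}), read off the composition multiplicity from \cref{gradedresult}, pass to characteristic~$2$ via the adjustment-matrix inequality, and invoke \cref{ineedalabel}. One tiny slip: the difference $\beta_k-\beta_{k+1}$ is also $3$, not $1$ (the transition to differences of $1$ occurs at $i=k+1$), but since $3\equiv-1\pmod 2$ as well, the Carter-criterion check is unaffected.
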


 \begin{proof}
Clearly, $\rho(2w)$ is a partition of $n=w(2w+1)$.
 We restrict our attention to the block $B$
of weight $w$ in $\Bbbk\mathfrak{S}_n$ with 2-core $\tau =  \rho(2w-1)$.
For $\nu=\rho(k)$, the partition  $\tau^{\nu}_\varnothing$  belongs to this block, and it  satisfies Carter's criterion.
Note that $c(\rho(k),\la,\mu^T)>0$ implies that $|\lambda|+|\mu|=w$, and
$\tau^{\lambda}_{\mu}$ also belongs to $B$.
  Finally,  we have
   that $$[
 {\bf S}^\Bbbk_{-1}(\tau^{\lambda}_{\mu} )
 : {\bf D}^\Bbbk_{-1}(\tau^{\nu}_\varnothing )]
 \geq
 [{\bf S}^\CC_{-1}(\tau^{\lambda}_{\mu} )
 : {\bf D}^\CC_{-1}(\tau^{\nu}_\varnothing )]
 = c(\rho(k),\lambda,\mu^T) >0, $$
 and the result follows from \cref{ineedalabel}.
  \end{proof}

We remark that none of the partitions above  are covered by existing results in the literature.
It is clear that they are not hooks or double-hooks and providing that neither $\la$ or $\mu$ is the empty partition, then these partitions are not comparable with $\rho(2w)$ in the dominance order (as $\tau^\la_\mu$ is both wider and longer that $\rho(2w)$).

An explicit new infinite family with unbounded Kronecker coefficients is given in the following corollary.

  \begin{cor}
 For
   $w=k(k+1)/2$, $n=w(2w+1)$ and $\tau=\rho(2w-1)$, we
 have that
  $$g(\rho({2w}),\rho({2w}),\tau^{\rho({k-1})}_{(k-1,1)})\geq k-1.  $$
      \end{cor}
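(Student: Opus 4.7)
The plan is to instantiate \cref{app1} with the specific choices $\la=\rho(k-1)$ and $\mu=(k-1,1)$. Since $|\rho(k-1)|+|(k-1,1)|=\binom{k}{2}+k=\binom{k+1}{2}=w$, the size constraint for the 2-separated partition $\tau^\la_\mu$ is satisfied, and \cref{app1} will give
$$g(\rho(2w),\rho(2w),\tau^{\rho(k-1)}_{(k-1,1)}) \;\geq\; c(\rho(k),\rho(k-1),(k-1,1)^T) \;=\; c(\rho(k),\rho(k-1),(2,1^{k-2})).$$
Thus the entire corollary reduces to the Littlewood--Richardson calculation $c(\rho(k),\rho(k-1),(2,1^{k-2}))=k-1$, together with the observation that this number is manifestly positive, which is what makes \cref{app1} applicable in the first place.

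To carry out this calculation, I would interpret the Littlewood--Richardson coefficient combinatorially as the number of LR tableaux of skew shape $\rho(k)/\rho(k-1)$ with content $(2,1^{k-2})$. An inspection of $\rho(k)/\rho(k-1)$ reveals that it consists of $k$ cells lying along the anti-diagonal, namely $(i,k-i+1)$ for $i=1,\ldots,k$, no two of which share a row or a column. The row-weakly-increasing and column-strictly-increasing conditions for a semistandard tableau are therefore automatic, so the only real constraint on such a filling is that the reverse reading word $(a_1,a_2,\ldots,a_k)$ (with $a_i$ the entry in cell $(i,k-i+1)$) be a lattice word.

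The multiset of entries is $\{1,1,2,3,\ldots,k-1\}$, so the lattice condition forces $a_1=1$ (otherwise already $\#2>\#1$ on the first step); moreover, for each $j\geq 2$, the unique occurrence of $j+1$ must come after the unique occurrence of $j$, which means the letters $2,3,\ldots,k-1$ appear in this precise increasing order within the subword $(a_2,\ldots,a_k)$. The only remaining degree of freedom is the position of the second $1$ among the $k-1$ slots $a_2,\ldots,a_k$, yielding exactly $k-1$ admissible LR tableaux and completing the proof. The main substance lies in this enumeration; the rest is a direct invocation of \cref{app1}, and there is no genuine obstacle, since the anti-diagonal geometry of $\rho(k)/\rho(k-1)$ trivialises both the semistandardness and the lattice-word conditions down to a single combinatorial choice.
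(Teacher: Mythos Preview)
Your proposal is correct and follows exactly the same route as the paper: apply \cref{app1} with $\lambda=\rho(k-1)$ and $\mu=(k-1,1)$, reducing the claim to the single Littlewood--Richardson identity $c(\rho(k),\rho(k-1),(2,1^{k-2}))=k-1$. The paper simply asserts this identity, whereas you supply the enumeration of LR fillings of the anti-diagonal skew shape $\rho(k)/\rho(k-1)$; your argument there is valid, since semistandardness is vacuous on disjoint cells and the lattice condition forces $a_1=1$ and the relative order of $2,\ldots,k-1$, leaving precisely the $k-1$ choices for the position of the second~$1$.
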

 \begin{proof}
We have
$$ c(\rho(k),\rho(k-1),(k-1,1)^T)=k-1, $$
from which
the result follows from \cref{app1}.
  \end{proof}

\begin{eg}
In particular, for $w=k(k+1)/2$,
  $g(\rho{(2w)},\rho({2w}),\tau^\la_\mu)> 0$
for $\la,\mu$ any pair such that $\la+\mu^T=\rho(k)$.
Examples of such partitions $\tau^\la_\mu$ are pictured in \cref{afiguregofigure}.
\end{eg}

\begin{eg}
Let $n=210$ and consider the 2-block of weight 20 with 2-core $\rho({19})$.   There exist 35 Carter--Saxl pairs belonging to pairs $(\la,\mu)$ such that $\la+\mu^T$ is equal to either $(10)$ or $(4,3,2,1)$.  There are many more Carter--Saxl pairs in this block.
\end{eg}

 Finally, we conclude this section by remarking that we have only used positivity of decomposition numbers for the Hecke algebra over $\mathbb C$.
   These are the easiest decomposition numbers to calculate, but only provide lower bounds for  the decomposition numbers of symmetric groups.

\section{More semisimple decomposable Specht modules}\label{more}

Semisimplicity and   decomposability of  Specht modules
 has long been a  subject   of major interest:  the  highlight being the  recent
progress on  the
 classification of {\em simple} Specht modules for symmetric groups and their Hecke algebras \cite{MR3011340,James,MR2207757,MR1687552,MR2137291,MR1402572,MR2339470,MR1425323,MR2089249,MR2488560,MR2588144}.
 Progress towards understanding the wider family of {\em semi}-simple  and decomposable modules has been snail-like in comparison
 \cite{MR591250,MR3250450,MR2081930,MR2905253,MR2422270,MR3533560}
  and reserved solely   to  near-hook partitions.
   All examples of decomposable Specht modules discovered to date have been labelled by 2-separated partitions.
   Our Theorem A   proves that for any 2-separated partition, the corresponding Specht module  for the algebra  $H^\CC_{-1}(n)$  is decomposable.
    It is natural to ask whether the converse is true: {\em are all decomposable Specht modules for $H^\CC_{-1}(n)$ and more generally $H^\Bbbk_{-1}(n)$ indexed by 2-separated partitions?}.  In  \cite[Section 8.2]{MR2905253},  Dodge and Fayers asked exactly this question for the symmetric group with $\operatorname{char}\Bbbk=2$.
   In this section, we provide  counterexamples to this question for the Hecke algebra.

\subsection{Two new infinite families of decomposable Specht modules}\label{2infinite}

 Given $k\in \mathbb{N}$ and $l\in 2\mathbb{N}+1$, we define $\alpha_k \vdash (k +2)^2-4$ and $\beta_l \vdash (l +2)^2-2$, respectively, to be the partitions
 $$\alpha_k =((k+2)^k,k^2),\qquad
 \beta_l = (l+3,(l+2)^{l-1},l^2,1).
 $$
     For example, the Young diagrams of the partitions $\alpha_5$ and $\beta_5$, along with their residues, are drawn as follows:
\[
[\alpha_5]=\gyoung(;0;1;0;1;0;1;0,;1;0;1;0;1;0;1,;0;1;0;1;0;1;0,;1;0;1;0;1;0;1,;0;1;0;1;0;1;0,;1;0;1;0;1,;0;1;0;1;0)
\qquad
[\beta_5]=\gyoung(;0;1;0;1;0;1;0!\dgr1,!\wh1;0;1;0;1;0;1,;0;1;0;1;0;1;0,;1;0;1;0;1;0;1,;0;1;0;1;0;1;0,;1;0;1;0;1,;0;1;0;1;0,!\dgr1)
\]
In the arXiv appendix to this paper, we prove that
$$
{\bf S}^\CC_{-1}(\alpha_{[k]})
\quad\text{and}\quad
 {\bf S}^\CC_{-1}(\beta_{[l]})
$$
   are decomposable for all $k\geq 1$ and odd $l \ge 1$.
    In fact, we show that both Specht modules have a direct summand equal to a (different) simple Specht module.  Namely,
 $${\bf S}^\CC_{-1} ((k+4)^k)\cong {\bf D}_{-1}^\CC {\left(2k+3,2k+1,2k-1,\dots,9,7,5\right)} \text{ is a direct summand of } {\bf S}^\CC_{-1}(\alpha_{[k]}).$$
 These provide the first examples of decomposable Specht modules indexed by partitions which are {\bf not} 2-separated.    The proof  of decomposability is not difficult, but it does involve twenty pages of extensive calculations.   The basic idea is to (1) show that
$$
\Hom_{H^\CC_{-1}(n)}({\bf S}^\CC_{-1}((k+4)^k),{\bf S}^\CC_{-1}(\alpha_{[k]}))\neq
 0
$$
using results  on semistandard homomorphisms and $(2)$ prove that
$$
[{\bf S}^\CC_{-1}(\alpha_{[k]}):{\bf D}_{-1}^\CC {\left(2k+3,2k+1,2k-1,\dots,9,7,5\right)} ]\leq 1
$$
by counting   corresponding  coloured tableaux.
One hence deduces that this simple composition factor occurs exactly once as a composition factor but  in both  the head and the socle of ${\bf S}^\CC_{-1}(\alpha_{[k]})$, and thus is a direct summand.  We refer the reader to the arXiv appendix for more details.
\begin{conj}\label{conj:exceptional}

For $k\in\mathbb{N}$,  we set
$\alpha_k^C=(2k+3,2k+1,2k-1,\dots,9,7,5)$.
Then
we expect that
 $$
 {\bf S}_{-1}^\CC \left(\alpha_k\right) =
 {\bf D}_{-1}^\CC \left(\alpha_k^R\right)\left\langle w\left(\alpha_k\right)/2 \right\rangle \oplus  {\bf D}_{-1}^\CC \left(\alpha_k^C\right)\left\langle w\left(\alpha_k\right)/2 \right\rangle.
 $$
 \end{conj}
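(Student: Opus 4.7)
The plan is to verify the semisimplicity criterion of \cref{directsum}: once it is shown that every non-zero graded decomposition number $d^\CC_{\alpha_k,\mu}(t)$ equals $a_\mu\cdot t^{w(\alpha_k)/2}$ with a common exponent, semisimplicity together with the claimed direct sum decomposition follow immediately from the Koszul/Cherednik argument used in the proof of that theorem. Thus the entire problem reduces to establishing the conjunction
\begin{equation*}
d^\CC_{\alpha_k,\mu}(t)=\begin{cases} t^{w(\alpha_k)/2} & \text{if }\mu\in\{\alpha_k^R,\alpha_k^C\},\\ 0 & \text{otherwise,}\end{cases}
\end{equation*}
quantified over all 2-regular $\mu$.

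First I would establish the two non-zero entries. The ungraded identity $[{\bf S}^\CC_{-1}(\alpha_k):{\bf D}^\CC_{-1}(\alpha_k^R)]=1$ is James' regularisation theorem (\cref{James-reg}); the graded refinement $d^\CC_{\alpha_k,\alpha_k^R}(t)=t^{w(\alpha_k)/2}$ then reduces to a direct degree computation on the unique element of $\mathrm{CStd}(\alpha_k,\alpha_k^R)$ using the node-by-node formula from \cref{clored}, where the signed addable/removable $i$-node counts telescope to the number of dominoes needed to pass between $\alpha_k$ and $\alpha_k^R$. The companion identity $d^\CC_{\alpha_k,\alpha_k^C}(t)=t^{w(\alpha_k)/2}$ is essentially the content of the arXiv appendix: the authors construct a non-zero semistandard homomorphism (giving the lower bound $\geq 1$) and match it with an upper bound of $1$ coming from counting coloured tableaux in $\mathrm{CStd}(\alpha_k,\alpha_k^C)$; recording the degree of that unique coloured tableau pins down the monomial $t^{w(\alpha_k)/2}$.

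The main obstacle is the vanishing $d^\CC_{\alpha_k,\mu}(t)=0$ for every 2-regular $\mu$ with $\alpha_k\lhd\mu$ and $\mu\notin\{\alpha_k^R,\alpha_k^C\}$. My preferred approach is to use the upper bound $d^\CC_{\alpha_k,\mu}\leq |\mathrm{CStd}(\alpha_k,\mu)|$ from \cref{hero} and argue combinatorially that $\mathrm{CStd}(\alpha_k,\mu)=\varnothing$ for all such intermediate $\mu$. The rigidity should come from the block-rectangular structure of $\alpha_k=((k+2)^k,k^2)$: the alternating residue pattern inside the $k\times(k+2)$ rectangle, together with the two trailing rows of length $k$, should force any coloured semistandard filling of weight $\mu$ to take one of only two global shapes --- either the ``ladder'' configuration, which produces weight $\alpha_k^R$, or the ``collapse to two rows'' configuration, which produces weight $\alpha_k^C$. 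Should the direct enumeration prove intractable for general $k$, a fallback is to establish the dimension identity
\begin{equation*}
\dim {\bf S}^\CC_{-1}(\alpha_k)=\dim {\bf D}^\CC_{-1}(\alpha_k^R)+\dim {\bf D}^\CC_{-1}(\alpha_k^C),
\end{equation*}
computing the left side by the hook-length formula and the right side by iterating the LLT recursion of \cref{troll}; since the two non-zero decomposition numbers are already known to equal $1$, such a dimensional match forces every other decomposition number to vanish. Either way, the conceptual content is already in place via \cref{directsum,hero,troll} and the appendix; the remaining difficulty is the substantial bookkeeping needed to rule out intermediate 2-regular partitions, which one should expect to dwarf the twenty-page appendix calculation that handles only the pair $(\alpha_k,\alpha_k^C)$.
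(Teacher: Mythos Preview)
The statement you are attempting to prove is labelled \textbf{Conjecture} in the paper and is explicitly left open; there is no proof in the paper to compare your proposal against. What the appendix establishes is only \cref{thm:decompalpha}: that ${\bf D}^\CC_{-1}(\alpha_k^C)$ is a direct summand of ${\bf S}^\CC_{-1}(\alpha_k)$. The full semisimple decomposition you are trying to prove is precisely what the authors say they ``expect'' but cannot yet show (see also the discussion in \cref{2infinite2} and the final subsection ``Patterns'').

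Your strategy via \cref{directsum} is the natural one, and the two non-zero entries are indeed handled as you describe. But your main vanishing argument has a concrete obstruction already visible in the paper's own computations: by \cref{max2} one has $|\mathrm{CStd}(\alpha_k,\alpha_k^C)|=2$, yet $d^\CC_{\alpha_k,\alpha_k^C}=1$. So the bound of \cref{hero} is \emph{not} sharp here, and the paper needs the auxiliary tableau of \cref{hardtab} (a degree-zero element of $\mathrm{CStd}(\alpha_k^R,\alpha_k^C)$) together with the LLT recursion to bring the bound down from $2$ to $1$. Hence there is no reason to expect $\mathrm{CStd}(\alpha_k,\mu)=\varnothing$ for every intermediate $\mu$; ruling out such $\mu$ would require controlling not just the tableau counts but the full inductive correction terms in \cref{troll}$(iii)$, which is exactly the open part. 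Your dimension fallback is also problematic: computing $\dim{\bf D}^\CC_{-1}(\alpha_k^R)$ in closed form is essentially equivalent to knowing the relevant column of the decomposition matrix, so the argument risks circularity. In short, your outline correctly identifies where the difficulty lies, but neither proposed mechanism actually closes the gap---which is consistent with the statement being a conjecture.
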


By $1$-induction, we conjecture the direct sum decomposition of ${\bf S}^\CC_{-1}(\beta_{[l]})$.

\begin{conj}
 For $l\in 2\mathbb{N}+1$, we set
 $\beta_l^C=(2l+3,2l+1,2l-1,\dots,9,7,6,1)$.
Then
we expect that
$$
{\bf S}_{-1}^\CC \left(\beta_l\right) =
{\bf D}_{-1}^\CC \left(\beta_l^R\right)\left\langle w\left(\beta_l\right)/2 \right\rangle \oplus  {\bf D}_{-1}^\CC \left(\beta_l^C\right)\left\langle w\left(\beta_l\right)/2 \right\rangle.
$$
\end{conj}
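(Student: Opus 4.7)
The plan is to deduce the decomposition of ${\bf S}_{-1}^\CC(\beta_l)$ from Conjecture \ref{conj:exceptional} by a $1$-induction argument. A preliminary combinatorial verification shows that for $l$ odd, $\beta_l = \alpha_l + (1,l+3) + (l+3,1)$, where both of the added boxes have residue $c-r \equiv 1 \pmod{2}$ and are simultaneously addable $1$-nodes of $\alpha_l$. Likewise $\beta_l^C = \alpha_l^C + (l,6) + (l+1,1)$ is obtained from $\alpha_l^C$ by adjoining a pair of addable residue-$1$ nodes, and a parallel statement $\beta_l^R = \widetilde{f}_1^{(2)}(\alpha_l^R)$ holds in the crystal of the basic level-$1$ $\widehat{\mathfrak{sl}}_2$-representation, so that each of $\beta_l$, $\beta_l^R$, $\beta_l^C$ arises from $\alpha_l$, $\alpha_l^R$, $\alpha_l^C$ by the same crystal operation.

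With this combinatorics in hand, I would apply the exact, additive, divided-power $i$-induction functor $F_1^{(2)}$ to both sides of the decomposition conjectured for $\alpha_l$, yielding
$$
F_1^{(2)}{\bf S}_{-1}^\CC(\alpha_l) \;\cong\; F_1^{(2)}{\bf D}_{-1}^\CC(\alpha_l^R)\langle w(\alpha_l)/2 \rangle \;\oplus\; F_1^{(2)}{\bf D}_{-1}^\CC(\alpha_l^C)\langle w(\alpha_l)/2 \rangle.
$$
The Brundan--Kleshchev branching rules describe the left-hand side as a direct sum of Specht modules indexed by partitions obtained from $\alpha_l$ by simultaneously adding two addable $1$-nodes, one block summand of which is ${\bf S}_{-1}^\CC(\beta_l)$. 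The crystal-graph computation on the right-hand side identifies $F_1^{(2)}{\bf D}_{-1}^\CC(\alpha_l^R)$ and $F_1^{(2)}{\bf D}_{-1}^\CC(\alpha_l^C)$ as containing (in the block of $\beta_l$) a single copy of ${\bf D}_{-1}^\CC(\beta_l^R)$ and ${\bf D}_{-1}^\CC(\beta_l^C)$ respectively, with degree shifts controlled by the $\mathbb{Z}$-grading of $F_1^{(2)}$.

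Projecting onto the block containing $\beta_l$ then produces the asserted decomposition. The main obstacle will be rigorously extracting ${\bf S}_{-1}^\CC(\beta_l)$ as an honest direct summand (rather than a mere subquotient) of $F_1^{(2)}{\bf S}_{-1}^\CC(\alpha_l)$, and tracking the precise graded degree shifts through $F_1^{(2)}$. This can be handled by combining the semi-simplicity criterion of Theorem \ref{directsum} with the coloured-tableau bound of Theorem \ref{hero}: it suffices to verify that for every $2$-regular partition $\mu$ of $|\beta_l|$, the graded multiplicity $[{\bf S}_{-1}^\CC(\beta_l) : {\bf D}_{-1}^\CC(\mu)\langle k \rangle]$ is concentrated in a single degree $k$ and equals $1$ precisely when $\mu \in \{\beta_l^R, \beta_l^C\}$; this reduces, via Theorem \ref{hero}, to a finite combinatorial count of semistandard coloured tableaux of shape $\beta_l$ with the prescribed ladder weights.
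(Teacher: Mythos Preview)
The statement you are addressing is a \emph{conjecture} in the paper, not a theorem; the paper explicitly leaves it open and offers no proof. The sentence immediately preceding it --- ``By $1$-induction, we conjecture the direct sum decomposition of ${\bf S}^\CC_{-1}(\beta_{[l]})$'' --- makes clear that the $1$-induction heuristic you outline is precisely the authors' motivation for \emph{stating} the conjecture, not an argument establishing it. Your proposal is therefore conditional on Conjecture~\ref{conj:exceptional}, which is itself open; at best you are sketching the implication ``Conjecture for $\alpha_l$ $\Rightarrow$ Conjecture for $\beta_l$'', and even that implication, as you present it, has loose ends.

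Two specific points on the implication. First, your worry about extracting ${\bf S}^\CC_{-1}(\beta_l)$ as a direct summand of $f_1^{(2)}{\bf S}^\CC_{-1}(\alpha_l)$ is misplaced: since for $l$ odd the partition $\alpha_l$ has exactly two addable $1$-nodes and no removable $1$-nodes, one has $f_1^{(2)}{\bf S}^\CC_{-1}(\alpha_l)\cong{\bf S}^\CC_{-1}(\beta_l)$ outright (this is how Proposition~\ref{branch1} is applied in the appendix), so no block projection is needed. Second, while the paper verifies $\text{conor}_1(\alpha_l^C)=2$ and $(\alpha_l^C)^{\blacktriangle_1}=\beta_l^C$, you have not checked the analogous statements for $\alpha_l^R$, nor tracked the graded shifts through $f_1^{(2)}$ to reconcile $w(\alpha_l)/2$ with $w(\beta_l)/2$. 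Your fallback ``direct'' route via Theorem~\ref{hero} is also not a proof as stated: that theorem gives only \emph{upper} bounds on graded decomposition numbers, so concluding that $[{\bf S}^\CC_{-1}(\beta_l):{\bf D}^\CC_{-1}(\mu)]=0$ for all $\mu\notin\{\beta_l^R,\beta_l^C\}$ would require the coloured-tableaux count itself to vanish for every such $\mu$, which you have not verified and which is exactly the obstruction preventing the authors from upgrading either conjecture to a theorem.
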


\subsection{Other decomposable Specht modules}\label{2infinite2}
We are indebted to   Matt Fayers for sharing the following  examples (which he   discovered  by computer) after we posited that the two families in \cref{2infinite} might be the only counterexamples to
  the quantised version of his question   \cite[Section 8.2]{MR2905253}.
\begin{figure}[ht!]
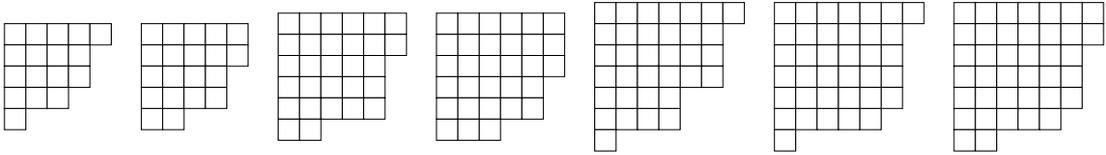

$$\Yvcentermath1\Yboxdim{8pt}
\gyoung(;;;;;,;;;;,;;;;,;;;,;)
\quad
\gyoung(;;;;;,;;;;;,;;;;,;;;;,;;)
\quad
\gyoung(;;;;;;,;;;;;;,;;;;;,;;;;;,;;;;;,;;)
\quad
\gyoung(;;;;;;,;;;;;;,;;;;;;,;;;;;,;;;;;,;;;)
\quad
\gyoung(;;;;;;;,;;;;;;,;;;;;;,;;;;;;,;;;;,;;;;,;)
\quad
\gyoung(;;;;;;;,;;;;;;,;;;;;;,;;;;;;,;;;;;;,;;;;;,;)
\quad
\gyoung(;;;;;;;,;;;;;;;,;;;;;;,;;;;;;,;;;;;;,;;;;;,;;)$$
\caption{More partitions labelling  semisimple Specht modules.}
\label{alabel}
\end{figure}

We hope that the examples in \cref{alabel} serve as inspiration for further work towards a classification of semisimple Specht modules.  Several more examples can be obtained from those in \cref{alabel}  by $i$-induction for $i=0,1$ (analogously to \cref{2infinite}) namely: $(8,7,6^2,4^2,2,1)$, $(7^3,6^3,3)$ and $(8,7^3,6^2,4,1)$.  Finally we have one partition which breaks the mould: $(6,5^3,3,1)$  which is  the only partition in this section   not equal to its own conjugate.

\subsection{Patterns}

The examples of \cref{2infinite,2infinite2} do share several striking similarities.  Firstly, all the examples in \cref{2infinite,2infinite2} have a direct summand which is isomorphic to a simple Specht module.
Secondly, all those of \cref{2infinite2} decompose as a direct sum of simples concentrated in one degree and so are semisimple by \cref{directsum}.  We conjecture this is also true of the infinite families in \cref{directsum}.
 It is interesting to speculate whether the converse of \cref{directsum} is also true:
 is semisimplicity of  a Specht module  equivalent to its composition factors being focussed in one degree?

\newpage
\appendix

\section{Two new infinite families of decomposable Specht modules \\ of Hecke algebras over $\mathbb{C}$}\label{sounusual!}

 We now prove that the  two infinite  families of decomposable Specht modules announced in \cref{more} are indeed  decomposable.

\subsection{Semistandard homomorphisms}\label{ss:permutationmodules}
We now recall the construction of Young permutation modules for Hecke algebras
and how they can be used to calculate homomorphisms between Specht modules.
Given $\la $ a partition of $n$, we define the $H^\Bbbk_q(n)$-module
$$
{\bf M}^\Bbbk_q(\la) = H^\Bbbk_q(n) x_\la
$$
which we refer to as the {\sf permutation module} labelled by $\la$.
   For $\SSTS\in\RStd(\la,\mu)$, there exists a   homomorphism $\Theta_{\SSTS}:{\bf M}^\Bbbk_q(\la)\rightarrow {\bf M}^\Bbbk_q(\mu)$ (see \cite[Section 4]{Mathas} for explicit details) and the set of all row standard tableau homomorphisms  $\left\{\Theta_{\SSTS}\mid \SSTS\in\RStd(\la,\mu) \right\}$  forms a basis of $\operatorname{Hom}_{H^\Bbbk_q(n)}\left({\bf M}^\Bbbk_q(\la),{\bf M}^\Bbbk_q(\mu)\right)$.
We let $\widehat{\Theta}_{\SSTS}$ denote the restriction of $\Theta_{\SSTS}$ to ${\bf S}^\Bbbk_q({\la})$.

\begin{thm}\cite[Corollary 8.7]{dj91}
	The set
$
 \{ \widehat{\Theta}_{\SSTS} \mid \SSTS\in\SStd( \la,\mu)  \}
$
is a linearly independent  subset of $\operatorname{Hom}_{H^\Bbbk_q(n)}\left({\bf S}^\Bbbk_q(\la),{\bf M}^\Bbbk_q(\mu)\right)$.
 If either $q\neq -1$ or $\la$ is $2$-regular, then $
 \{ \widehat{\Theta}_{\SSTS} \mid \SSTS\in\SStd( \la,\mu)  \}
$ provides a basis of this space.
\end{thm}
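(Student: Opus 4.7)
The plan follows the classical two-step strategy of James, adapted to the Hecke algebra by Dipper--James. First I would establish linear independence in full generality, and only then argue the spanning statement under the stated hypothesis.

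For linear independence, fix the Murphy-style generator $m_\la = y_\la T_{w_\la} x_\la$ of ${\bf S}^\Bbbk_q(\la)$ and expand $\widehat{\Theta}_\SSTS(m_\la)$ in the natural basis of ${\bf M}^\Bbbk_q(\mu)$ indexed by row-standard $\mu$-tableaux (equivalently, by distinguished coset representatives of $\mathfrak{S}_\mu$ in $\mathfrak{S}_n$). Using an appropriate dominance order on these basis elements (induced by shapes of initial subtableaux, together with the residue data), I would verify that $\widehat{\Theta}_\SSTS(m_\la)$ has a well-defined leading term with nonzero coefficient, and that the assignment $\SSTS \mapsto (\text{leading term})$ is injective on $\SStd(\la,\mu)$. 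This unitriangularity is essentially formal once the ordering is set up correctly, and it immediately yields linear independence.

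For the spanning statement, the strategy is to invoke a Kernel Intersection Theorem. Any $\theta \in \Hom_{H^\Bbbk_q(n)}({\bf S}^\Bbbk_q(\la),{\bf M}^\Bbbk_q(\mu))$ should lift to some $\Theta \in \Hom_{H^\Bbbk_q(n)}({\bf M}^\Bbbk_q(\la),{\bf M}^\Bbbk_q(\mu))$ with $\widehat{\Theta}=\theta$; this uses that ${\bf S}^\Bbbk_q(\la) \subseteq {\bf M}^\Bbbk_q(\la)$ together with an extension/duality argument for the hom-space into ${\bf M}^\Bbbk_q(\mu)$. Writing such a $\Theta$ in the row-standard basis $\{\Theta_\SSTS \mid \SSTS\in \RStd(\la,\mu)\}$, the condition that its restriction to ${\bf S}^\Bbbk_q(\la)$ is well-defined (equivalently, that $\Theta$ annihilates the defining Garnir-type relations that cut out ${\bf S}^\Bbbk_q(\la)$ inside ${\bf M}^\Bbbk_q(\la)$) translates, after a careful calculation, into the condition that only those coefficients indexed by $\SSTS\in \SStd(\la,\mu)$ can be nonzero.

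The principal obstacle is verifying that the Garnir relations force the support of an admissible lift into $\SStd(\la,\mu)$ precisely when $q\neq -1$ or $\la$ is $2$-regular. The coefficients entering these relations are quantum binomials, and the factor $[2]_q=1+q$ vanishes at $q=-1$, potentially collapsing certain Garnir relations and allowing non-semistandard lifts to persist. When $\la$ is $2$-regular the row lengths are pairwise distinct, which ensures that the adjacent rows entering the pertinent Garnir relations have different length and thereby avoids the degenerate configuration; and when $q\neq -1$ the relevant quantum integers simply remain nonzero. Working out this combinatorial casework is the technical heart of the proof; moreover, it is precisely here that the counterexamples at $q=-1$ with $\la$ 2-singular arise, which is why the hypothesis cannot be dropped.
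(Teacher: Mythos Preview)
The paper does not prove this theorem; it is quoted verbatim as \cite[Corollary 8.7]{dj91} and used as a black box, so there is no in-paper argument to compare against.

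That said, your sketch of the spanning step contains a genuine conceptual slip. In the conventions of this paper, ${\bf S}^\Bbbk_q(\la)$ is a \emph{submodule} of ${\bf M}^\Bbbk_q(\la)$, not a quotient. Consequently, restriction of any $\Theta\in\Hom({\bf M}^\Bbbk_q(\la),{\bf M}^\Bbbk_q(\mu))$ to ${\bf S}^\Bbbk_q(\la)$ is automatically well-defined; there is no ``condition'' to impose and no Garnir-type relations enter at that point. What is actually at stake is whether the restriction map
\[
\Hom_{H^\Bbbk_q(n)}\bigl({\bf M}^\Bbbk_q(\la),{\bf M}^\Bbbk_q(\mu)\bigr)\longrightarrow \Hom_{H^\Bbbk_q(n)}\bigl({\bf S}^\Bbbk_q(\la),{\bf M}^\Bbbk_q(\mu)\bigr)
\]
is \emph{surjective}, i.e.\ whether every $\theta$ extends. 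You acknowledge this step (``an extension/duality argument'') but then immediately relocate the hypothesis $q\neq -1$ or $\la$ 2-regular to the subsequent step of forcing the support of a lift into $\SStd(\la,\mu)$. That is backwards: the semistandardisation of the restricted maps $\widehat{\Theta}_\SSTS$ for $\SSTS\in\RStd(\la,\mu)$ holds without any hypothesis (this is exactly the content of Fayers' algorithm, Theorem~\ref{ssalgorithm} in the appendix), whereas the extension step is precisely where the hypothesis is required. Indeed, when $q=-1$ and $\la$ is $2$-singular there exist homomorphisms out of ${\bf S}^\Bbbk_{-1}(\la)$ that do not extend to ${\bf M}^\Bbbk_{-1}(\la)$; this is how the semistandard basis can fail. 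The Dipper--James proof handles surjectivity by a dimension count through the $q$-Schur algebra (Specht filtrations of permutation modules), not by a direct Garnir analysis of lifts.
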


 This paper focuses on the case $q=-1$.
While the  homomorphisms $\widehat{\Theta}_{\SSTS}$ for $ \SSTS\in\RStd(\la,\mu) $
do  not necessarily provide a basis of
 $\operatorname{Hom}_{H^\Bbbk_{-1}(n)}\left({\bf S}^\Bbbk_{-1}(\la),{\bf S}^\Bbbk_{-1}(\mu)\right)$, we shall
 see  that     tableau-theoretic homomorphisms are a useful tool for analysing  the structure of Specht modules   in \cref{sounusual!}.
Let $\la\vdash n$. Fix $d\in\mathbb{N}$ and $t$ such that $0\leqslant t<\la_{d+1}$. We
set
\[
\la_i^{d,t}=\begin{cases}
\la_i+\la_{i+1}-t
&\text{if $i=d$,}\\
t&\text{if $i=d+1$,}\\
\la_i&\text{otherwise.}
\end{cases}
\]
Let $\SSTS_{d,t}\in\RStd(\la,\la^{d,t})$ be such that all of its entries in row $i$ are equal to $i$, except for row $d+1$ which contains $\mu_{d+1}-t$ entries equal to $d$ and $t$ entries equal to $d+1$. We now write $\varphi_{d,t}$ for the homomorphism $\Theta_{\SSTS_{d,t}}:{\bf M}^\Bbbk_q(\lambda)\rightarrow {\bf M}^\Bbbk_q(\lambda ^{d,t})$. The homomorphism $\varphi_{d,t}$ enables us to give the following alternative definition of a Specht module.

\begin{thm}[{\cite[Theorem 7.5]{dj86}}]\label{sadwtoiyreiotyweiortuyuwoiertyuweoirty}
	Let $\lambda\vdash n$. Then   the {Specht module} ${\bf S}^\Bbbk_{q}(\la)$ can be constructed as the intersection of the kernels of these homomorphisms as follows,
	\[
{\bf S}^\Bbbk_{q}(\la)=\bigcap_{\substack{d\geqslant 1\\
			1\leqslant t\leqslant \lambda_{d+1}}}
	\operatorname{ker}\varphi_{d,t}.
	\]
 \end{thm}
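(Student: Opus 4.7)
The plan is to establish the stated equality by proving the two inclusions separately. Let me write $z_\lambda := y_\lambda T_{w_\lambda} x_\lambda$, so that ${\bf S}^\Bbbk_q(\lambda) = H^\Bbbk_q(n) z_\lambda$. Note that $z_\lambda \in H^\Bbbk_q(n) x_\lambda = {\bf M}^\Bbbk_q(\lambda)$, so the Specht module is already realized as a submodule of the permutation module, and the homomorphisms $\varphi_{d,t}$ act on it by restriction.

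For the easy inclusion ${\bf S}^\Bbbk_q(\lambda) \subseteq \bigcap \ker \varphi_{d,t}$, since $\varphi_{d,t}$ is $H^\Bbbk_q(n)$-linear and the Specht module is cyclic, it suffices to verify that $\varphi_{d,t}(z_\lambda) = 0$ for each admissible pair $(d,t)$. First I would compute $\varphi_{d,t}(x_\lambda)$ explicitly as a sum of the form $\sum_{w} q^{\ell(w)} T_w x_{\lambda^{d,t}}$, where $w$ ranges over a set of distinguished coset representatives that record the choice of $\lambda_{d+1}-t$ boxes ``promoted'' from row $d+1$ to row $d$. Multiplying on the left by $y_\lambda T_{w_\lambda}$ converts the row symmetrization over $\mathfrak{S}_\lambda$ into a column antisymmetrization with respect to the columns of $\lambda$, which is precisely the setup of a Garnir relation: the $\lambda_{d+1}$ boxes of column indices involved exceed the height available within the relevant column strip, forcing the sum to vanish by a quantum antisymmetrizer argument (that is, the $q$-analogue of ``swapping two equal entries changes sign'').

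For the reverse inclusion it suffices to show that the intersection has rank at most $|\Std(\lambda)|$, since the Specht module is free of that rank. The cleanest route is to use the Murphy-style cellular basis of ${\bf M}^\Bbbk_q(\lambda)$, whose elements are indexed by pairs $(\sts,\stt)$ of row-standard $\lambda$-tableaux, and which carries a filtration by dominance whose subquotients are Specht modules ${\bf S}^\Bbbk_q(\nu)$ for $\nu \gedom \lambda$. One then shows, by induction on the dominance order, that an element annihilated by every $\varphi_{d,t}$ must have all its coefficients on rows labelled by partitions $\nu \gdom \lambda$ equal to zero, so that it lies in the bottom layer of the filtration, which is exactly ${\bf S}^\Bbbk_q(\lambda)$. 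Combined with the first inclusion, this pins down equality.

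The main obstacle is the reverse inclusion: the Garnir-style vanishing in the first step is a single essentially algebraic calculation, whereas the dimension count requires a careful tracking of how $\varphi_{d,t}$ interacts with the cellular filtration, and in particular one has to verify that the homomorphisms $\varphi_{d,t}$ collectively detect every row of the Murphy basis labelled by a partition strictly dominating $\lambda$. A subtlety is that the semistandard basis theorem recalled above (which gives a basis of $\operatorname{Hom}_{H^\Bbbk_q(n)}({\bf S}^\Bbbk_q(\lambda), {\bf M}^\Bbbk_q(\mu))$ only in the regular or generic case) does not apply directly for $\mu = \lambda^{d,t}$ when $\lambda$ is $2$-singular and $q=-1$; one must instead run the argument on the level of the permutation module and use only row-standard basis calculations, where no regularity hypothesis is needed.
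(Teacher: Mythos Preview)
The paper does not prove this theorem at all: it is quoted verbatim from Dipper--James \cite[Theorem 7.5]{dj86} and used as a black box, so there is no proof in the paper to compare your proposal against.

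Your outline is nonetheless a reasonable sketch of how such a result is established. The forward inclusion via Garnir-type vanishing is standard. For the reverse inclusion, your proposed route through the Murphy cellular filtration is more modern than the original 1986 argument (Dipper--James work directly with their bilinear form and the semistandard basis of ${\bf M}^\Bbbk_q(\lambda)$, before the cellular formalism existed), but it is a legitimate strategy. The genuine work, which you correctly flag as the ``main obstacle'', is verifying that the family $\{\varphi_{d,t}\}$ collectively kills nothing in the layers ${\bf S}^\Bbbk_q(\nu)$ for $\nu \rhd \lambda$; concretely one must show that for each such $\nu$ some $\varphi_{d,t}$ restricts to an injection on that layer (or at least that the intersection of the kernels on that layer is zero). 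Your proposal names this step but does not carry it out, so as written it is an outline rather than a proof. If you want to complete it, the cleanest argument is to exhibit, for $\nu \rhd \lambda$, a specific pair $(d,t)$ for which $\varphi_{d,t}$ restricted to the $\nu$-piece is (up to a unit) the identity composed with an inclusion --- this is essentially what Dipper--James do by matching their maps with the Specht series of the permutation module.
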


 Therefore we
 can  use the homomorphisms  of \cref{sadwtoiyreiotyweiortuyuwoiertyuweoirty} to determine whether or not the image of  a tableau homomorphism $\widehat{\Theta}_{\SSTS}$ lies in the Specht module ${\bf S}^\Bbbk_q(\la)$.

\begin{cor}
Let $\lambda,\mu\vdash n$ and suppose that $\widehat{\Theta}_{\SSTT}\in\operatorname{Hom}_{H^\Bbbk_q(n)}\left({\bf S}^\Bbbk_q(\la),{\bf M}^\Bbbk_q(\mu)\right)$ for some $\SSTT\in\RStd(\la,\mu)$. Then ${\rm im}(\widehat{\Theta}_{\SSTT})\subseteq {\bf S}^\Bbbk_q(\mu)$ if and only if $\varphi_{d,t}\circ\widehat{\Theta}_{\SSTT}=0$ for all $d\geqslant 1$ and $0\leqslant t<\mu_{d+1}$.
\end{cor}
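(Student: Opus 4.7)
The plan is to observe that this corollary is an immediate unpacking of the preceding characterization of the Specht module as a kernel intersection, and the ``if and only if" just records what it means for the image of a homomorphism to lie in a given intersection of kernels.

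More precisely, I would argue as follows. First, apply \cref{sadwtoiyreiotyweiortuyuwoiertyuweoirty} to $\mu$ (rather than $\lambda$) in order to identify
\[
{\bf S}^\Bbbk_q(\mu)=\bigcap_{\substack{d\geqslant 1\\ 0\leqslant t<\mu_{d+1}}}\operatorname{ker}\bigl(\varphi_{d,t}\colon {\bf M}^\Bbbk_q(\mu)\to {\bf M}^\Bbbk_q(\mu^{d,t})\bigr).
\]
Since $\widehat{\Theta}_{\SSTT}$ takes values in ${\bf M}^\Bbbk_q(\mu)$, the condition $\operatorname{im}(\widehat{\Theta}_{\SSTT})\subseteq {\bf S}^\Bbbk_q(\mu)$ translates to $\widehat{\Theta}_{\SSTT}(v)\in \ker(\varphi_{d,t})$ for every $v\in {\bf S}^\Bbbk_q(\lambda)$ and every admissible pair $(d,t)$. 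This is precisely the statement that $\varphi_{d,t}\circ \widehat{\Theta}_{\SSTT}=0$ as a map of $H^\Bbbk_q(n)$-modules for all such $(d,t)$, giving both implications simultaneously.

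Thus there is no real obstacle: the corollary is a direct consequence of \cref{sadwtoiyreiotyweiortuyuwoiertyuweoirty} together with the set-theoretic fact that a map lands inside an intersection of subspaces if and only if its composition with each of the defining quotient/test maps vanishes. The only minor point worth noting is that one should check the homomorphisms $\varphi_{d,t}$ and $\widehat{\Theta}_{\SSTT}$ compose sensibly, which is automatic from the fact that $\widehat{\Theta}_{\SSTT}$ factors through ${\bf M}^\Bbbk_q(\mu)$ as a map of $H^\Bbbk_q(n)$-modules.
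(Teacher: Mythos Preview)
Your proposal is correct and matches the paper's treatment: the corollary is stated without proof immediately after \cref{sadwtoiyreiotyweiortuyuwoiertyuweoirty}, precisely because it is the direct unpacking you describe. Your only addition is to spell out the (trivial) set-theoretic step that landing in an intersection of kernels is equivalent to vanishing under each test map, which the paper leaves implicit.
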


\subsection{Statement and outline of proof}
\begin{defn}Given $k\in \mathbb{N}$, we define $\alpha_k \vdash (k +2)^2-4$ to be the partition
 $\alpha_k =((k+2)^k,k^2)$.
Given $k\in 2\mathbb{N}+1$, we define $\beta_k \vdash (k +2)^2-2$ to be the partition
 $\beta_k = (k+3,(k+2)^{k-1},k^2,1)$.
\end{defn}

\begin{eg}
The Young diagrams of the partitions $\alpha_5$ and $\beta_5$, along with their residues, are drawn as follows:
\[
[\alpha_5]=\gyoung(;0;1;0;1;0;1;0,;1;0;1;0;1;0;1,;0;1;0;1;0;1;0,;1;0;1;0;1;0;1,;0;1;0;1;0;1;0,;1;0;1;0;1,;0;1;0;1;0)
\qquad
[\beta_5]=\gyoung(;0;1;0;1;0;1;0!\dgr1,!\wh1;0;1;0;1;0;1,;0;1;0;1;0;1;0,;1;0;1;0;1;0;1,;0;1;0;1;0;1;0,;1;0;1;0;1,;0;1;0;1;0,!\dgr1).
\]
\end{eg}

We now state our main result in this section (which we later split into two results).

\begin{thm}\label{unusual}
The Specht modules ${\bf S}^\CC_{-1}(\alpha_k)$ and ${\bf S}^\CC_{-1}(\beta_{k})$ are decomposable.
\end{thm}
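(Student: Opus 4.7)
The plan is to prove decomposability of $\mathbf{S}^\CC_{-1}(\alpha_k)$ by exhibiting a simple direct summand; decomposability of $\mathbf{S}^\CC_{-1}(\beta_k)$ will then follow by applying a suitable $i$-induction functor to the $\alpha_k$ case. Concretely, following the outline in Section~6, I would identify a candidate simple summand, namely $\mathbf{D}^\CC_{-1}(\alpha_k^C)$ with $\alpha_k^C=(2k+3,2k+1,\ldots,7,5)$. Observe that the rectangular partition $(k+4)^k$ is $2$-regular and satisfies Carter's criterion (differences $0 \equiv -1\pmod{2^{\ell_2(0)}}=1$ trivially between equal consecutive rows, and the last row sees no further row), so $\mathbf{S}^\CC_{-1}((k+4)^k)$ is simple; identifying its $2$-regularisation shows that this simple Specht module is isomorphic to $\mathbf{D}^\CC_{-1}(\alpha_k^C)$.

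The first main step is to construct a non-zero element of $\Hom_{H^\CC_{-1}(n)}(\mathbf{S}^\CC_{-1}((k+4)^k),\mathbf{S}^\CC_{-1}(\alpha_k))$. For this I would exhibit a specific semistandard tableau $\SSTT\in \SStd((k+4)^k,\alpha_k)$ giving rise via the Dipper--James construction of Section~A.1 to a non-zero homomorphism $\widehat\Theta_\SSTT : \mathbf{S}^\CC_{-1}((k+4)^k)\to \mathbf{M}^\CC_{-1}(\alpha_k)$, and then verify using Theorem~A.3 that $\varphi_{d,t}\circ \widehat\Theta_\SSTT=0$ for every admissible pair $(d,t)$, so that the image lies inside $\mathbf{S}^\CC_{-1}(\alpha_k)$. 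The natural candidate tableau is the one sending the first $k$ rows of $(k+4)^k$ onto the first $k$ rows of $\alpha_k$ in the obvious way, with the extra content pushed to rows $k{+}1, k{+}2$; checking the vanishing of the $\varphi_{d,t}$ composites reduces to a finite set of binomial-coefficient identities among the quantum integers at $q=-1$.

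The second main step is a composition multiplicity bound:
\[
[\mathbf{S}^\CC_{-1}(\alpha_k):\mathbf{D}^\CC_{-1}(\alpha_k^C)]\leqslant 1.
\]
By Theorem~\ref{hero} it suffices to show that the set $\mathrm{CStd}(\alpha_k,\alpha_k^C)$ of coloured semistandard tableaux contains at most one element (or more generally that the relevant degree-zero part does). Here the residues of $\alpha_k$ are rigidly prescribed by the checkerboard pattern of a rectangle with two short rows, and the ladder weights of $\alpha_k^C$ force the entries to march along diagonals in an essentially unique way; a careful case analysis of what can be placed in the two short rows of length $k$ should leave exactly one admissible filling. This combinatorial rigidity argument is the main technical obstacle and is where I expect the bulk of the twenty-page arXiv calculation to live.

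Putting the two steps together: $\mathbf{D}^\CC_{-1}(\alpha_k^C)$ appears as a composition factor of $\mathbf{S}^\CC_{-1}(\alpha_k)$ with multiplicity exactly one, and the non-zero homomorphism from the simple module $\mathbf{S}^\CC_{-1}((k+4)^k)\cong \mathbf{D}^\CC_{-1}(\alpha_k^C)$ places this simple both in the socle of $\mathbf{S}^\CC_{-1}(\alpha_k)$ and, via James's regularisation theorem \ref{James-reg} combined with the identification of $(k+4)^k$ as the $2$-regularisation of $\alpha_k$, in its head; a simple module that is simultaneously a quotient and a submodule with multiplicity one must be a direct summand. This yields the desired decomposition of $\mathbf{S}^\CC_{-1}(\alpha_k)$. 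For $\mathbf{S}^\CC_{-1}(\beta_k)$, I would then apply the $1$-induction functor $f_1$ (or the appropriate $i$-restriction/induction combination that sends $\alpha_k$ to $\beta_k$) to the decomposition just obtained; since induction is exact and sends indecomposable summands to sums of indecomposables, a non-trivial direct sum decomposition of $\mathbf{S}^\CC_{-1}(\alpha_k)$ induces a non-trivial decomposition of $\mathbf{S}^\CC_{-1}(\beta_k)$, provided one checks that neither summand is killed—this last check is again a short combinatorial verification on residues.
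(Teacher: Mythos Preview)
Your overall strategy matches the paper's, but two steps contain genuine errors that would make the argument fail as written.

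First, the multiplicity bound. You assert that $\mathrm{CStd}(\alpha_k,\alpha_k^C)$ has at most one element, but in fact it has exactly two (Proposition~\ref{max2}): one obtained by filling the ladders in the most dominant way, and its conjugate. Thus Theorem~\ref{hero} only yields $[\mathbf{S}^\CC_{-1}(\alpha_k):\mathbf{D}^\CC_{-1}(\alpha_k^C)]\leq 2$. To sharpen this to~$1$, the paper exhibits a coloured tableau $\SSTT\in\mathrm{CStd}(\alpha_k^R,\alpha_k^C)$ of degree~$0$ (Proposition~\ref{hardtab}), which forces $e(\alpha_k^C)\mathbf{D}^\CC_{-1}(\alpha_k^R)\neq 0$; since $[\mathbf{S}^\CC_{-1}(\alpha_k):\mathbf{D}^\CC_{-1}(\alpha_k^R)]=t^{w(\alpha_k)/2}$ by James regularisation, one of the two coloured tableaux in $\mathrm{CStd}(\alpha_k,\alpha_k^C)$ is accounted for by $\mathbf{D}^\CC_{-1}(\alpha_k^R)$ in the inductive formula of Proposition~\ref{troll}(iii), leaving at most one for $\mathbf{D}^\CC_{-1}(\alpha_k^C)$.

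Second, your argument that $\mathbf{D}^\CC_{-1}(\alpha_k^C)$ lies in the head is wrong: the $2$-regularisation of $\alpha_k$ is $\alpha_k^R$, \emph{not} $\alpha_k^C$ (for instance $\alpha_3^R=(7,6,4,3,1)$ whereas $\alpha_3^C=(9,7,5)$), so James's regularisation theorem puts $\mathbf{D}^\CC_{-1}(\alpha_k^R)$ in the head, not $\mathbf{D}^\CC_{-1}(\alpha_k^C)$. The paper instead uses that $\alpha_k=\alpha_k^T$ is self-conjugate, whence $\mathbf{S}^\CC_{-1}(\alpha_k)$ is self-dual; a simple appearing in the socle with multiplicity one must then be a direct summand. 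As a minor further point, Carter's criterion (Theorem~\ref{JamMAt}) is stated for $\Bbbk$ of characteristic~$2$, not for $H^\CC_{-1}(n)$, and in any case $(k+4)^k$ fails it at $i=k-1$ (where $\lambda_{k-1}-\lambda_k=0$ but $\lambda_k-\lambda_{k+1}=k+4$); simplicity of the rectangular Specht module over $H^\CC_{-1}(n)$ is established separately in the appendix via column removal.
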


 We remark  that $\alpha_k=((k+2)^k,k^2)$ is 2-separated if and only if $k=1$, in which case, we already know from \cref{2qs} that ${\bf S}^\CC_{-1}(3,1^2)$ is semisimple (see also \cite{MR3250450}).
We shall focus on the decomposability of ${\bf S}^\CC_{-1}(\alpha_k)$;
 we shall see later that it is relatively straightforward to determine the decomposability of ${\bf S}^\CC_{-1}(\beta_k)$ from ${\bf S}^\CC_{-1}(\alpha_k)$; this is because $\beta_k$ is obtained from $\alpha_k$ by adding all possible addable nodes of residue 1 (the observant reader will notice that this is why $k$ being odd is a necessary condition in the definition of $\beta_k$).

\begin{prop}\cite[Theorem 4.7]{MR1402572}\label{noofparts}
	Let $\alpha,\mu\vdash n$ with $\alpha_r\geqslant r$ for some $r\in\mathbb{N}$ and $\mu$ a $2$-regular partition. If $[{\bf S}^\CC_{-1}(\alpha):D^\CC_{-1}({\mu})]>0$, then $\ell(\mu)\geq r$.
\end{prop}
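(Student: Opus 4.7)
The plan is to combine James' regularisation theorem (\cref{James-reg}) with a combinatorial monotonicity property of the $2$-regularisation map, and then to invoke the Jantzen sum formula of \cite{MR1402572} to handle composition factors other than the ``top'' one.

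First I would establish $\ell(\alpha^R)\geq r$ directly. The hypothesis $\alpha_r\geq r$ means $\alpha\supseteq (r^r)$, and the $2$-regularisation is monotone under inclusion of Young diagrams: on each ladder (for $e=2$, the antidiagonal $i+j=\mathfrak{l}$) the number of nodes of $\beta\subseteq \alpha$ is bounded by that of $\alpha$, and packing both sets to the top of the ladder shows $\beta^R\subseteq \alpha^R$. A direct calculation identifies $(r^r)^R$ with the odd staircase $(2r-1,2r-3,\dots,3,1)$, which has exactly $r$ parts, so $\ell(\alpha^R)\geq \ell((r^r)^R)=r$. By \cref{James-reg}, ${\bf D}^\CC_{-1}(\alpha^R)$ occurs with multiplicity $1$ as a composition factor of ${\bf S}^\CC_{-1}(\alpha)$, and hence the conclusion holds at least for $\mu=\alpha^R$.

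The main obstacle is ruling out the remaining composition factors ${\bf D}^\CC_{-1}(\mu)$ when $\ell(\mu)<r$. James' regularisation alone only gives $\mu\not\lhd \alpha^R$, which is compatible with $\ell(\mu)=1$ (e.g.\ $\mu=(n)$), and the coloured-tableau bound of \cref{hero} is also too weak: for $\alpha=(3,3)$ one verifies directly that $|{\rm CStd}((3,3),(6))|=1$ whereas $[{\bf S}^\CC_{-1}((3,3)):{\bf D}^\CC_{-1}((6))]=0$, so the elementary tools of Section~\ref{clored} do not suffice. To overcome this I would invoke the Jantzen sum formula of \cite{MR1402572}, which expresses $\sum_{k\geq 1}[J_k{\bf S}^\CC_{-1}(\alpha):{\bf D}^\CC_{-1}(\mu)]$ as a positive combinatorial sum indexed by partitions obtained from $\alpha$ by sliding a single domino; I would then argue by induction on the $2$-weight of $\alpha$, with base case $\alpha=\rho(k)$ a $2$-core (where ${\bf S}^\CC_{-1}(\rho(k))$ is simple and $\ell(\rho(k))=k\geq r$), and with inductive step checking that each summand in the Jantzen formula produces partitions still satisfying a Durfee-containment condition strong enough to force $\ell(\mu)\geq r$ on all composition factors.
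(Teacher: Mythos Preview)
The paper does not supply its own proof of this proposition: it is simply quoted from James--Mathas \cite[Theorem~4.7]{MR1402572}, so there is no in-paper argument to compare against.

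Your first paragraph is fine: monotonicity of $2$-regularisation under containment of Young diagrams together with the explicit computation $(r^r)^R=(2r-1,2r-3,\dots,1)$ does give $\ell(\alpha^R)\geq r$, and James' regularisation theorem then handles the factor $\mu=\alpha^R$.

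The second paragraph, however, contains a genuine error of strategy. Your proposed induction variable, the $2$-weight, cannot work: every partition $\nu$ appearing in the Jantzen sum formula for ${\bf S}^\CC_{-1}(\alpha)$ lies in the same $2$-block as $\alpha$ and therefore has the \emph{same} $2$-weight. The Jantzen formula never drops you from positive weight down to a $2$-core, so your base case $\alpha=\rho(k)$ is unreachable. The induction that actually makes sense here is on the dominance order (the $\nu$'s in the Jantzen sum satisfy $\nu\vartriangleright\alpha$), with base case the most dominant partition in the block. Even after that repair, the assertion that each $\nu$ in the sum ``still satisfies a Durfee-containment condition'' needs real work: removing a domino from row~$r$ of $\alpha$ can destroy $\alpha_r\geq r$, and the Jantzen sum involves signed terms, so one cannot simply ignore those $\nu$. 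If you want to reconstruct the argument rather than cite it, you will need to look at how James--Mathas organise the hook-wrapping terms in \cite{MR1402572} to control $\ell(\mu)$ through the cancellations.
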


Fayers--Lyle~\cite{MR3011340} provide a conjectural classfication of all  irreducible Specht $ {H}^{\mathbb{C}}_{-1}(n)$-modules.
Mathas has provided a proof of  the Fayers--Lyle conjecture for   rectangular partitions, using the above result.  {\em The following result and its proof is unpublished and conveyed to us by Fayers.}

\begin{thm}\label{rectangle}
Let $\nu=(m^k)$ for some $m,k\in\mathbb{N}$. Then ${\bf S}^\CC_{-1}(\lambda)$ irreducible. In particular, ${\bf S}^\CC_{-1}(\lambda)\cong {\bf D}^\CC_{-1}(\lambda^R)$.
\end{thm}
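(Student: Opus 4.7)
The plan is to pin down the composition factors of $S^\CC_{-1}(\nu)$ by combining James' regularisation theorem (\cref{James-reg}) with the length bound of \cref{noofparts}, and to rule out any surviving candidates via the coloured-tableau upper bound of \cref{hero}.

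Assuming without loss of generality $m \geq k$ (the other case is symmetric), I would first compute $\nu^R$ explicitly. Since the ladders at $e=2$ are the anti-diagonals $\mathscr{L}_i = \{(r,c) : r+c = i\}$, sliding every node of $(m^k)$ as high as possible along its ladder yields the strict partition
\[
\nu^R = (m+k-1,\, m+k-3,\, \dots,\, m-k+1)
\]
of exactly $k$ parts, which is 2-regular. By \cref{James-reg}, $[S^\CC_{-1}(\nu) : D^\CC_{-1}(\nu^R)] = 1$ and every composition factor $D^\CC_{-1}(\mu)$ satisfies $\mu \trianglerighteq \nu^R$. The Durfee size of $\nu=(m^k)$ is $k$, so \cref{noofparts} forces $\ell(\mu) \geq k$.

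It remains to show that the only 2-regular $\mu \vdash mk$ with $\mu \trianglerighteq \nu^R$ and $\ell(\mu) \geq k$ is $\mu = \nu^R$ itself. Dominance at $j = k-1$ gives $\sum_{i=1}^{k-1}\mu_i \geq (k-1)(m+1)$, hence $\sum_{i \geq k}\mu_i \leq m-k+1$; combined with the strict decrease of the parts of a 2-regular $\mu$, this severely restricts both $\ell(\mu)$ and the tail of $\mu$ and cuts the candidate set down to a small list. Any remaining candidate $\mu \neq \nu^R$ is then eliminated by \cref{hero}, which yields the bound $d_{\nu,\mu}(t) \leq |{\rm CStd}(\nu,\mu)|$: one checks directly that no semistandard coloured tableau of shape $(m^k)$ and weight ${\rm Lad}(\mu)$ exists when $\mu$ strictly dominates $\nu^R$.

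The hardest step is precisely this last combinatorial nonexistence. The $m$ identical columns of length $k$ together with the residue pattern $\res(r,c) = r+c \pmod 2$ rigidly constrain where entries of ladder-weight ${\rm Lad}(\mu)$ can be placed in $[(m^k)]$. The row-weakly-increasing and column-strictly-increasing conditions then collide with any excess of $\mu$ over $\nu^R$: a case analysis shows that strict dominance of $\nu^R$ by $\mu$ forces some ladder either to overfill a column or to produce a row that fails to be weakly increasing. Once $|{\rm CStd}((m^k), \mu)| = 0$ is established for all 2-regular $\mu \triangleright \nu^R$ with $\ell(\mu) \geq k$, the only composition factor of $S^\CC_{-1}(\nu)$ is $D^\CC_{-1}(\nu^R)$, and hence $S^\CC_{-1}(\nu) = D^\CC_{-1}(\nu^R)$ is irreducible.
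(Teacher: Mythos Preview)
Your approach parallels the paper's through the first reduction: both use \cref{noofparts} and \cref{James-reg} to force any composition factor $D^\CC_{-1}(\mu)$ to have $\ell(\mu)=k$ exactly (in fact $\mu\trianglerighteq\nu^R$ with $\ell(\nu^R)=k$ already gives $\sum_{i\le k}\mu_i\ge mk$, hence $\ell(\mu)\le k$, so your discussion of ``cutting down the tail'' is superfluous). The divergence comes at the last step, and there your argument has a genuine gap: the claim that $|{\rm CStd}((m^k),\mu)|=0$ for every 2-regular $\mu\triangleright\nu^R$ with $\ell(\mu)=k$ is \emph{false}.

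A concrete counterexample is $k=2$, $m=5$, so $\nu=(5,5)$, $\nu^R=(6,4)$, and take $\mu=(8,2)$. Then ${\rm Lad}(\mu)=(0,1,2,2,1,1,1,1,1)$, and the filling
\[
\begin{array}{ccccc}
2&3&4&5&6\\
3&4&7&8&9
\end{array}
\]
is a bona fide element of ${\rm CStd}((5,5),(8,2))$: rows and columns are strictly increasing and every entry matches the residue $r+c\bmod 2$ of its box. Thus \cref{hero} yields only $d_{(5,5),(8,2)}\le 1$, not~$0$, and the ``case analysis'' you sketch cannot go through. The coloured-tableau upper bound is simply not tight enough to finish the argument on its own; one would have to run the full inductive algorithm of \cref{troll}$(iii)$, which is no easier than what you are trying to avoid.

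The paper closes the argument differently. Once $\ell(\mu)=k$ is known, James' column removal (\cite[Rule~5.8]{j90}) gives
\[
[{\bf S}^\CC_{-1}((m^k)):{\bf D}^\CC_{-1}(\mu)]
=[{\bf S}^\CC_{-1}((m-1)^k)):{\bf D}^\CC_{-1}(\mu_1-1,\dots,\mu_k-1)],
\]
and irreducibility follows by induction on $m$, with the base case $m=1$ (or $m=k$ after transposing) trivial. This bypasses all tableau counting.
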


\begin{proof}
We  suppose without loss of generality (as ${\bf S}^\CC_{-1}(\lambda)$ and  ${\bf S}^\CC_{-1}(\lambda^T)$ have the same decomposition multiplicities) that $m\geqslant k$ and so $\nu_k\geqslant k$. Now, by applying \cref{noofparts}, we know that if $D^\CC_{-1}({\mu})$ is a composition factor of ${\bf S}^\CC_{-1}(\lambda)$ for some $2$-regular $\mu\vdash n$, then $\mu$ has at least $k$ non-zero parts.
 On the other hand, we know from \cref{James-reg} that $[{\bf S}^\CC_{-1}(\lambda):{\bf D}^\CC_{-1}({\mu})]\neq 0$ implies that $\mu\trianglerighteq  \nu^R$, and hence $\mu$ has at most $k$ parts.

We can thus assume that $\mu$ has exactly $k$ parts, and write $\mu=(\mu_1,\mu_2,\dots,\mu_k)$.
	We now apply induction on the statement that ${\bf S}^\CC_{-1}(\lambda)$ is simple  for all $m\in\mathbb{N}$. If $m=1$, then ${\bf S}_{-1}^\CC{(m)}\cong
	{\bf D}_{-1}^\CC{(m)}$   and   the result holds. Now suppose that $m>1$, we obtain
\[
\left[{\bf S}^\CC_{-1}(\lambda):{\bf D}^\CC_{-1}({\mu})\right]=
\begin{cases}
\left[ {\bf S}^\CC_{-1} \left({(m-1)^k}\right): {\bf D}^\CC_{-1} {\left(\mu_1-1,\mu_2-1,\dots,\mu_k-1\right)}\right] &\text{if $\ell(\mu)=k$} \\
0																&\text{otherwise,}
\end{cases}
\]
by applying the column removal result \cite[Rule 5.8]{j90}.
 By induction, we have that
 $$ {\bf S}^\CC_{-1} \left({(m-1)^k}\right) \cong {\bf S}^\CC_{-1} \left(k^{m-1}\right) \cong {\bf D}^\CC_{-1} {\left(\mu_1-1,\mu_2-1,\dots,\mu_k-1\right)},$$
 and the result follows.\end{proof}

\begin{cor}
Let $\la=((k+4)^k)$ for all $k\in\mathbb{N}$. Then ${\bf S}^\CC_{-1}(\lambda)$ is irreducible and  $${\bf S}^\CC_{-1}\left(\lambda\right)\cong {\bf D}_{-1}^\CC {\left(2k+3,2k+1,2k-1,\dots,9,7,5\right)}.$$
\end{cor}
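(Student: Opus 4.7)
The plan is straightforward: apply Theorem~\ref{rectangle} to the rectangular partition $\lambda = ((k+4)^k)$. This immediately gives that ${\bf S}^\CC_{-1}(\lambda)$ is irreducible and isomorphic to ${\bf D}^\CC_{-1}({\sf R}(\lambda))$, so all that remains is to verify that the $2$-regularisation ${\sf R}(\lambda)$ coincides with the partition $(2k+3, 2k+1, \dots, 9, 7, 5)$. This is purely a combinatorial bookkeeping exercise.

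To identify ${\sf R}(\lambda)$ I would use the description of $2$-regularisation recalled before Theorem~\ref{James-reg}: slide every node as far as possible along its $2$-ladder. For $e=2$ the ladders are the NE--SW diagonals $\mathscr{L}_i = \{(r,c) : r+c=i\}$, and in ${\sf R}(\lambda)$ the $m_i := |\mathscr{L}_i \cap [\lambda]|$ nodes of ladder $i$ occupy the uppermost $m_i$ positions on that diagonal, namely $(1,i-1), (2,i-2), \dots, (m_i, i-m_i)$. Counting the diagonals of the rectangle $((k+4)^k)$ directly yields
\[
m_i = \begin{cases} i-1 & \text{if } 2 \le i \le k+1, \\ k & \text{if } k+1 \le i \le k+5, \\ 2k+5-i & \text{if } k+5 \le i \le 2k+4. \end{cases}
\]

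The length of row $j$ of ${\sf R}(\lambda)$ is the number of indices $i$ with $m_i \ge j$. For $1 \le j \le k$ this count is $(2k+5-j) - (j+1) + 1 = 2k+5-2j$, so row $j$ has length $2k+5-2j$ and specialising $j = 1, 2, \dots, k$ produces exactly the sequence $(2k+3, 2k+1, 2k-1, \dots, 7, 5)$. This completes the identification; the sanity check $\sum_{j=1}^{k}(2k+5-2j) = k(k+4) = |\lambda|$ is automatic. The entire content of the proof is really contained in Theorem~\ref{rectangle}; there is no substantive obstacle to overcome, only the explicit diagonal count.
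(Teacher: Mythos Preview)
Your proposal is correct and follows exactly the approach the paper intends: the corollary is stated without proof immediately after Theorem~\ref{rectangle}, so the only content is applying that theorem to the rectangle $((k+4)^k)$ and identifying its $2$-regularisation, which you carry out explicitly and correctly.
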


\noindent In what follows, we shall denote $$\alpha_k^C=((k+4)^k)^R=(2k+3,2k+1,2k-1,\dots,9,7,5).$$

 While we are able to calculate both the  non-zero decomposition numbers in the above
and prove decomposability, we do not have a proof that all other decomposition numbers are zero (and hence we do not prove semisimplicity either).
Our proof of  \cref{unusual} proceeds in two steps as follows:
\begin{itemize}
\item[$(i)$]
Prove that  ${\bf D}_{-1}^\CC {(\alpha_k^{\rm C})}$ belongs to the socle of  ${\bf S}_{-1}^\CC (\alpha_k)$;
\item[$(ii)$]  Prove that  $ [ {\bf S}_{-1}^\CC (\alpha_k): {\bf D}_{-1}^\CC {(\alpha_k^{\rm C})} ] \leq 1$ and thus deduce (by self-duality) that ${\bf D}_{-1}^\CC  (\alpha_k^{\rm C}) $ is a direct summand of ${\bf S}_{-1}^\CC (\alpha_k)$.
\end{itemize}
 Using the previous corollary, it is enough to show that
$$\Hom_{H^\CC_{-1}(k^2-2)}\left({\bf S}_{-1}^\CC {\left( (k+4)^k\right)}, {\bf D}_{-1}^\CC \left(\alpha_k\right)\right)\neq 0$$
to immediately deduce part $(i)$.  To deduce part $(ii)$, we shall prove that
\begin{align}\label{1}
\dim_t\left(e_{\Lad\left(\alpha_k^{\rm C}\right)}{\bf S}_{-1}^\CC \left(\alpha_k\right)\right)&=2 t^{\left\lfloor w\left(\alpha_k\right)/2\right\rfloor }
\dim_t\left(e_{\Lad\left(\alpha_k^{\rm C}\right)}{\bf D}_{-1}^\CC{\left(\alpha_k^{\rm C}\right)}\right)
\\\label{2}
 \dim_t\left(e_{\Lad\left(\alpha_k^{\rm C}\right)}
 {\bf D}_{-1}^\CC{\left(\alpha_k^{\rm R}\right)})\right) &=
\dim_t\left(e_{\Lad(\alpha_k^{\rm C})}{\bf D}_{-1}^\CC{\left(\alpha_k^{\rm C}\right)}\right)
+ \dots
\end{align}
where the other terms in \cref{2} belong to $t\NN_0[t]$.  We deduce from  \cref{1}
that $ [ {\bf S}_{-1}^\CC (\alpha_k): {\bf D}_{-1}^\CC {(\alpha_k^{\rm C})} ] \leq 2$ and then
refine this to the sharp upper bound of 1 using \cref{2} and part $(i)$ along with the results of Kleshchev--Nash from \cref{clored}.

\subsection{Proof part $(i)$: the homomorphism }\label{louise}

We employ the method set out in \cref{ss:permutationmodules} to describe a Specht module homomorphism between ${\bf S}_{-1}^\CC {( (k+4)^k)}$ and ${\bf S}_{-1}^\CC {((k+2)^k,k^2)}$ from the following definition of a semistandard tableau.

Let $\mu=((k+4)^k)$ and $\lambda=((k+2)^k,k^2)$ for all $k>1$. We define the tableau $\SSTA \in \SStd(\mu,\lambda)$ as follows:
\[\scalefont{0.8}
\SSTA=\gyoungxy(1.6pt,1.6pt,;1;1;1;1;1;1;1;1;1;1;1;1;\hdts;1!\gr2!\wh3,;2;2;2;2;2;2;2;2;2;2;2;\hdts;2!\gr3!\wh4;4,;3;3;3;3;3;3;3;3;3;3;\hdts;3!\gr4!\wh5;5;5,;4;4;4;4;4;4;4;4;4;\hdts;4!\gr5!\wh6;6;6;6,;5;5;5;5;5;5;5;5;\hdts;5!\gr6!\wh7;7;7;7;7,;6;6;6;6;6;6;6;\hdts;6!\gr7!\wh8;8;8;8;8;8,;\vdts;\vdts;\vdts;\vdts;\vdts;\vdts;\vdts;!\gr\iddots!\wh;;\vdts;\vdts;\vdts;\vdts;\vdts;\vdts,;<k{-}4>;<k{-}4>;<k{-}4>;<k{-}4>;<k{-}4>;<k{-}4>;<k{-}4>!\gr<k{-}3>!\wh<k{-}2>;\hdts;<k{-}2>;<k{-}2>;<k{-}2>;<k{-}2>;<k{-}2>;<k{-}2>,;<k{-}3>;<k{-}3>;<k{-}3>;<k{-}3>;<k{-}3>;<k{-}3>!\gr<k{-}2>!\wh<k{-}1>;\hdts;<k{-}1>;<k{-}1>;<k{-}1>;<k{-}1>;<k{-}1>;<k{-}1>;<k{-}1>,;<k{-}2>;<k{-}2>;<k{-}2>;<k{-}2>;<k{-}2>!\gr<k{-}1>!\wh<k>;\hdts;<k>;<k>;<k>;<k>;<k>;<k>;<k>;<k>,;<k{-}1>;<k{-}1>;<k{-}1>;<k{-}1>!\gr<k>!\wh<k{+}1>;\hdts;<k{+}1>;<k{+}1>;<k{+}1>;<k{+}1>;<k{+}1>;<k{+}1>;<k{+}1>;<k{+}1>;<k{+}1>,;<k>;<k>;<k>!\gr<k{+}1>!\wh<k{+}2>;\hdts;<k{+}2>;<k{+}2>;<k{+}2>;<k{+}2>;<k{+}2>;<k{+}2>;<k{+}2>;<k{+}2>;<k{+}2>;<k{+}2>)
\]
Since $\SSTA$ is semistandard, we know that $\widehat{\Theta}_{\SSTA}:{\bf S}_{-1}^\CC {( (k+4)^k)}\rightarrow {M}_{-1}^\CC {((k+2)^k,k^2)}$ is a non-zero   homomorphism; we claim that the image of $\widehat{\Theta}_{\SSTA}$ lies in ${\bf S}_{-1}^\CC {((k+2)^k,k^2)}$.
We will heavily rely on Fayers' algorithm~\cite[Theorem 3.1]{MR2927046}, which semistandardises tableau homomorphisms solely by manipulating row-standard tableaux.

\begin{thm}
	\label{ssalgorithm}
	Suppose that $\lambda$ and $\mu$ are partitions of $n$, and $\SSTT$ is a row-standard $\lambda$-tableau of type $\mu$. Suppose $i\geqslant 1$, and $A,B,C$ are multisets of positive integers such that $|B|>\lambda_i$ and $$(A\sqcup B\sqcup C)\cap\{(r,c)\mid r\in \{i,i+1\}, c\geq 1\} =\SSTT\cap \{(r,c)\mid r\in \{i,i+1\},c\geq 1\}.$$  Let $\mathscr{B}$ be the set of all pairs $(D,E)$ of multisets such that $|D|=\lambda_i-|A|$ and $B=D\sqcup E$. For each such pair $(D,E)$, define $\SSTT_{D,E}$ to be the row-standard tableau with
	\[
	\SSTT_{D,E} \cap \{(r,c)\mid c\geq 1\}=
	\begin{cases}
	A\sqcup D &(r=i)\\
	C\sqcup E &(r=i+1)\\
	\SSTT \cap \{(r,c)\mid c\geq 1\}
	&(\text{otherwise}).
	\end{cases}
	\]
	Then
	\begin{equation}\label{eqnssalgorithm}
	\sum_{(D,E)\in\mathscr{B}}
	\prod_{i\geqslant 1}
	\stirling{A_i+D_i}{A_i}
	\stirling{C_i+E_i}{C_i}
	\prod_{i<j}q^{\left(A_jD_i+C_iE_j\right)}
	\widehat{\Theta}_{\SSTT_{D,E}}=0.
	\end{equation}\end{thm}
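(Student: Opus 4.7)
The plan is to establish \eqref{eqnssalgorithm} as a $q$-analogue of the classical Garnir relation inside the permutation module ${\bf M}^\Bbbk_q(\mu)$. Since each $\widehat{\Theta}_{\SSTT_{D,E}}$ is the restriction of a permutation module homomorphism, and since ${\bf S}^\Bbbk_q(\lambda)$ sits inside ${\bf M}^\Bbbk_q(\lambda)=H_q^\Bbbk(n)x_\lambda$, it suffices to show that the corresponding linear combination of tabloid maps evaluated at $x_\lambda$ vanishes in ${\bf M}^\Bbbk_q(\mu)$. The first step is therefore to reduce to a local two-row identity: the tableaux $\SSTT_{D,E}$ agree outside rows $i$ and $i{+}1$, so $\Theta_{\SSTT_{D,E}}(x_\lambda)$ factors as a fixed ``outer'' tabloid tensored with a two-row inner part depending only on $(D,E)$. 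Isolating rows $i,i{+}1$ reduces \eqref{eqnssalgorithm} to a statement about a two-row Hecke algebra module for the shape $(\lambda_i,\lambda_{i+1})$.

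Next, I would identify the coefficients appearing in \eqref{eqnssalgorithm} with those of a $q$-Garnir element $g_{A,B,C}\in H_q^\Bbbk(|A|+|B|+|C|)$. The quantum binomial prefactors $\stirling{A_i+D_i}{A_i}$ and $\stirling{C_i+E_i}{C_i}$ are meant to enumerate, with the correct $q$-weight, the number of tabloid representatives arising from a single row-equivalence class in ${\bf M}^\Bbbk_q(\mu)$: inside the row with multiset content $A\sqcup D$, the $i$-entries from $A$ and $D$ may interleave in $\stirling{A_i+D_i}{A_i}$ many $q$-weighted ways, and symmetrically for $C\sqcup E$ in row $i{+}1$. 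The $q$-powers $q^{A_jD_i+C_iE_j}$ then record the length of the minimal coset representative that moves a fixed ordering of $A\sqcup B\sqcup C$ into the prescribed split between the two rows, with each instance of a value $j>i$ from $A$ crossing an $i$-value from $D$ contributing one inversion, and symmetrically for $C,E$. Once these identifications are in place, \eqref{eqnssalgorithm} becomes the single assertion that $g_{A,B,C}\cdot x_\lambda = 0$ in ${\bf M}^\Bbbk_q(\lambda)$.

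The final step is to prove this annihilation under the hypothesis $|B|>\lambda_i$. This is precisely the content of the $q$-Garnir relation for a two-row Young module: because row $i$ has too few slots to accommodate the surplus $B$-entries, the $q$-weighted sum over splittings $D\sqcup E$ collapses by the quantum Vandermonde identity. Concretely, one rewrites $g_{A,B,C}$ using $q$-Pascal identities to exhibit it as a product containing a factor of the form $\sum_{w}(-q)^{\ell(w)}T_w$ summed over a coset of length $|B|>\lambda_i$, and this factor annihilates $x_\lambda$ by a direct application of the Hecke quadratic relation $(T_r-q)(T_r+1)=0$ at the critical transposition; this is the standard mechanism underpinning the proof of \cref{sadwtoiyreiotyweiortuyuwoiertyuweoirty} itself.

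The main obstacle I anticipate is the combinatorial bookkeeping in the second step: carefully matching the statistics $A_jD_i+C_iE_j$ to the length of a canonical coset representative interpolating between different $(D,E)$-configurations, and verifying that the product of quantum binomial coefficients exactly accounts for the multiplicities arising when one expands $\Theta_{\SSTT_{D,E}}(x_\lambda)$ in the tabloid basis of ${\bf M}^\Bbbk_q(\mu)$. Once this matching is secured, the vanishing is uniform in the outer structure of $\SSTT$, and one recovers \eqref{eqnssalgorithm} in full generality.
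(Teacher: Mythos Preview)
The paper does not prove this theorem: it is quoted verbatim as \cite[Theorem 3.1]{MR2927046} (Fayers, \emph{An algorithm for semistandardising homomorphisms}) and used as a black box throughout the appendix. There is no ``paper's own proof'' to compare against.

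That said, your outline is in the right neighbourhood. Fayers' argument does ultimately rest on the kernel-intersection description of the Specht module (the result recorded here as \cref{sadwtoiyreiotyweiortuyuwoiertyuweoirty}) together with the explicit formula for composing a row-standard homomorphism with a $\varphi_{d,t}$ (the result recorded here as \cref{prop:homcomp}). The relation \eqref{eqnssalgorithm} is then obtained not by building a single Garnir element and showing it kills $x_\lambda$, but by observing that the hypothesis $|B|>\lambda_i$ forces at least one $\SSTT_{D,E}$ to place more entries in row $i$ than the shape allows; expanding the resulting illegal tableau homomorphism (which is identically zero) via the composition formula of \cref{prop:homcomp} produces exactly the sum in \eqref{eqnssalgorithm}. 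So the coefficients you are trying to match to coset lengths and interleaving counts are really just the structure constants of Lyle's composition rule, and the vanishing is tautological once one identifies the sum as the expansion of a zero map.

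Your sketch is not wrong in spirit, but the step you flag as the ``main obstacle'' --- matching the $q$-powers to lengths of coset representatives and the binomials to tabloid multiplicities --- is precisely the content of \cref{prop:homcomp}, and you would be re-deriving that lemma rather than invoking it. If you want to write out a proof, the cleanest route is to take \cref{prop:homcomp} as given and show that \eqref{eqnssalgorithm} is what you get when you apply it to the zero homomorphism attached to an overfilled row.
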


Given $\SSTT$ as in \cref{ssalgorithm}, we know that there exists some $c\geqslant 1$ such that the entries in the boxes $(i,c)$ and $(i+1,c)$ of $\SSTT$ are equal, that is $\SSTT(i,c)=\SSTT(i+1,c)=j$ for some $j\geqslant 1$. We let $\SSTT^{(i)}=\{(i,c) \mid c\geqslant 1\}$ for all $i\geqslant 1$. We now explicitly define the multisets $A$, $B$ and $C$ as follows:
\begin{align*}
A &= \{ k\in\SSTT^{(i)} \mid k<j \}\\
B &= \{k\in\SSTT^{(i)} \mid k\geqslant j \} \sqcup \{k\in\SSTT^{(i+1)} \mid k\leqslant j \}\\
C &= \{ k\in\SSTT^{(i+1)} \mid k>j \}.
\end{align*}

Recall from \cref{ss:permutationmodules} that $\widehat{\Theta}_{\SSTA}\subseteq{\rm im}({\bf S}_{-1}^\CC {((k+2)^k,k^2)})$ if the map composition of $\widehat{\Theta_{\SSTA}}$ with $\varphi_{d,t}$ is zero for all $d$ and $t$.
In order to do so, we now make extensive use of Fayers' semistandardising algorithm to show that certain row-standard tableau homomorphisms are zero; we invite the general reader to skip to the main result of this Subsection on Page 29 to avoid the following technical results, should they wish to do so.
We note that when $q=-1$, the tableau homomorphism $\widehat{\Theta}_{\SSTT_{D,E}}$ appears in the above sum of \cref{eqnssalgorithm} with coefficient $-1$, $0$ or $1$; we shade diagrams throughout for clarity.

 \begin{lem}\label{lem2}
	Let $\lambda=((k+4)^k)$ and $\mu=((k+2)^k,k^2)$ for some $k>1$, and let $d\in\{2,\dots,k-1\}$.
	\begin{enumerate}[leftmargin=*]
		\item Suppose that $t\geqslant u\geqslant 1$, $w\geqslant 3$ such that $w$ is odd, and $v\geqslant 0$.
		We have that $\widehat{\Theta}_{\SSTS}=0$ for any tableau $\SSTS\in\RStd(\la,\mu)$  whose $d$th and $(d+1)$th rows have the following form:
		\[\scalefont{0.8}
		\begin{tikzpicture}[scale=1.6]
		 \tgyoung(0cm,0cm,;<d>;\hdts;<d>;<d>;\hdts;<d>;<d>;\hdts;<d>;<d>;<d>;<d{+}2>;\hdts;<d{+}2>,;<d>;\hdts;<d>!\gr<d{+}1>;\hdts<d{+}1>!\wh<d{+}2>;\hdts;<d{+}2>;<d{+}3>;<d{+}3>;<d{+}3>;\hdts;<d{+}3>)
		\draw[decoration={brace,mirror,raise=5pt},decorate]	(0.05,-0.44) -- node[below=6pt] {$u$ times} (1.35,-0.44);
		\draw[decoration={brace,mirror,raise=5pt},decorate]	(1.4,-0.44) -- node[below=6pt] {$v$ times} (2.75,-0.44);
		\draw[decoration={brace,mirror,raise=5pt},decorate]	(2.8,-0.44) -- node[below=6pt] {$w$ times} (4.1,-0.44);
		\draw[decoration={brace,raise=5pt},decorate]	(0.05,0.43) -- node[above=6pt] {$t$ times} (5,0.43);
		\end{tikzpicture}
		\]
		
		\item Suppose that $t\geqslant u\geqslant 2$, $w\geqslant 3$ such that $w$ is odd, and $v\geqslant 0$.
		We have that $\widehat{\Theta}_{\SSTT}=0$ for any tableau  $\SSTT\in\RStd(\la,\mu)$   whose $d$th and $(d+1)$th rows have the following form:
		\[\scalefont{0.8}
		\begin{tikzpicture}[scale=1.6]
		 \tgyoung(0cm,0cm,;<d>;\hdts;<d>;<d>;\hdts;<d>;<d>;\hdts;<d>;<d>!\gr<d{+}1>!\wh<d{+}2>;\hdts;<d{+}2>,;<d>;\hdts;<d>!\gr<d{+}1>;\hdts<d{+}1>!\wh<d{+}2>;\hdts;<d{+}2>;<d{+}3>;<d{+}3>;<d{+}3>;\hdts;<d{+}3>)
		\draw[decoration={brace,mirror,raise=5pt},decorate]	(0.05,-0.44) -- node[below=6pt] {$u$ times} (1.35,-0.44);
		\draw[decoration={brace,mirror,raise=5pt},decorate]	(1.4,-0.44) -- node[below=6pt] {$v$ times} (2.75,-0.44);
		\draw[decoration={brace,mirror,raise=5pt},decorate]	(2.8,-0.44) -- node[below=6pt] {$w$ times} (4.1,-0.44);
		\draw[decoration={brace,raise=5pt},decorate]	(0.05,0.43) -- node[above=6pt] {$t$ times} (4.55,0.43);
		\end{tikzpicture}
		\]
	\end{enumerate}	
\end{lem}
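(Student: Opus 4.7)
The plan is to invoke Fayers' semistandardising algorithm (\cref{ssalgorithm}) applied to rows $i=d$ and $i+1=d+1$ of the tableau $\SSTS$ (respectively $\SSTT$), choosing the multisets $A$, $B$, $C$ so that the resulting identity \eqref{eqnssalgorithm} forces $\widehat{\Theta}_\SSTS=0$. For Part (1), I would take $A$ to be the $u$ copies of $d$ at the start of row $d+1$, take $C$ to be the $w$ copies of $d+3$ at the end of row $d+1$, and put the remaining entries of row $d+1$ (the $v$ copies of $d+1$ and the $w$ copies of $d+2$) together with the tail of row $d$ (beyond the first $u$ entries) into $B$. The hypotheses $t\geq u\geq 1$, $w\geq 3$ and $v\geq 0$ are exactly what one needs to ensure $|B|>\lambda_d=k+4$, which is the hypothesis of Fayers' algorithm.

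Next I would analyse the linear combination \eqref{eqnssalgorithm} arising from this choice. Each pair $(D,E)\in\mathscr{B}$ corresponds to a way of redistributing the entries of $B$ between rows $d$ and $d+1$; the redistributed tableau $\SSTS_{D,E}$ is row-standard (after possibly re-sorting within a row, which is automatic since Fayers' algorithm produces row-standard tableaux from the multiset data). At $q=-1$, many of the coefficients $\binom{A_i+D_i}{A_i}_{-1}\binom{C_i+E_i}{C_i}_{-1}$ vanish by Lucas's theorem for quantum binomials: $\binom{a+b}{a}_{-1}=0$ precisely when $a$ and $b$ share an odd $2$-adic digit. The oddness of $w$ is what drives the cancellation, leaving only a small number of surviving tableaux $\SSTS_{D,E}$. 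Among the survivors, $\SSTS$ itself appears with a non-zero coefficient, while every other surviving $\SSTS_{D,E}$ has strictly smaller parameter $(w,v,u)$ in a suitable lexicographic order, so $\widehat{\Theta}_{\SSTS_{D,E}}=0$ by induction.

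The induction is set up on the triple $(w,v,u)$, with base case $w=3$, $v=0$, $u=1$: here the only redistribution $(D,E)$ with non-vanishing coefficient at $q=-1$ is the trivial one, $(D,E)=(B\cap\SSTS^{(d)}\!,B\cap\SSTS^{(d+1)})$, giving $\SSTS_{D,E}=\SSTS$ and so $\pm\widehat{\Theta}_\SSTS=0$. Part (2) is proved by the same algorithm applied to $\SSTT$, which differs from the analogous $\SSTS$ only in that a single $d+1$ in row $d$ has been replaced by a $d+2$; the hypothesis $u\geq 2$ (rather than $u\geq 1$) is precisely what ensures the necessary inequality $|B|>\lambda_d$ after this modification, and the induction then runs identically.

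The main obstacle is the bookkeeping: one must verify that for every surviving pair $(D,E)$ the redistributed tableau $\SSTS_{D,E}$ either equals $\SSTS$ or falls within the inductive hypothesis, and one must carefully track the signs and vanishings of the coefficients $\prod_i\binom{A_i+D_i}{A_i}_{-1}\binom{C_i+E_i}{C_i}_{-1}\prod_{i<j}(-1)^{A_jD_i+C_iE_j}$. The oddness of $w$ is doing the essential work here, converting what looks like a large sum of tableau homomorphisms into a manageable expression with a single dominant term; without this parity condition the cancellations fail and $\widehat{\Theta}_\SSTS$ need not vanish. I expect that the verification of this parity cancellation --- and the check that each non-trivial surviving $\SSTS_{D,E}$ genuinely satisfies the inductive hypothesis (i.e.\ its own $w$-parameter is still odd, its $u$-parameter is still at least $1$, and the tableau still has the shape required by the statement) --- will constitute the bulk of the technical work.
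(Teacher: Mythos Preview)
Your overall strategy—apply \cref{ssalgorithm} at rows $i=d,d{+}1$ and exploit the oddness of $w$—matches the paper's, but your choice of $A,B,C$ is garbled and the induction you propose is unnecessary.  In the algorithm, $A\sqcup D$ becomes the \emph{new} row $i$, so entries you put in $A$ are destined for row~$d$, not row~$d{+}1$; describing $A$ as ``the $u$ copies of $d$ at the start of row $d{+}1$'' already signals a misreading.  More importantly, with the $C$ you describe (only the $(d{+}3)$'s), none of the coefficient factors $\binom{C_i+E_i}{C_i}$ involves $w$ at all, so there is no mechanism by which the oddness of $w$ could drive any cancellation.

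The paper takes $A=\emptyset$ and $C=\{(d{+}1)^v,(d{+}2)^w,(d{+}3)^{k+4-u-v-w}\}$, i.e.\ everything in row $d{+}1$ except the $u$ copies of~$d$.  Then $C_{d+2}=w$, and every term in \eqref{eqnssalgorithm} carries the factor $\binom{E_{d+2}+w}{w}$, which is where the oddness of $w$ is used to force $E_{d+2}=0$.  In Case~1 the multiset $B=\{d^{t+u},(d{+}2)^{k+4-t}\}$ contains no $(d{+}1)$'s, so $E_{d+2}=0$ determines a \emph{unique} pair $(D,E)$ giving back $\SSTS$ itself; hence $\widehat\Theta_\SSTS=0$ directly, with no induction on $(w,v,u)$.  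In Case~2 the same choice of $A,C$ leaves exactly two surviving tableaux with $E_{d+2}=0$: the original $\SSTT$ and a tableau $\SSTR$ of the Case~1 shape with parameters $(t{+}1,u{-}1,v{+}1,w)$.  The hypothesis $u\geq 2$ is used here---not to guarantee $|B|>\lambda_d$ as you suggest (that already follows from $u\geq 1$ since $|B|=k+4+u$), but to ensure $u{-}1\geq 1$ so that Case~1 applies to $\SSTR$.
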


\begin{proof}
	We let $i=d$, and apply \cref{ssalgorithm} to $\widehat{\Theta}_{\SSTS}$ and $\widehat{\Theta}_{\SSTT}$, respectively. In both cases, we  set $A=\emptyset$ and $C =\{(d+1)^{v},(d+2)^w,(d+3)^{k+4-u-v-w}\}$, by which we mean
	\[C=\left\{ \underbrace{d+1,\dots,d+1}_{\text{$v$ times}}, \underbrace{d+2,\dots,d+2}_{\text{$w$ times}}, \underbrace{d+3,\dots,d+3}_{\text{$k+4-u-v-w$ times}} \right\}.\]
	Thus every  morphism $\widehat{\Theta}_{\SSTS_{D,E}}$ and $\widehat{\Theta}_{\SSTT_{D,E}}$ in \cref{eqnssalgorithm} appears with some scalar multiple of
	\[
	\stirling{E_{d+2}+w}{w}=\begin{cases}
	1&\text{if $E_{d+2}=0$}\\
	0&\text{if $E_{d+2}\geqslant 1$}.
	\end{cases}
	\]
	Therefore we need now only consider the case $E_{d+2}=0$, which we now do in both cases:
	
	\smallskip
	\noindent{\bf Case 1:} Observe that
	$B=\{ d^{t+u}, ({d+2 })^ {k-t+4 } \}$
	and hence when we set  $E_{d+2}=0$ we obtain only the original tableau $\SSTS$ (which always  appears with coefficient 1) in \cref{eqnssalgorithm};   hence   $\widehat{\Theta}_{\SSTS}=0$.

	\smallskip
	\noindent{\bf Case 2:}  Observe that
	$B=\{  d^{t+u}, d+1,  (d+2)^{k-t+3} \}$.
	There are two tableaux obtained from setting $E_{d+2}=0$, namely $\SSTT$ and the tableau $\SSTR$ that is identical to $\SSTT$ except for its $d$th and $(d+1)$th rows, which have the following form
	\[\scalefont{0.8}
	\begin{tikzpicture}[scale=1.6]
	 \tgyoung(0cm,0cm,;<d>;\hdts;<d>;<d>;<d>;\hdts;<d>;<d>;\hdts;<d>;<d>;<d{+}2>;\hdts;<d{+}2>,;<d>;\hdts;<d>!\gr<d{+}1>;<d{+}1>;\hdts<d{+}1>!\wh<d{+}2>;\hdts;<d{+}2>;<d{+}3>;<d{+}3>;\hdts;<d{+}3>)
	\draw[decoration={brace,mirror,raise=5pt},decorate]	(0.05,-0.44) -- node[below=6pt] {$u-1$ times} (1.35,-0.44);
	\draw[decoration={brace,mirror,raise=5pt},decorate]	(1.4,-0.44) -- node[below=6pt] {$v+1$ times} (3.15,-0.44);
	\draw[decoration={brace,mirror,raise=5pt},decorate]	(3.25,-0.44) -- node[below=6pt] {$w$ times} (4.55,-0.44);
	\draw[decoration={brace,raise=5pt},decorate]	(0.05,0.43) -- node[above=6pt] {$t+1$ times} (5,0.43);
	\end{tikzpicture}
	\]
	We can thus write $\alpha_{\SSTR}\widehat{\Theta}_{\SSTR}+\widehat{\Theta}_{\SSTT}=0$ for some $\alpha_{\SSTR}\in\{-1,1\}$.
	It follows from part (1) that $\widehat{\Theta}_{\SSTR}=0$, and hence $\widehat{\Theta}_{\SSTT}=0$, as required.\qedhere
\end{proof}

\begin{lem}\label{lem5}
	Let $\lambda=((k+4)^k)$ and $\mu=((k+2)^k,k^2)$ for some $k>1$, and let $d\in\{2,\dots,k-1\}$.
	Suppose that either
	\begin{enumerate}[leftmargin=*]
		\item $u>s\geqslant 0$, $v\geqslant 3$ such that $v$ is odd, and $t\geqslant 0$, or
		\item $u>s+1>0$, $v\geqslant 2$ such that $v$ is even, and $t\geqslant 0$.
	\end{enumerate}
	We have that
	$\widehat{\Theta}_{\SSTT}=0$ for any
	$\SSTT\in\RStd(\la,\mu)$  whose $(d-1)$th and $d$th rows have the following form
	\[\scalefont{0.8}
	\begin{tikzpicture}[scale=1.6]
	 \tgyoung(0cm,0cm,;<d{-}1>;\hdts;<d{-}1>;<d>;<d>;<d>;\hdts;<d>!\gr<d{+}1>;<d{+}1>;\hdts;<d{+}1>,!\wh<d>;\hdts;<d>;<d>;<d>!\gr<d{+}1>;\hdts;<d{+}1>!\wh<d{+}2>;<d{+}2>;\hdts;<d{+}2>)
	\draw[decoration={brace,mirror,raise=5pt},decorate]	(0.05,-0.44) -- node[below=6pt] {$u$ times} (2.25,-0.44);
	\draw[decoration={brace,mirror,raise=5pt},decorate]	(2.35,-0.44) -- node[below=6pt] {$v$ times} (3.6,-0.44);
	\draw[decoration={brace,raise=5pt},decorate]	(0.05,0.43) -- node[above=6pt] {$s$ times} (1.3,0.43);
	\draw[decoration={brace,raise=5pt},decorate]	(1.4,0.43) -- node[above=6pt] {$t$ times} (3.6,0.43);
	\end{tikzpicture}
	\]
\end{lem}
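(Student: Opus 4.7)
The proof parallels that of Lemma~\ref{lem2}, applying Fayers' semistandardising algorithm (Theorem~\ref{ssalgorithm}) at $i=d-1$, and exploiting the parity of $v$ via the vanishing of a quantum binomial coefficient at $q=-1$. The choice of $A$ and $C$ must freeze enough of rows $d-1$ and $d$ so that the exchange pool $B$ isolates the block of $v$ copies of $d+2$ at the end of row $d$.

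For Case 1, I take $A = \{(d-1)^s, d^{k+4-s-t}\}$, which freezes the initial block of $d-1$'s and all of the $d$'s in row $d-1$, and I take $C = \{(d+1)^{k+4-u-v}\}$, which freezes the middle block of $d+1$'s in row $d$. Then $B$ consists of the $t$ copies of $d+1$ from row $d-1$ together with the $u$ copies of $d$ and the $v$ copies of $d+2$ from row $d$, so that $|B| = t+u+v$; the constraint $|B| > \lambda_{d-1} = k+4$ is readily checked from the row-standard structure of $\SSTT$. For each pair $(D,E)$ with $D \sqcup E = B$ and $|D| = \lambda_{d-1} - |A| = t$, the coefficient of $\widehat{\Theta}_{\SSTT_{D,E}}$ in equation (\ref{eqnssalgorithm}) contains the factor $\stirling{E_{d+2}+0}{0}_{-1} \cdot \stirling{v}{v - ?}_{-1}$; since $v$ is odd, the standard parity-vanishing $\stirling{a}{b}_{-1} = 0$ whenever $a$ is even and $b$ is odd forces $E_{d+2}$ to be $0$ (and similarly constrains $D_{d+1}$ if necessary). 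The remaining surviving summands turn out to be $\SSTT$ itself together with tableaux whose $(d-1,d)$-rows fall under the hypothesis of Lemma~\ref{lem2}; in particular, these auxiliary tableaux have a block of $d+2$'s in row $d$ of odd length $\geq 3$ sitting next to a block of $d+1$'s, and Lemma~\ref{lem2} annihilates their homomorphisms. Collecting, the only nonzero term left in (\ref{eqnssalgorithm}) is $\widehat{\Theta}_\SSTT$, whence $\widehat{\Theta}_\SSTT = 0$.

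For Case 2, where $v \geq 2$ is even, no such direct vanishing occurs. Instead I apply the algorithm at $i = d-1$ with a slightly different choice of frozen sets, producing a two-term equation
\[
\alpha_\SSTR \widehat{\Theta}_\SSTR + \widehat{\Theta}_\SSTT = 0
\]
for some $\alpha_\SSTR \in \{-1, 1\}$, where $\SSTR$ is obtained from $\SSTT$ by migrating one $d+1$ from row $d$ up to row $d-1$ (so that row $d$ of $\SSTR$ has $v+1$ copies of $d+2$, which is now odd and $\geq 3$, and the $u$-block in row $d$ is shortened by $1$ but still satisfies $u-1 > s$ by the hypothesis $u > s+1$). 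Case 1 applied to $\SSTR$ gives $\widehat{\Theta}_\SSTR = 0$, and hence $\widehat{\Theta}_\SSTT = 0$.

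The main obstacle I expect is the bookkeeping: unlike Lemma~\ref{lem2}, rows $d-1$ and $d$ each carry three distinct residues rather than two on the critical side, so several quantum binomial factors appear simultaneously, and one must check that the chosen $A$ and $C$ simultaneously kill every non-surviving $(D,E)$ while leaving behind only tableaux covered by Lemma~\ref{lem2}. Verifying row-standardness of the intermediate tableaux $\SSTT_{D,E}$ (required in order to even apply Theorem~\ref{ssalgorithm}) and confirming the precise vanishing pattern of the binomials $\stirling{\cdot}{\cdot}_{-1}$ under the parity hypotheses on $v$ is therefore the principal technical burden, though conceptually the argument is a direct analogue of Lemma~\ref{lem2}.
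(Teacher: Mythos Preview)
Your overall architecture for Case~2 is right --- reduce to Case~1 via a two-term relation --- and this matches the paper. But there are two real problems.

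First, you have misread the tableau. The block of length $t$ in row $d-1$ consists of entries equal to $d$, not $d+1$; the block of length $v$ in row $d$ consists of entries equal to $d+1$, not $d+2$. Consequently your multisets $A$, $B$, $C$ are impossible: row $d-1$ contains only $t$ copies of $d$, not $k+4-s-t$ of them, so $A=\{(d-1)^s,d^{k+4-s-t}\}$ cannot be extracted from it. The correct choice (and the one the paper uses) is the canonical one dictated by the repeated entry $j=d$:
\[
A=\{(d-1)^s\},\qquad B=\{d^{t+u},(d+1)^{k+4-s-t}\},\qquad C=\{(d+1)^v,(d+2)^{k+4-u-v}\}.
\]

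Second, and more importantly, with these correct sets Case~1 is \emph{immediate} and requires no appeal to Lemma~\ref{lem2}. The coefficient of each $\widehat{\Theta}_{\SSTT_{D,E}}$ carries the factor $\stirling{E_{d+1}+v}{v}$, coming from $C_{d+1}=v$. When $v$ is odd this quantum binomial vanishes for every $E_{d+1}\geq 1$, so the \emph{only} surviving term is $E_{d+1}=0$, which forces $\SSTT_{D,E}=\SSTT$; hence $\widehat{\Theta}_{\SSTT}=0$ directly. There are no auxiliary tableaux to kill, and Lemma~\ref{lem2} (which concerns rows $d$ and $d+1$ with a different entry pattern involving $d+3$'s) does not apply here and is not needed. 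In Case~2 the same factor vanishes for $E_{d+1}\geq 2$, leaving $\SSTT$ and a single tableau $\SSTS$ with $E_{d+1}=1$; this $\SSTS$ has $u-1$ copies of $d$ and $v+1$ copies of $d+1$ in row $d$, so it satisfies the hypotheses of Case~1 (using $u-1>s$), and one concludes as you outlined.
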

\begin{proof}
	
	Fix $i=d-1$ and set
	$$A=\{  (d-1)^s\}, \quad  B=\{ d^{t+u }, (d+1)^{k+4-s-t} \},\quad
	C=\{ (d+1)^{v}, (d+2)^{k+4-u-v } \}.$$
	
	\smallskip
	\noindent{\bf Case 1:}

	Observe that every homomorphism $\widehat{\Theta}_{\SSTT_{D,E}}$ in \cref{ssalgorithm} appears with some scalar multiple of
	\[
	\stirling{E_{d+1}+v}{v}=\begin{cases}
	1&\text{if $E_{d+1}=0$}\\
	0&\text{if $E_{d+1}\geqslant 1$}.
	\end{cases}
	\]
	The only tableau $\SSTT_{D,E}$ with $E_{d+1}=0$ is $\SSTT$ itself, and hence $\widehat{\Theta}_{\SSTT}=0$.
	
	\smallskip
	\noindent{\bf Case 2:}   In this case, every homomorphism $\widehat{\Theta}_{\SSTT_{D,E}}$ in \cref{ssalgorithm} appears with some scalar multiple of
	\[
	\stirling{E_{d+1}+v}{v}=\begin{cases}
	1&\text{if $E_{d+1}=0$ or $1$}\\
	0&\text{if $E_{d+1}\geqslant 2$}.
	\end{cases}
	\] There are two possible tableaux $\SSTT_{D,E}$ such that the above quantum binomial coefficient is non-zero, namely $\SSTT$ (with $E_{d+1}=0$) and the tableau $\SSTS$ (with $E_{d+1}=1$) that is identical to $\SSTT$ except for its $(d-1)$th and $d$th rows, which have the following form
	\[\scalefont{0.8}
	\begin{tikzpicture}[scale=1.6]
	 \tgyoung(0cm,0cm,;<d{-}1>;\hdts;<d{-}1>;<d>;<d>;<d>;\hdts;<d>;<d>!\gr<d{+}1>;\hdts;<d{+}1>,!\wh<d>;\hdts;<d>;<d>!\gr<d{+}1>;<d{+}1>;\hdts;<d{+}1>!\wh<d{+}2>;<d{+}2>;\hdts;<d{+}2>)
	\draw[decoration={brace,mirror,raise=5pt},decorate]	(0.05,-0.44) -- node[below=6pt] {$u-1$ times} (1.8,-0.44);
	\draw[decoration={brace,mirror,raise=5pt},decorate]	(1.9,-0.44) -- node[below=6pt] {$v+1$ times} (3.65,-0.44);
	\draw[decoration={brace,raise=5pt},decorate]	(0.05,0.43) -- node[above=6pt] {$s$ times} (1.3,0.43);
	\draw[decoration={brace,raise=5pt},decorate]	(1.4,0.43) -- node[above=6pt] {$t+1$ times} (4.1,0.43);
	\end{tikzpicture}
	\]
	We thus have $\alpha_{\SSTS}\widehat{\Theta}_{\SSTS}+\widehat{\Theta}_{\SSTT}=0$ for some $\alpha_{\SSTS}\in\{-1,1\}$.
	It follows from part (1) that $\widehat{\Theta}_{\SSTS}=0$, and hence $\widehat{\Theta}_{\SSTT}=0$.
\end{proof}

The following result will be the most instrumental in proving that there exists a non-zero homomorphism between ${\bf S}_{-1}^\CC {((k+4)^k)}$ and ${\bf S}_{-1}^\CC {((k+2)^k,k^2)}$; in particular, we cancel out most  pairs of relevant tableau homomorphisms (two exceptional cases will be deferred to the next lemma).

\begin{lem}\label{lem1}
	Let $\lambda=((k+4)^k)$ and $\mu=((k+2)^k,k^2)$ for some $k>1$  and  $d\in\{2,\dots,k-1\}$. The set of tableaux in $\RStd(\la,\mu)$  obtained from $\SSTA$ by replacing $\bar{t}\in\{1,\dots,k+2\}$ of the entries  equal to $d+1$ with the entry $d$ fall into two families:
	
	The first family is denoted by $\SSTS^l$.   The $(d-1)$th, $d$th and $(d+1)$th rows of $\SSTS^l$ are
	\[\scalefont{0.8}
	\begin{tikzpicture}[scale=1.6]
	 \tgyoung(0cm,0cm,;<d{-}1>;<d{-}1>;\hdts;<d{-}1>;<d{-}1>;\hdts;<d{-}1>;<d{-}1>;<d{-}1>;<d>!\dgr<d>;\hdts;<d>!\gr<d{+}1>;\hdts;<d{+}1>,!\wh<d>;<d>;\hdts;<d>;<d>;\hdts;<d>;<d>!\gr<d{+}1>!\wh<d{+}2>;<d{+}2>;\hdts;<d{+}2>;<d{+}2>;\hdts;<d{+}2>,!\dgr<d>;<d>;\hdts;<d>!\gr<d{+}1>;\hdts;<d{+}1>!\wh<d{+}2>;<d{+}3>;<d{+}3>;<d{+}3>;\hdts;<d{+}3>;<d{+}3>;\hdts;<d{+}3>)
	\draw[decoration={brace,mirror,raise=5pt},decorate]	(0.05,-0.9) -- node[below=6pt] {$l$ times} (1.8,-0.9);
	\draw[decoration={brace,mirror,raise=5pt},decorate]	(3.7,-0.9) -- node[below=6pt] {$d+1$ times} (7.3,-0.9);
	\draw[decoration={brace,raise=5pt},decorate]	(0.05,0.43) -- node[above=6pt] {$k-d+4$ times} (4.1,0.43);
	\draw[decoration={brace,raise=5pt},decorate]	(4.6,0.43) -- node[above=6pt] {$\bar{t}-l$ times} (5.9,0.43);
	\end{tikzpicture}
	\]
	with $l\leqslant \bar{t}$,
	and all other rows agree with those of $\SSTA$.
	The second family is $\SSTT^l $. The   $(d-1)$th, $d$th and $(d+1)$th rows of  $\SSTT^l$ are
	\[\scalefont{0.8}
	\begin{tikzpicture}[scale=1.6]
	 \tgyoung(0cm,0cm,;<d{-}1>;\hdts;<d{-}1>;<d{-}1>;<d{-}1>;\hdts;<d{-}1>;<d{-}1>;<d{-}1>;<d>!\dgr<d>;\hdts;<d>!\gr<d{+}1>;\hdts;<d{+}1>,!\wh<d>;\hdts;<d>;<d>;<d>;\hdts;<d>;<d>!\dgr<d>!\wh<d{+}2>;<d{+}2>;\hdts;<d{+}2>;<d{+}2>;\hdts;<d{+}2>,!\dgr<d>;\hdts;<d>!\gr<d{+}1>;<d{+}1>;\hdts;<d{+}1>!\wh<d{+}2>;<d{+}3>;<d{+}3>;<d{+}3>;\hdts;<d{+}3>;<d{+}3>;\hdts;<d{+}3>)
	\draw[decoration={brace,mirror,raise=5pt},decorate]	(0.05,-0.9) -- node[below=6pt] {$l-1$ times} (1.35,-0.9);
	\draw[decoration={brace,mirror,raise=5pt},decorate]	(3.7,-0.9) -- node[below=6pt] {$d+1$ times} (7.3,-0.9);
	\draw[decoration={brace,raise=5pt},decorate]	(0.05,0.43) -- node[above=6pt] {$k-d+4$ times} (4.1,0.43);
	\draw[decoration={brace,raise=5pt},decorate]	(4.6,0.43) -- node[above=6pt] {$\bar{t}-l$ times} (5.9,0.43);
	\end{tikzpicture}
	\]
	with $l\leqslant \bar{t}$,
	and all other rows agree with those of $\SSTA$.
	\begin{enumerate}[leftmargin=*]
		\item If $k-d$ is odd and  $l\in\{1,\dots,k-d+2\}$, then $\widehat{\Theta}_{\SSTS^l}=-\widehat{\Theta}_{\SSTT^l}$.
		\item If $k-d$ is even and  $l\in\{1,\dots,k-d+2\}$, then $\widehat{\Theta}_{\SSTS^l}=0$.
	\end{enumerate}
\end{lem}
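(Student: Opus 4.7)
The strategy is to apply Fayers' semistandardising algorithm \cref{ssalgorithm} at the row $i = d$, with the multisets $A, B, C$ chosen so that both $\SSTS^l$ and $\SSTT^l$ appear in the resulting expansion \cref{eqnssalgorithm}. Concretely, I would take $A$ to consist of the entries strictly less than $d+1$ in row $d$, $C$ to consist of the entries strictly greater than $d+1$ in row $d+1$, and $B$ to be the remaining multiset of entries drawn from rows $d$ and $d+1$. Because $\SSTT^l$ is obtained from $\SSTS^l$ by moving a single $d+1$ downwards and a single $d$ upwards across the boundary between rows $d$ and $d+1$, both tableaux arise from distinct admissible pairs $(D,E)$ in the sum.

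The next step is to apply \cref{lem2,lem5} to kill every other $\widehat{\Theta}_{\SSTT_{D,E}}$ that appears in the resulting expansion. The parity hypotheses built into those earlier lemmas are precisely tuned to match the row configurations arising here: any alternative $(D,E)$ leaves rows $d-1, d, d+1$ in a shape satisfying the hypotheses of one of the two lemmas, so the associated homomorphism vanishes. After this filtering, the Fayers relation collapses to
\[
\alpha\, \widehat{\Theta}_{\SSTS^l} + \beta\, \widehat{\Theta}_{\SSTT^l} = 0,
\]
with explicit coefficients $\alpha, \beta$ given by products of $q = -1$ quantum binomials read off from \cref{eqnssalgorithm}.

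The parity dichotomy in the conclusion is then forced by the evaluation of these binomials at $q = -1$. The coefficient $\beta$ carries a Gaussian binomial factor of the form $\genfrac{[}{]}{0pt}{}{N}{1}_{-1}$, where $N$ records the number of $d+1$ entries present in row $d+1$ after the shift. Since this count depends linearly on $k-d$, the factor evaluates to a unit precisely when $k-d$ is odd and to zero precisely when $k-d$ is even. In the odd case both $\alpha$ and $\beta$ are units and sign-tracking in \cref{eqnssalgorithm} gives $\widehat{\Theta}_{\SSTS^l} = - \widehat{\Theta}_{\SSTT^l}$, which is part (1). In the even case $\beta = 0$ while $\alpha$ remains a unit, forcing $\widehat{\Theta}_{\SSTS^l} = 0$, which is part (2).

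The principal obstacle is the bookkeeping: first, identifying exactly which $(D,E)$ survive the filter imposed by \cref{lem2,lem5} and showing that any surviving terms beyond $\SSTS^l$ and $\SSTT^l$ can be eliminated by a secondary appeal to those lemmas, possibly after a further application of the algorithm at an adjacent row; and second, evaluating the resulting $q = -1$ binomials via a Lucas-type 2-adic analysis so as to pinpoint the parity-dependent vanishing. Once the row configurations are aligned with the hypotheses of the earlier lemmas, the rest of the argument is essentially mechanical.
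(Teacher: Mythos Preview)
Your high-level plan---apply \cref{ssalgorithm} at row $i=d$, then use \cref{lem2,lem5} to kill the extra terms---matches the paper. However, the mechanism you describe for the parity split is not what actually happens, and this is a genuine gap.

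In the paper's proof one applies the algorithm to $\SSTS^l$ with $A=\varnothing$, $B=\{d^{k-d+l+3},\,d+1,\,(d+2)^d\}$, $C=\{(d+1)^{k-d-l+2},\,d+2,\,(d+3)^{d+1}\}$. The two ``main'' terms (those with $E_{d+2}=0$) are exactly $\SSTS^l$ and $\SSTT^l$, and \emph{both occur with coefficient $1$ regardless of the parity of $k-d$}. After the remaining terms are shown to vanish, one obtains $\widehat\Theta_{\SSTS^l}+\widehat\Theta_{\SSTT^l}=0$ in both cases; this already gives part~(1). Part~(2) is \emph{not} obtained by a coefficient $\beta$ vanishing in this same relation. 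Instead one applies the algorithm a second time, now to $\SSTT^l$: since row $d$ of $\SSTT^l$ has no $d+1$ entry, the multiset $B$ changes to $\{d^{k-d+l+3},(d+2)^d\}$, and now the only term with $E_{d+2}=0$ is $\SSTT^l$ itself. The residual terms are again killed (by the same style of argument), yielding $\widehat\Theta_{\SSTT^l}=0$ and hence $\widehat\Theta_{\SSTS^l}=0$. So the parity dichotomy comes from running the algorithm on two different starting tableaux, not from a single binomial $[N]$ whose parity is governed by $k-d$.

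A second point: the ``filtering'' is considerably heavier than you indicate. Several of the surviving $(D,E)$ do not directly fit the hypotheses of \cref{lem2} or \cref{lem5}; the paper must reapply \cref{ssalgorithm} at rows $d-1$ and then $d-2$, tracking further case splits (on $E_{d+1}$, on whether certain indices exceed others, etc.) before those lemmas finally apply. Your sketch should flag that this is a multi-pass recursion through adjacent rows rather than a one-shot appeal to the earlier lemmas.
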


\begin{proof}
	We fix $i=d$  and apply \cref{ssalgorithm} to $\SSTS^l$ only. We set
	$$ A=\emptyset, \quad
	B=\{d^{k-d+l+3 },d+1,(d+2)^ {d  } \},
	\quad C=\{ (d+1)^{k-d-l+2},d+2,(d+3)^ {d+1 } \},$$
	so $|D|=k+4$, $|E|=l$. Observe that
	\[
	\alpha^l_{\SSTS_{D,E}}:=
	\prod_{i\geqslant 1}
	\stirling{A_i+D_i}{A_i}
	\stirling{C_i+E_i}{C_i}
	=\stirling{E_{d+1}+k-d-l+2}{k-d-l+2}
	[E_{d+2}+1] \in\{0,1\}.
	\]
	If $E_{d+2}$ is odd then this product is zero since $[E_{d+2}+1]=0$. So suppose that $E_{d+2}$ is even. In particular, we note that if $E_{d+2}=0$, then
	\begin{equation}
	\label{albrl}
	\SSTS^l=\SSTS^l_{D,E} \text{ where } E=\{  d^l \}
	\qquad\text{ and }\qquad
	\SSTT^l=\SSTS^l_{D,E} \text{ where }
	E=\{ d^{l-1},d+1 \}.
	\end{equation}
	Let $n_{D,E}=\sum_{1\leqslant i<j}\left( A_jD_i+C_iE_j \right)$. Then substituting \cref{albrl} into \cref{eqnssalgorithm} we obtain
	\[
	\widehat{\Theta}_{\SSTS^l}+\widehat{\Theta}_{\SSTT^l}+\sum_{\{(D,E)\mid E_{d+2}\geqslant 1\}}(-1)^{n_{D,E}}\alpha^l_{\SSTS_{D,E}}\widehat{\Theta}_{\SSTS^l_{D,E}}=0.
	\]
	
	\smallskip
	\noindent{\bf Case 1:} Suppose that $k-d$ is odd. We proceed to show that $\alpha^l_{\SSTS_{D,E}}\widehat{\Theta}_{\SSTS^l_{D,E}}$ in \cref{eqnssalgorithm} equals zero for all pairs $(D,E)$ except
	in the cases of \cref{albrl}.
	\begin{enumerate}[leftmargin=*]
		\item Assume that $E_{d+1}=1$. In other words, we first consider tableaux $\SSTS^l_{D,E}$ such that
		$
		E=\{  d^{l-r},d+1,(d+2)^{r-1 } \}
		$
		with $1<r\leqslant l$. Then
		\[
		\stirling{E_{d+1}+k-d-l+2}{k-d-l+2}
		=[k-d-l+3]
		=\begin{cases}
		1&\text{ if $l$ is odd,}\\
		0&\text{ if $l$ is even}.
		\end{cases}
		\]
		We now suppose that $l$ is odd and recall that $l\leqslant \bar{t}$;  it thus follows that $\bar{t}\geqslant {3}$. Since $E_{d+2}=r-1$, and we assume above that $E_{d+2}$ is even, it suffices to   consider $\SSTS^l_{D,E}$ such that $r$ is odd. Under this assumption, we can further suppose that $d\geqslant 3$. If $r<l$ (so that $E_d>0$), then rows $d$ and $d+1$ of $\SSTS^l_{D,E}$ are of the form
		\[\scalefont{0.8}
		\begin{tikzpicture}[scale=1.6]
		 \tgyoung(0cm,0cm,;<d>;<d>;\hdts;<d>;<d>!\dgr<d>;<d>;\hdts;<d>;<d>!\wh<d{+}2>;<d{+}2>;\hdts;<d{+}2>,!\dgr<d>;\hdts;<d>!\gr<d{+}1>;\hdts;<d{+}1>!\wh<d{+}2>;<d{+}2>;\hdts;<d{+}2>;<d{+}3>;<d{+}3>;\hdts;<d{+}3>)
		\draw[decoration={brace,mirror,raise=5pt},decorate]	(0.05,-0.43) -- node[below=6pt] {$l-r$ times} (1.35,-0.43);
		\draw[decoration={brace,mirror,raise=5pt},decorate]	(2.8,-0.43) -- node[below=6pt] {$r$ times} (4.55,-0.43);
		\draw[decoration={brace,mirror,raise=5pt},decorate]	(4.65,-0.43) -- node[below=6pt] {$d+1$ times} (6.35,-0.43);
		\draw[decoration={brace,raise=5pt},decorate]	(0.05,0.43) -- node[above=6pt] {$k-d+r+3$ times} (4.55,0.43);
		\draw[decoration={brace,raise=5pt},decorate]	(4.6,0.43) -- node[above=6pt] {$d-r+1$ times} (6.35,0.43);
		\end{tikzpicture}
		\]
		It follows from part (1) of \cref{lem2} that $\widehat{\Theta}_{\SSTS^l_{D,E}}=0$.
		
		We now suppose that $r=l$, and consider the exceptional tableau homomorphism $\SSTR:=\SSTS^l_{D,E}$ with $E=\{ d+1, (d+2)^{l-1} \}$.  We now proceed to show that  $\widehat{\Theta}_{\SSTR}=0$ by repeated application of \cref{ssalgorithm}. Observe that the $(d-1)$th and $d$th rows in $\SSTR$ have the following form
		\[\scalefont{0.8}
		\begin{tikzpicture}[scale=1.6]
		 \tgyoung(0cm,0cm,;<d{-}1>;\hdts;<d{-}1>;<d{-}1>;\hdts;<d{-}1>;<d>!\dgr<d>;<d>;\hdts;<d>!\gr<d{+}1>;<d{+}1>;\hdts;<d{+}1>,!\wh<d>;\hdts;<d>!\dgr<d>;\hdts;<d>;<d>;<d>!\wh<d{+}2>;\hdts<d{+}2>;<d{+}2>;<d{+}2>;\hdts;<d{+}2>)
		\draw[decoration={brace,mirror,raise=5pt},decorate]	(0.05,-0.43) -- node[below=6pt] {$k-d+l+3$ times} (3.65,-0.43);
		\draw[decoration={brace,mirror,raise=5pt},decorate]	(3.7,-0.43) -- node[below=6pt] {$d-l+1$ times} (6.85,-0.43);
		\draw[decoration={brace,raise=5pt},decorate]	(0.05,0.43) -- node[above=6pt] {$k-d+4$ times} (2.7,0.43);
		\draw[decoration={brace,raise=5pt},decorate]	(3.25,0.43) -- node[above=6pt] {$\bar{t}-l$ times} (5,0.43);
		\end{tikzpicture}
		\]
		We now apply \cref{ssalgorithm} to $\widehat{\Theta}_{\SSTR}$ with $i=d-1$. We set
		$$A=\{ (d-1)^{k-d+4} \},\quad   B=\{d^{k-d+\bar{t}+4},(d+1)^{d-\bar{t}+l-1  } \},\quad C=\{  (d+2)^{d-l+1 } \},$$
		so that $|D|=d$, $|E|=k+l-d+3$.
		We write $\sum_{(D,E)}\alpha_{\SSTR_{D,E}}\widehat{\Theta}_{\SSTR_{D,E}}=0$, and observe that $0\neq\alpha_{\SSTR_{D,E}}\in\{-1,1\}$ for all possible pairs $(D,E)$ with $A,B,C$ as above.
		
		Observe that $$ D = \{ d^{\bar{t}-l+u+1}, (d+1)^{d-\bar{t}+l-u-1} \}, \quad
		E= \{ d^{k+l-d+3-u },(d+1)^{u} \}$$ with $u\geq 0$.
		If $u=0$, we observe that $\SSTR_{D,E}=\SSTR$.
		So we can write $$\widehat{\Theta}_{\SSTR} + \sum_{ \{(D,E)\mid E_{d+1}>0\} } \alpha_{{\SSTR}_{D,E}} \widehat{\Theta}_{\SSTR_{D,E}}=0,$$
		where $\alpha_{{\SSTR}_{D,E}}\in\{-1,0,1\}$.
		We now suppose that $u>0$.
		Then rows $d-1$ and $d$ of $\SSTR_{D,E}$ are as follows
		\[\scalefont{0.8}
		\begin{tikzpicture}[scale=1.6]
		 \tgyoung(0cm,0cm,;<d{-}1>;\hdts;<d{-}1>;<d{-}1>;\hdts;<d{-}1>;<d>!\dgr<d>;\hdts;<d>;<d>!\gr<d{+}1>;<d{+}1>;\hdts;<d{+}1>,!\wh<d>;\hdts;<d>!\dgr<d>;<d>;\hdts;<d>!\gr<d{+}1>;<d{+}1>;\hdts;<d{+}1>!\wh<d{+}2>;<d{+}2>;\hdts;<d{+}2>)
		\draw[decoration={brace,mirror,raise=5pt},decorate]	(0.05,-0.43) -- node[below=6pt] {$k-d+l+3-u$ times} (3.15,-0.43);
		\draw[decoration={brace,mirror,raise=5pt},decorate]	(5.1,-0.43) -- node[below=6pt] {$d-l+1$ times} (6.85,-0.43);
		\draw[decoration={brace,raise=5pt},decorate]	(0.05,0.43) -- node[above=6pt] {$k-d+4$ times} (2.7,0.43);
		\draw[decoration={brace,raise=5pt},decorate]	(2.8,0.43) -- node[above=6pt] {$\bar{t}-l+u+1$ times} (5,0.43);
		\draw[decoration={brace,mirror,raise=5pt},decorate]	(3.25,-0.43) -- node[below=6pt] {$u$ times} (5,-0.43);
		\end{tikzpicture}
		\]
		We now show that $\widehat{\Theta}_{\SSTR_{D,E}}=0$ for all $u>0$, treating the cases $u=1$ and $u>1$ separately, and hence $\widehat{\Theta}_{\SSTR}=0$.
		
		
		We first suppose that $u>1$ and write $\SSTR'=\SSTR_{D,E}$ in this case.
		If $l>u+1$ and $u>1$, then we can observe from above that $\SSTR'(d-1,c)=\SSTR'(d,c)=d$ for some column $c$, and hence $\widehat{\Theta}_{\SSTR'}=0$ by \cref{lem5}.
		We now suppose that $l\leqslant u+1$; we observe that rows $d-1$ and $d$ in $\SSTR'$ above are standard. Since $d\geqslant 3$, we see that the $(d-2)$th and $(d-1)$th rows in $\SSTR'$ have the following form
		\[\scalefont{0.8}
		\begin{tikzpicture}[scale=1.6]
		 \tgyoung(0cm,0cm,;<d{-}2>;<d{-}2>;\hdts;<d{-}2>;<d{-}2>;<d{-}1>;<d>;<d>;\hdts;<d>;<d>,;<d{-}1>;<d{-}1>;\hdts;<d{-}1>;<d>!\dgr<d>;\hdts;<d>!\gr<d{+}1>;\hdts;<d{+}1>)
		\draw[decoration={brace,raise=5pt},decorate]	(0.05,0.43) -- node[above=6pt] {$k-d+5$ times} (2.25,0.43);
		\draw[decoration={brace,raise=5pt},decorate]	(2.8,0.43) -- node[above=6pt] {$d-2$ times} (5,0.43);
		\draw[decoration={brace,mirror,raise=5pt},decorate]	(0.05,-0.43) -- node[below=6pt] {$k-d+4$ times} (1.8,-0.43);
		\draw[decoration={brace,mirror,raise=5pt},decorate]	(1.9,-0.43) -- node[below=6pt] {$\bar{t}-l+u+1$ times} (3.65,-0.43);
		\end{tikzpicture}
		\]
		We now apply \cref{ssalgorithm} to $\widehat{\Theta}_{\SSTR'}$ with $i=d-2$. We set
		$$A = \{ (d-2)^{k-d+5}, d-1 \},\quad B = \{ (d-1)^{k-d+4}, d^{\bar{t}+d-l+u-1} \}, \quad C = \{ (d+1)^{d-\bar{t}+l-u-1} \},$$
		so that $|D|=d-2$ and $|E|=k+\bar{t}-l-d+u+5$.
		Observe that each homomorphism $\widehat{\Theta}_{\SSTR'_{D,E}}$ in \cref{ssalgorithm} appears with some scalar multiple of $[D_{d-1}+1]$, which equals zero if and only if $D_{d-1}$ is odd. So suppose that $D_{d-1}$ is even. If $D_{d-1}=0$, then ${\SSTR'}_{D,E}=\SSTR'$. Now suppose that $D_{d-1}>0$ and consider $\SSTR'_{D,E}$ with
		$$ D = \{ (d-1)^{2j}, d^{d-2j-2} \}, \quad E = \{ (d-1)^{k-d-2j+4}, d^{\bar{t}-l+u+2j+1} \},$$
		where $j\geqslant 1$; we abuse notation and let $\SSTR''=\SSTR'_{D,E}$ in this case.
		Now observe that the $(d-2)$th and $(d-1)$th rows of  $\SSTR'_{D,E}$ are
		\[\scalefont{0.8}
		\begin{tikzpicture}[scale=1.6]
		 \tgyoung(0cm,0cm,;<d{-}2>;<d{-}2>;\hdts;<d{-}2>;<d{-}2>;<d{-}1>;\hdts;<d{-}1>;<d>;\hdts;<d>,;<d{-}1>;<d{-}1>;\hdts;<d{-}1>;<d>!\dgr<d>;\hdts;<d>!\gr<d{+}1>;\hdts;<d{+}1>)
		\draw[decoration={brace,raise=5pt},decorate]	(0.05,0.43) -- node[above=6pt] {$k-d+5$ times} (2.25,0.43);
		\draw[decoration={brace,raise=5pt},decorate]	(3.7,0.43) -- node[above=6pt] {$d-2j-2$ times} (5,0.43);
		\draw[decoration={brace,mirror,raise=5pt},decorate]	(0.05,-0.43) -- node[below=6pt] {$k-d-2j+4$ times} (1.8,-0.43);
		\draw[decoration={brace,mirror,raise=5pt},decorate]	(3.65,-0.43) -- node[below=6pt] {$d-\bar{t}+l-u-1$ times} (5,-0.43);
		\end{tikzpicture}
		\]
		Suppose that $\bar{t}+u-2j>l+1$. Then we can apply \cref{ssalgorithm} to $\widehat{\Theta}_{\SSTR''}$ with $i=d-2$. We observe that in this case, we have $A_{d-1}=2j+1$. Hence the only tableau homomorphism $\widehat{\Theta}_{\SSTR''_{D,E}}$ that appears in \cref{ssalgorithm} with a non-zero coefficient is $\widehat{\Theta}_{\SSTR''}$ itself, and hence $\widehat{\Theta}_{\SSTR''}=0$.
		Instead suppose that $l\geqslant \bar{t}+u-2j-1$, and observe rows $d-1$ and $d$ of $\SSTR''$ are of the form
		\[\scalefont{0.8}
		\begin{tikzpicture}[scale=1.6]
		 \tgyoung(0cm,0cm,;<d{-}1>;<d{-}1>;\hdts;<d{-}1>;<d>!\dgr<d>;\hdts;<d>!\gr<d{+}1>;\hdts;<d{+}1>,!\wh<d>;<d>;\hdts;<d>;<d>;<d{+}1>;\hdts;<d{+}1>;<d{+}2>;\hdts;<d{+}2>)
		\draw[decoration={brace,mirror,raise=5pt},decorate]	(3.7,-0.43) -- node[below=6pt] {$d-l+1$ times} (5,-0.43);
		\draw[decoration={brace,mirror,raise=5pt},decorate]	(2.35,-0.43) -- node[below=6pt] {$u$ times} (3.6,-0.43);
		\draw[decoration={brace,raise=5pt},decorate]	(0.05,0.43) -- node[above=6pt] {$k-d-2j+4$ times} (1.8,0.43);
		\draw[decoration={brace,raise=5pt},decorate]	(3.65,0.43) -- node[above=6pt] {$d-\bar{t}+l-u-1$ times} (5,0.43);
		\end{tikzpicture}
		\]
		Since $\bar{t}\geqslant 3$, we have that $l>u-2j+1$. It thus follows that there exists a column $c$ such that $\SSTR''(d-1,c)=\SSTR''(d,c)=d$, and hence $\widehat{\Theta}_{\SSTR''}=0$ by \cref{lem5}.


		We now suppose that $u=1$; we write $\SSTU=\SSTR_{D,E}$ with $D = \{ d^{\bar{t}-l+2}, (d+1)^{d-\bar{t}+l-2} \}$, $E=\{ d^{k+l-d+2 },d+1 \}$. We now apply \cref{ssalgorithm} to $\widehat{\Theta}_{\SSTU}$ with $i=d-1$.
		We set
		$$ A = \{ (d-1)^{k-d+4} \}, \quad B = \{ d^{k+\bar{t}-d+4}, (d+1)^{d-\bar{t}+l-2} \}, \quad C = \{ d+1, (d+2)^{d-l+1} \},$$
		so that $|D|=d$ and $|E|=k+l-d+2$. Thus each homomorphism $\widehat{\Theta}_{\SSTU_{D,E}}$ appears in \cref{ssalgorithm} with some scalar multiple of $[E_{d+1}+1]$, which equals zero if and only if $E_{d+1}$ is odd. So suppose that $E_{d+1}$ is even. If $E_{d+1}=0$, then $\SSTU_{D,E}=\SSTU$.
		We can thus write $$\widehat{\Theta}_{\SSTR} + \widehat{\Theta}_{\SSTU} + \sum_{ \{(D,E)\mid E_{d+1} \text{ even, } E_{d+1}>1\} } \alpha_{{\SSTR}_{D,E}} \widehat{\Theta}_{\SSTR_{D,E}}=0,$$
		where $\alpha_{{\SSTR}_{D,E}}\in\{-1,1\}$.
		For $E_{d+1}>0$, we let
		$$ D = \{ d^{\bar{t}-l+2m+2}, (d+1)^{d-\bar{t}+l-2m-2} \}, \quad E = \{ d^{k+l-d-2m+2}, (d+1)^{2m} \} $$
		with $m\geqslant 1$, and we write $\SSTU'=\SSTU_{D,E}$ in this case.
		
		By following the above proof to show that $\widehat{\Theta}_{\SSTR_{D,E}}=0$ when $u>1$, we can similarly show that $\widehat{\Theta}_{\SSTU'}=0$ by applying the substitution $u=2m+1$.

		\item Assume that $E_{d+1}=0$. In other words,  consider the tableaux $\SSTS^l_{D,E}$ with
		$$ D = \{ d^{k-d+r+3}, d+1, (d+2)^{d-r} \}, \quad
		E=\{ d^{l-r },(d+2)^{r  } \},
		$$
		where $1\leqslant r\leqslant l$. Since $E_{d+2}=r$, and  $E_{d+2}$ is even (by assumption), it thus suffices to consider $\SSTS^l_{D,E}$ with $r$ even.
		If $l>r$, then it follows from part (2) of \cref{lem2} that $\widehat{\Theta}_{\SSTS^l_{D,E}}=0$.
		We now suppose that $r=l$ is even and consider the tableau homomorphism $\SSTR:=\widehat{\Theta}_{\SSTS^l_{D,E}}$ with
		$$ D = \{ d^{k-d+l+3}, d+1, (d+2)^{d-l} \},\quad E = \{ (d+2)^l \}.$$
		Observe that the $(d-1)$th and $d$th rows of $\SSTR$ are of the form
		\[\scalefont{0.8}
		\begin{tikzpicture}[scale=1.6]
		 \tgyoung(0cm,0cm,;<d{-}1>;<d{-}1>;\hdts;<d{-}1>;<d>!\dgr<d>;<d>;\hdts;<d>!\gr<d{+}1>;<d{+}1>;\hdts;<d{+}1>,!\dgr<d>;<d>;\hdts;<d>!\wh<d>;<d>;\hdts;<d>!\gr<d{+}1>!\wh<d{+}2>;<d{+}2>;\hdts;<d{+}2>)
		\draw[decoration={brace,mirror,raise=5pt},decorate]	(0.05,-0.43) -- node[below=6pt] {$k-d+l+3$ times} (3.6,-0.43);
		\draw[decoration={brace,mirror,raise=5pt},decorate]	(4.2,-0.43) -- node[below=6pt] {$d-l$ times} (5.95,-0.43);
		\draw[decoration={brace,raise=5pt},decorate]	(0.05,0.43) -- node[above=6pt] {$k-d+4$ times} (1.8,0.43);
		\draw[decoration={brace,raise=5pt},decorate]	(2.3,0.43) -- node[above=6pt] {$\bar{t}-l$ times} (4.1,0.43);
		\end{tikzpicture}
		\]
		We now apply \cref{ssalgorithm} to $\widehat{\Theta}_{\SSTR}$ with $i=d-1$. We set
		$$ A = \{ (d-1)^{k-d+4} \}, \quad B = \{ d^{k-d+\bar{t}+4}, (d+1)^{d-\bar{t}+l-1} \}, \quad C = \{ d+1, (d+2)^{d-l} \},$$
		so that $|D|=d$ and $|E|=k+l-d+3$. Observe that $\widehat{\Theta}_{\SSTR_{D,E}}$ appears in \cref{ssalgorithm} with some scalar multiple of $[E_{d+1}+1]$, which is zero if and only if $E_{d+1}$ is odd. So suppose that $E_{d+1}$ is even, and let
		$$ D = \{ d^{k-d+\bar{t}-2m+3}, (d+1)^{2d-k-\bar{t}+2m-3} \}, \quad E = \{ d^{2m+1}, (d+1)^{k+l-d-2m+2}  \}$$
		with $m\geqslant 1$. Observe that $\SSTR_{D,E}=\SSTR$ if $2m=k-d+l+2$. So suppose that $2m<k-d+l$, and write $\SSTR'=\SSTR_{D,E}$ in this case. Then rows $d-1$ and $d$ of $\SSTR'_{D,E}$ are of the form
		\[\scalefont{0.8}
		\begin{tikzpicture}[scale=1.6]
		 \tgyoung(0cm,0cm,;<d{-}1>;<d{-}1>;\hdts;<d{-}1>;<d>;<d>;<d>;\hdts;<d>!\gr<d{+}1>;<d{+}1>;\hdts;<d{+}1>,!\wh<d>;<d>;\hdts;<d>;<d>!\gr<d{+}1>;<d{+}1>;\hdts;<d{+}1>!\wh<d{+}2>;<d{+}2>;\hdts;<d{+}2>)
		\draw[decoration={brace,mirror,raise=5pt},decorate]	(0.05,-0.43) -- node[below=6pt] {$2m+1$ times} (2.2,-0.43);
		\draw[decoration={brace,mirror,raise=5pt},decorate]	(4.2,-0.43) -- node[below=6pt] {$d-l$ times} (5.95,-0.43);
		\draw[decoration={brace,raise=5pt},decorate]	(0.05,0.43) -- node[above=6pt] {$k-d+4$ times} (1.8,0.43);
		\draw[decoration={brace,raise=5pt},decorate]	(1.9,0.43) -- node[above=6pt] {$k-d+\bar{t}-2m+3$ times} (4.1,0.43);
		\end{tikzpicture}
		\]
		Observe that if $2m>k-d+3$, then there exists a column $c$ such that $\SSTR'(d-1,c)=\SSTR'(d,c)=d$. Hence $\widehat{\Theta}_{\SSTR''}=0$ by part two of \cref{lem5}.
		We now suppose that $2m\leqslant k-d+2$ and observe that the $(d-2)$th and $(d-1)$th rows of $\SSTR'$ are of the form
		\[\scalefont{0.8}
		\begin{tikzpicture}[scale=1.6]
		 \tgyoung(0cm,0cm,;<d{-}2>;<d{-}2>;\hdts;<d{-}2>;<d{-}2>;<d{-}1>;<d>;\hdts;<d>;<d>;<d>;\hdts;<d>,;<d{-}1>;<d{-}1>;\hdts;<d{-}1>;<d>;<d>;<d>;\hdts;<d>!\gr<d{+}1>;<d{+}1>;\hdts;<d{+}1>)
		\draw[decoration={brace,raise=5pt},decorate]	(0.05,0.43) -- node[above=6pt] {$k-d+5$ times} (2.2,0.43);
		\draw[decoration={brace,raise=5pt},decorate]	(2.75,0.43) -- node[above=6pt] {$d-2$ times} (5.9,0.43);
		\draw[decoration={brace,mirror,raise=5pt},decorate]	(0.05,-0.43) -- node[below=6pt] {$k-d+4$ times} (1.8,-0.43);
		\draw[decoration={brace,mirror,raise=5pt},decorate]	(2.3,-0.43) -- node[below=6pt] {$k-d+\bar{t}-2m+3$ times} (4.1,-0.43);
		\end{tikzpicture}
		\]
		Since $k-d\geqslant 2m-2$ and $\bar{t}\geqslant 3$, we have $k-d+\bar{t}-2m\geqslant 0$. Hence $\SSTR'(d-2,c)=\SSTR'(d-1,c)=d$ for some column $c$. We can thus apply \cref{ssalgorithm} to $\widehat{\Theta}_{\SSTR'}$ with $i=d-2$. We set
		$$ A = \{ (d-2)^{k-d+5}, d-1 \},\quad B = \{ (d-1)^{k-d+4}, d^{k+\bar{t}-2m+1} \},\quad C = \{ (d+1)^{2d-k-\bar{t}+2m-3} \},  $$
		so that $|D|=d-2$ and $|E|=2k-2d+\bar{t}-2m+7$. We observe that $\widehat{\Theta}_{\SSTR'_{D,E}}$ appears in \cref{ssalgorithm} with some scalar multiple of $[D_{d-1}+1]$, which is zero if and only if $D_{d-1}$ is odd. So suppose that $D_{d-1}$ is even, and write
		$$ D = \{ (d-1)^{2j}, d^{d-2j-2} \}, \quad E = \{ (d-1)^{k-d-2j+4}, d^{k+\bar{t}-d-2m+2j+3} \}$$
		with $j\geqslant 0$.
		We have $\SSTR'_{D,E}=\SSTR'$ when $j=0$. So suppose that $j>0$, and write $\SSTR''=\SSTR'_{D,E}$ in this case. Then the $(d-2)$th and $(d-1)$th rows of $\SSTR''$ are of the form
		\[\scalefont{0.8}
		\begin{tikzpicture}[scale=1.6]
		 \tgyoung(0cm,0cm,;<d{-}2>;<d{-}2>;\hdts;<d{-}2>;<d{-}2>;<d{-}1>;<d{-}1>;\hdts;<d{-}1>;<d>;<d>;\hdts;<d>,;<d{-}1>;<d{-}1>;\hdts;<d{-}1>;<d>;<d>;<d>;\hdts;<d>!\gr<d{+}1>;<d{+}1>;\hdts;<d{+}1>)
		\draw[decoration={brace,raise=5pt},decorate]	(0.05,0.43) -- node[above=6pt] {$k-d+5$ times} (2.2,0.43);
		\draw[decoration={brace,raise=5pt},decorate]	(4.2,0.43) -- node[above=6pt] {$d-2j-2$ times} (5.9,0.43);
		\draw[decoration={brace,mirror,raise=5pt},decorate]	(0.05,-0.43) -- node[below=6pt] {$k-d-2j+4$ times} (1.8,-0.43);
		\draw[decoration={brace,mirror,raise=5pt},decorate]	(4.2,-0.43) -- node[below=6pt] {$2d-k-\bar{t}+2m-3$ times} (5.9,-0.43);
		\end{tikzpicture}
		\]
		If $k+\bar{t}-d-2m-2j\geqslant 0$, then there exists a column $c$ such that $\SSTR''(d-2,c)=\SSTR''(d-1,c)=d$. We can thus apply \cref{ssalgorithm} to $\widehat{\Theta}_{\SSTR''}$ with $i=d-2$.
		We observe that $\widehat{\Theta}_{\SSTR''_{D,E}}$ appears in \cref{ssalgorithm} with some scalar multiple of
		\[
		\stirling{A_{d-1}+D_{d-1}}{A_{d-1}}=\stirling{D_{d-1}+2j+1}{2j+1},
		\]
		which equals zero if and only if $D_{d-1}>0$. If $D_{d-1}=0$, then $\SSTR''_{D,E}=\SSTR''$. Hence $\widehat{\Theta}_{\SSTR_{D,E}}$ is the only homomorphism in the sum of \cref{ssalgorithm}, and thus must be zero.
		We now suppose that $k+\bar{t}-d-2m-2j+1\leqslant 0$, and observe that the $(d-1)$th and $d$th rows of $\SSTR''$ have the form
		\[\scalefont{0.8}
		\begin{tikzpicture}[scale=1.6]
		 \tgyoung(0cm,0cm,;<d{-}1>;<d{-}1>;\hdts;<d{-}1>;<d>;<d>;\hdts;<d>!\gr<d{+}1>;<d{+}1>;\hdts;<d{+}1>,!\wh<d>;<d>;\hdts;<d>;<d{+}1>;<d{+}1>;\hdts;<d{+}1>;<d{+}2>;<d{+}2>;\hdts;<d{+}2>)
		\draw[decoration={brace,mirror,raise=5pt},decorate]	(0.05,-0.43) -- node[below=6pt] {$2m+1$ times} (1.8,-0.43);
		\draw[decoration={brace,mirror,raise=5pt},decorate]	(4.2,-0.43) -- node[below=6pt] {$d-l$ times} (5.45,-0.43);
		\draw[decoration={brace,raise=5pt},decorate]	(0.05,0.43) -- node[above=6pt] {$k-d-2j+4$ times} (1.8,0.43);
		\draw[decoration={brace,raise=5pt},decorate]	(3.7,0.43) -- node[above=6pt] {$2d-k-\bar{t}+2m-3$ times} (5.45,0.43);
		\end{tikzpicture}
		\]
		Since $\bar{t}\geqslant 3$, it follows that $\SSTR''(d-1,c)=\SSTR''(d,c)=d$ for some column $c$. Moreover, since $2m\leqslant k-d+3$, there are $k+l-d-2m+3\geqslant l+1\geqslant 3$ entries equal to $d+1$ in row $d$ of $\SSTR''$. Hence $\widehat{\Theta}_{\SSTR''}=0$ by \cref{lem5}.
		
	\end{enumerate}
	Hence the sum in \cref{ssalgorithm} is $\widehat{\Theta}_{\SSTS^l}+\widehat{\Theta}_{\SSTT^l}=0$ for $k-d$ odd.
	
	\smallskip
	\noindent{\bf Case 2:}  Suppose that $k-d$ is even. Arguing as in the first case, we also deduce that  $\widehat{\Theta}_{\SSTS^l}+\widehat{\Theta}_{\SSTT^l}=0$.
	We shall now apply \cref{ssalgorithm} to $\widehat{\Theta}_{\SSTT^l}$ with $i=d$ and $k-d$ even in order to  show that $\widehat{\Theta}_{\SSTT^l}=0$ and thus obtain the required result.
	We have
	$$A=\emptyset, \quad  B=\{ d^{k+l-d+3 },(d+2)^{d } \},
	\quad C=\{ (d+1)^{k-d-l+3 },d+2,(d+3)^{d+1 } \}$$
	and $|D|=k+4$, $|E|=l-1$. It follows that $\alpha^l_{\SSTT_{D,E}}$ is zero if and only if $E_{d+2}$ is odd. We thus assume that $E_{d+2}$ is even.
	We know that $\SSTT_{D,E}^l=\SSTT^l$ when $E_{d+2}=0$, so it suffices to only consider the pairs $(D,E)$ with
	$$
	D = \{ d^{k-d+2m+4}, (d+2)^{d-2m} \}, \quad
	E=\{ d^{l-2m-1 },(d+2)^{2m } \},
	$$
	where $m\geqslant 1$.
	Now observe that the tableau $\SSTT^l_{D,E}$ with $E=\{ d^{l-2m-1}, (d+2)^{2m} \}$ is similar to $\SSTS^l_{D,E}$ with $E=\{ d^{l-r}, (d+2)^{r} \}$ such that $r$ is even.
	Thus following the argument as in part (2) of the first case with $k-d$ odd, we find that $\widehat{\Theta}_{\SSTT^l_{D,E}}=0$ for all possible pairs $(D,E)$ as above. Hence $\widehat{\Theta}_{\SSTT}=0$  and thus $\widehat{\Theta}_{\SSTS}=0$.
\end{proof}

\begin{rmk}
	We note that neither $\SSTS^{k-d+3}$ nor $\SSTT^0$ are considered in the previous result due to the restriction on $l$. In fact, we consider these two cases to be \emph{exceptional}; we show that these tableaux are well defined for certain $\bar{t}$ and that their corresponding tableau homomorphisms are zero.
\end{rmk}

\begin{lem}\label{lem4}
	Let $\lambda=((k+4)^k)$ and $\mu=((k+2)^k,k^2)$ for some $k>1$, $d\in\{2,\dots,k-1\}$ and $\bar{t}\in\{1,\dots,k+2\}$.
	\begin{enumerate}[leftmargin=*]
		
		\item
		
		Suppose that $d>2$ and $1<\bar{t}\leqslant d-1$.  We have that  $\widehat{\Theta}_{\SSTS}=0$ for $\SSTS\in\RStd(\la,\mu)$   the  tableau   obtained from $\SSTA$ by replacing $\bar{t}$ entries equal to $d+1$ in the $(d-1)$th row with the entry $d$. The tableau $\SSTS$ is identical to $\SSTA$ except for row $d-1$, which is of the form
		\[\scalefont{0.8}
		\begin{tikzpicture}[scale=1.6]
		 \tgyoung(0cm,0cm,;<d{-}1>;<d{-}1>;\hdts;<d{-}1>;<d{-}1>;<d{-}1>;<d>!\dgr<d>;<d>;\hdts;<d>!\gr<d{+}1>;<d{+}1>;\hdts;<d{+}1>)
		\draw[decoration={brace,raise=5pt},decorate]	(0.05,0.43) -- node[above=6pt] {$k-d+4$ times} (2.7,0.43);
		\draw[decoration={brace,raise=5pt},decorate]	(3.2,0.43) -- node[above=6pt] {$\bar{t}$ times} (4.95,0.43);
		\end{tikzpicture}
		\]

		\item
		
		We suppose that $k-d+3\leqslant\bar{t}$.  We have that $\widehat{\Theta}_{\SSTT}=0$ for $\SSTT\in\RStd(\la,\mu)$   the  tableau  obtained from $\SSTA$ by replacing the leftmost $\bar{t}$ entries equal to $d+1$ with  entries equal to $d$.
		The tableau $\SSTT$ is identical to $\SSTA$ except for in the $(d-1)$th, $d$th and $(d+1)$th rows  which are of the form

		\[\scalefont{0.8}
		\begin{tikzpicture}[scale=1.6]
		 \tgyoung(0cm,0cm,;<d{-}1>;<d{-}1>;\hdts;<d{-}1>;<d{-}1>;<d{-}1>;<d>!\dgr<d>;<d>;\hdts;<d>!\gr<d{+}1>;<d{+}1>;\hdts;<d{+}1>,!\wh<d>;<d>;\hdts;<d>;<d>!\dgr<d>!\wh<d{+}2>;<d{+}2>;<d{+}2>;\hdts;<d{+}2>;<d{+}2>;<d{+}2>;\hdts;<d{+}2>,!\dgr<d>;<d>;\hdts;<d>!\wh<d{+}2>;<d{+}3>;<d{+}3>;<d{+}3>;<d{+}3>;\hdts;<d{+}3>;<d{+3}>;<d{+3}>;\hdts;<d{+3}>)
		\draw[decoration={brace,mirror,raise=5pt},decorate]	(0.05,-0.9) -- node[below=6pt] {$k-d+2$ times} (1.8,-0.9);
		\draw[decoration={brace,mirror,raise=5pt},decorate]	(2.3,-0.9) -- node[below=6pt] {$d+1$ times} (6.85,-0.9);
		\draw[decoration={brace,raise=5pt},decorate]	(0.05,0.43) -- node[above=6pt] {$k-d+4$ times} (2.7,0.43);
		\draw[decoration={brace,raise=5pt},decorate]	(3.2,0.43) -- node[above=6pt] {$\bar{t}+d-k-3$ times} (4.95,0.43);
		\end{tikzpicture}
		\]

	\end{enumerate}
\end{lem}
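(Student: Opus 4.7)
The plan is to apply Fayers' semistandardising algorithm (\cref{ssalgorithm}) to both $\widehat{\Theta}_{\SSTS}$ and $\widehat{\Theta}_{\SSTT}$ and then use \cref{lem2,lem5,lem1} to kill all terms in the resulting linear relation except the distinguished one, thereby forcing it to vanish. This is precisely the strategy used throughout \cref{ss:permutationmodules}, but applied in two \emph{boundary} cases ($l = 0$ and $l = k-d+3$) that were explicitly excluded from \cref{lem1}.

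For part $(1)$, I would apply \cref{ssalgorithm} with $i = d-2$ (this is where the hypothesis $d>2$ enters). Row $d-2$ of $\SSTS$ agrees with row $d-2$ of $\SSTA$, while row $d-1$ has been modified as in the statement. Taking $A$ to absorb all entries of $\SSTS$ in row $d-2$ that are strictly less than $d$ and setting $B$, $C$ appropriately so that the piece of $B$ containing the entries $d+1$ is of size $d-1-\bar{t}\geqslant 1$, one sees that the only terms $\widehat{\Theta}_{\SSTS_{D,E}}$ appearing with non-zero coefficient have $E_{d} \in \{0,1\}$, since the $q$-binomial factor $\stirling{E_{d}+?}{?}$ at $q=-1$ vanishes otherwise. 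The term with $E_d = 0$ is $\SSTS$ itself; the term with $E_d = 1$ is a tableau whose rows $d-1$ and $d$ have exactly the shape covered by \cref{lem5}(1) (with $v$ odd chosen as $\bar{t}+1$), and hence its homomorphism vanishes. The restriction $\bar{t} > 1$ ensures $v \geqslant 3$ in \cref{lem5}, and $\bar{t} \leqslant d-1$ ensures that there are still $d+1$ entries left to fill the row of length $k+4$. This collapses the relation to $\widehat{\Theta}_{\SSTS}=0$.

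For part $(2)$, I would apply \cref{ssalgorithm} with $i = d$, mirroring the setup in \cref{lem1}. Setting $A = \varnothing$, and choosing $B$, $C$ to match the entries of $\SSTT$ in rows $d$ and $d+1$, one obtains a relation $\widehat{\Theta}_{\SSTT} + \sum_{(D,E)} \alpha_{D,E}\, \widehat{\Theta}_{\SSTT_{D,E}} = 0$. Analysing $\stirling{E_{d+2}+?}{?}$ at $q=-1$ forces $E_{d+2}$ to be even. The remaining tableaux $\SSTT_{D,E}$ have a shape where (thanks to the assumption $\bar{t} \geqslant k-d+3$, which guarantees sufficiently many $d$'s in row $d+1$ were originally $d+1$'s) rows $d$ and $d+1$ fall into the configuration of \cref{lem2}(1) or \cref{lem2}(2), or require an iterated second application of \cref{ssalgorithm} at $i = d-1$, exactly as in the proof of \cref{lem1}, at which point the tableaux fit the hypotheses of \cref{lem5}. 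Thus every term except $\widehat{\Theta}_{\SSTT}$ vanishes, and the result follows.

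The main obstacle will be the bookkeeping: at the boundary values $l = 0$ and $l = k-d+3$ there is no companion tableau $\SSTT^l$ or $\SSTS^l$ (respectively) to cancel against $\widehat{\Theta}_{\SSTS}$ or $\widehat{\Theta}_{\SSTT}$, unlike in \cref{lem1}. One must verify that in these degenerate cases the would-be cancelling tableau is either not row-standard (because a bracket label becomes $0$ or negative) or lies outside the range of the sum, so that the algorithm genuinely outputs a single non-trivial term. Carefully tracking the parities and the residue shifts coming from the $q$-integer factors $[E_{?}+1]$ at $q=-1$ — particularly when iterating \cref{ssalgorithm} multiple times through rows $d$, $d-1$, $d-2$ as in the second half of the proof of \cref{lem1} — is the technical heart of the argument.
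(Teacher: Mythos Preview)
Your Part~(2) sketch is essentially right and in fact simpler than you suggest: after one application of \cref{ssalgorithm} at $i=d$ with $A=\varnothing$, $B=\{d^{2k-2d+6},(d+2)^d\}$, $C=\{d+2,(d+3)^{d+1}\}$, the coefficient of each term is $\pm[E_{d+2}+1]$, so only $E_{d+2}$ even survives; the case $E_{d+2}=0$ is $\SSTT$ itself, and every $E_{d+2}>0$ term is killed by \cref{lem2}(1) with $v=0$. No iteration through $i=d-1$ or appeal to \cref{lem5} is needed.

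Your Part~(1) sketch, however, has a real gap. Applying \cref{ssalgorithm} at $i=d-2$ with $A=\{(d-2)^{k-d+5},d-1\}$, $B=\{(d-1)^{k-d+4},d^{d+\bar t-1}\}$, $C=\{(d+1)^{d-\bar t-1}\}$ (note $B$ contains no entries $d+1$), the surviving coefficient is $[D_{d-1}+1]$, which vanishes iff $D_{d-1}$ is \emph{odd}. So all even values $D_{d-1}=0,2,4,\dots$ survive, not just two terms. The $D_{d-1}=0$ term is $\SSTS$; the others (call them $\SSTR^r$ with $D_{d-1}=2r$) are not yet in the shape of \cref{lem5}, because only rows $d-2$ and $d-1$ have been altered while row~$d$ still has a single entry $d+1$, violating the $v\geqslant 2$ hypothesis. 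The paper therefore applies \cref{ssalgorithm} a \emph{second} time to each $\widehat\Theta_{\SSTR^r}$, now at $i=d-1$, producing tableaux $\SSTU^{r,s}$ with $E_{d+1}=2s$; for $r\geqslant s+1$ these fall under \cref{lem5}(1) with $v=2s+1$, while for $s\geqslant r$ a \emph{third} application at $i=d-2$ (where now $A_{d-1}=2r+1$ forces $D_{d-1}=0$) finishes the job. Your one-step argument collapses this three-layer iteration and cannot be repaired without reinstating it.
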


\begin{proof}
	We consider these two exceptional cases separately.
	
	Since $d>2$ and $\bar{t}\geqslant 2$, we have $k\geqslant 4$. Thus $\SSTS$ contains the rows $d-2$, $d-1$ and $d$ for any $d\in\{3,\dots,k-1\}$.	
	
	\smallskip
	\noindent{\bf Case 1:}
	We fix $i=d-2$ and apply \cref{ssalgorithm} to $\widehat{\Theta}_{\SSTS}$.
	We set
	$$A=\{ (d-2)^{k-d+5}, d-1 \},\quad B=\{ (d-1)^{k-d+4}, d^{d+\bar{t}-1} \},\quad C=\{ (d+1)^{d-\bar{t}-1} \},$$ and $|D|=d-2$, $|E|=k+\bar{t}-d+5$. Thus every homomorphism $\widehat{\Theta}_{\SSTS_{D,E}}$ in \cref{eqnssalgorithm} appears with some scalar multiple of $[D_{d-1}+1]$, which is zero if and only if $D_{d-1}$ is odd. So suppose that $D_{d-1}$ is even.
	If $D_{d-1}=0$ then $\SSTS=\SSTS_{D,E}$. Now let $\SSTR^{r}=\SSTS_{D,E}$ with $D_{d-1}=2r$ for some $r\geqslant 1$, so that the pair $(D,E)$ are of the form
	\[
	D=\{ (d-1)^{2r}, d^{d-2r-2} \}, \quad
	E= \{ (d-1)^{k-d-2r+4}, d^{\bar{t}+2r+1} \}.
	\]
	We observe that rows $d-1$ and $d$ of $\SSTR^{r}_{D,E}$ are of the form
	\[\scalefont{0.8}
	\begin{tikzpicture}[scale=1.6]
	 \tgyoung(0cm,0cm,<d{-}1>;\hdts;<d{-}1>;<d>;<d>;<d>;\hdts;<d>!\dgr<d>;\hdts;<d>!\gr<d{+}1>;\hdts;<d{+}1>,!\wh<d>;\hdts;<d>;<d>!\gr<d{+}1>!\wh<d{+}2>;\hdts;<d{+}2>;<d{+}2>;\hdts;<d{+}2>;<d{+}2>;\hdts;<d{+}2>)
	\draw[decoration={brace,raise=5pt},decorate]	(5.1,0.43) -- node[above=6pt] {$d-\bar{t}-1$ times} (6.4,0.43);
	\draw[decoration={brace,raise=5pt},decorate]	(1.4,0.43) -- node[above=6pt] {$\bar{t}+2r+1$ times} (5,0.43);
	\draw[decoration={brace,mirror,raise=5pt},decorate]	(0.05,-0.43) -- node[below=6pt] {$k-d+3$ times} (1.8,-0.43);
	\draw[decoration={brace,mirror,raise=5pt},decorate]	(2.3,-0.43) -- node[below=6pt] {$d$ times} (6.4,-0.43);
	\end{tikzpicture}
	\]
	We can thus write
	$$\widehat{\Theta}_{\SSTS}+\sum_{r\geqslant 1 }\alpha_{\SSTR^{r}}\widehat{\Theta}_{\SSTR^{r}}=0,$$
	where $\alpha_{\SSTR^{r}}\in\{-1,1\}$.
	
	We now apply \cref{ssalgorithm} to $\widehat{\Theta}_{\SSTR^{r}}$ with $i=d-1$. We set
	$$ A= \{ (d-1)^{k-d-2r+4} \},\quad B = \{ d^{k-d+\bar{t}+2r+4}, (d+1)^{d-\bar{t}-1} \},\quad C = \{ d+1, (d+2)^d \},$$ and $|D|=d+2r$, $|E|=k-d+3$. Thus every homomorphism $\widehat{\Theta}_{\SSTR^r_{D,E}}$ in \cref{eqnssalgorithm} appears with some scalar multiple of $[E_{d+1}+1]$, which is zero if and only if $E_{d+1}$ is odd. So suppose that $E_{d+1}$ is even.
	If $E_{d+1}=0$ then $\SSTR^{r}=\SSTR^{r}_{D,E}$. Now let $\SSTU^{r,s}=\SSTR^{r}_{D,E}$ with $E_{d+1}=2s$ for some $s\geqslant 1$, so that the pair $(D,E)$ are of the form
	$$ D = \{ d^{\bar{t}+2r+2s+1}, (d+1)^{d-\bar{t}-2s-1} \},\quad
	E = \{ d^{k-d-2s+3}, (d+1)^{2s}\}.$$
	We can thus write
	$$\widehat{\Theta}_{\SSTR^r}+\sum_{s\geqslant 1 }\alpha_{\SSTU^{r,s}}\widehat{\Theta}_{\SSTU^{r,s}}=0,$$
	where $\alpha_{\SSTU^{r,s}}\in\{-1,1\}$.
	
	We now observe that we lie in one of the following two cases.
	\begin{enumerate}
		\item Suppose that $r\geqslant s+1$. Then $\SSTU^{r,s}(d-1,c)=\SSTU^{r,s}(d,c)=d$ for some column $c$. We can thus apply part one of \cref{lem5} to $\SSTU^{r,s}$ with $v=2s+1$, and hence $\widehat{\Theta}_{\SSTU^{r,s}}=0$.
		
		\item Suppose that $s\geqslant r$. Then rows $d-1$ and $d$ in $\SSTU^{r,s}$ are standard. Now observe that the $(d-2)$th and $(d-1)$th rows of $\SSTU^{r,s}$ are of the form
		\[\scalefont{0.8}
		\begin{tikzpicture}[scale=1.6]
		 \tgyoung(0cm,0cm,<d{-}2>;\hdts;<d{-}2>;<d{-}2>;<d{-}1>;\hdts;<d{-}1><d>;<d>;\hdts;<d>,;<d{-}1>;\hdts;<d{-}1>;<d>;<d>;\hdts;<d>;<d>!\gr<d{+}1>;\hdts;<d{+}1>)
		\draw[decoration={brace,raise=5pt},decorate]	(1.9,0.43) -- node[above=6pt] {$2r+1$ times} (3.15,0.43);
		\draw[decoration={brace,raise=5pt},decorate]	(0.05,0.43) -- node[above=6pt] {$k-d+5$ times} (1.8,0.43);
		\draw[decoration={brace,mirror,raise=5pt},decorate]	(0.05,-0.43) -- node[below=6pt] {$k-d+4-2r$ times} (1.35,-0.43);
		\draw[decoration={brace,mirror,raise=5pt},decorate]	(3.7,-0.43) -- node[below=6pt] {$d-\bar{t}-2s-1$ times} (5,-0.43);
		\end{tikzpicture}
		\]
		Observe that $\SSTU^{r,s}(d-2,c)=\SSTU^{r,s}(d-1,c)=d$ for some column $c$. We now apply \cref{ssalgorithm} to $\widehat{\Theta}_{\SSTU^{r,s}}$ with $i=d-2$. We have
		$$ A = \{ (d-2)^{k-d+5}, (d-1)^{2r+1}\},\quad
		B = \{ (d-1)^{k-d+4-2r} , d^{\bar{t}+d+2s-1}\}\quad
		C = \{ (d+1)^{d-\bar{t}-2s-1} \}.$$
		Thus every homomorphism $\widehat{\Theta}_{\SSTU^{r,s}_{D,E}}$ in  \cref{eqnssalgorithm} appears with some scalar multiple of
		\[
		\stirling{A_{d-1}+D_{d-1}}{A_{d-1}}
		=\stirling{D_{d-1} + 2r +1}{2r+1}
		=\begin{cases}
		1&\text{if $D_{d-1}=0$,}\\
		0&\text{otherwise},
		\end{cases}
		\]
		and hence $\widehat{\Theta}_{\SSTU^{r,s}}=0$ for all $s\geqslant 1$.
		
	\end{enumerate}
	
	Thus $\widehat{\Theta}_{\SSTR^{r}}=0$ for all $r\geqslant 1$, and moreover $\widehat{\Theta}_{\SSTS}=0$.

	\smallskip
	\noindent{\bf Case 2:}
	We fix $i=d$ and apply \cref{ssalgorithm} to $\widehat{\Theta}_{\SSTT}$. We set
	$$A=\emptyset,\quad  B=\{ d^{2k-2d+6 },(d+2)^{d } \},\quad C=\{ d+2,(d+3)^{d+1} \},$$
	and $|D|=k+4$, $|E|=k-d+2$.
	Thus every homomorphism $\widehat{\Theta}_{\SSTT_{D,E}}$ in \cref{eqnssalgorithm} appears with some scalar multiple of $[E_{d+2}+1]$, which is zero if and only if $E_{d+2}$ is odd.
	
	Now assume that $E_{d+2}$ is even. Let $\SSTT_{D,E}$ be the tableau that is identical to $\SSTT$ except for its $d$th and $(d+1)$th rows, which have the following form
	\[\scalefont{0.8}
	\begin{tikzpicture}[scale=1.6]
	 \tgyoung(0cm,0cm,;<d>;<d>;\hdts;<d>;<d>;<d{+}2>;\hdts;<d{+}2>;<d{+}2>;\hdts;<d{+}2>,;<d>;<d>;\hdts;<d>;<d{+}2>;<d{+}2>;\hdts;<d{+}2>;<d{+}3>;\hdts;<d{+}3>)
	\draw[decoration={brace,raise=5pt},decorate]	(2.3,0.43) -- node[above=6pt] {$d-E_{d+2}$ times} (5,0.43);
	\draw[decoration={brace,mirror,raise=5pt},decorate]	(1.9,-0.43) -- node[below=6pt] {$E_{d+2}+1$ times} (3.65,-0.43);
	\draw[decoration={brace,mirror,raise=5pt},decorate]	(3.7,-0.43) -- node[below=6pt] {$d+1$ times} (5,-0.43);
	\end{tikzpicture}
	\]
	Observe that $\SSTT=\SSTT_{D,E}$ when $E_{d+2}=0$, and we can thus write
	$$\widehat{\Theta}_{\SSTT}+\sum_{(D,E)\mid  E_{d+2}>0\}}\alpha_{\SSTT_{D,E}}\widehat{\Theta}_{\SSTT_{D,E} },$$
	where $\alpha_{\SSTT_{D,E}}\in\{-1,1\}$.
	If $E_{d+2}>0$, then $\widehat{\Theta}_{\SSTT_{D,E}}=0$ by setting v=0 in part (1) of \cref{lem2}. Hence the sum in \cref{eqnssalgorithm} contains only the tableau homomorphism $\widehat{\Theta}_{\SSTT}$, which must be zero.
\end{proof}

Thanks to Lyle, we know how to compute the composition of any tableau homomorphism with the map $\varphi_{d,t}$.

\begin{prop}\cite[Proposition 2.14]{MR2339470}
	\label{prop:homcomp}
	Let $\mu,\alpha\vdash n$, $1\leqslant d$, $0\leqslant t<\alpha_{d+1}$ and $\bar{t}=\alpha_{d+1}-t$. Suppose $\SSTS\in\RStd(\mu,\alpha)$ and let $\mathscr{S}\subseteq\RStd(\mu,\nu^{d,t})$ be the set of row standard tableaux obtained by replacing $\bar{t}$ entries of $d+1$ in $\SSTS$ with $d$. For each $\SSTT\in\mathscr{S}$, set
	\begin{itemize}
		\item $\beta_i=|\{j\mid \SSTS(i,j)=d+1\}|-|\{j\mid \SSTT(i,j)=d+1\}|$;
 		\item $x_i=|\{(k,j)\mid k>i\text{ and }\SSTS(k,j)=d\}|$;
		\item $y_i=|\{j\mid \SSTT(i,j)=d\}|$,
	\end{itemize}
	for all $i\geqslant 1$. Then
	\[
	\varphi_{d,t}\circ\Theta_{\SSTS}
	=\sum_{\SSTT\in\mathscr{S}}
 	\alpha_{\SSTT}
	\Theta_{\SSTT},
	\quad
	\text{	where }
	\quad
	\alpha_{\SSTT}=\prod_{i\geqslant 1}q^{x_i\beta_i}\stirling{y_i}{\beta_i}.
	\]
\end{prop}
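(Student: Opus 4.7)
The plan is to evaluate both sides on the generator $x_\mu$ of $M^\Bbbk_q(\mu) = H^\Bbbk_q(n) x_\mu$ and compare expansions. Since $\varphi_{d,t} \circ \Theta_\SSTS$ lies in $\Hom_{H^\Bbbk_q(n)}(M^\Bbbk_q(\mu), M^\Bbbk_q(\alpha^{d,t}))$, the Dipper--James row-standard basis theorem decomposes it uniquely as $\sum_{\SSTT} c_\SSTT \Theta_\SSTT$ over $\SSTT \in \RStd(\mu, \alpha^{d,t})$. The task reduces to identifying which $\SSTT$ have $c_\SSTT \neq 0$ and computing these scalars.

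First I would pin down the support. The map $\varphi_{d,t}$ only alters entries whose value is $d$ or $d+1$ in the weight, and its effect on $x_\alpha$ is to refine the Young subgroup $\mathfrak{S}_\alpha$ to $\mathfrak{S}_{\alpha^{d,t}}$, effectively demoting $\bar t$ of the $(d{+}1)$-entries to $d$-entries. Consequently, only tableaux $\SSTT \in \mathscr S$ can occur: those agreeing with $\SSTS$ outside entries valued $d$ or $d+1$, and obtained from $\SSTS$ by converting exactly $\bar t$ of the $(d{+}1)$-entries (distributed among rows so as to remain row-standard) into $d$-entries. All other tableaux of weight $\alpha^{d,t}$ give linearly independent homomorphisms and hence contribute zero.

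Second I would compute the scalars $c_\SSTT = \alpha_\SSTT$ by direct evaluation. Writing $\Theta_\SSTS(x_\mu) = \sigma(\SSTS)\, x_\alpha$ for the standard sum $\sigma(\SSTS) \in H^\Bbbk_q(n)$ over minimal $(\mathfrak{S}_\mu, \mathfrak{S}_\alpha)$ double-coset representatives, apply $\varphi_{d,t}$ by expanding $x_\alpha = \eta_{d,t} \cdot x_{\alpha^{d,t}}$, where $\eta_{d,t}$ is the sum over minimal coset representatives of $\mathfrak{S}_{\alpha^{d,t}}$ in $\mathfrak{S}_\alpha$ supported on rows $d$ and $d+1$. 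Expanding $\sigma(\SSTS)\,\eta_{d,t}$ using the braid and quadratic Hecke relations and re-collecting into expressions of the form $\sigma(\SSTT)\, x_{\alpha^{d,t}}$ produces the stated formula. Concretely, the factor $q^{x_i\beta_i}$ tracks the $q$-cost of commuting the $\beta_i$ newly-demoted entries in row $i$ of $\SSTT$ past the $x_i$ unchanged $d$-entries lying in lower rows, each such crossing contributing a $q$ via $T_j^2 = (q-1)T_j + q$; while the Gaussian binomial $\stirling{y_i}{\beta_i}$ is the Poincar\'e polynomial of the coset space $\mathfrak{S}_{y_i}/(\mathfrak{S}_{\beta_i} \times \mathfrak{S}_{y_i - \beta_i})$, counting the $q$-weighted number of ways to interleave, within row $i$ of $\SSTT$, the $\beta_i$ demoted entries among the $y_i$ total $d$-entries.

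The main obstacle is the $q$-bookkeeping: one must choose reduced expressions for all coset representatives so that no extraneous $q$-factors appear, and identify the parabolic coset sums whose Poincar\'e polynomials assemble into each Gaussian binomial. This is a $q$-analogue of James's classical row-changing formula (and a direct Hecke-algebra generalisation of Lyle's argument for the $q=1$ symmetric group case), with the quadratic relation replacing the simple transposition swap; making this precise is technical but requires no input beyond the Dipper--James basis theorem and standard Hecke identities.
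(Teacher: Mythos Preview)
The paper does not prove this proposition at all: it is quoted verbatim from Lyle \cite[Proposition 2.14]{MR2339470} and used as a black box. So there is no ``paper's own proof'' to compare against.

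Your sketch is a reasonable outline of how such a result is established, and it is in the spirit of the Dipper--James/Lyle arguments it ultimately derives from: evaluate on the cyclic generator, factor $x_\alpha$ through $x_{\alpha^{d,t}}$ via a sum over minimal coset representatives for $\mathfrak S_{\alpha^{d,t}}\backslash\mathfrak S_\alpha$ (supported on the block corresponding to rows $d,d{+}1$ of the weight), rewrite the resulting expression in the row-standard basis, and read off coefficients. Your combinatorial interpretation of the two factors is also correct in outline: the Gaussian binomial $\stirling{y_i}{\beta_i}$ is the Poincar\'e polynomial counting interleavings of the $\beta_i$ newly-created $d$'s among the $y_i$ total $d$'s in row $i$, and the power $q^{x_i\beta_i}$ records crossings with $d$-entries in strictly lower rows.

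One caution on the bookkeeping: your phrase ``each such crossing contributing a $q$ via $T_j^2=(q-1)T_j+q$'' is slightly misleading. The clean $q$-power arises not from the quadratic relation but from straightening products of distinguished coset representatives into reduced form (length-additivity $T_uT_v=T_{uv}$ when $\ell(uv)=\ell(u)+\ell(v)$); the quadratic relation would introduce unwanted lower terms. Getting the formula exactly right requires a careful choice of coset representatives so that all products stay reduced, which is where the actual work in Lyle's proof lies. Your sketch acknowledges this (``choose reduced expressions \dots so that no extraneous $q$-factors appear''), but be aware that this is the substantive step, not merely a technicality.
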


In our case, we note that $\alpha_{\SSTT}=\pm \stirling{y_i}{\beta_i}$ since $q=-1$.

We are now ready to determine that the semistandard tableau homomorphism $\widehat{\Theta}_{\SSTA}$ belongs to the homomorphism space $\operatorname{Hom}_{H^\Bbbk_q(n)}\left({\bf S}^\Bbbk_q(((k+4)^k)),{\bf S}^\Bbbk_q((k+2)^k,k^2)\right)$.

\begin{thm}
	Let $k> 1$. Then $\varphi_{d,t}\circ \widehat{\Theta}_{\SSTA}=0$ for all $d\geqslant 1$ and $0\leqslant t<\lambda_{d+1}$.
\end{thm}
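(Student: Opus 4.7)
The plan is to apply the homomorphism composition formula (Proposition on $\varphi_{d,t}\circ\Theta_{\SSTS}$) to expand $\varphi_{d,t}\circ\widehat{\Theta}_{\SSTA}$ as a sum of row-standard tableau homomorphisms $\Theta_{\SSTT}$, where each $\SSTT$ is obtained from $\SSTA$ by replacing $\bar{t}=\lambda_{d+1}-t$ entries equal to $d+1$ with the entry $d$. The goal is then to organize these tableaux into pairs that cancel, together with some exceptional tableaux that individually vanish, using the technical lemmas already established.

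First I would deal with the boundary cases $d=1$, $d=k$ and $d=k+1$ separately. For $d\geq k+1$, the row $d+1$ of $\SSTA$ contains no entries equal to $d+1$ at all in positions that can be swapped with $d$'s in row $d$, so the composition is trivially zero or follows from an inspection of the top/bottom rows of $\SSTA$. For $d=1$, the first two rows of $\SSTA$ have a very simple structure (row $1$ is constant equal to $1$ except for two cells, and the only $2$'s in row $2$ that can be changed form a constrained set), so a direct application of the composition formula gives cancellation among at most two tableaux.

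For the main range $2\leq d\leq k-1$, I would classify $\mathscr{S}$ according to \emph{which $d+1$'s get changed} and \emph{in which rows}. In $\SSTA$, entries equal to $d+1$ appear in the $(d-1)$th, $d$th and $(d+1)$th rows (on the relevant parts of each row). Given $\bar{t}\in\{1,\dots,k+2\}$, the tableau $\SSTT$ is determined by how many changes occur in each of these three rows; write this as a triple $(a,b,c)$ with $a+b+c=\bar{t}$. The lemmas in Section~\ref{louise} were designed precisely to handle such triples: the pairing lemma shows that for each valid $l\in\{1,\dots,k-d+2\}$, the tableaux $\SSTS^l$ and $\SSTT^l$ (which differ by moving one of the swapped entries between the $d$th and $(d+1)$th rows) satisfy $\widehat{\Theta}_{\SSTS^l}=-\widehat{\Theta}_{\SSTT^l}$ when $k-d$ is odd, and each vanishes individually when $k-d$ is even. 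The exceptional lemma covers the boundary values $l=0$ (corresponding to all changes occurring in the $(d-1)$th row) when $\bar{t}\leq d-1$, and $l=k-d+3$ (corresponding to all of the $(d-1)$th-row changes being forced when $\bar{t}\geq k-d+3$), showing both give zero homomorphisms.

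The remaining work is therefore to verify that every $\SSTT\in\mathscr{S}$ falls into exactly one of these cases, and that the scalar coefficients $\alpha_{\SSTT}$ produced by the homomorphism composition formula respect the pairing. The coefficient analysis is clean: since $q=-1$, each $\alpha_{\SSTT}$ is $\pm\stirling{y_i}{\beta_i}$, and for the paired tableaux $\SSTS^l,\SSTT^l$ the triples $(y_i,\beta_i)$ differ only in rows $d$ and $d+1$ in a way that forces the two binomial coefficients to coincide, so the $-1$ relation from the pairing lemma translates directly into cancellation in $\varphi_{d,t}\circ\widehat{\Theta}_{\SSTA}$. The main obstacle I anticipate is the bookkeeping: verifying that the combinatorial enumeration of $\mathscr{S}$ matches exactly the union of the paired family $\{(\SSTS^l,\SSTT^l)\}$ with the two exceptional tableaux, with no unaccounted contributions, and checking the coefficient matching for the pairing. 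Once this bookkeeping is done, $\varphi_{d,t}\circ\widehat{\Theta}_{\SSTA}=0$ follows for all $(d,t)$, hence by Theorem~\ref{sadwtoiyreiotyweiortuyuwoiertyuweoirty} the image of $\widehat{\Theta}_{\SSTA}$ lies inside ${\bf S}^{\CC}_{-1}(\lambda)$, giving the desired nonzero Specht homomorphism.
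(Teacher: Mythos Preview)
Your overall strategy is the paper's: apply Proposition~\ref{prop:homcomp} to expand $\varphi_{d,t}\circ\widehat{\Theta}_{\SSTA}$, treat $d\in\{1,k,k+1\}$ separately, and in the main range $2\le d\le k-1$ organize the tableaux of $\mathscr{S}$ into the paired family $\{\SSTS^l,\SSTT^l\}$ handled by Lemma~\ref{lem1} plus the two extremes handled by Lemma~\ref{lem4}. Two points, however, need correcting.

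\textbf{The coefficient claim is wrong.} The scalars $\alpha_{\SSTS^l}$ and $\alpha_{\SSTT^l}$ do \emph{not} coincide: one finds $\alpha_{\SSTS^l}=[\bar t-l+1]$ but $\alpha_{\SSTT^l}=[\bar t-l+1]\cdot[k-d+4]$, because in $\SSTT^l$ the entry $d+1$ in row $d$ has been replaced by $d$, giving $y_d=k-d+4$, $\beta_d=1$ and hence an extra factor $\stirling{k-d+4}{1}=[k-d+4]$. This extra factor is exactly what drives the parity split you quote from Lemma~\ref{lem1}: when $k-d$ is odd, $[k-d+4]=1$, the coefficients match, and part~(1) gives $\widehat{\Theta}_{\SSTS^l}+\widehat{\Theta}_{\SSTT^l}=0$; when $k-d$ is even, $[k-d+4]=0$ kills $\alpha_{\SSTT^l}$ outright, and one then needs part~(2) to show $\widehat{\Theta}_{\SSTS^l}=0$ on its own. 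Your sentence ``forces the two binomial coefficients to coincide'' hides precisely this mechanism. Similarly $\alpha_{\SSTR}=\pm[\bar t+1]$ and $\alpha_{\SSTU}=\pm[\bar t-k+d-2]\cdot[k-d+4]$ only vanish for certain parities, so Lemma~\ref{lem4} is genuinely invoked, not bypassed, in the remaining cases.

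\textbf{The boundary cases are not light.} For $d=k+1$ it is false that ``row $d+1$ of $\SSTA$ contains no entries equal to $d+1$'': $\SSTA$ has only $k$ rows, and all of its $k+2$'s sit in row $k$, so $\mathscr{S}$ is nonempty for every $\bar t\in\{1,\dots,k\}$. The paper's treatment here (its Case~4) is the most delicate part of the whole proof, requiring repeated use of Theorem~\ref{ssalgorithm} and, for $\bar t=2$, an induction on $k$. The cases $d=1$ and $d=k$ likewise each need one or two rounds of the semistandardising algorithm and a parity split on $k$ or $\bar t$; they are not ``direct inspection''. So the structure of your outline is right, but the bookkeeping you flag as the ``main obstacle'' is substantially heavier than you indicate, and the coefficient analysis must be redone row by row rather than asserted to match.
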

\begin{proof}
	We apply \cref{prop:homcomp} to $\widehat{\Theta}_{\SSTA}$ for all possible $d$ and $t$.
	
	\smallskip
	\noindent{\bf Case 1:}  Suppose that $d=1$. Then we have $\varphi_{1,t}\circ \widehat{\Theta}_{\SSTA}=\alpha_{\SSTS}\widehat{\Theta}_{\SSTS}+\alpha_{\SSTT}\widehat{\Theta}_{\SSTT}$, where $\SSTS$ is identical to $\SSTA$ except for its first two rows, which are of the form
	\[\scalefont{0.8}
	\begin{tikzpicture}[scale=1.6]
	\tgyoung(0cm,0cm,;1;\hdts;1;1;1;\hdts;1;1!\dgr1!\wh3,!\dgr1;\hdts;1!\gr2;2;\hdts;2!\wh3;4;4).
	\draw[decoration={brace,raise=5pt},decorate]	(0.05,0.43) -- node[above=6pt] {$k+2$ times} (3.65,0.43);
	\draw[decoration={brace,mirror,raise=5pt},decorate]	(0.05,-0.43) -- node[below=6pt] {$\bar{t}-1$ times} (1.35,-0.43);
	\end{tikzpicture}
	\]	
	and where $\SSTT$ is identical to $\SSTA$ except for its first two rows, which are of the form
	\[\scalefont{0.8}
	\begin{tikzpicture}[scale=1.6]
	\tgyoung(0cm,0cm,;1;1;\hdts;1;1;\hdts;1;1!\gr2!\wh3,!\dgr1;1;\hdts;1!\gr2;\hdts;2!\wh3;4;4)
	\draw[decoration={brace,raise=5pt},decorate]	(0.05,0.43) -- node[above=6pt] {$k+2$ times} (3.65,0.43);
	\draw[decoration={brace,mirror,raise=5pt},decorate]	(0.05,-0.43) -- node[below=6pt] {$\bar{t}$ times} (1.8,-0.43);
	\end{tikzpicture}
	\]
	We observe that $\alpha_{\SSTT}=1$ and $\alpha_{\SSTS}=[k+3]=\begin{cases}
	0&\text{if $k$ is odd,}\\
	1&\text{if $k$ is even.}
	\end{cases}$
	
	Hence
	\[
	\varphi_{1,t}\circ\widehat{\Theta}_{\SSTA}
	=\begin{cases}
	\widehat{\Theta}_{\SSTT}
	&\text{if $k$ is odd,}\\
	\widehat{\Theta}_{\SSTS}+\widehat{\Theta}_{\SSTT}
	&\text{if $k$ is even.}
	\end{cases}
	\]
	
	We first apply \cref{ssalgorithm} to $\widehat{\Theta}_{\SSTT}$ with $i=1$.
	We have \[
	A=\emptyset,\quad B=\{1^{k+\bar{t}+2 },2,3\},
	\quad C=\{2^{k-\bar{t}+1 },3,4,4\},
	\]
	and we thus have the following four possible pairs of $(D,E)$:
	$$
	( \{1^{k+2},2,3\}, \{1^{\bar{t}}\})\quad
	( \{1^{k+3},3\} ,  \{1^{\bar{t}-1},2\}) \quad
	( \{1^{k+3},2\} ,  \{1^{\bar{t}-1},3\}) \quad
	( \{1^{k+4}\} ,  \{1^{\bar{t}-2},2,3\}).$$
	Observe that $\SSTT_{D,E}=\SSTT$ where $(D,E)$ is the first pair above.
	We note that $\widehat{\Theta}_{\SSTT_{D,E}}$ appears in \cref{eqnssalgorithm} with some scalar multiple of
	\[
	\stirling{E_2+k-\bar{t}+1}{k-\bar{t}+1}
	\cdot[E_3+1]
	\cdot\stirling{E_4+2}{2}.
	\]
	Hence the homomorphism $\widehat{\Theta}_{\SSTT_{D,E}}$, where $(D,E)$ is one of the last two pairs above, appears with zero coefficient in \cref{eqnssalgorithm} since $[E_3+1]=[2]=0$.
	Also, $\SSTT_{D,E}=\SSTS$ where $(D,E)$ is the second pair above.
	We can thus write $\widehat{\Theta}_{\SSTS}+\widehat{\Theta}_{\SSTT}=0$, and hence
	\[
	\varphi_{1,t}\circ\widehat{\Theta}_{\SSTA}
	=\begin{cases}
	\widehat{\Theta}_{\SSTT}=-\widehat{\Theta}_{\SSTS}
	&\text{if $k$ is odd,}\\
	0
	&\text{if $k$ is even.}
	\end{cases}
	\]
	
	We now suppose that $k$ is odd. If $\bar{t}=1$ and we apply \cref{ssalgorithm} to $\widehat{\Theta}_{\SSTT}$ with $i=1$ as above, we observe that $\widehat{\Theta}_{\SSTT_{D,E}}$ with $(D,E)$ as the second pair above appears in \cref{eqnssalgorithm} with zero coefficient since
	$$
	\stirling{E_2+k-\bar{t}+1}{k-\bar{t}+1}=\stirling{k+1}{k}=[k+1]=0.
	$$
	Hence $\widehat{\Theta}_{\SSTT}=0$ when $\bar{t}=1$.
	
	We now let $\bar{t}>1$ and apply \cref{ssalgorithm} to $\widehat{\Theta}_{\SSTS}$ with $i=1$ as follows.
	We have
	\[
	A=\emptyset,\quad B=\{1^{k+\bar{t}+2 },3\},
	\quad C=\{2^{k-\bar{t}+2 },3,4,4\},
	\]
	and we thus have the following two possible pairs of $(D,E)$:
	$$D=\{1^{k+3},3\},\quad E=\{1^{\bar{t}-1}\}\quad \text{ or }\quad D=\{1^{k+4}\}, \quad E=\{1^{\bar{t}-2},3\}.$$
	Observe that $\SSTS_{D,E}=\SSTS$ where $(D,E)$ is the first pair above. By applying \cref{ssalgorithm} to $\SSTS_{D,E}$ where $(D,E)$ is the second pair above, we see that the corresponding sum in \cref{eqnssalgorithm} contains only a single homomorphism, and hence $\widehat{\Theta}_{\SSTS_{D,E}}=0$.
	It thus follows that $\widehat{\Theta}_{\SSTS}=0$, and hence $\varphi_{1,t}\circ\widehat{\Theta}_{\SSTA}=0$ when $\bar{t}>1$.
	
	\smallskip
	\noindent{\bf Case 2:}  Suppose that $d\in\{2,3,\dots,k-1\}$, so that $\bar{t}\in\{1,2,\dots,k+2\}$. Note that the entries equal to $d+1$ lie in the $(d-1)$th, $d$th and $(d+1)$th rows of $\SSTA$, which have the following form
	\[\scalefont{0.8}
	\begin{tikzpicture}[scale=1.6]
	 \tgyoung(0cm,0cm,;<d{-}1>;<d{-}1>;\hdts;<d{-}1>;<d{-}1>;<d{-}1>;<d>!\gr<d{+}1>;<d{+}1>;\hdts;<d{+}1>,!\wh<d>;<d>;\hdts;<d>;<d>!\gr<d{+}1>!\wh<d{+}2>;<d{+}2>;<d{+}2>;\hdts;<d{+}2>,!\gr<d{+}1>;<d{+}1>;\hdts;<d{+}1>!\wh<d{+}2>!\wh<d{+}3>;<d{+3}>;<d{+3}>;<d{+3}>;\hdts;<d{+3}>)
	\draw[decoration={brace,mirror,raise=5pt},decorate]	(0.05,-0.9) -- node[below=6pt] {$k-d+2$ times} (1.8,-0.9);
	\draw[decoration={brace,mirror,raise=5pt},decorate]	(2.3,-0.9) -- node[below=6pt] {$d+1$ times} (5,-0.9);
	\end{tikzpicture}
	\]
	
	The row-standard tableaux appearing in \cref{eqnssalgorithm} split into the following three cases, depending on $\bar{t}$.
	\begin{enumerate}[leftmargin=*]
		\item Suppose that $\bar{t}\leqslant d-1$. Then there are $2\bar{t}+1$ such tableaux, namely $\SSTR$, $\SSTS^1,\SSTS^2,\dots,\SSTS^{\bar{t}}$ and $\SSTT^1,\SSTT^2,\dots,\SSTT^{\bar{t}}$, where $\SSTR,\SSTS^l,\SSTT^l$ are row-standard $(k^4)$-tableaux of type $((k+2)^k,k^2)$ that are obtained from $\SSTA$ by replacing $\bar{t}$ entries equal to $d+1$ with $d$s. We let $\SSTR$ be identical to $\SSTA$ except for its $(d-1)$th row, which is drawn as in part (1) of \cref{lem4}; let $\SSTS^l$ and $\SSTT^l$ be identical to $\SSTA$ except for their $(d-1)$th, $d$th and $(d+1)$th rows, which are drawn as in parts (1) and (2) of \cref{lem1}, respectively. Then
		\[
		\varphi_{d,t}\circ \widehat{\Theta}_{\SSTA}
		=\alpha_{\SSTR}\widehat{\Theta}_{\SSTR}
		+\sum_{l=1}^{\bar{t}}
		\left(\alpha_{\SSTS^l}\widehat{\Theta}_{\SSTS^l}
		+\alpha_{\SSTT^l}\widehat{\Theta}_{\SSTT^l}\right),
		\]
		where $\alpha_{\SSTR},\alpha_{\SSTS^l},\alpha_{\SSTT^l}\in\{-1,0,1\}$.
		
		\item Suppose that $d\leqslant \bar{t}\leqslant k-d+2$. Then there are $2\bar{t}$ such tableaux, namely $\SSTS^1,\SSTS^2,\dots,\SSTS^{\bar{t}}$ and $\SSTT^1,\SSTT^2,\dots,\SSTT^{\bar{t}}$, where $\SSTS^l$ and $\SSTT^l$ are defined as in \cref{lem1}. Then
		\[
		\varphi_{d,t}\circ \widehat{\Theta}_{\SSTA}
		=
		\sum_{l=1}^{\bar{t}}\left(\alpha_{\SSTS^l}\widehat{\Theta}_{\SSTS^l}
		+\alpha_{\SSTT^l}\widehat{\Theta}_{\SSTT^l}\right),
		\]
		where $\alpha_{\SSTS^l},\alpha_{\SSTT^l}\in\{-1,0,1\}$.
		
		\item Suppose that $\bar{t}\geqslant k-d+3$.  Then there are $2(k-\bar{t})+5$ such tableaux, namely $\SSTU$, $\SSTS^{\bar{t}-d+1}$, $\SSTS^{\bar{t}-d+2},\dots,\SSTS^{k-d+2}$ and $\SSTT^{\bar{t}-d+1},\SSTT^{\bar{t}-d+2},\dots,\SSTT^{k-d+2}$. We let $\SSTU$ be defined to be the tableau that is identical to $\SSTA$ except for its $(d-1)$th, $d$th and $(d+1)$th rows, which are drawn as in the second part of \cref{lem4}; we let $\SSTS^l$ and $\SSTT^l$ be defined as in \cref{lem1}. Then
		\[
		\varphi_{d,t}\circ \widehat{\Theta}_{\SSTA}
		=\alpha_{\SSTU}\widehat{\Theta}_{\SSTU}
		+\sum_{l=\bar{t}-d+1}^{k-d+2}\left(\alpha_{\SSTS^l}\widehat{\Theta}_{\SSTS^l}
		+\alpha_{\SSTT^l}\widehat{\Theta}_{\SSTT^l}\right),
		\]
		where $\alpha_{\SSTU},\alpha_{\SSTS^l},\alpha_{\SSTT^l}\in\{-1,0,1\}$.
		
	\end{enumerate}
	
	We now observe the scalars of the tableau homomorphisms of $\SSTR$, $\SSTS^l$, $\SSTT^l$ and $\SSTU$ appearing in \cref{prop:homcomp} are as follows:
	$$
	\textstyle {  \alpha_{\SSTR}\	 =\pm [\bar{t}+1], 	\  	
		\alpha_{\SSTS^l}	 =[\bar{t}-l+1], \
		\alpha_{\SSTT^l}	 =[\bar{t}-l+1]    [k-d+4], \
		\alpha_{\SSTU}\ =\pm[\bar{t}-k+d-2]    [k-d+4].
	}
	$$
	
	Observe that
	\[
	[k-d+4]=\begin{cases}
	0&\text{if $k-d$ is even},\\
	1&\text{if $k-d$ is odd};
	\end{cases}\quad\quad
	[\bar{t}+1]=\begin{cases}
	0&\text{if $\bar{t}$ is odd},\\
	1&\text{if $\bar{t}$ is even}.
	\end{cases}
	\]
	
	We thus suppose that $\bar{t}$ is even. Then we know from part (1) of \cref{lem4} that $\widehat{\Theta}_{\SSTR}=0$ when $\bar{t}\leqslant d-1$.
	
	Suppose that $k-d$ is odd. Then $\alpha_{\SSTS^l}=[\bar{t}-l+1]=\alpha_{\SSTT^l}$ for all $l$. Moreover, we know from part (1) of \cref{lem1} that $\widehat{\Theta}_{\SSTS^l}=-\widehat{\Theta}_{\SSTT^l}$ for all $l$. Also, $\alpha_{\SSTU}=[\bar{t}-k+d-2]$, and when this coefficient is non-zero, it follows from part (2) of \cref{lem4} that $\widehat{\Theta}_{\SSTU}=0$.
	Hence, for all $\bar{t}$,
	\[
	\varphi_{d,t}\circ \widehat{\Theta}_{\SSTA}
	 =\sum_l\left(\alpha_{\SSTS^l}\widehat{\Theta}_{\SSTS^l}+\alpha_{\SSTT^l}\widehat{\Theta}_{\SSTT^l}\right)
	=\sum_l\alpha_{\SSTS^l}\left(\widehat{\Theta}_{\SSTS^l}-\widehat{\Theta}_{\SSTS^l}\right)
	=0.
	\]
	
	We now suppose that $k-d$ is even, in which case both $\alpha_{\SSTU}=0$ and $\alpha_{\SSTT^l}=0$. Moreover, we know from part (2) of \cref{lem1} that $\widehat{\Theta}_{\SSTS^l}=0$ for all $l$, and hence $\varphi_{d,t}\circ\widehat{\Theta}_{\SSTA}=0$ for all $\bar{t}$.

	\smallskip
	\noindent{\bf Case 3:}   Suppose that $d=k$. Then we have $\varphi_{k,t}\circ \widehat{\Theta}_{\SSTA}=\alpha_{\SSTS}\widehat{\Theta}_{\SSTS}+\alpha_{\SSTT}\widehat{\Theta}_{\SSTT}$ with $\alpha_{\SSTS},\alpha_{\SSTT}\in\{-1,0,1\}$, where $\SSTS$ and $\SSTT$ are as follows: $\SSTS$ is identical to $\SSTA$ except for its last two rows, which are of the form
	\[\scalefont{0.8}
	\begin{tikzpicture}[scale=1.6]
	 \tgyoung(0cm,0cm,;<k{-}1>;<k{-}1>;<k{-}1>;<k{-}1>;<k>!\dgr<k>;\hdts;<k>!\gr;<k{+}1>;<k{+}1>;\hdts;<k{+}1>,!\wh<k>;<k>;<k>!\dgr<k>!\wh<k{+}2>;<k{+}2>;\hdts;<k{+}2>;<k{+}2>;<k{+}2>;\hdts;<k{+}2>)
	\draw[decoration={brace,raise=5pt},decorate]	(2.3,0.43) -- node[above=6pt] {$\bar{t}-1$ times} (3.6,0.43);
	\draw[decoration={brace,mirror,raise=5pt},decorate]	(1.85,-0.43) -- node[below=6pt] {$k$ times} (5.45,-0.43);
	\end{tikzpicture}
	\]
	and $\SSTT$ is identical to $\SSTA$ except for its last two rows, which are of the form
	\[\scalefont{0.8}
	\begin{tikzpicture}[scale=1.6]
	 \tgyoung(0cm,0cm,;<k{-}1>;<k{-}1>;<k{-}1>;<k{-}1>;<k>!\dgr<k>;<k>;\hdts;<k>!\gr<k{+}1>;\hdts;<k{+}1>,!\wh<k>;<k>;<k>!\gr<k{+}1>!\wh<k{+}2>;<k{+}2>;<k{+}2>;\hdts;<k{+}2>;<k{+}2>;\hdts;<k{+}2>)
	\draw[decoration={brace,raise=5pt},decorate]	(2.3,0.43) -- node[above=6pt] {$\bar{t}$ times} (4.1,0.43);
	\draw[decoration={brace,mirror,raise=5pt},decorate]	(1.85,-0.43) -- node[below=6pt] {$k$ times} (5.45,-0.43);
	\end{tikzpicture}
	\]
	We observe that $\alpha_{\SSTS}=[\bar{t}]  \cdot  [4]=0$ and $\alpha_{\SSTT}=[\bar{t}+1]=\begin{cases}
	0&\text{if $\bar{t}$ is odd},\\
	1&\text{if $\bar{t}$ is even}.
	\end{cases}$
	
	We thus suppose that $\bar{t}$ is even. We now apply \cref{ssalgorithm} to $\widehat{\Theta}_{\SSTT}$ with $i=k-2$.
	We set
	$$A=\{(k-2)^5,k-1 \},\quad B=\{k-1,k-1,k-1,k-1,k^{k+\bar{t}-1 }\},\quad C=\{ (k+1)^{k-\bar{t}-1 } \},$$	and we thus have the following five possible pairs of $(D,E)$:
	$$
	(\{k^{k-2 }\} , \{(k-1)^4,k^{\bar{t}+1 } \}) \quad
	(\{k-1,k^{k-3 }\} , \{(k-1)^3,k^{\bar{t}+2 } \})  \quad
	(\{(k-1)^2,k^{k-4 }\} , \{(k-1)^2,k^{\bar{t}+3 } \}) $$
	$$		(\{(k-1)^3,k^{k-5 }\} , \{k-1,k^{\bar{t}+4 } \}) \quad
	(\{(k-1)^4,k^{k-6 }\} , \{k^{\bar{t}+5 } \}.$$
	Observe that $\SSTT_{D,E}=\SSTT$ where $(D,E)$ is the first pair above. Each homomorphism $\widehat{\Theta}_{\SSTT_{D,E}}$ appears in \cref{eqnssalgorithm} with coefficient $\pm[D_{k-1}+1]$, which is zero when $(D,E)$ is either the second or the fourth pair above.
	We let $\SSTR:=\SSTT_{D,E}$ where $(D,E)$ is the third pair above, and $\SSTU:=\SSTT_{D,E}$ where $(D,E)$ is the fifth pair above. We can thus write $\widehat{\Theta}_{\SSTT}+\alpha_{\SSTR}\widehat{\Theta}_{\SSTR}+\alpha_{\SSTU}\widehat{\Theta}_{\SSTU}=0$, where $\alpha_{\SSTR},\alpha_{\SSTU}\in\{-1,1\}$.
	
	We first apply \cref{ssalgorithm} to $\widehat{\Theta}_{\SSTR}$ with $i=k-2$. We set
	$$
	A = \{ (k-2)^5, (k-1)^3 \},\quad
	B = \{ (k-1)^2, k^{k+\bar{t}-1} \},\quad
	C = \{ (k+1)^{k-\bar{t}-1} \},
	$$
	and we thus have the following three possible pairs of $(D,E)$:
	$$
	( \{ k^{k-4} \}, \{ (k-1)^2, k^{\bar{t}+3} \} )\quad
	( \{ k-1, k^{k-5} \}, \{ k-1, k^{\bar{t}+4} \} )\quad
	( \{ (k-1)^2, k^{k-6} \}, \{ k^{\bar{t}+5} \} ).
	$$
	Observe that $\SSTR_{D,E}=\SSTT$ where $(D,E)$ is the first pair above. The other two homomorphisms $\widehat{\Theta}_{\SSTT_{D,E}}$ appear in \cref{eqnssalgorithm} with zero coefficient since $\stirling{D_{k-1}+3}{3}$ is zero if and only if $D_{k-1}>0$. It thus follows that $\widehat{\Theta}_{\SSTR}=0$.
	
	We now apply \cref{ssalgorithm} to $\widehat{\Theta}_{\SSTU}$ with $i=k-2$. The only homomorphism that appears in \cref{eqnssalgorithm} is $\widehat{\Theta}_{\SSTU}$ since $B=\{k^{\bar{t}+k-1}\}$, and hence $\widehat{\Theta}_{\SSTU}$ must be zero.
	
	Finally, since both $\widehat{\Theta}_{\SSTR}$ and $\widehat{\Theta}_{\SSTU}$ are zero homomorphisms, so is $\widehat{\Theta}_{\SSTT}$.

	\smallskip
	\noindent{\bf Case 4:}  Suppose that $d=k+1$. Then we have $\varphi_{k+1,t}\circ\widehat{\Theta}_{\SSTA}=\alpha_{\SSTT}\widehat{\Theta}_{\SSTT}$ with $\alpha_{\SSTT}\in\{-1,0,1\}$, and where $\SSTT$ is identical to $\SSTA$ except for its last row. Observe that the last two rows of $\SSTT$ are of the form
	\[\scalefont{0.8}
	\begin{tikzpicture}[scale=1.6]
	 \tgyoung(0cm,0cm,;<k{-}1>;<k{-}1>;<k{-}1>;<k{-}1>!\gr<k>!\wh<k{+}1>;\hdts;<k{+}1>;<k{+}1>;<k{+}1>;\hdts;<k{+}1>,!\gr<k>;<k>;<k>!\wh<k{+}1>!\dgr<k{+}1>;<k{+}1>;\hdts;<k{+}1>!\wh<k{+}2>;<k{+}2>;\hdts;<k{+}2>)
	\draw[decoration={brace,mirror,raise=5pt},decorate]	(1.85,-0.43) -- node[below=6pt] {$\bar{t}$ times} (3.65,-0.43);
	\end{tikzpicture}
	\]
	We have $\alpha_{\SSTT}=[\bar{t}+1]=\begin{cases}
	0&\text{if $\bar{t}$ is odd},\\
	1&\text{if $\bar{t}$ is even}.
	\end{cases}$
	
	So suppose that $\bar{t}$ is even and apply \cref{ssalgorithm} to $\widehat{\Theta}_{\SSTT}$ with $i=k-1$.
	We set
	$$A=\{k-1^4,k \},\quad B=\{k^3,(k+1)^{k+\bar{t} }\},\quad C=\{ (k+2)^{k-\bar{t} } \},$$
	and we thus have the following four possible pairs of $(D,E)$:
	$$		(\{ (k+1)^{k-1 } \}, \{ k^3,(k+1)^{\bar{t}+1 } \})\quad
	(\{ k,(k+1)^{k-2 } \}), \{ k^2,(k+1)^{\bar{t}+2 } \})$$
	$$
	(\{ k^2,(k+1\})^{k-3 } \}), \{ k,(k+1)^{\bar{t}+3 } \})\quad
	(\{ k^3,(k+1)^{k-4 } \}, \{ (k+1)^{\bar{t}+4 } \}).$$
	Observe that $\SSTT=\SSTT_{D,E}$ when $(D,E)$ is the first pair above. Each homomorphism $\widehat{\Theta}_{\SSTT_{D,E}}$ appears in \cref{eqnssalgorithm} with coefficient $\pm[D_k+1]$, which is zero when $(D,E)$ is either the second or the fourth pair above. We now let $\SSTS=\SSTT_{(D,E)}$, where $(D,E)$ is the third pair above, whose last two rows are of the form
	\[\scalefont{0.8}
	\begin{tikzpicture}[scale=1.6]
	 \tgyoung(0cm,0cm,;<k{-}1>;<k{-}1>;<k{-}1>;<k{-}1>!\gr<k>;<k>;<k>!\wh<k{+}1>;<k{+}1>;<k{+}1>;\hdts;<k{+}1>,!\gr<k>!\wh<k{+}1>;<k{+}1>;<k{+}1>!\dgr<k{+}1>;<k{+}1>;\hdts;<k{+}1>!\wh<k{+}2>;<k{+}2>;\hdts;<k{+}2>)
	\draw[decoration={brace,mirror,raise=5pt},decorate]	(1.85,-0.43) -- node[below=6pt] {$\bar{t}$ times} (3.65,-0.43);
	\end{tikzpicture}
	\]
	We can thus write $\alpha_{\SSTS}\widehat{\Theta}_{\SSTS}+\widehat{\Theta}_{\SSTT}=0$, where $\alpha_{\SSTS}\in\{-1,1\}$.
	
	\noindent{\bf Case 4a:}
	Assume that $\bar{t}=2$.
	Let $r\geqslant k$, and suppose that $\SSTT'$ (respectively, $\SSTS'$) is the $((k+4)^r)$-tableau whose first $k$ rows agree with $\SSTT$ (respectively, $\SSTS$) and the remaining rows are arbitrary.
	Now observe that the $(k-1)$th and $k$th rows of both $\SSTT$ and $\SSTT'$ in this case are of the form
	\[\scalefont{0.8}
	\begin{tikzpicture}[scale=1.6]
	 \tgyoung(0cm,0cm,;<k{-}1>;<k{-}1>;<k{-}1>;<k{-}1>!\gr<k>!\wh<k{+}1>;<k{+}1>;\hdts;<k{+}1>,!\gr<k>;<k>;<k>!\wh<k{+}1>!\dgr<k{+}1>;<k{+}1>!\wh<k{+}2>;\hdts;<k{+}2>)
	\draw[decoration={brace,mirror,raise=5pt},decorate]	(2.75,-0.43) -- node[below=6pt] {$k-2$ times} (4.1,-0.43);
	\draw[decoration={brace,raise=5pt},decorate]	(2.35,0.43) -- node[above=6pt] {$k-1$ times} (4.1,0.43);
	\end{tikzpicture}
	\]
	We proceed to show that $\widehat{\Theta}_{\SSTT'}=0$ by induction on $k$. Note that $\widehat{\Theta}_{\SSTT'}$ is the only homomorphism appearing in \cref{eqnssalgorithm} when $k=2$, and hence $\widehat{\Theta}_{\SSTT'}=0$. Now suppose that $k>2$, and observe that the $(k-2)$th, $(k-1)$th and $k$th rows of $\SSTS'$ are of the form
	\[\scalefont{0.8}
	\begin{tikzpicture}[scale=1.6]
	 \tgyoung(0cm,0cm,;<k{-}2>;<k{-}2>;<k{-}2>;<k{-}2>;<k{-}2>;<k{-}1>;<k>;<k>;\hdts;<k>,<k{-}1>;<k{-}1>;<k{-}1>;<k{-}1>!\gr<k>;<k>;<k>!\wh<k{+}1>;\hdts;<k{+}1>,!\gr<k>!\wh<k{+}1>;<k{+}1>;<k{+}1>!\dgr<k{+}1>;<k{+}1>!\wh<k{+}2>;<k{+}2>;\hdts;<k{+}2>)
	\draw[decoration={brace,raise=5pt},decorate]	(2.8,0.43) -- node[above=6pt] {$k-2$ times} (4.5,0.43);
	\end{tikzpicture}
	\]
	By the inductive hypothesis, $\widehat{\Theta}_{\SSTS'}=0$ and hence $\widehat{\Theta}_{\SSTT}=0$. It thus follows that $\widehat{\Theta}_{\SSTT}$ is also zero.
	
	\noindent{\bf Case 4b:}
	We now assume that $\bar{t}\geqslant 4$. We apply \cref{ssalgorithm} to $\widehat{\Theta}_{\SSTS}$ with $i=k-1$ as follows. We set
	\[
	A=\{(k-1)^4,k^3\},\quad
	B=\{k,(k+1)^{k+\bar{t}}\},\quad
	C=\{(k+2)^{k-\bar{t}}\},
	\]
	and we thus have the following two possible pairs of $(D,E)$:
	$$ (\{(k+1)^{k-3}\}, \{k,(k+1)^{\bar{t}+3}\})\qquad
	(\{k,(k+1)^{k-4}\},\{(k+1)^{\bar{t}+4}\}).$$
	Observe that $\SSTS=\SSTS_{D,E}$ where $(D,E)$ is the first pair. Also, the homomorphism $\widehat{\Theta}_{\SSTS_{D,E}}$, where $(D,E)$ is the second pair above, appears in the sum of \cref{ssalgorithm} with zero coefficient since $\stirling{D_k+3}{3}$ is zero if and only if $D_k>0$. Hence $\widehat{\Theta}_{\SSTS}$ is zero, and moreover, so is $\widehat{\Theta}_{\SSTT}$.
\end{proof}

\subsection{Proof part $(ii)$: the decomposition numbers}

We now calculate the graded decomposition multiplicity $[{\bf S}^\CC_{-1}(\alpha_k):
{\bf D}^\CC_{-1}(\alpha_k^C)]$  and hence prove that ${\bf S}^\CC_{-1}(\alpha_k)$ is decomposable.

\begin{prop}\label{max2}
For $k\geq 1$, we have that
$
\sharp\{{\rm CStd}(\alpha_k,\alpha_k^C)\}=2
$.    The first tableau is simply given by adding the nodes  in weakly increasing order in the most dominant possible position in $\alpha_k$.  The second tableau is the conjugate of the first.
\end{prop}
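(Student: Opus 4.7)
The plan is to identify the two candidate tableaux explicitly and then show no other element of ${\rm CStd}(\alpha_k,\alpha_k^C)$ exists. First I would define $\SSTS_1$ by the greedy rule: scanning the entries $2<3<\dots<2k+4$ in increasing order, place each into the topmost, then leftmost, cell of $\alpha_k$ compatible with the residue and the semistandard conditions. Direct inspection of this construction confirms $\SSTS_1\in{\rm CStd}(\alpha_k,\alpha_k^C)$. A crucial structural observation is that every coloured semistandard tableau of shape $\alpha_k$ in the principal 2-block has \emph{strictly} increasing rows: two horizontally adjacent cells of $\alpha_k$ have opposite residues mod 2, so the matching entries must differ. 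Combined with the (always strict) columns, this shows that transposition sends coloured semistandard tableaux to coloured semistandard tableaux. Since $\alpha_k$ is self-conjugate and the residue $(r+c)\bmod 2$ is symmetric in $(r,c)$, the tableau $\SSTS_2:=\SSTS_1^T$ is a second element of ${\rm CStd}(\alpha_k,\alpha_k^C)$, and transposition is a fixed-point-free involution on the set.

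Second, I would argue that in any $\SSTS\in{\rm CStd}(\alpha_k,\alpha_k^C)$ the entries $2,3,\dots,k+1$ occupy precisely the staircase $\rho(k)$ in the top-left of $\alpha_k$. We have $\sum_{i=2}^{k+1}\nu_i=k(k+1)/2=|\rho(k)|$, the cells of entries $\leq k+1$ form a partition shape inside $\alpha_k$, and the residue-profile forced by the weight (i.e.\ the exact counts of residue-$0$ and residue-$1$ cells contributed by each value) singles out $\rho(k)$ among all partitions of $k(k+1)/2$ contained in $\alpha_k$. This can be verified by a direct case analysis, using that each entry $i\leq k+1$ fills $i-1$ cells in $i-1$ distinct columns, which forces a staircase-like pattern.

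Third, I would analyze the placement of the $\nu_{k+2}=k$ entries equal to $k+2$. Each of the $k+1$ addable cells of $\rho(k)$ lies on the single diagonal $r+c=k+2$ and therefore has residue $(k+2)\bmod 2$, so the shape $\lambda^{(k+2)}$ is obtained by filling $k$ of the $k+1$ addable cells, giving $k+1$ \emph{a priori} options. I would then show that only the two extreme options, obtained by omitting $(1,k+1)$ or $(k+1,1)$, survive, corresponding to $\SSTS_2$ and $\SSTS_1$ respectively; omitting a middle addable cell $(r,k+2-r)$ with $2\leq r\leq k$ produces a concavity in $\lambda^{(k+2)}$ whose next addable cells of residue $(k+3)\bmod 2$ do not form a partition-shape extension of the required size with residue matching, so the configuration cannot be continued. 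A symmetric local analysis at each subsequent ladder shows that both surviving choices extend uniquely to full tableaux, yielding exactly $\SSTS_1$ and $\SSTS_2=\SSTS_1^T$.

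The main obstacle is the third step: ruling out the $k-1$ middle choices for $\lambda^{(k+2)}$ and verifying uniqueness of each of the two surviving completions. The transposition involution reduces the second half of this to a single case, but the blockage argument for middle omissions requires careful tracking of how the residue-parity pattern interacts with the horizontal-strip condition throughout $\alpha_k\setminus\rho(k)$; this is where the characteristic $2$ combinatorics is genuinely restrictive.
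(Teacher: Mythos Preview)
Your approach is essentially the same as the paper's: force the entries $2,\dots,k+1$ to fill $\rho(k)$, branch on the placement of the $k$ copies of $k+2$, rule out all but the two extreme choices, and check that each extreme completes uniquely. The transposition involution you highlight is exactly how the paper identifies the second tableau.

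One point needs adjusting in your third step. You claim that omitting a middle addable cell $(r,k+2-r)$ with $2\leq r\leq k$ already blocks the placement of the entries equal to $k+3$. In fact it does not: after any of the $k+1$ choices for $\lambda^{(k+2)}$, the $k$ addable cells of residue $(k+3)\bmod 2$ are uniquely determined and the entries $k+3$ can always be placed (for $k=3$, omitting $(2,3)$ gives $\lambda^{(5)}=(4,2,2,1)$, and the three entries equal to $6$ go at $(1,5),(4,2),(5,1)$ without obstruction). The genuine obstruction appears one step later. After the entries $k+3$ are placed, the resulting shape is $\rho(k)$ together with $k$ dominoes, and the paper's observation is that only the two extreme such shapes have $k$ addable cells of the correct residue \emph{lying inside} $\alpha_k$; the middle choices produce shapes whose would-be addable cells for entry $k+4$ spill outside the box $((k+2)^k,k^2)$. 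So the constraint that kills the middle options is containment in $\alpha_k$, not an abstract residue/partition-shape mismatch at level $k+3$. Once you make this correction the argument goes through exactly as you outline, and your use of the conjugation involution cuts the remaining uniqueness check in half.
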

\begin{proof}
The entries from the set $\{2,\dots,k+1\}$ are all forced and the only tableau is that of shape $(k,k-1,\dots,2,1)$ and weight  $(k,k-1,\dots,2,1)$.   There are now $k$ possible choices for how to add the nodes  with entry $k+2$ (this is because we have $k$ such nodes  and $k+1$ addable nodes all of the correct residue).
 Regardless of how these nodes  are added, there are no choices for how to add the nodes  with entry $k+3$.
Having added these nodes , we now have a tableau   which is one of   $k$ distinct possible shapes: namely those obtained from $(k,k-1,\dots,1)$ by adding $k$ $2$-dominoes  either of shape $(2)$ or of shape $(1^2)$ (this is precisely the set of  2-separated partitions from this block).  Of these partitions, only two of them have a total of $k$ distinct addable nodes of the form $\{(r,c)\mid (r,c)\in \alpha_k\}$ of the required residue (namely those  of the form $(  k,k-1,\dots,1)+ 2(1^k)$ and its transpose).
We now proceed to add the nodes  with entry $k+5$ to these two possible partitions.
There are now {\em no} choices for where to add these nodes; continuing in this fashion there are no more choices to be made and the result follows.
 \end{proof}

\begin{eg}
Let $k=3$.  The two elements of ${\rm CStd} ((5^3,3^2),(9,7,5))$ are as follows
$$\SSTS_1=\gyoung(2;3;4;5;6,3;4;5;6;7,4;5;6;7;8,7;8;9,8;9;\ten)
\qquad
\SSTS_2=\gyoung(2;3;4;7;8,3;4;5;8;9,4;5;6;9;\ten,5;6;7,6;7;8).$$
 \end{eg}

\begin{prop}\label{hardtab}
Let $\SSTT \in {\rm CStd}(\alpha_k^R,\alpha_k^C)$  be the tableau constructed as follows:
\begin{itemize}[leftmargin=*]
\item[$(i)$] Add the nodes  with ladder number $\ell\in\{2,\dots,k+3\}$ in weakly increasing order in the most dominant possible position.
\item[$(ii)$] Add the first $k-1$ of the nodes  of ladder number $k+4$ in the most dominant possible position.
\item[$(iii)$] Add the final node   of ladder number $k+4$ in the least dominant  position: namely $(k+1,1)$.

 \item[$(iv)$] At this point, there remain $k$ distinct ladders numbers $\{k+4 + i\mid  1\leq i \leq k\}$ to add; which we add in weakly increasing order.
  For $1\leq i\leq k-2$, we add
 the first $k-1-i$ nodes  in the most dominant positions possible and the final
  two nodes    in the least dominant positions  possible: namely  $(k+2-i,2i-1)$ and $(k+1-i,2i)$.

\item[$(v)$] For ladder numbers $\ell=2k+3$ (respectively $\ell=2k+4$) there are precisely 2 nodes  (respectively 1 node) of this ladder number, which we add in the only available positions:  namely $(4,2k-2)$ and  $(4,2k-3)$ (respectively $(2,2k)$).
\end{itemize}
For $\SSTT\in  {\rm CStd}(\alpha_k^C,\alpha_k^R)$ constructed as above, we have that $\deg(\SSTT)=0$.
 \end{prop}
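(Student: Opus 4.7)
The plan is to compute $\deg(\SSTT)$ directly from the definition
$\deg(\SSTT)=\sum_{(a,b)\in\alpha_k^R}\bigl(|\mathcal{A}_\SSTT(a,b)|-|\mathcal{R}_\SSTT(a,b)|\bigr)$
by walking through the construction of $\SSTT$ one ladder at a time, and showing that the total contribution of each ladder vanishes. The key observation is the standard one: when a node of residue $i$ is placed in the \emph{most dominant} available position of its ladder, the partition $\alpha_k^R\cap\{(r,c)\mid \SSTT(r,c)\le \mathfrak l(a,b)\}$ has, above $(a,b)$, an addable $i$-node in every row where the previous ladder contributed a box, and a removable $i$-node in every row where the previous ladder contributed its last box—and these are in bijection. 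Hence all nodes placed by steps (i) and (ii), and the first $k-1-i$ nodes placed dominantly in each ladder of step (iv), contribute $0$ to $\deg(\SSTT)$, by a one-line counting argument. The step (v) nodes are forced, sit at the top of their ladders with no addable or removable residue-match above them, and also contribute $0$.

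Next I would isolate the nodes whose contributions are \emph{a priori} nonzero: the single node placed at $(k+1,1)$ in step (iii), and for each $1\le i\le k-2$ the two nodes placed at $(k+2-i,2i-1)$ and $(k+1-i,2i)$ in step (iv). For the step (iii) node, the restriction $\alpha_k^R\cap\{\SSTT(r,c)\le k+4\}$ is the staircase $\rho(k+1)$ together with the entire row below it; a direct inspection shows that the addable nodes of matching residue above $(k+1,1)$ exactly cancel the removable ones in the previous shape, giving contribution $0$. For the paired step (iv) nodes of ladder number $k+4+i$, I would compute the two contributions separately and show they are negatives of one another: the node at $(k+2-i,2i-1)$ is preceded by $k-1-i$ dominant placements in the same ladder, producing a known surplus of addable nodes above it; the node at $(k+1-i,2i)$ is placed immediately after and ``absorbs'' this surplus, producing an equal number of removable $i$-nodes. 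Summing within each ladder yields zero, and summing over $i$ yields zero.

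The main obstacle will be the combinatorial bookkeeping in step (iv): one must verify, ladder by ladder, that after placing the first $k-1-i$ nodes dominantly the shape $\alpha_k^R\cap\{\SSTT(r,c)<k+4+i\}$ really does have the claimed profile of addable/removable residue-$(k+i)\!\!\mod 2$ nodes above the two exceptional placements, and that the two exceptional placements are compatible (that is, each is indeed addable at the moment it is placed, and each is a legal entry of a colored semistandard tableau). This is tight but amounts to a local check on the two rows $k+1-i$ and $k+2-i$ and the two columns $2i-1$ and $2i$, which can be organised by induction on $i$ running from $k-2$ down to $1$, as the inner partitions form a chain. Once this local bookkeeping is in place, the cancellations described above are immediate and the equality $\deg(\SSTT)=0$ follows.
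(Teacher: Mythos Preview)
Your overall strategy---compute $\deg(\SSTT)$ directly by walking through the construction ladder by ladder---is exactly what the paper does. However, your claimed cancellation pattern is wrong in several places, and if you carried out the computation as you describe you would not get the right answer.

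Specifically: the node $(k+1,1)$ added in step~(iii) does \emph{not} have contribution~$0$. At that stage the partition built so far has a single addable node of the correct residue above $(k+1,1)$ and no removable one, so its contribution is~$+1$. Likewise, the step~(v) nodes are not all degree-zero: the node $(2,2k)$ of ladder number $2k+4$ has no addable node of matching residue above it but does have the removable node $(1,2k+1)$, so its contribution is~$-1$. (The two ladder-$(2k+3)$ nodes each have one addable and one removable node above, hence contribute~$0$.) Finally, in step~(iv) the two ``least dominant'' nodes $(k+2-i,2i-1)$ and $(k+1-i,2i)$ do not cancel as a $(+1,-1)$ pair; rather each one separately has exactly one addable and one removable node of matching residue above it, so each contributes~$0$ on its own.

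Thus the actual cancellation is between the single $+1$ from step~(iii) and the single $-1$ from the last node of step~(v); everything else is individually zero. Your plan would work once you correct these local computations, but as written the claims ``step~(iii) contributes~$0$'', ``step~(v) nodes sit at the top with nothing above them'', and ``the step~(iv) pair are negatives of one another'' are all false.
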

\renewcommand{\sts}{{\sf s}}
\renewcommand{\stt}{{\sf t}}
\begin{proof}
 For $(a,b)$ a node added in steps $(i)$ and $(ii)$, we have that $\mathcal{A}_\SSTS(a,b)=\varnothing = \mathcal{R}_\SSTS(a,b)$.

We have that the one node added in step $(iii)$ has
$\mathcal{A}_\SSTS(a,b)=\{(k,k)\}$  and $ \mathcal{R}_\SSTS(a,b)=\varnothing$.

Fix $1\leq i \leq k$ in step $(iv)$.
The most dominant nodes of ladder number $k+4+i$ each have $\mathcal{A}_\SSTS(a,b)=\varnothing = \mathcal{R}_\SSTS(a,b)$.
The two least dominant nodes added in step $(iv)$ have
$\mathcal{A}_\SSTS(a,b) = \{(k-i-1,2i+4)\}$	and 	$	\varnothing = \mathcal{R}_\SSTS(a,b)=\{(k-i,2i+1)\}$.

In step $(v)$, for the two nodes $(a,b)$  of ladder number $2k+3$ we have that
$\mathcal{A}_\SSTS(a,b) = \{(2,2k)\}$	and 	$	\varnothing = \mathcal{R}_\SSTS(a,b)=\{(1,2k+1)\}$.
The node $(a,b)$ of ladder number $2k+4$ has
$\mathcal{A}_\SSTS(a,b) = \varnothing$	and $\mathcal{R}_\SSTS(a,b) = \{(1,2k+1)\}$.

The only nodes with non-zero degree contribution are the nodes $(k+1,1)$ (added in step $(iii)$) and
the node $(2,2k)$ (added in step $(v)$).  The degree contributions of these nodes are $+1$ and $-1$ respectively.  The result follows.
 \end{proof}

\begin{eg}Let $k=2,3,4$. The semistandard coloured tableau   described in the \cref{hardtab} is as follows.  We have coloured the nodes  for which the degree contribution is $1$ and $-1$, all other nodes  contribute nothing to the relative degree.    $$\newcommand{\seven}{{\bf7}}
\newcommand{\eight}{{\bf8}}
\newcommand{\six}{{\bf6}}
\newcommand{\tten}{{\bf10}}
\newcommand{\ttwelve}{{\bf12}}
 \gyoung(2;3;4;5;6,3;4;5!\dgr8,;6!\wh7,7)\quad
\gyoung(2;3;4;5;6;7;8,3;4;5;6;7!\dgr<10>,!\wh4;5;6;9,!\dgr7!\wh8;9,8)
\quad
\gyoung(2;3;4;5;6;7;8;9;\ten,3;4;5;6;7;8;9!\dgr<12>,!\wh4;5;6;7;8;\eleven,5;6;7;\ten;\eleven,!\dgr8!\wh9;\ten,9)
.$$
\end{eg}
\begin{prop}\label{regular}
For $k\geq 1$ we have that $
\sharp\{{\rm CStd}(\alpha_k,\alpha_k^R)\}=1
$ and we let $\SSTT$ denote the unique tableau in this set.  We have that
$
\deg(\SSTT)= w(\alpha_k)/2.
$

\end{prop}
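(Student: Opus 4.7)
The plan has two main parts: establish that $|{\rm CStd}(\alpha_k,\alpha_k^R)|=1$, with the unique $\SSTT$ given by $\SSTT(r,c)=r+c$; and evaluate $\deg(\SSTT)$ by a ladder-by-ladder decomposition, matching the result with $w(\alpha_k)/2$ computed via the $2$-abacus.

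For uniqueness, I would argue as follows. Semistandardness combined with the residue-matching rule for coloured tableaux forces any $\SSTS\in{\rm CStd}(\alpha_k,\alpha_k^R)$ to satisfy $\SSTS(r,c)\geq r+c$: along each column, entries strictly increase and alternate in parity, so a simple induction on $r$ starting from $\SSTS(1,c)\geq c+1$ gives the claim. Since $2$-regularisation moves each node within its own ladder, ${\rm Lad}(\alpha_k^R)={\rm Lad}(\alpha_k)$, so the multiset of weights of $\SSTS$ contains exactly $s_\ell:=\#\{(r,c)\in\alpha_k:r+c=\ell\}$ entries equal to $\ell$. The subset $\{(r,c):\SSTS(r,c)\leq\ell\}$ is then contained in $\{(r,c)\in\alpha_k:r+c\leq\ell\}$ and of equal cardinality $\sum_{\ell'\leq\ell}s_{\ell'}$; the two sets must therefore coincide, forcing $\SSTS(r,c)=r+c$ throughout.

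For the degree, introduce $L(\ell):=\alpha_k\cap\{(r,c):r+c\leq\ell\}$. The degree of $\SSTT$ splits ladder-by-ladder, with each ladder-$\ell$ node $(a,b)$ contributing (addable $\ell\bmod 2$ nodes of $L(\ell)$ above $(a,b)$) minus (removable $\ell\bmod 2$ nodes of $L(\ell-1)$ above $(a,b)$). Three ranges arise:
\begin{itemize}[leftmargin=*]
\item For $\ell\leq k+3$, $L(\ell)=\rho(\ell-1)$ is a staircase; both its addable nodes and the removable nodes of $\rho(\ell-2)$ have residue of opposite parity to $\ell$, so the contribution is $0$.
\item For $k+4\leq\ell\leq 2k+1$, a direct description of $L(\ell)$'s row-length profile shows that its only addable nodes of residue $\ell\bmod 2$ are $(1,k+3)$ and $(k+3,1)$, both of residue $k\bmod 2$, present exactly when $\ell\equiv k\pmod 2$. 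Removable nodes of $L(\ell-1)$ all have residue $(\ell-1)\bmod 2$. As every ladder-$\ell$ node $(a,b)$ satisfies $2\leq a\leq k+2$ once $\ell\geq k+4$, only $(1,k+3)$ lies strictly above it, contributing $+1$ per node. Hence ladder $\ell$ contributes $s_\ell=2k-\ell+5$ when $\ell\equiv k\pmod 2$ and $0$ otherwise.
\item For $\ell=2k+2$, $L(\ell)=\alpha_k$ has addable nodes $(1,k+3),(k+1,k+1),(k+3,1)$ with residues $k,0,k\pmod 2$; a parity-based case split and a small direct count at the two ladder-$(2k+2)$ nodes $(k,k+2),(k+2,k)$ yields a total contribution of $3$ if $k$ is even and $1$ if $k$ is odd.
\end{itemize}
Summing the non-zero contributions gives $\deg(\SSTT)=k(k+4)/4$ for $k$ even and $(k^2+4k-1)/4$ for $k$ odd. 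A $2$-abacus computation of $w(\alpha_k)$ (with $2$-core $\varnothing$ for $k$ even and $(1)$ for $k$ odd) gives the same expressions for $w(\alpha_k)/2$, completing the proof.

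The main obstacle will be the intermediate range: describing $L(\ell)$'s row-length profile accurately enough to identify which addable nodes of $L(\ell)$ carry residue $\ell\bmod 2$, and verifying that $(1,k+3)$ and $(k+3,1)$ are indeed the only such candidates. Once this is settled, the rest is arithmetic bookkeeping matched against the abacus.
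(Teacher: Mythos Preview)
Your argument is correct. The uniqueness proof via the lower bound $\SSTS(r,c)\geq r+c$ combined with the ladder-weight counting is clean, and the ladder-by-ladder degree computation checks out: for $k\ge 2$ one has $L(\ell)=\rho(\ell-1)$ for $\ell\le k+3$, the description $L(\ell)=((k+2)^{m},k+1,k,\dots,m+1,m)$ with $m=\ell-k-2$ in the intermediate range is accurate and does force $(1,k+3)$ and $(k+3,1)$ to be the only addable nodes of residue $k\bmod 2$, and the endpoint $\ell=2k+2$ gives $3$ or $1$ as you say. The arithmetic matches the abacus values $w(\alpha_k)=k(k+4)/2$ (resp.\ $(k^2+4k-1)/2$) for $k$ even (resp.\ odd).

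The paper's proof, by contrast, does not compute anything specific to $\alpha_k$. It simply invokes a general fact, valid for every $e$-singular partition $\lambda$, that $|{\rm CStd}(\lambda,\lambda^R)|=1$ with the degree of the unique tableau read off from James' regularisation map; the reference given is \cite[Theorem~2.2]{Fay07}, a $q$-analogue of James' regularisation theorem phrased in the Fock-space language. That theorem settles the question for all $\lambda$ at once, and one then just extracts $w(\alpha_k)/2$ as the exponent. Your approach trades this black-box citation for a self-contained, shape-specific calculation; it is longer but makes the mechanism completely transparent and avoids the Fock-space machinery.
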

\begin{proof}
 This follows simply by counting    nodes moved under James' regularisation map.
For a proof (for all $e$-singular partitions) in the language of Fock-spaces, see \cite[Theorem 2.2]{Fay07}.
 \end{proof}

Now, with the combinatorics in place we are ready to calculate decomposition numbers.
 By \cref{max2}, we know that
 $$
[{\bf S}^\CC_{-1}(\alpha_k):{\bf D}^\CC_{-1}(\alpha_k^C)]=a t^{ w(\alpha_k)/2}
 $$
 for $a =0,1$ or $2$.  By \cref{louise}, we know that $a \neq 0$.
Now, we know that
$$
[{\bf S}^\CC_{-1}(\alpha_k):{\bf D}^\CC_{-1}(\alpha_k^R)]=t^{ w(\alpha_k)/2}
$$
by \cref{regular} and that
$$e(\alpha^C)( {\bf D}^\CC_{-1}(\alpha_k^R))\neq0$$
by \cref{hardtab} and the fact that  $d_{\alpha_k^C,\alpha_k^R}\in t\NN_0[t]$.
 Therefore $a=1$ (as the other vector of weight $\alpha_k^R$ is in the span of  the composition factor  ${\bf D}^\CC_{-1}(\alpha_k^R)$ of ${\bf S}^\CC_{-1}(\alpha_k)$).
We thus obtain:

 \begin{thm}\label{thm:decompalpha}
Let $k\in\mathbb{N}$. Then the Specht module ${\bf S}^\CC_{-1}(\alpha_k)$ is decomposable with ${\bf D}^\CC_{-1}(\alpha_k^C)$ appearing as a  direct summand of ${\bf S}^\CC_{-1}(\alpha_k)$.
  \end{thm}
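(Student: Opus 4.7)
The plan is to combine the two-pronged strategy laid out at the end of Section~A.2: prove that $\mathbf{D}^\CC_{-1}(\alpha_k^C)$ embeds in $\mathbf{S}^\CC_{-1}(\alpha_k)$ (putting it in the socle), and prove that $\mathbf{D}^\CC_{-1}(\alpha_k^C)$ appears with composition multiplicity exactly $1$; combined with the self-duality of Specht modules, this forces $\mathbf{D}^\CC_{-1}(\alpha_k^C)$ to be a direct summand.

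For the socle statement, I would invoke the detailed tableau-theoretic work of Section~A.2. The semistandard tableau $\SSTA \in \SStd(((k+4)^k),\alpha_k)$ defined in Section~A.2 gives a nonzero tableau homomorphism $\widehat{\Theta}_\SSTA : \mathbf{S}^\CC_{-1}((k+4)^k) \to \mathbf{M}^\CC_{-1}(\alpha_k)$ by the Dipper--James basis theorem. Lemmas A.4--A.8 together with Proposition~A.3 verify that $\varphi_{d,t}\circ \widehat{\Theta}_\SSTA = 0$ for every $d\geq 1$ and $0\leq t<(\alpha_k)_{d+1}$, so by Theorem~2.1 the image of $\widehat{\Theta}_\SSTA$ lies in $\mathbf{S}^\CC_{-1}(\alpha_k)$. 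Since $\mathbf{S}^\CC_{-1}(((k+4)^k)) \cong \mathbf{D}^\CC_{-1}(\alpha_k^C)$ by the rectangular partition result (Theorem~A.4 and its corollary), this yields a nonzero homomorphism $\mathbf{D}^\CC_{-1}(\alpha_k^C) \hookrightarrow \mathbf{S}^\CC_{-1}(\alpha_k)$, so $\mathbf{D}^\CC_{-1}(\alpha_k^C)$ sits in the socle.

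For the multiplicity bound, I would use the coloured-tableau machinery of Section~3. Proposition~A.9 shows $|\CStd(\alpha_k,\alpha_k^C)|=2$, and an easy inspection (the two tableaux being transposes of each other through the two symmetric ways to place the $(k+2)$-ladder nodes) shows both coloured tableaux have the same degree $w(\alpha_k)/2$. By Theorem~3.20 this gives the upper bound $[\mathbf{S}^\CC_{-1}(\alpha_k):\mathbf{D}^\CC_{-1}(\alpha_k^C)\langle k\rangle]\leq 2$ with $k=w(\alpha_k)/2$, concentrated in a single degree. To sharpen this to $1$, I would use Proposition~A.11 (namely $[\mathbf{S}^\CC_{-1}(\alpha_k):\mathbf{D}^\CC_{-1}(\alpha_k^R)]=t^{w(\alpha_k)/2}$, which follows from $|\CStd(\alpha_k,\alpha_k^R)|=1$ and degree $w(\alpha_k)/2$) together with Proposition~A.10, which exhibits a coloured tableau $\SSTT \in \CStd(\alpha_k^R,\alpha_k^C)$ of degree $0$. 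The latter shows $e(\alpha_k^C)\mathbf{D}^\CC_{-1}(\alpha_k^R)\neq 0$, since by Theorem~3.17 we have $d_{\alpha_k^C,\alpha_k^R}(t)\in t\mathbb{N}_0[t]$, so the single nonzero tableau contribution at degree $0$ from $\SSTT$ survives in $e(\alpha_k^C)\mathbf{D}^\CC_{-1}(\alpha_k^R)$. Hence one of the two $\alpha_k^C$-weight vectors in $\mathbf{S}^\CC_{-1}(\alpha_k)$ is already accounted for by the composition factor $\mathbf{D}^\CC_{-1}(\alpha_k^R)\langle w(\alpha_k)/2\rangle$ (via the $[\alpha_k^C]$-weight space of its unique composition copy), leaving the second vector for $\mathbf{D}^\CC_{-1}(\alpha_k^C)$. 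Therefore $[\mathbf{S}^\CC_{-1}(\alpha_k):\mathbf{D}^\CC_{-1}(\alpha_k^C)\langle w(\alpha_k)/2\rangle]=1$.

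Finally, the conclusion follows from self-duality: since Specht modules for $H^\CC_{-1}(n)$ are self-dual (their head and socle have the same composition factors up to degree shifts corresponding to the bar involution), and $\mathbf{D}^\CC_{-1}(\alpha_k^C)$ appears with multiplicity $1$ in $\mathbf{S}^\CC_{-1}(\alpha_k)$ and lies in its socle, it must also lie in the head. A simple composition factor that lies in both the head and the socle of a module and appears with total multiplicity $1$ is necessarily a direct summand, completing the proof. The main obstacle is really part~(i): verifying the vanishing $\varphi_{d,t}\circ\widehat{\Theta}_\SSTA=0$, since this requires the lengthy row-standard-tableau pairing arguments of Lemmas~A.4--A.8, whereas part~(ii) is a clean synthesis of the coloured-tableau bounds once those lemmas are in hand.
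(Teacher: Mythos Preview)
Your proposal follows essentially the same approach as the paper's proof: establish the socle embedding via the semistandard homomorphism $\widehat{\Theta}_{\SSTA}$ (Section~A.3), bound the composition multiplicity by~$2$ via the coloured-tableau count (Proposition~A.9), refine to multiplicity~$1$ by showing that one of the two $\alpha_k^C$-weight vectors is absorbed by the composition factor ${\bf D}^\CC_{-1}(\alpha_k^R)$ (Propositions~A.10 and~A.11 together with $d_{\alpha_k^C,\alpha_k^R}\in t\NN_0[t]$), and conclude using self-duality.

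One correction to your final paragraph: it is not true that all Specht modules for $H^\CC_{-1}(n)$ are self-dual. The relevant fact, which the paper invokes explicitly, is that $\alpha_k$ is a self-conjugate partition, and it is this symmetry that makes ${\bf S}^\CC_{-1}(\alpha_k)$ self-dual. Your argument goes through once you replace the general (false) claim with this specific observation.
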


\begin{proof}
Since ${\bf D}^\CC_{-1}(\alpha_k^C)$ appears in the socle of ${\bf S}^\CC_{-1}(\alpha_k)$ and as a decomposition number with multiplicity 1, the result follows as   ${\bf S}^\CC_{-1}(\alpha_k)$ is self-dual (as  the partition $\alpha_k$ is self-conjugate).
\end{proof}

\subsection{The decomposability of Specht modules labelled by $\beta_k$ for $k\in 2\NN$}
We now proceed to prove that the second exceptional family of partitions $\left\{\beta_k\ |\ k\in 2\mathbb{N}+1 \right\}$ also label decomposable Specht modules. For $k\in 2\mathbb{N}+1$, we let $\beta_k$ denote the partition $$\beta_k^C=(2k+3,2k+1,2k-1,\dots,9,7,6,1).$$

\begin{eg}\label{eg:summands}
The Young diagrams of the partitions $\alpha_5^C$ and $\beta_5^C$, along with their residues, are drawn as follows:
\[
[\alpha_5^C]=\gyoung(;0;1;0;1;0;1;0;1;0;1;0;1;0,;1;0;1;0;1;0;1;0;1;0;1,;0;1;0;1;0;1;0;1;0,;1;0;1;0;1;0;1,;0;1;0;1;0)
\qquad
[\beta_5^C]=\gyoung(;0;1;0;1;0;1;0;1;0;1;0;1;0,;1;0;1;0;1;0;1;0;1;0;1,;0;1;0;1;0;1;0;1;0,;1;0;1;0;1;0;1,;0;1;0;1;0!\dgr1,;1)
\]
\end{eg}

Notice that $\beta_5^C$ is obtained from $\alpha_5^C$ by adding certain nodes of residue $1$ to $[\alpha_5^C]$; we will see that these nodes are predetermined by the \emph{Modular Branching Rule}.
There exist \emph{cyclotomic divided power $i$-restriction and $i$-induction functors}, $e_i^{(r)}$ and $f_i^{(r)}$ respectively (we refer the reader to \cite{bk09} for details), that act on $H^\CC_q(n)$-modules and enable us to move between different blocks of $H^\CC_q(n)$. We will describe the action of $f_i^{(r)}$ on both Specht modules and irreducible $H^\CC_q(n)$-modules using the following combinatorics (the action of $e_i^{(r)}$ can be analogously described).

Let $i\in\ZZ/e\ZZ$ and $\la\vdash n$.
We define the \emph{$i$-signature of $\la$} by reading the Young digram $[\la]$ from the top of the first component down to the bottom of the last component, writing a $+$ for each addable node and writing a $-$ for each removable node, where the leftmost $+$ corresponds to the highest addable node of $\la$. We obtain the \emph{reduced $i$-signature of $\la$} by successively deleting all adjacent pairs $+-$ from the $i$-signature of $\la$, always of the form $-\dots-+\dots+$. The addable nodes of residue $i$ corresponding to the $+$ signs in the reduced $i$-signature of $\la$ are called the \emph{conormal} nodes of residue $i$ of $\la$.
 We now define
\begin{itemize}
	\item $\la^{\vartriangle_i}$ to be the partition obtained from $\la$ by adding all addable nodes of residue $i$,
	\item $\la^{\blacktriangle_i}$ to be the partition obtained from $\la$ by adding all conormal nodes of residue $i$.
\end{itemize}
Denote the total number of addable \emph{(respectively conormal)} nodes of residue $i$ of $\la$ by $\text{add}_i(\la)$ \emph{(respectively $\text{\rm conor}_i(\la)$)}.

\begin{eg}
The $1$-signature of $\alpha_5^C$ in \cref{eg:summands} is $+-+-++$, and removing all pairs $+-$, we obtain the reduced $1$-signature $++$. Observe that $(6,1)$ and $(5,6)$ are the conormal nodes of residue $1$ of $\alpha_5^C$, and upon adding these to $\alpha_5^C$ we have $(\alpha_5^C)^{\blacktriangle_1}=\beta_5^C$.
\end{eg}

We now introduce versions of the \emph{Branching Rules}, which will allow us to determine decomposability of ${\bf S}^\CC_{-1}(\beta_k)$ from ${\bf S}^\CC_{-1}(\alpha_k)$; we first observe how the functor $f_i^{(r)}$ acts on Specht modules.
The following  is a generalisation of \cite[Theorem 3.2]{MR3250450}.

\begin{prop}\label{branch1}
Let $i\in\ZZ/e\ZZ$ and $\la\vdash n$ and let $r=|{\text{\rm add}_i(\la)}|$ and suppose that ${\rm Rem}_i(\lambda)=\emptyset$ and $f_i^r {\bf S}^\CC_q(\la) \cong {\bf S}^\CC_q(\la^{\vartriangle_i})$.
 Then ${\bf S}^\CC_q(\la)$ is indecomposable (respectively simple) if and only if ${\bf S}^\CC_q(\la^{\vartriangle_i})$ is indecomposable (respectively simple).  \end{prop}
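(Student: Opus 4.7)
The plan is to exploit the exactness and biadjointness of the cyclotomic divided power functors $e_i^{(r)}$ and $f_i^{(r)}$ together with the categorified $\mathfrak{sl}_e$-relations which, on appropriate weight spaces, identify compositions such as $e_i^{(r)} f_i^{(r)}$ and $f_i^{(r)} e_i^{(r)}$ with the identity. I would appeal to the Brundan--Kleshchev framework of \cite{bk09} already cited in the paper.

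First I would observe that adding a node of residue $i$ to a Young diagram never creates a new addable $i$-node, because the two adjacent positions have residues $i\pm 1$. Hence $\la^{\vartriangle_i}$ has no addable $i$-nodes and exactly $r$ removable $i$-nodes (the ones just added), so $f_i{\bf S}^\CC_q(\la^{\vartriangle_i})=0$ by the branching rule. Dually, the hypothesis ${\rm Rem}_i(\la)=\varnothing$ with $r$ addable $i$-nodes gives $e_i{\bf S}^\CC_q(\la)=0$. In both situations, the modules sit in extremal weight spaces of their $\mathfrak{sl}_2$-string, and the standard categorification formulae yield
$$
e_i^{(r)}f_i^{(r)} M \cong M \qquad \text{whenever } e_i M=0 \text{ with $h_i$-weight } r,
$$
and the transpose identity for $f_i^{(r)} e_i^{(r)}$ on modules killed by $f_i$. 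In particular, combining with the hypothesis $f_i^{(r)}{\bf S}^\CC_q(\la)\cong {\bf S}^\CC_q(\la^{\vartriangle_i})$, I obtain the reverse isomorphism $e_i^{(r)}{\bf S}^\CC_q(\la^{\vartriangle_i})\cong {\bf S}^\CC_q(\la)$.

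For the indecomposability equivalence, suppose ${\bf S}^\CC_q(\la^{\vartriangle_i})=M_1\oplus M_2$ is a nontrivial splitting. Applying the additive exact functor $e_i^{(r)}$ gives ${\bf S}^\CC_q(\la)\cong e_i^{(r)}M_1\oplus e_i^{(r)}M_2$; each summand $M_j$ inherits $f_i M_j=0$ from ${\bf S}^\CC_q(\la^{\vartriangle_i})$ by additivity, so $f_i^{(r)}e_i^{(r)}M_j\cong M_j\neq 0$ forces $e_i^{(r)}M_j\neq 0$, and the splitting of ${\bf S}^\CC_q(\la)$ is nontrivial. The converse direction is symmetric: start with a nontrivial ${\bf S}^\CC_q(\la)=A_1\oplus A_2$, apply $f_i^{(r)}$, and use $e_i A_j=0$ (inherited from $e_i{\bf S}^\CC_q(\la)=0$) together with $e_i^{(r)}f_i^{(r)}A_j\cong A_j$ to conclude.

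For the simplicity equivalence, the same idea applies to short exact sequences $0\to A\to {\bf S}^\CC_q(\la)\to B\to 0$ rather than direct sums. Exactness of $f_i^{(r)}$ produces a sequence $0\to f_i^{(r)}A\to {\bf S}^\CC_q(\la^{\vartriangle_i})\to f_i^{(r)}B\to 0$; both $A$ and $B$ inherit $e_i=0$ from ${\bf S}^\CC_q(\la)$, so neither $f_i^{(r)}A$ nor $f_i^{(r)}B$ can vanish, and a proper submodule yields a proper submodule of ${\bf S}^\CC_q(\la^{\vartriangle_i})$. The converse is dual, using $e_i^{(r)}$ and $f_i N=0$ for submodules and quotients $N$ of ${\bf S}^\CC_q(\la^{\vartriangle_i})$.

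The main obstacle will be cleanly justifying the categorification identity $e_i^{(r)} f_i^{(r)} M \cong M$ rather than a multiplicity $\binom{h_i}{r}M$: this requires a careful check that the $h_i$-weight of the extremal module equals $r$ (so the binomial coefficient is $1$), which follows from the tally of addable minus removable $i$-nodes. I would cite this directly from the standard Chevalley relations for the categorified action of \cite{bk09} rather than re-derive them.
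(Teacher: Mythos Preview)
Your proposal is correct and follows essentially the same idea as the paper, though far more carefully. The paper's entire proof is the single sentence ``This follows by exactness of the functors $e_i^{(r)}$ and $f_i^{(r)}$,'' leaving implicit exactly the point you work out: exactness alone gives only one direction of each implication, and to get the converse one needs that $e_i^{(r)}$ and $f_i^{(r)}$ are mutually inverse on the relevant extremal weight spaces (equivalently, that neither functor kills a nonzero summand or subquotient). Your use of the categorified $\mathfrak{sl}_2$-relations to obtain $e_i^{(r)}f_i^{(r)}M\cong M$ and $f_i^{(r)}e_i^{(r)}N\cong N$ on modules with $e_iM=0$ (respectively $f_iN=0$) of the correct $h_i$-weight is precisely what is needed to make the paper's one-liner rigorous, and your check that the binomial coefficient is $1$ is the right way to close the gap.
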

\begin{proof}
This follows by exactness of the functors $e_i^{(r)}$ and $f_i^{(r)}$.
\end{proof}

The following result is a natural corollary of \cref{thm:decompalpha}.

\begin{thm}
Let $k\in 2\mathbb{N}+1$. Then the Specht module ${\bf S}^\CC_{-1}(\beta_k)$
is decomposable with ${\bf D}^\CC_{-1}(\beta_k^C)$ appearing as a  direct summand of ${\bf S}^\CC_{-1}(\beta_k)$.
\end{thm}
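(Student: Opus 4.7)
The plan is to transfer the decomposition of ${\bf S}^\CC_{-1}(\alpha_k)$ from Theorem~A.1 to ${\bf S}^\CC_{-1}(\beta_k)$ via the $1$-induction functor, invoking Proposition~\ref{branch1} for the general decomposability statement and Kleshchev's modular branching rule to track the summand ${\bf D}^\CC_{-1}(\alpha_k^C)$. First, I would verify the hypotheses of Proposition~\ref{branch1} for $\lambda=\alpha_k$ and $i=1$. Since $k$ is odd, the three addable nodes of $\alpha_k=((k+2)^k,k,k)$, namely $(1,k+3)$, $(k+1,k+1)$, and $(k+3,1)$, have residues $k+4$, $2k+2$, and $k+4$ modulo~$2$, so the first and third are $1$-nodes while the middle is a $0$-node, giving $|{\rm add}_1(\alpha_k)|=2$. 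The corners $(k,k+2)$ and $(k+2,k)$ both have residue $2k+2\equiv 0$, so ${\rm Rem}_1(\alpha_k)=\emptyset$, and a direct check confirms $\alpha_k^{\vartriangle_1}=(k+3,(k+2)^{k-1},k,k,1)=\beta_k$.

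With these facts in hand, the isomorphism $f_1^{2}\,{\bf S}^\CC_{-1}(\alpha_k)\cong {\bf S}^\CC_{-1}(\beta_k)$ follows from the classical branching rule for Specht modules: because $\alpha_k$ has no removable $1$-nodes, iterated $1$-induction adds precisely the two addable $1$-nodes without interference from removable ones, and projection onto the block of $\beta_k$ picks out ${\bf S}^\CC_{-1}(\beta_k)$. Proposition~\ref{branch1} then says that the decomposability of ${\bf S}^\CC_{-1}(\beta_k)$ is equivalent to that of ${\bf S}^\CC_{-1}(\alpha_k)$, and the latter holds by Theorem~A.1. This gives decomposability of ${\bf S}^\CC_{-1}(\beta_k)$.

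To identify ${\bf D}^\CC_{-1}(\beta_k^C)$ as a direct summand, I would apply the divided-power functor $f_1^{(2)}$ to the decomposition ${\bf S}^\CC_{-1}(\alpha_k)\cong {\bf D}^\CC_{-1}(\alpha_k^C)\oplus M$ furnished by Theorem~A.1; by exactness, $f_1^{(2)}{\bf D}^\CC_{-1}(\alpha_k^C)$ is a direct summand of $f_1^{(2)}{\bf S}^\CC_{-1}(\alpha_k)\cong{\bf S}^\CC_{-1}(\beta_k)$. Kleshchev's modular branching rule gives $f_1^{(r)}{\bf D}(\mu)\cong {\bf D}(\mu^{\blacktriangle_1})$ whenever $r$ equals the number of conormal $1$-nodes of $\mu$, so the task reduces to showing both that $(\alpha_k^C)^{\blacktriangle_1}=\beta_k^C$ and that $\alpha_k^C$ has exactly two conormal $1$-nodes. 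Reading the $1$-signature of $\alpha_k^C=(2k+3,2k+1,\ldots,9,7,5)$ from top to bottom produces the string $+-+-\cdots+-++$ (a string of $(k-1)/2$ cancelling pairs followed by two unmatched plusses coming from row $k$ and the new row $k+1$), so the conormal $1$-nodes are precisely $(k,6)$ and $(k+1,1)$; adding these yields $(2k+3,2k+1,\ldots,9,7,6,1)=\beta_k^C$, as desired. The main obstacle is not conceptual but organisational: one must carefully enumerate addable and removable $1$-nodes of both $\alpha_k$ and $\alpha_k^C$ by residue and verify the cancellation pattern of the $1$-signature of $\alpha_k^C$, ensuring that the parity hypothesis $k\in 2\mathbb{N}+1$ is used at each step (without it, the two key $1$-nodes $(1,k+3)$ and $(k+3,1)$ of $\alpha_k$ become $0$-nodes and the argument collapses).
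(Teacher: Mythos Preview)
Your argument is correct and follows essentially the same route as the paper: verify that $\alpha_k$ has no removable $1$-nodes and $\alpha_k^{\vartriangle_1}=\beta_k$, apply Proposition~\ref{branch1} together with Theorem~\ref{thm:decompalpha} to get decomposability, and then compute the reduced $1$-signature of $\alpha_k^C$ to see that $(\alpha_k^C)^{\blacktriangle_1}=\beta_k^C$, so Kleshchev's branching rule carries the summand ${\bf D}^\CC_{-1}(\alpha_k^C)$ to ${\bf D}^\CC_{-1}(\beta_k^C)$. The paper's proof is the same in structure and detail, differing only in that it does not explicitly write out the residue computations for the addable and removable nodes of $\alpha_k$ as you do.
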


\begin{proof}
First observe that $(\alpha_k)^{\vartriangle_1}=\beta_k$, and we thus have $f_1^{|\text{add}_i(\la)|} {\bf S}^\CC_{-1}(\alpha_k)\cong{\bf S}^\CC_{-1}(\beta_k)$ by applying \cref{branch1}. It follows from \cref{thm:decompalpha} that ${\bf S}^\CC_{-1}(\beta_k)$ is decomposable.

Moreover, we know from \cref{thm:decompalpha} that ${\bf D}^\CC_{-1}(\alpha_k^C)$ is a direct summand of ${\bf S}^\CC_{-1}(\alpha_k)$. Now observe that $\alpha_k^C$ has $k$ (odd) non-zero parts, where the $r$th row of $\alpha_k^C$ has:
\begin{itemize}
	\item an addable node of residue $1$ when $r\in\{1,3,5,\dots,k\}\cup\{k+1\}$,
	\item a removable node of residue $1$ when $r\in\{2,4,6,\dots,k-1\}$.
\end{itemize}
Hence $\alpha_k^C$ has $1$-signature
\[
\underbrace{(+-)(+-)\dots(+-)}_{(k-1)/2\text{ times}}++,
\]
and thus reduced $1$-signature $++$, corresponding to the conormal nodes of residue $1$ $(k+1,1)$ and $(k,6)$. Hence $(\alpha_k^C)^{\blacktriangle_1}=\beta_k^C$.  Now, we observe that
$$  f_i^{\text{conor}_i(\la)}{\bf D}^\CC_q(\la) \cong {\bf D}^\CC_q(\la^{\blacktriangle_i})
$$
follows trivially from Kleshchev's branching rule;  therefore $f_1^{\text{conor}_1(\alpha_k^C)}{\bf D}^\CC_{-1}(\alpha_k^C) \cong {\bf D}^\CC_{-1}(\beta_k^C)$ is a direct summand of ${\bf S}^\CC_{-1}(\beta_k)$.
\end{proof}

  Analogous to \cref{conj:exceptional}, we predict that the Specht modules ${\bf S}_{-1}^\CC (\beta_k)$ are  semisimple.

\begin{conj} For all $k\in 2\mathbb{N}+1$, we expect that
	$${\bf S}_{-1}^\CC (\beta_k) =
	{\bf D}_{-1}^\CC (\beta_k^R)\langle w(\beta_k)/2 \rangle \oplus  {\bf D}_{-1}^\CC (\beta_k^C)\langle w(\beta_k)/2 \rangle.$$
\end{conj}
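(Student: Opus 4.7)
The plan is to establish the conjectured direct sum decomposition of ${\bf S}^\CC_{-1}(\beta_k)$ by combining the already-proven direct summand statement (giving ${\bf D}^\CC_{-1}(\beta_k^C)\langle w(\beta_k)/2\rangle$ as a summand) with the semisimplicity criterion of \cref{directsum}. It suffices to show that $[{\bf S}^\CC_{-1}(\beta_k):{\bf D}^\CC_{-1}(\mu)\langle j\rangle]$ is nonzero only when $\mu\in\{\beta_k^R,\beta_k^C\}$ and $j=w(\beta_k)/2$, with both multiplicities equal to~$1$. Once both composition factors are known to sit in the single graded degree $w(\beta_k)/2$, \cref{directsum} applies with $p(\beta_k)=w(\beta_k)/2$ and forces semisimplicity; the multiplicity-one contributions from James' regularisation theorem and from the self-dual direct summand argument then identify the two simples uniquely.

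First I would reduce to the analogous statement for $\alpha_k$ via the branching functor $f_1^{r}$ with $r=|{\rm add}_1(\alpha_k)|$. A standard branching argument (in the spirit of \cref{branch1}, suitably graded) shows that $f_1^{r}{\bf S}^\CC_{-1}(\alpha_k)\cong{\bf S}^\CC_{-1}(\beta_k)$, because the addable $1$-nodes of $\alpha_k$ are exactly the nodes  needed to pass from $\alpha_k$ to $\beta_k$ and there are no removable $1$-nodes. Moreover, using Kleshchev's modular branching rule together with the identification $\beta_k^C=(\alpha_k^C)^{\blacktriangle_1}$ established in the preceding proof and the analogous identification $\beta_k^R=(\alpha_k^R)^{\blacktriangle_1}$ (verified combinatorially from the reduced $1$-signature of $\alpha_k^R$), the images $f_1^r{\bf D}^\CC_{-1}(\alpha_k^R)$ and $f_1^r{\bf D}^\CC_{-1}(\alpha_k^C)$ are irreducible with the correct labels. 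Exactness of $f_1^{(r)}$ and the preservation of graded degrees therefore reduces the conjecture to \cref{conj:exceptional}.

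Second, I would prove \cref{conj:exceptional} directly by applying \cref{directsum} to ${\bf S}^\CC_{-1}(\alpha_k)$. The coloured tableau machinery of \cref{hero,troll} is the right tool: for each $2$-regular $\mu$ in the block of $\alpha_k$ that is not equal to $\alpha_k^R$ or $\alpha_k^C$, I would show that $[{\bf S}^\CC_{-1}(\alpha_k):{\bf D}^\CC_{-1}(\mu)]=0$ by combining an enumeration of ${\rm CStd}(\alpha_k,\mu)$ with the inductive identity of \cref{troll}(iii). The hope is that a refinement of the two bespoke tableau counts \cref{max2} and \cref{regular} (which pin down $\alpha_k^C$ and $\alpha_k^R$ respectively) together with the forced positions of nodes on ladders will show that any surviving coloured tableaux of shape $\alpha_k$ and an intermediate weight $\mu$ all have degree $w(\alpha_k)/2$; their contributions must then be absorbed by the two already-identified simples via \cref{VV99} and the bar-invariance of \cref{bk09}, leaving no room for additional composition factors.

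The main obstacle will be step two: showing combinatorially that outside of $\mu\in\{\alpha_k^R,\alpha_k^C\}$ either ${\rm CStd}(\alpha_k,\mu)$ is empty or its contribution to $\Dim(e(\mu){\bf S}^\CC_{-1}(\alpha_k))$ is fully accounted for by the two known simple composition factors. The tableau counts in \cref{max2,hardtab,regular} give strong control only at the two extremal labels, and extending this control uniformly across the block requires a genuinely new combinatorial input: a ladder-by-ladder analysis parallel to the one in the proof of \cref{mainresultformodular}, but adapted to the (not $2$-separated) partition $\alpha_k$. Once the coloured tableaux at all intermediate $\mu$ are shown either to vanish or to be bar-invariant (and hence to drop out of the inductive formula in \cref{troll}(iii)), \cref{directsum} and the reduction of the first step finish the proof.
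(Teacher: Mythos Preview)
The statement you are attempting to prove is stated in the paper as an open \emph{conjecture}; there is no proof to compare against. The paper establishes only the weaker fact that ${\bf D}^\CC_{-1}(\beta_k^C)$ is a direct summand of ${\bf S}^\CC_{-1}(\beta_k)$, deduced by branching from the corresponding result for $\alpha_k$. The full semisimple decomposition---and its analogue \cref{conj:exceptional} for $\alpha_k$---is explicitly left unresolved.

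Your proposal is a strategy outline rather than a proof, and you correctly locate the gap yourself. Step one (the reduction from $\beta_k$ to $\alpha_k$ via $f_1^{(r)}$) is the same mechanism the paper uses for the decomposability statement, and with the checks you mention it would indeed reduce the conjecture for $\beta_k$ to \cref{conj:exceptional}. The real obstruction is step two: to invoke \cref{directsum} you must show that $[{\bf S}^\CC_{-1}(\alpha_k):{\bf D}^\CC_{-1}(\mu)]=0$ for every $2$-regular $\mu\notin\{\alpha_k^R,\alpha_k^C\}$ in the block. The tableau bounds of \cref{hero} are only upper bounds, and the inductive identity of \cref{troll}(iii) requires as input the $e(\mu)$-weight space dimensions of \emph{all} intermediate simples ${\bf D}^\CC_{-1}(\nu)$ with $\alpha_k\lhd\nu\lhd\mu$, not just the two you control via \cref{max2,hardtab,regular}. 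The ``genuinely new combinatorial input'' you call for---a ladder-by-ladder analysis for the non-$2$-separated partition $\alpha_k$---is precisely what is missing, and neither the paper nor your proposal supplies it. That is why the statement remains a conjecture.
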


\bibliographystyle{amsalpha}

\bibliographystyle{amsalpha}

\end{document}